\newtheorem{theorem}{Theorem}[section]
\newtheorem{lemma}[theorem]{Lemma}
\newtheorem{definition}[theorem]{Definition}
\newtheorem{proposition}[theorem]{Proposition}
\newtheorem{corollary}[theorem]{Corollary}
\theoremstyle{remark}
\newtheorem{remark}[theorem]{Remark}
\newtheorem{example}[theorem]{Example}
\numberwithin{equation}{section}
\newcommand{\A}{\mathbb{A}}                                                                          
\newcommand{\aaa}{\mathfrak{a}}                                                                      
\newcommand{\Lorb}{\mathfrak{o}}                                                                     
\newcommand{\bs}{\backslash}                                                                         
\newcommand{\R}{\mathbb{R}}                                                                          
\newcommand{\C}{\mathbb{C}}                                                                          
\newcommand{\Z}{\mathbb{Z}}                                                                          
\newcommand{\Q}{\mathbb{Q}}                                                                          
\newcommand{\PVS}{PVS}                                                                               
\newcommand{\rk}{\operatorname{rk}}                                                                  
\newcommand{\centder}[1]{{#1}^\star}                                                                 
\newcommand{\fundchr}{\mathfrak{X}}                                                                  
\newcommand{\reducd}[1]{\overline{#1}}                                                               
\newcommand{\Sym}{\operatorname{Sym}}                                                                
\newcommand{\Skew}{\operatorname{Skew}}                                                              
\newcommand{\Pf}{\operatorname{Pf}}                                                                  
\newcommand{\Env}{\mathbf{E}}                                                                        
\newcommand{\excp}{\operatorname{excp}}                                                              
\newcommand{\Stab}{\operatorname{Stab}}                                                              
\newcommand{\iii}{\mathrm{i}}                                                                        
\newcommand{\PD}{\operatorname{PD}}                                                                  
\newcommand{\Exp}{\operatorname{Exp}}                                                                
\newcommand{\Vreg}{X}                                                                                
\newcommand{\Hom}{\operatorname{Hom}}                                                                
\newcommand{\nsr}{isotropic}                                                                         
\newcommand{\sr}{non-isotropic}                                                                      
\newcommand{\rd}{\operatorname{rd}}                                                                  
\newcommand{\vol}{\operatorname{vol}}                                                                
\newcommand{\Ad}{\operatorname{Ad}}                                                                  
\newcommand{\indx}{\mathcal{I}}                                                                      
\newcommand{\jndx}{\mathcal{J}}                                                                      
\newcommand{\rest}{\big|}                                                                            
\newcommand{\Sub}{\Gamma}                                                                            
\newcommand{\BASIC}{basic}                                                                           
\newcommand{\reg}{\operatorname{reg}}                                                                
\newcommand{\sreg}{\operatorname{niso}}                                                              
\newcommand{\Minset}{\Sub_{\reg}}                                                                    
\newcommand{\SPCL}{special}
\newcommand{\Spcl}{\Sub_{\operatorname{spcl}}}                                                       
\newcommand{\Ker}{\operatorname{Ker}}                                                                
\newcommand{\rad}[1]{N_{#1}}                                                                        
\newcommand{\tor}{\operatorname{tor}}                                                               
\newcommand{\ssm}{\operatorname{ss}}                                                                 
\newcommand{\ssu}{\operatorname{ssu}}                                                               
\newcommand{\red}{\operatorname{red}}                                                                
\newcommand{\levi}{\operatorname{lev}}                                                                
\newcommand{\mdchr}{\Delta}                                                                          
\newcommand{\supp}{\operatorname{supp}}                                                              
\newcommand{\GL}{\operatorname{GL}}                                                                  
\newcommand{\SL}{\operatorname{SL}}                                                                  
\newcommand{\PGL}{\operatorname{PGL}}                                                                
\newcommand{\Orth}{\operatorname{O}}                                                                 
\newcommand{\SO}{\operatorname{SO}}                                                                  
\newcommand{\Sp}{\operatorname{Sp}}                                                                  
\newcommand{\GSp}{\operatorname{GSp}}
\newcommand{\Spin}{\operatorname{Spin}}
\newcommand{\GSpin}{\operatorname{GSpin}}
\newcommand{\GE}{\operatorname{GE}}
\newcommand{\spin}{\operatorname{spin}}
\newcommand{\Mat}{\operatorname{Mat}}
\newcommand{\diag}{\operatorname{diag}}
\newcommand{\RI}{\operatorname{RI}}                                                                  
\newcommand{\Lie}{\operatorname{Lie}}
\newcommand{\Liealg}[1]{\mathfrak{\lowercase{#1}}}
\newcommand{\sprod}[2]{\left\langle{#1},{#2}\right\rangle}
\newcommand{\abs}[1]{\left|{#1}\right|}
\newcommand{\swrz}{\mathcal{S}}
\newcommand{\Std}{\operatorname{Std}}                                                                 
\newcommand{\Siegel}{\mathfrak{S}}
\newcommand{\basis}{\mathcal{B}}
\newcommand{\rts}{\Phi}
\newcommand{\wgts}{\Psi}
\newcommand{\cone}{\mathcal{C}}
\newcommand{\face}{\mathcal{F}}
\newcommand{\Ind}{\operatorname{Ind}}                                                                
\newcommand{\pr}{\operatorname{pr}}
\newcommand{\IFD}{IFD}                                                                               
\newcommand{\data}{\mathfrak{d}}                                                                     
\newcommand{\Indata}{\mathfrak{D}}                                                                   
\newcommand{\Cls}{\mathfrak{C}}                                                                      
\newcommand{\filt}{\mathcal{F}}                                                                      
\newcommand{\der}{\operatorname{der}}                                                                
\newcommand{\ab}{\operatorname{ab}}                                                                  
\newcommand{\Gal}{\operatorname{Gal}}                                                                
\newcommand{\fin}{\operatorname{fin}}
\newcommand{\cmpl}{\mathfrak{b}}                                                                     
\newcommand{\id}{\operatorname{id}}                                                                  
\newcommand{\vlum}{\kappa}                                                                           
\newcommand{\sm}[4]{{\bigl(\begin{smallmatrix}{#1}&{#2}\\{#3}&{#4}\end{smallmatrix}\bigr)}}
\newcommand{\norm}[1]{\lVert#1\rVert}
\newcommand{\B}{\=}                                                                                  
\newcommand{\Pic}{\operatorname{Pic}}                                                                
\newcommand{\Cl}{\operatorname{Cl}}                                                                  
\newcommand{\Spec}{\operatorname{Spec}}                                                              
\newcommand{\Mlt}{\mathbf{G}_m}                                                                      
\newcommand{\OOO}{\mathcal{O}}                                                                       
\newcommand{\Eroot}[2]{\begin{psmallmatrix*}[l]{#1}\\\phantom{0}\phantom{0}{#2}\end{psmallmatrix*}}
\newcommand{\Rich}{\operatorname{Rich}}                                                              
\newcommand{\Domain}{\mathcal{D}}                                                                    
\newcommand{\one}{\mathbf{1}}
\newcommand{\CF}{CF}                                                                                 
\newcommand{\fs}{\operatorname{fs}}                                                                  
\newcommand{\nfs}{\operatorname{nfs}}                                                                
\newcommand{\PS}{positive envelope}
\begin{document}

\title[Zeta functions of PVSs]{On the convergence of zeta functions of prehomogeneous vector spaces}

\author{Tobias Finis}
\address{Institute of Mathematics, University of Leipzig, Leipzig, Germany}
\email{Tobias.Finis@math.uni-leipzig.de}
\author{Erez M. Lapid}
\address{Department of Mathematics, Weizmann Institute of Science, Rehovot Israel}
\email{Erez.M.Lapid@gmail.com}

\date{\today}

\begin{abstract}
We prove a general convergence result for zeta functions of prehomogeneous vector spaces extending results of H. Saito, F. Sat\B{o} and Yukie.
Our analysis points to certain subspaces which yield boundary terms. We study it further in the setup arising from nilpotent orbits. 
In certain cases we determine the residue at the rightmost pole of the zeta function.
\end{abstract}

\maketitle
\setcounter{tocdepth}{1}
\tableofcontents

\section{Introduction}

The theory of prehomogeneous vector spaces ({\PVS}s) and their zeta functions was initiated by M. Sato and Shintani \cites{MR296079, MR1086566}.
(See \cite{MR1944442} for history and references.)
One of the early motivations was to study asymptotics of arithmetic invariants \cite{MR0292755},
an endeavor followed up by a number of authors (e.g., \cites{MR1267735, MR1185585, MR936994, MR2138067}).
For these applications, the alternative approach of Bhargava using innovative Geometry of Numbers techniques was very successful (see \cite{MR2275598} and the references therein).
Nonetheless, zeta functions of {\PVS}s are still useful for these applications \cite{2107.12819}.
Our motivation for the study of the zeta functions comes from their appearance in Arthur's trace formula \cites{MR903631, MR3675167, MR3843151, MR3969874, MR3625130},
although this will not play a direct role in the current paper.

The purpose of this paper is to provide a general basic convergence result for zeta functions of regular {\PVS}s over numbers fields. 
It generalizes earlier results in this direction by F. Sat\B{o}, H. Saito, Yukie and others.
More importantly, we hope that the method will enhance our understanding of the singularities
of the zeta function and ultimately will be incorporated to Arthur's trace formula.
As a first result in this direction, we determine the structure of the first pole (at the edge of the domain of absolute convergence)
for a restricted, but natural class of {\PVS}s.

In order to state our main result, let $V$ be a reductive {\PVS} over a number field $F$, i.e, an $F$-rational representation $\rho$
of a reductive group $G$ over $F$ on $V$ with an open orbit $\Vreg$. Let $H=G_v$ be the generic stabilizer
and let $\chi_1,\dots,\chi_r\in X^*(G)$ be the fundamental $F$-rational characters of $G$.
Let $\A$ be the ring of adeles of $F$.
For any rapidly decreasing continuous function $\phi$ on $V(\A)$ let $\theta_\phi$ be the automorphic function
\[
\theta_\phi(g)=\sum_{v\in \Vreg(F)}\phi(\rho(g)^{-1}v),\ \ g\in G(F)\bs G(\A).
\]
Roughly speaking, the zeta function is the Mellin transform of $\theta_\phi$.
In the simplest case, our result is the following.
\begin{theorem} \label{thm: mainsimp}
Suppose that the connected component $H^\circ$ of $H$ is semisimple. Then, the zeta function
\[
Z(\phi,(s_1,\dots,s_r))=\int_{G(F)\bs G(\A)}\theta_\phi(g)\abs{\det\rho(g)}^{-1}\prod_{i=1}^r\abs{\chi_i(g)}^{-s_i}\ dg
\]
converges absolutely for $\Re s_i>0$, $i=1,\dots,r$.
\end{theorem}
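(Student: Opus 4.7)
The plan is to unfold the theta series and reduce the convergence statement to a local zeta-function estimate on $V(\A)$. Decompose $\Vreg(F) = \bigsqcup_j G(F)\cdot v_j$ as a finite disjoint union of $G(F)$-orbits with representatives $v_j$, and set $H_j = G_{v_j}$; each $H_j$ is an inner form of $H$, so $H_j^\circ$ is again semisimple. Replacing $\phi$ by $\abs{\phi}$ throughout and applying Tonelli, unfolding $\theta_{\abs{\phi}}$ gives
\[
Z(\abs{\phi},\Re s) = \sum_j \int_{H_j(F)\bs G(\A)} \abs{\phi(\rho(g)^{-1}v_j)}\abs{\det\rho(g)}^{-1}\prod_{i=1}^r\abs{\chi_i(g)}^{-\Re s_i}\,dg,
\]
so it suffices to bound the right-hand side, which then dominates $\abs{Z(\phi,s)}$.

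The hypothesis enters as follows. Since $H_j^\circ$ is semisimple, every $F$-rational character of $G$ restricts to a character of $H_j$ of finite order, so $\abs{\chi_i}$ and $\abs{\det\rho}$ are trivial on $H_j(\A)$; the integrand is therefore right $H_j(\A)$-invariant. Semisimplicity of $H_j^\circ$ also guarantees that $H_j(F)\bs H_j(\A)$ has finite volume, and collapsing the $H_j$-integration gives
\[
\sum_j \vol(H_j(F)\bs H_j(\A))\int_{H_j(\A)\bs G(\A)}\abs{\phi(\rho(g)^{-1}v_j)}\abs{\det\rho(g)}^{-1}\prod_i\abs{\chi_i(g)}^{-\Re s_i}\,dg.
\]

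Next I would transfer the integration to the orbit $G(\A)\cdot v_j \subset V(\A)$ via $x = \rho(g)^{-1}v_j$. Writing $\det\rho = \prod_i \chi_i^{a_i}$ and setting $P = \prod_i f_i^{a_i}$, where $f_i$ is the relative invariant of character $\chi_i$, the quotient Haar measure on $H_j(\A)\bs G(\A)$ pushes forward (up to a constant) to the $G$-invariant measure $\abs{P(x)}^{-1}\,dx$ on the orbit. The identity $\abs{\det\rho(g)}^{-1}\abs{P(x)}^{-1} = \abs{P(v_j)}^{-1}$ together with $\abs{\chi_i(g)}^{-s_i} = \abs{f_i(x)}^{s_i}\abs{f_i(v_j)}^{-s_i}$ then reduces the problem to bounding
\[
\sum_j c_j\int_{G(\A)\cdot v_j}\abs{\phi(x)}\prod_i\abs{f_i(x)}^{\Re s_i}\,dx \le \sum_j c_j\int_{V(\A)}\abs{\phi(x)}\prod_i\abs{f_i(x)}^{\Re s_i}\,dx
\]
for explicit constants $c_j$. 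For $\Re s_i > 0$, the weight $\prod_i\abs{f_i(x)}^{\Re s_i}$ is locally bounded near the discriminant $\{P=0\}$ and polynomially bounded at infinity; combined with the rapid decrease of $\phi$, the adelic integral factors as a product of Igusa-type local integrals, all convergent in this region.

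The main technical point I anticipate is the Jacobian computation in the change-of-variables step, which relies on $\det\rho$ lying in the lattice generated by $\chi_1,\dots,\chi_r$ so that the discriminant $P$ exists—standard for regular {\PVS}s but to be recorded carefully. A secondary point is the uniform treatment of the finitely many $G(F)$-orbits in $\Vreg(F)$, which is handled by the observation that the inner forms $H_j$ inherit semisimplicity of the identity component from $H$.
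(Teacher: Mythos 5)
Your approach is genuinely different from the paper's. The paper proves Theorem \ref{thm: mainsimp} as a special case of Theorem \ref{thm: convergence}, whose proof bounds $\theta_\phi$ directly on a Siegel domain via lattice-point counting: it decomposes $\theta_\phi$ according to the weight-space ``support'' of each lattice point (the decomposition \eqref{eq: thetadecomp}), proves a combinatorial positivity statement about the gauge $\lambda(U)$ (Corollary \ref{cor: lam(U)} and Proposition \ref{prop: convregs}), and integrates over the Siegel cone. Your route — unfold over $G(F)$-orbits and reduce to a global Igusa integral — is the alternative strategy of Sat\B{o} and Saito that the paper explicitly mentions in \S\ref{sec: statement} as requiring ``an analysis of the resulting Euler products.''

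The decisive gap in your version of this route is the assertion that $\Vreg(F)$ decomposes into a \emph{finite} disjoint union of $G(F)$-orbits. As noted in \S\ref{sec: {\PVS}}, the orbits are indexed by the kernel of $H^1(F,G_u)\to H^1(F,G)$, which is typically infinite for a number field once $H$ is disconnected or finite (binary cubic forms: $H\simeq S_3$, orbits $\leftrightarrow$ cubic \'etale algebras; the $\wedge^3$ of $\GL_8$: $H^\circ=\PGL_3$ with $H^1(F,\PGL_3)$ classifying degree-$3$ central simple algebras; etc.). Consequently your final bound
\[
\sum_j c_j \int_{G(\A)\cdot v_j}\abs{\phi(x)}\prod_i\abs{f_i(x)}^{\Re s_i}\,dx \le \Bigl(\sum_j c_j\Bigr)\int_{V(\A)}\abs{\phi(x)}\prod_i\abs{f_i(x)}^{\Re s_i}\,dx
\]
has a right-hand side that is generally $+\infty$: the constants $c_j = \vol(H_j(F)\bs H_j(\A))$ (after using the product formula to kill $\abs{P(v_j)}$ and $\abs{f_i(v_j)}$) are bounded away from $0$, and their sum over infinitely many orbits diverges. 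The actual convergence of $\sum_j c_j I_j$ comes from a delicate cancellation between the growth of the number of orbits and the decay of the orbital integrals $I_j$, which is precisely the Euler-product analysis that Saito carries out and that your proposal replaces by a crude domination argument. There is also a smaller technical point: for regular PVSs the paper only asserts $(\det\rho)^2\in X^*(G/\centder{G})$, so $\det\rho$ need not lie in the lattice $\Z\fundchr$ and your polynomial $P=\prod_i f_i^{a_i}$ must be replaced by the function $\prod_i\abs{f_i}^{b_i/2}$; but this is cosmetic compared to the finiteness problem. To repair the argument along your lines one would have to either bound $\sum_j c_j I_j$ directly as a sum of Euler products, or fall back on the paper's reduction-theoretic estimate on $\theta_\phi$.
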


Theorem \ref{thm: mainsimp} cannot hold as stated for a general {\PVS}. The first issue is that assuming convergence, as a function of $x\in G(\A)$ the integral
\begin{equation} \label{eq: prezeta}
\int_{G(F)\bs G(\A)^1}\theta_\phi(gx)\ dg
\end{equation}
is invariant not just under $G(\A)^1$ but also under the adelic points of the group $\centder{G}=HG^{\der}$. Therefore, in the case where $X^*(H)$ is infinite
(or, equivalently $r<\dim X^*(G)$) we need to modify the definition of the zeta function as follows:
\begin{equation} \label{eq: zetanonisotropic}
Z(\phi,(s_1,\dots,s_r))=\int_{G(\A)^1\centder{G}(\A)\bs G(\A)}\int_{G(F)\bs G(\A)^1}\theta_\phi(ga)\ dg\ \abs{\det\rho(a)}^{-1}\prod_{i=1}^r\abs{\chi_i(a)}^{-s_i}\ da.
\end{equation}
We will show that this integral converges absolutely for $\Re s_i>0$, $i=1,\dots,r$ provided that $H$ is connected reductive.

In the non-connected case, there is yet a more fundamental difficulty, namely that already the integral \eqref{eq: prezeta} may diverge.
This happens if $\vol(G_v(F)\bs (G_v(\A)\cap G(\A)^1))=\infty$, or equivalently, if
the group of connected components $G_v/G_v^\circ$ acts non-trivially on $X^*(G_v^\circ)$ for some $v\in \Vreg(F)$. We call such elements $v$ {\nsr}.
For instance, this is the case for the {\PVS} of binary quadratic forms (with respect to the action of $\GL_2$) in which the centralizer
(an orthogonal group in dimension $2$) may split over $F$. 

To remedy the situation, we modify $\theta_\phi$, and consequently the zeta function, by simply removing the sum over {\nsr} $v\in \Vreg(F)$. Our main result 
is that the modified zeta function converges for $\Re s_i>0$, $i=1,\dots,r$ provided that $H^\circ$ is reductive (see Theorem \ref{thm: convergence} below).

In fact, using different methods, the convergence of this modified zeta function in \emph{some} orthant $\Re s_i\gg0$ was proved by H. Saito
under the condition that $X^*(H)$ is finite \cite{MR1994885}.

We also remark that meromorphic continuation to $\C^r$ and functional equations for the zeta function 
\eqref{eq: zetanonisotropic} were established by F. Sat\B{o} 
\cite{MR676121} under the assumptions that $H^\circ$ is reductive, $X^*(H)$ is finite and that there are no {\nsr} elements in $X(F)$,
extending the classical work by M. Sato and Shintani \cite{MR0344230}.
For the {\PVS} of binary quadratic forms, meromorphic continuation of \eqref{eq: zetanonisotropic} to $\C$ and the functional equation
of a modified zeta function, which includes a contribution from the {\nsr} elements, were obtained by Yukie \cite{MR1149040}
(following the work of Shintani \cite{MR0384717}). We will not deal with these questions here.

The contents of the paper are as follows.
We start with recalling general facts, mostly well known, about {\PVS}s (\S\ref{sec: {\PVS}}).
We review examples of {\PVS}s, especially those arising from nilpotent orbits, in \S\ref{sec: nilPVS}.
The main result is stated in \S\ref{sec: statement}.
In \S\ref{sec: thetabnd} we give a general bound for the function $\theta_\phi$ (Proposition \ref{prop: bndtheta}).
The analysis is based on the examination of the support sets of elements in $\Vreg(F)$ with respect to the weights of a maximal $F$-split torus of $G$,
an idea which goes back to Yukie \cites{MR1267735, MR1369477}.
The linchpin of the argument is a certain non-negativity statement pertaining to these support sets (Corollary \ref{cor: lam(U)}).
Further analysis is carried out in \S\ref{sec: further} where the key notion of \emph{\SPCL} subspaces of a {\PVS} is introduced.
In \S\ref{sec: pfmain} we prove the main convergence result. An important aspect in the proof is that the existence of {\nsr} elements is closely
related to the existence of {\SPCL} subspaces with additional properties, which we call exceptional (Proposition \ref{prop: stronglyregular}).
The non-exceptional subspaces exhibit a crucial positivity property (Proposition \ref{prop: convregs}) which strengthens Corollary \ref{cor: lam(U)} \textit{supra} and
guarantees convergence. In \S\ref{SectionSimpleCase} we improve the convergence result for {\PVS}s without non-trivial {\SPCL} subspaces
(under an additional technical condition which is satisfied in the $F$-irreducible case) and establish meromorphic continuation to the left of the polar hyperplanes
$s_i = 0$, $i = 1, \ldots, r$ together with an explicit description of the residues.
Finally, in \S\ref{sec: examspcl} we give a general construction of {\SPCL} subspaces for regular PVSs coming from nilpotent orbits. 

\section{Prehomogeneous vector spaces} \label{sec: {\PVS}}

We start with basic facts about {\PVS}s.  (See \cite{MR1944442}*{Ch.~2} and \cite{MR676121}*{\S1} for more details.)

\subsection{General notation}
Throughout the paper $F$ is a field of characteristic $0$ with algebraic closure $\bar F$.
(From \S\ref{sec: statement} onward, $F$ will be a number field.)

We denote by $\Mlt$ the multiplicative group.

Whenever a group $G$ acts on a set $A$, we denote by $A^G$ the set of fixed points of $G$ in $A$.

We denote by $X_*(T)$ the lattice of $F$-rational cocharacters of an $F$-torus $T$.

For a linear algebraic group $G$ over $F$ we use the following notation.
\begin{itemize}
\item $G^\circ$ -- the identity component of $G$.
\item $Z(G)$ -- the center of $G$.
\item $G^{\der}$ -- the derived group of $G$.
\item $G^{\ab}=G/G^{\der}$ -- the abelianization of $G$.
\item $\rad{G}$ -- the unipotent radical of $G$.
\item $G^{\levi}=G/\rad{G}$ -- the Levi part of $G$, so that $G^{\red}:=(G^{\levi})^{\circ}=(G^\circ)^{\levi}$ is reductive.
\item $G^{\ssm}=(G^{\red})^{\der}$ -- the semisimple part of $G^\circ$.
\item $G^{\tor}=(G^{\red})^{\ab}$ -- the largest toric quotient of $G^\circ$.
\item $G^{\ssu}=\Ker (G^\circ\rightarrow G^{\tor})$ -- an extension of $G^{\ssm}$ by $\rad{G}$.
\item $\Liealg{G}=\Lie G$ -- the Lie algebra of $G$ (and similarly for other groups).
\item $\Ad=\Ad_G$ -- the adjoint representation of $G$ on $\Liealg{G}$.
\item $X^*(G)=X^*(G^{\ab})=X^*(G^{\levi})$ -- the finitely generated abelian group of $F$-rational characters of $G$.
\item $\mdchr_G=\det\Ad\in X^*(G)$ -- the modular character of $G$.
\item $T_G$ -- the maximal $F$-split torus of $Z(G^{\red})$.
\item $\aaa_G=X_*(T_G)\otimes_{\Z}\R$ -- the $\R$-vector space generated by $X_*(T_G)$.
\end{itemize}
We often view $G^{\levi}$ as an $F$-subgroup (the Levi subgroup) of $G$, uniquely defined up to conjugation by $\rad{G}(F)$ (\cite{MR0092928} or \cite{MR620024}*{Ch. VIII, Theorem 4.3}).

If $G$ is connected, then $X^*(G)=X^*(G^{\tor})$ is torsion-free, i.e., it is a lattice.

If $G$ is reductive, then $G^{\ssu}=G^{\ssm}=G^{\der}$.

If $T$ is an $F$-split torus, then $X^*(T)$ can be identified with the dual lattice of $X_*(T)$.
In general, if $T'$ is the maximal $F$-split subtorus of $T$, then we can identify $X_*(T)$ with $X_*(T')$,
while the restriction map $X^*(T)\rightarrow X^*(T')$ is injective with finite cokernel. Thus, we have isomorphisms
\[
X^*(T)\otimes_{\Z}\R\simeq X^*(T')\otimes_{\Z}\R\simeq \aaa_{T'}^*= \aaa_T^*.
\]
The $F$-split rank of $T$, i.e. the dimension of $T'$, is the rank of $X^*(T)$ (and $X_*(T)$).

More generally, if $G$ is connected, then the restriction map $X^*(G)=X^*(G^{\red})\rightarrow X^*(T_G)$
is injective with finite cokernel and hence yields an isomorphism
\[
X^*(G)\otimes_{\Z}\R\simeq X^*(T_G)\otimes\R=\aaa_G^*.
\]

For any finite subset $A$ of a real vector space we denote by $\R_{\ge0}A$ the polyhedral cone generated by $A$, i.e.,
the set of linear combinations $\sum_{a\in A}x_aa$, where $x_a\ge0$ for all $a\in A$.
Similarly, we use the notation $\R_{>0}A$, $\Z_{\ge0}A$, $\Z_{>0}A$ where we require the $x_a$ to be strictly positive (resp., non-negative integers,
strictly positive integers) for all $a$.

Let $G$ be an algebraic $F$-group and let $H$ be an $F$-subgroup of $G$.
We record a few basic results.

\begin{lemma} \label{lem: genHG}
Suppose that $G$ is connected. Then, the following two conditions are equivalent.
\begin{enumerate}
\item \label{part: cokGH} The restriction map $X^*(G)\rightarrow X^*(H^\circ)$ has finite cokernel.
\item \label{part: centm} The torus $(Z(H^{\red})\cap G^{\ssu})^\circ$ is $F$-anisotropic.
\end{enumerate}
\end{lemma}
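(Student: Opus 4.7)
The plan is to translate the statement into one about $F$-split tori, using the isomorphism $X^*(G)\otimes\R \simeq \aaa_G^*$ from the preliminaries (and its analogue for the connected group $H^\circ$). Since $X^*(G)$ and $X^*(H^\circ)$ are lattices, condition \eqref{part: cokGH} is equivalent to the surjectivity of the induced $\R$-linear map $\aaa_G^* \to \aaa_{H^\circ}^*$, and dually to the injectivity of the cocharacter map $\aaa_{H^\circ} \to \aaa_G$.

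I would next identify this cocharacter map with the one induced by the composition $T_{H^\circ} \hookrightarrow H^\circ \hookrightarrow G \twoheadrightarrow G^{\tor}$, the point being that both sides factor through the maximal $F$-split subtorus of $G^{\tor}$, which is isogenous to $T_G$. Since $T_{H^\circ}$ is $F$-split, the kernel of this composition is precisely $T_{H^\circ} \cap G^{\ssu}$, and so $\aaa_{H^\circ} \to \aaa_G$ is injective if and only if $(T_{H^\circ} \cap G^{\ssu})^\circ$ is trivial.

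The crux is then to identify $(T_{H^\circ} \cap G^{\ssu})^\circ$ with the maximal $F$-split subtorus of the torus $(Z(H^{\red}) \cap G^{\ssu})^\circ$. The containment is clear since $T_{H^\circ} \subseteq Z(H^{\red})$, and $F$-splitness follows since $T_{H^\circ}$ is $F$-split. Conversely, any $F$-split subtorus of $(Z(H^{\red}) \cap G^{\ssu})^\circ$ is in particular an $F$-split subtorus of $Z(H^{\red})$, hence lies in $T_{H^\circ}$ by maximality in the definition of the latter, and thus in $T_{H^\circ} \cap G^{\ssu}$. This gives that $(T_{H^\circ} \cap G^{\ssu})^\circ$ is trivial iff $(Z(H^{\red}) \cap G^{\ssu})^\circ$ has no non-trivial $F$-split subtorus, i.e.\ is $F$-anisotropic, yielding the equivalence \eqref{part: cokGH} $\Leftrightarrow$ \eqref{part: centm}. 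The main point requiring care is threading the various isogenies correctly (e.g.\ between $T_G$ and the maximal $F$-split subtorus of $G^{\tor}$), but this is routine given the preliminaries.
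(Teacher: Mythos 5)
Your argument is correct and follows essentially the same route as the paper's proof: pass to the split torus $T_{H^\circ}$ (the paper's $T_L$, where $L$ is a fixed Levi subgroup of $H$), dualize to cocharacters, and observe that injectivity of $X_*(T_{H^\circ})\to X_*(G/G^{\ssu})$ is equivalent to the triviality of $(T_{H^\circ}\cap G^{\ssu})^\circ$. Your version spells out more carefully the final equivalence between triviality of $(T_{H^\circ}\cap G^{\ssu})^\circ$ and $F$-anisotropy of $(Z(H^{\red})\cap G^{\ssu})^\circ$, which the paper asserts without elaboration; everything else is the same argument.
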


\begin{proof}
We may assume that $H$ is connected. Fix a Levi subgroup $L\simeq H^{\red}$ of $H$. Then, the restriction map $X^*(H)\simeq X^*(L)\rightarrow X^*(T_L)$
is injective with finite cokernel. Therefore, we may rephrase condition \ref{part: cokGH} by saying that
the restriction map $X^*(G)=X^*(G^{\tor})\rightarrow X^*(T_L)$ has finite cokernel, or equivalently,
that $X_*(T_L)\rightarrow X_*(G/G^{\ssu})$ is injective.
In turn, this is equivalent to the triviality of $(T_L\cap G^{\ssu})^\circ$, which amounts to condition \ref{part: centm}.
\end{proof}

\begin{lemma} \label{lem: charconn}
Suppose that $H=G^\circ$ and let $\Gamma=G/H$.
Then, the kernel of the restriction map $X^*(G)\xrightarrow{r} X^*(H)$ is finite and its image is a finite index sublattice of $X^*(H)^G=X^*(H)^\Gamma$.
Thus, the cokernel of $r$ is finite if and only if $G$ acts trivially on $X^*(H)$.
\end{lemma}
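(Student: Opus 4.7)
The plan is to analyze the restriction map $r$ by treating its kernel, its image in $X^*(H)^\Gamma$, and finally the cokernel in $X^*(H)$ in turn. First, for the kernel: any $\chi \in X^*(G)$ with $\chi|_H = 1$ factors through $G/H = \Gamma$, which in characteristic zero is a finite \'etale $F$-group, so $\Ker r \simeq X^*(\Gamma) = \Hom(\Gamma,\Mlt)$, a finite group. The inclusion of the image of $r$ in $X^*(H)^G$ is immediate: for $\chi \in X^*(G)$, $g \in G$, and $h \in H$, commutativity of $\Mlt$ yields $\chi(ghg^{-1}) = \chi(g)\chi(h)\chi(g)^{-1} = \chi(h)$. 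The equality $X^*(H)^G = X^*(H)^\Gamma$ is trivial by the same commutativity, since $H$ acts trivially on its own character group.

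The main step is to show that the image of $r$ has finite index in $X^*(H)^\Gamma$. Given $\chi \in X^*(H)^\Gamma$, I would form the induced representation $V = \Ind_H^G \chi$, an $F$-rational representation of $G$ of dimension $n := [G:H] = |\Gamma|$ (the construction makes sense algebraically since $H$ is a closed normal subgroup of finite index and $\chi$ is $F$-rational). Realising $V$ as the space of regular functions $f \colon G \to F$ satisfying $f(hg) = \chi(h) f(g)$, under the right translation action of $G$, and selecting coset representatives $g_1,\dots,g_n$, one computes
\[
(h_0 f)(g_i) = f(g_i h_0) = \chi(g_i h_0 g_i^{-1}) f(g_i) = \chi(h_0) f(g_i),
\]
where the last equality uses the $\Gamma$-invariance of $\chi$. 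Hence $V|_H \simeq \chi^{\oplus n}$, and taking determinants shows that $\det V \in X^*(G)$ restricts to $\chi^n$; in particular $n \cdot X^*(H)^\Gamma$ is contained in the image of $r$.

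For the final equivalence, since $H = G^\circ$ is connected, $X^*(H)$ is a torsion-free lattice (as recalled earlier in the paper), so a sublattice has finite index in $X^*(H)$ exactly when it spans $X^*(H) \otimes \Q$. Combined with the previous step, the cokernel of $r$ viewed as a map into $X^*(H)$ is finite if and only if $X^*(H)^\Gamma \otimes \Q = X^*(H) \otimes \Q$, i.e.\ $\Gamma$ acts trivially on $X^*(H) \otimes \Q$; by torsion-freeness of $X^*(H)$, this is equivalent to $G$ acting trivially on $X^*(H)$ itself. The only step I expect to need genuine care is the induced representation construction in the middle paragraph, both for its $F$-rationality and for the identification $V|_H \simeq \chi^{\oplus n}$; the rest is formal, and I anticipate no serious obstacle.
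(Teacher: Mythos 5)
Your proof is correct, but it takes a genuinely different route from the paper's. The paper's argument is homological: it invokes the (Galois-equivariant) five-term exact sequence
\[
H_2(\Gamma,\Z)\rightarrow H/[G,H]\rightarrow G^{\ab}\rightarrow\Gamma^{\ab}\rightarrow1
\]
from Hilton--Stammbach to conclude that $H/[G,H]\to G^{\ab}$ has finite kernel and cokernel, and then dualizes using $X^*(G)=X^*(G^{\ab})$ and $X^*(H)^\Gamma=X^*(H/[G,H])$ to get the statement about $r$. Your argument instead constructs a transfer map concretely: given a $\Gamma$-invariant $\chi\in X^*(H)$ you build the algebraically induced representation $V=\Ind_H^G\chi$, observe $V\rest_H\simeq\chi^{\oplus n}$, and take determinants to conclude $\chi^n\in\operatorname{im}(r)$, so that $n\,X^*(H)^\Gamma\subset\operatorname{im}(r)\subset X^*(H)^\Gamma$ forces finite index. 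This is essentially the Verlagerung realized on the level of algebraic characters.

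The comparison: the paper's approach is shorter once the cited exact sequence is granted, and it gives a little more (an explicit bound on the kernel via $H_2(\Gamma,\Z)$); your approach is self-contained and constructive but must account for the subtlety you yourself flag — that algebraic induction from $H$ to $G$ is an honest finite-dimensional $F$-rational representation and that $V\rest_H\simeq\chi^{\oplus n}$ holds as algebraic representations. Both points do go through here: since $H=G^\circ$, the quotient $G/H=\Gamma$ is a finite \'etale (hence affine) $F$-group scheme, so $\Ind_H^G\chi$ has the expected dimension $n=[G:H]$ and commutes with base change, and the identity $(h_0f)(g)=\chi(h_0)f(g)$ can be checked directly on $R$-points for any $F$-algebra $R$ using normality of $H$ and $\Gamma$-invariance of $\chi$, without needing $F$-rational coset representatives. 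With that caveat addressed, all the remaining steps (kernel $=X^*(\Gamma)$ finite, image lands in $X^*(H)^\Gamma$, saturation of $X^*(H)^\Gamma$ giving the final equivalence) are correct and match the paper's deductions.
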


\begin{proof}
The first part follows from the fact that the homomorphism $H/[G,H]\rightarrow G^{\ab}$ has finite kernel and cokernel.
In fact we have (a $\Gal(\bar F/F)$-equivariant) exact sequence
\[
H_2(\Gamma,\Z)\rightarrow H/[G,H]\rightarrow G^{\ab}\rightarrow\Gamma^{\ab}\rightarrow1
\]
\cite{MR1438546}*{Corollary VI.8.2}. The second part follows from the first part and the fact that if a finite group $\Gamma$ acts non-trivially on a lattice $L$, then
$L^\Gamma$ is of infinite index in $L$. 
\end{proof}

We denote the Picard group of a variety $X$ by $\Pic X=H^1(\Vreg,\OOO_X^*)$. If $X$ is smooth, then we can identify $\Pic X$ with the (Weil) divisor class group
$\Cl X$  of $X$. (See \cite{MR463157}*{\S II.6} for basic facts about divisors and the Picard group.)

\begin{lemma}[\cite{MR260758}*{Proposition VII.1.5}, cf.~also the proof of \cite{MR357399}*{Theorem 4}] \label{lem: pic}
Suppose that $G$ is connected. Then, there is an exact sequence
\[
X^*(G)\rightarrow X^*(H)\rightarrow \Pic (G/H).
\]
\end{lemma}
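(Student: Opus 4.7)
The plan is to construct the middle map via the associated line bundle construction and to identify its kernel using Rosenlicht's theorem on units of connected algebraic groups.

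First I would define the homomorphism $X^*(H)\to \Pic(G/H)$. Given $\chi\in X^*(H)$, let $H$ act on $G\times \A^1$ by $(g,t)\cdot h=(gh,\chi(h)^{-1}t)$ and form the quotient $L_\chi:=G\times^H\A^1$. The projection to $G/H$ makes $L_\chi$ into a line bundle (since $G\to G/H$ is a locally trivial $H$-torsor in the étale topology, or even Zariski-locally when $H$ is solvable; in general one uses descent). One checks that $\chi\mapsto [L_\chi]$ is a homomorphism because tensor product of $L_\chi$ and $L_{\chi'}$ corresponds to the additive character $\chi\chi'$.

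Next I would verify that the composition $X^*(G)\to X^*(H)\to \Pic(G/H)$ is trivial. If $\chi$ extends to a character $\tilde\chi$ of $G$, then the map $G\times \A^1\to G\times \A^1$, $(g,t)\mapsto (g,\tilde\chi(g)t)$, is $H$-equivariant when the source carries the $\chi$-twisted action and the target carries the trivial action; passing to the quotient trivializes $L_\chi$.

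The heart of the argument is exactness at $X^*(H)$. If $L_\chi$ is trivial, pick a nowhere-vanishing global section. Pulling back to $G$ yields a nowhere-vanishing regular function $f\in \OOO(G)^*$ satisfying $f(gh)=\chi(h)^{-1}f(g)$ for all $h\in H$. Here I would invoke Rosenlicht's theorem: for a connected algebraic group $G$ over a field of characteristic zero, every invertible regular function is the product of a scalar and a character of $G$. Hence $f=c\cdot \tilde\chi^{-1}$ for some $\tilde\chi\in X^*(G)$ and $c\in \bar F^*$; evaluating at $h\in H$ and at $1$ shows $\tilde\chi|_H=\chi$, giving exactness. (One needs the constant $c$ and the character $\tilde\chi$ to be $F$-rational, which follows from Galois equivariance of the constructions.)

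The main obstacle is the careful setup that makes the above work over a non-algebraically closed field $F$: one has to ensure that $L_\chi$ is a genuine line bundle on the $F$-scheme $G/H$ (the construction is valid because $G\to G/H$ is faithfully flat and $H$ acts freely, so descent applies), and that the element of $X^*(G)$ produced from a trivialization is $F$-rational. Both points follow from Galois descent applied to Rosenlicht's theorem, since any Galois-invariant trivializing section yields a Galois-invariant $f$ and hence an $F$-rational character.
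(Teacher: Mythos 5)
The paper does not prove this lemma itself; it cites Raynaud \cite{MR260758}*{Proposition VII.1.5} and Popov \cite{MR357399}*{Theorem 4}. Your proof is a correct reconstruction of the standard argument behind those references: the homomorphism $\chi\mapsto [L_\chi]$ where $L_\chi=G\times^H\A^1$, triviality of the composite because an extension $\tilde\chi$ trivializes $L_\chi$, and exactness at $X^*(H)$ via Rosenlicht's unit theorem together with the Galois-descent observation that the resulting character is $F$-rational (note $\psi(1)=1$ forces the constant $c$ to lie in $F^*$, hence $\psi$ is fixed by $\Gal(\bar F/F)$). The sign bookkeeping ($f(gh)=\chi(h)^{-1}f(g)$ implies $f=c\cdot\tilde\chi^{-1}$ with $\tilde\chi|_H=\chi$) checks out, and since $F$ has characteristic zero, Rosenlicht applies without smoothness caveats.
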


\subsection{}
Recall that a \emph{prehomogeneous vector space} ({\PVS}) over $F$ is a finite-dimensional $F$-representation
\[
\rho:G\rightarrow\GL(V)
\]
of a connected linear algebraic $F$-group $G$ on a finite-dimensional $F$-vector space $V$, for which there exists
a (necessarily unique) open orbit $\Vreg$.
Note that $\Vreg$ is defined over $F$ and $\Vreg(F)$ is $G(F)$-invariant.
If $F$ is algebraically closed, then $\Vreg(F)$ is a $G(F)$-orbit and the stabilizers $G_v$, $v\in \Vreg(F)$ are conjugate by $G(F)$.
In general, given $u\in \Vreg(F)$, the $G(F)$-orbits in $\Vreg(F)$ are in bijection with the elements of the kernel of 
$H^1(F,G_u)\rightarrow H^1(F,G)$ (a map of pointed sets) \cite{MR1466966}*{I \S5.4, Corollary 1 of Proposition 36}.
Correspondingly, the $F$-groups $G_v$, $v\in \Vreg(F)$ are pure inner forms of $G_u$.

Clearly, a quotient of a {\PVS} by an invariant subspace is also a {\PVS}.

We will write $H=G_v$ (the ``generic stabilizer'') when $v\in \Vreg(F)$ is immaterial, e.g., when we refer to the group $X^*(H)$ or to properties pertaining to $H(\bar F)$.
Thus, we can identify $\Vreg$ with $G/H$. The $F$-subgroup $\centder{G}=G^{\der} H$ of $G$ is well defined.
Note that in general, $H$ may be disconnected and the groups $G_v^\circ$, $v\in \Vreg(F)$ are \emph{not} necessarily in the same inner class.
In fact, the lattice $X^*(G_v^\circ)$ may depend on the choice of $v\in \Vreg(F)$. For this reason we will refrain from using the notation $X^*(H^\circ)$.

For $v\in\Vreg(F)$, the linear map
\begin{equation} \label{def: Dv}
D_v:\Liealg{G}\rightarrow V,\ \  x\mapsto d\rho(x)v
\end{equation}
is surjective, since it is the differential of the dominant map $G\rightarrow V$, $g\mapsto\rho(g)v$.
The kernel $\Liealg{G}_v$ of $D_v$ is the Lie algebra of $G_v$ \cite{MR1102012}*{Lemma II.7.4}.
We thus get a $G_v$-equivariant isomorphism $\Liealg{G}/\Liealg{G}_v\rightarrow V$.
In particular,
\begin{equation} \label{eq: mdchr}
\mdchr_G\det\rho^{-1}\rest_H=\mdchr_H.
\end{equation}

A \emph{relative invariant} with respect to a character $\chi\in X^*(G)$ is an $F$-rational function $f\not\equiv0$ on $V$ such that
$f\circ\rho(g)=\chi(g)f$ for all $g\in G$. We will write $f=\RI_\chi$.

The following basic properties hold.
\begin{enumerate}
\item $\RI_\chi$, if it exists, is unique up to a non-zero scalar and it is regular and non-vanishing on $\Vreg$.
\item The sublattice $X^*(G/\centder{G})$, which is the kernel of the restriction map $X^*(G)\rightarrow X^*(H)$,
consists of the characters with respect to which there exists a relative invariant.
\item Let $\chi_1,\dots,\chi_r$ be a basis of $X^*(G/\centder{G})$. Then $\RI_{\chi_1},\dots,\RI_{\chi_r}$ are algebraically independent.
\item The $F$-irreducible polynomials that are nowhere vanishing on $\Vreg$ (if any)
are relative invariants. They are called the \emph{fundamental relative invariants}.
The corresponding characters are called \emph{fundamental characters}.
They form a basis for $X^*(G/\centder{G})$.
Denote the set of all fundamental characters by $\fundchr$.
\end{enumerate}

If $G$ is reductive, then the following conditions are equivalent.
\begin{enumerate}
\item $H^\circ$ is reductive.
\item $\Vreg$ is the complement of a hypersurface.
\item $\Vreg$ is the complement of the hypersurface defined by $\RI_{\chi_1}\cdots\RI_{\chi_r}$ where $\fundchr=\{\chi_1,\dots,\chi_r\}$.
\item $\Vreg$ is an affine variety.
\item $V$ is regular (see \cite{MR1944442}*{Definition 2.14}).
\end{enumerate}
In this case, $(\det\rho)^2\in X^*(G/\centder{G})$.
(As far as we know, it is unknown whether $\RI_{\det\rho^2}$ is a polynomial in general.)

We give some more basic properties of {\PVS}s.

\begin{lemma} \label{lem: restggv}
There is a short exact sequence
\[
0 \rightarrow X^*(G/\centder{G}) \rightarrow X^* (G)\rightarrow X^* (H) \rightarrow 0.
\]
Thus, $\rk X^*(G)-r=\rk X^*(H)$.
\end{lemma}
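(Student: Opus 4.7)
The plan is to derive the short exact sequence by combining the description of the kernel of the restriction $X^*(G)\to X^*(H)$ with Lemma \ref{lem: pic}, thereby reducing the whole assertion to the vanishing $\Pic(\Vreg)=0$.

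First, the subgroup $\centder{G}=G^{\der}H$ is normal in $G$ (since $G^{\der}$ is normal and, modulo $G^{\der}$, the image of $H$ lands in the abelian group $G^{\ab}$ and is therefore stable under conjugation). Inflation along $G\twoheadrightarrow G/\centder{G}$ thus embeds $X^*(G/\centder{G})$ into $X^*(G)$. Since every character of $G$ is automatically trivial on $G^{\der}$ (the target $\Mlt$ being abelian), the condition $\chi|_H=1$ is equivalent to $\chi$ being trivial on $G^{\der}H=\centder{G}$. This identifies $X^*(G/\centder{G})$ with $\ker(X^*(G)\to X^*(H))$, yielding exactness on the left and in the middle.

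For surjectivity on the right, I invoke Lemma \ref{lem: pic} for the connected group $G$ and the subgroup $H$, giving the exact sequence
\[
X^*(G)\rightarrow X^*(H)\rightarrow \Pic(G/H),
\]
and I identify $G/H$ with $\Vreg$ via the orbit map $g\mapsto \rho(g)v$ for any chosen $v\in \Vreg(F)$. It then suffices to show $\Pic(\Vreg)=0$. Both $V$ and $\Vreg$ are smooth, so $\Pic$ coincides with the Weil divisor class group $\Cl$. Since $V$ is a vector space, $\Cl(V)=0$, and the localization sequence
\[
\bigoplus_D \Z\cdot D \rightarrow \Cl(V)\rightarrow \Cl(\Vreg)\rightarrow 0,
\]
where $D$ ranges over the codimension-one irreducible components of $V\setminus \Vreg$, forces $\Cl(\Vreg)=0$.

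The rank identity $\rk X^*(G)-r=\rk X^*(H)$ is then immediate by additivity of ranks in a short exact sequence of finitely generated abelian groups, using that $\chi_1,\dots,\chi_r$ form a basis of $X^*(G/\centder{G})$ and hence $\rk X^*(G/\centder{G})=r$. The only non-formal input is the vanishing $\Pic(\Vreg)=0$, which comes down to the standard excision for Weil divisors on open subvarieties of affine space; everything else is bookkeeping.
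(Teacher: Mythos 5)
Your proposal is correct and follows essentially the same route as the paper: identify the kernel with $X^*(G/\centder{G})$ via triviality of characters on $G^{\der}$, invoke Lemma \ref{lem: pic}, and reduce to the vanishing of $\Pic \Vreg$, which is the excision statement that $\Cl V \to \Cl \Vreg$ is surjective and $\Cl V = 0$ (the paper phrases this as $V = \Spec A$ for $A$ a UFD, which is the same fact). Your argument merely spells out in more detail the normality of $\centder{G}$ and the kernel identification, which the paper treats as clear.
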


\begin{proof}
Clearly, $X^*(G/\centder{G})$ is the kernel of the restriction map $X^*(G)\rightarrow X^*(H)$.
The surjectivity of this map follows from Lemma \ref{lem: pic} and the vanishing of $\Pic\Vreg$. 
More generally, this is true for any open subvariety $\Vreg$ of $Y=\Spec A$ where $A$ is a Noetherian UFD since
$\Cl Y=1$ and $\Cl Y\rightarrow\Cl \Vreg$ is onto.
\end{proof}

Let $\reducd{\aaa}_G^*$ be the subspace $X^*(G/\centder{G})\otimes\R$ of $\aaa_G^*$. Thus, $\fundchr$ forms a basis for $\reducd{\aaa}_G^*$.

For completeness, we also note the following fact, which can also be proved by a topological argument for $F=\C$.

\begin{lemma} \label{lem: speconn}
Suppose that $V$ has no relative invariants. Then $H$ is connected.
\end{lemma}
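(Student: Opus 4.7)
The plan is to show that the hypothesis forces the complement $V\setminus \Vreg$ to have codimension at least two in $V$, whence $\Vreg$ is simply connected and the natural cover $G/H^\circ\to G/H$ becomes trivial, giving $H=H^\circ$. We first reduce to $F$ algebraically closed: if $X^*(G_{\bar F}/\centder{G}_{\bar F})\neq 0$, then it admits a basis of fundamental $\bar F$-characters permuted by $\Gal(\bar F/F)$, and the sum over any Galois orbit is a non-zero element of $X^*(G_{\bar F}/\centder{G}_{\bar F})^{\Gal(\bar F/F)}=X^*(G/\centder{G})$, contrary to hypothesis. Since connectedness of $H$ is preserved under base change in characteristic zero, we may and will assume $F=\bar F$.

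Next I would show that every irreducible component $D$ of $V\setminus \Vreg$ has codimension at least two. Being an irreducible component of a $G$-stable closed subset and $G$ being connected, $D$ is $G$-stable setwise. If $D$ were a hypersurface, its defining irreducible polynomial $f_D$ would be semi-invariant: $f_D\circ\rho(g)=\chi(g)f_D$ for some $\chi\in X^*(G)$. The character $\chi$ cannot be trivial, since a non-constant $G$-invariant polynomial would be constant on the (dense) open orbit and hence identically constant on $V$. Thus $f_D$ would furnish a non-constant relative invariant, contradicting the hypothesis.

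Since $V\cong\A^n$ has trivial \'etale fundamental group in characteristic zero and $V\setminus\Vreg$ has codimension at least two in the regular variety $V$, Zariski--Nagata purity of the branch locus yields $\pi_1^{\text{\'et}}(\Vreg)=\pi_1^{\text{\'et}}(V)=1$. The canonical morphism $G/H^\circ\to G/H=\Vreg$ is a finite \'etale Galois cover with group $\Gamma:=H/H^\circ$, hence must be a trivial torsor, so that $G/H^\circ\cong\bigsqcup_\Gamma \Vreg$. But $G/H^\circ$ is the image of the connected group $G$ under the quotient map and is therefore itself connected, forcing $|\Gamma|=1$ and $H=H^\circ$.

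The main non-elementary input is Zariski--Nagata purity of the branch locus; the topological argument for $F=\C$ alluded to in the statement presumably replaces this step by the elementary observation that removing a complex subvariety of codimension at least two (hence real codimension at least four) from $\R^{2n}\simeq\C^n$ preserves simple connectedness, so the finite cover in question must split.
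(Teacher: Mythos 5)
Your proof is correct and follows the same approach as the paper: reduce to $F=\bar F$, observe that the absence of relative invariants forces $V\setminus\Vreg$ to have codimension at least two, invoke Zariski--Nagata purity to conclude $\Vreg$ is simply connected, and note that $G/H^\circ\to\Vreg$ is then a trivial \'etale cover from a connected source. You supply two details the paper leaves implicit (the Galois-orbit argument for the reduction to $\bar F$, and the argument that a codimension-one component would produce a nonconstant relative invariant), but the structure and key inputs are identical.
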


\begin{proof}
We may assume that $F$ is algebraically closed.
The condition on $V$ means that $V\setminus \Vreg$ has codimension $>1$ in $V$.
It follows from purity of the branch locus that $\Vreg$ is simply connected \cite{MR2017446}*{Exp.~X, Corollaire 3.3}, i.e., there is no non-trivial \'etale covering of $\Vreg$.
On the other hand,
\[
G/H^\circ\rightarrow \Vreg
\]
is an \'etale covering. Thus, $H$ is connected.
\end{proof}

\begin{lemma} \label{lem: isot}
The following conditions on $v\in\Vreg(F)$ are equivalent.
\begin{enumerate}
\item \label{part: cok1} The cokernel of the restriction map $X^*(G)\rightarrow X^*(G_v^\circ)$ is infinite.
\item \label{part: cok2} The cokernel of the restriction map $X^*(G_v)\rightarrow X^*(G_v^\circ)$ is infinite.
\item \label{part: splt} There is a non-trivial $F$-split torus in $Z(G_v^{\red})\cap G^{\ssu}$.
\item \label{part: nontact} The action of $G_v$ (via the component group $G_v/G_v^\circ$) on $X^*(G_v^\circ)$ is nontrivial.
\end{enumerate}
\end{lemma}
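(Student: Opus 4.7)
The plan is to chain the four conditions together as $(3)\iff(1)\iff(2)\iff(4)$, where each equivalence will be an immediate consequence of an already-established lemma, after noting that each of our four conditions is stated as the negation of a finiteness/triviality property appearing in the earlier results.

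For $(1)\iff(3)$, I would simply apply Lemma \ref{lem: genHG} with the $F$-subgroup $G_v \subset G$ (recall $G$ is connected). That lemma asserts that the cokernel of $X^*(G)\to X^*(G_v^\circ)$ is \emph{finite} precisely when the torus $(Z(G_v^{\red})\cap G^{\ssu})^\circ$ is $F$-anisotropic. Negating both sides — using that a finitely generated abelian group has either finite or infinite cokernel, and that a torus either is or is not $F$-anisotropic — gives the desired equivalence.

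For $(1)\iff(2)$, I would factor the restriction map as
\[
X^*(G)\to X^*(G_v)\to X^*(G_v^\circ),
\]
and invoke Lemma \ref{lem: restggv} to see that the first arrow is surjective. Strictly speaking, that lemma is phrased for the generic stabilizer $H$; however, its proof only uses that $\Pic(G/H)=\Pic \Vreg=0$, a property of the variety $\Vreg\cong G/G_v$ that holds for every $v\in \Vreg(F)$. Surjectivity of the first arrow means the cokernels of the composition and of the second map are canonically isomorphic, so one is infinite iff the other is.

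Finally, $(2)\iff(4)$ is a direct application of Lemma \ref{lem: charconn} to the inclusion $G_v^\circ = (G_v)^\circ \subset G_v$, with $\Gamma = G_v/G_v^\circ$; that lemma states that the cokernel of $X^*(G_v)\to X^*(G_v^\circ)$ is finite iff $G_v$ acts trivially on $X^*(G_v^\circ)$, and negating yields exactly $(2)\iff(4)$. There is no substantive obstacle here: the only point requiring care is the verification above that Lemma \ref{lem: restggv} applies with $H$ replaced by the specific stabilizer $G_v$, which is immediate from the Picard-group argument of its proof.
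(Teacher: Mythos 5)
Your proposal is correct and takes essentially the same route as the paper: the paper's proof cites Lemma \ref{lem: restggv} for \ref{part: cok1}$\iff$\ref{part: cok2}, Lemma \ref{lem: genHG} for \ref{part: cok1}$\iff$\ref{part: splt}, and Lemma \ref{lem: charconn} for \ref{part: cok2}$\iff$\ref{part: nontact}, which is exactly your chain. Your additional remark that Lemma \ref{lem: restggv} applies to each specific stabilizer $G_v$ (because $\Pic(G/G_v)=\Pic\Vreg=0$ is independent of $v$) is a correct and worthwhile clarification that the paper leaves implicit.
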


\begin{proof}

\ref{part: cok1}$\iff$\ref{part: cok2}: Lemma \ref{lem: restggv}.

\ref{part: cok1}$\iff$\ref{part: splt}: Lemma \ref{lem: genHG}.

\ref{part: cok2}$\iff$\ref{part: nontact}: Lemma \ref{lem: charconn}.
\end{proof}

\begin{definition} \label{def: sr}
An element $v\in\Vreg(F)$ is \emph{\nsr} if it satisfies the conditions of Lemma \ref{lem: isot}. Otherwise, $v$ is \emph{\sr}. 

We denote the subset of {\sr} elements in $\Vreg(F)$ by $\Vreg(F)^{\sreg}$. It is invariant under $G(F)$.

A {\PVS} $V$ is {\nsr} if it contains {\nsr} elements. Otherwise, $V$ is {\sr}.
\end{definition}

By Lemma \ref{lem: isot}, if $H$ is connected, then $V$ is {\sr}.

The basic example of an {\nsr} {\PVS} is the space $V$ of binary quadratic forms with respect to the action of $\GL_2$.
Here, $H$ is an orthogonal group in dimension $2$; $v\in\Vreg(F)$ is {\nsr} if and only it is isotropic in the usual sense of quadratic forms.
We caution however, that the {\PVS} of quadratic forms in $n>2$ variables (with respect to the $\GL_n$-action) is {\sr}.

In general, it is possible that all elements of $\Vreg(F)$ are {\nsr} (for instance, if $V$ is {\nsr} and $F=\bar F$),
although we expect that this is never the case if $F$ is a number field.

Assume now that $G$ is reductive, so that every representation of $G$ is completely reducible.
We single out two important classes of {\PVS}s in the opposite extremes.

\begin{definition} \label{def: basic}
Let $V$ be a reductive {\PVS}. We say that
\begin{enumerate}
\item $V$ is \emph{{\BASIC}} if for every $v\in \Vreg(F)$, $G_v^\circ$ is not contained in any proper parabolic $F$-subgroup of $G$.\footnote{This condition
depends on $v$. A strictly stronger condition would be that $H^\circ$ is not contained in any proper parabolic subgroup of $G$ (not necessarily defined over $F$).}
\item $V$ is \emph{distinguished} if $H/\Ker\rho$ is finite.\footnote{The terminology comes from nilpotent orbits -- see \S \ref{sec: nilPVS} below.}
\end{enumerate}
\end{definition}

Note that every {\BASIC} {\PVS} is regular, since any connected non-reductive $F$-subgroup of $G$ is contained in a proper parabolic $F$-subgroup of $G$ \cite{MR0294349}.
Moreover, a {\BASIC} {\PVS} is necessarily {\sr}.
It is also clear that every subrepresentation of a {\BASIC} {\PVS} is also {\BASIC}.

A distinguished {\PVS} is regular and {\sr}.

\section{{\PVS}s of DK-type, examples} \label{sec: nilPVS}

A main source of {\PVS}s are gradations on reductive Lie algebras and nilpotent orbits.

\subsection{}
Let $G'$ be a reductive group over $F$ and let $\lambda:\Mlt\rightarrow G'$ be a one-parameter subgroup.
Let $h=(d\lambda)(1)\in\Liealg{G}'$. Decompose
\[
\Liealg{G}'=\oplus_{i\in\Z}\Liealg{G}'_i,\ \ \text{where }\Liealg{G}'_i=\{x\in\Liealg{G}'\mid\Ad(\lambda(t))x=t^ix\ \forall t\in\Mlt\}=
\{x\in\Liealg{G}'\mid [h,x]=ix\}.
\]
This is a $\Z$-gradation of $\Liealg{G}'$. (If $G'$ is semisimple and adjoint, then every gradation of $\Liealg{G'}$ is of this form.)
Let $\filt_i=\oplus_{j\ge i}\Liealg{G}'_j$, $i\in\Z$ be the corresponding (decreasing) filtration of $\Liealg{G}'$.
The stabilizer $P$ of $\filt$ in $G$ under $\Ad$ is the parabolic $F$-subgroup of $G$ whose Lie algebra is $\filt_0$.
The Lie algebra of the unipotent radical of $P$ is $\filt_1$.
The centralizer $G$ of $\lambda$ is a Levi subgroup of $P$ whose Lie algebra is $\Liealg{G}'_0$.
For every $i\ne0$, the vector space $\Liealg{G}'_i$, which consists of nilpotent elements,
comprises finitely many (geometric) orbits with respect to the adjoint action of $G$ \cite{MR0430168}.
In particular it is a {\PVS}. In the terminology of \cite{MR1125214}, these are {\PVS}s of V-type (for Vinberg). They are not necessarily regular. We also 
say that $\filt$ is a filtration of $\Liealg{G}'$ of V-type.
(Note that we may restrict here to the case $i=1$, since the general case follows from it by considering the
graded Lie subalgebra $\oplus_j\Liealg{G}'_{ji}$, which is reductive.)

As a special case, let $P$ be a parabolic subgroup of $G'$ with Levi factor $G$ and $\Liealg{N}=\Lie\rad{P}$.
Then, $V=\Liealg{N}/[\Liealg{N},\Liealg{N}]$ is a {\PVS} with respect to the adjoint action of $G$. 
In fact, $\Liealg{N}$ is a {\PVS} with respect to the adjoint action of $P$ by a well known result of Richardson. We say that $V$ is a {\PVS} of R-type.
We denote the nilpotent orbit containing the open orbit of $P$ on $\Liealg{N}$ (the Richardson orbit with
respect to $P$) by $\Rich(P)$.
The number of irreducible components of $V$ is the $F$-corank of $P$ in $G$.

Other important special cases, this time of regular {\PVS}s, arise from the Dynkin--Kostant classification of nilpotent orbits.
In more detail, let $(e,h,f)$ be an $\Liealg{SL}_2$-triple in $[\Liealg{G}',\Liealg{G}']$.
(By the Jacobson--Morozov theorem, every nilpotent $e\in\Liealg{G}'$ is a part of an $\Liealg{SL}_2$-triple.)
It corresponds to an $F$-homomorphism $\SL_2\rightarrow G'^{\der}$.
Let $\Liealg{G}'=\oplus_i\Liealg{G}'_i$ be the grading induced by $h$.
The corresponding (decreasing) filtration $\filt_i=\oplus_{j\ge i}\Liealg{G}'_j$, $i\in\Z$ of $\Liealg{G}'$ depends only on $e$ and in fact only on the $P$-orbit
of $e$, where $P=P(e)$ is the stabilizer of $\filt=(\filt_i)_{i\in\Z}$ in $G'$ under the adjoint action.
Recall that $P$ is the parabolic subgroup of $G'$
with Lie algebra $\filt_0$. The Lie algebra of the unipotent radical of $P$ is $\filt_1$. (That $\filt$ depends only on $e$ follows from the fact that two
$\Liealg{SL}_2$-triples $(e,h,f)$ and $(e,h',f')$ with the same first element $e$ are conjugate by an element of $N_P(F) \cap G'_e (F)$ \cite{MR2109105}*{Lemma VIII.11.1.4}.)

Let $G$ be the Levi quotient  
of $P$, so that $\Liealg{G}=\filt_0/\filt_1\simeq\Liealg{G}'_0$.
Let $V=\filt_2/\filt_3\simeq\Liealg{G}'_2$ with the $G$-action $\rho$ induced by the adjoint representation. 
Let $\Lorb$ be the geometric $G'$-orbit of $e$ and let $\Lorb_\filt=\Lorb\cap\filt_2$.

\begin{theorem} \label{thm: stdunip}
With the notation above:
\begin{enumerate}
\item $V$ is a regular {\PVS} with respect to $\rho$.
\item $\Lorb_\filt=\Vreg+\filt_3$.
\item $\Lorb_\filt$ is a $P$-orbit.
\item \label{part: moveo} For every $g\in G'$, $\Ad(g)\Lorb_\filt\cap\Lorb_\filt\ne\emptyset$ if and only if $g\in P$.
In particular, $G'_v\subset P$ for any $v\in\Lorb_\filt$.
\end{enumerate}
\end{theorem}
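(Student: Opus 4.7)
For part (1), the plan is to invoke $\Liealg{SL}_2$-representation theory applied to the adjoint action of the triple $(e,h,f)$ on $\Liealg{G}'$. The key input is that $\operatorname{ad}(e):\Liealg{G}'_i\to\Liealg{G}'_{i+2}$ is surjective for $i\ge 0$, and that $\Liealg{G}'_e$ lies in nonnegative degrees (since $\operatorname{ad}(e)$-kernels are generated by highest weight vectors of nonnegative weight). In particular $D_e:\Liealg{G}\to\Liealg{G}'_2\simeq V$ is surjective, so $G\cdot e$ is open in $V$ and $V$ is a {\PVS} with $e\in\Vreg$. Further, $\Liealg{G}_e=\Liealg{G}'_e\cap\Liealg{G}'_0$ consists of vectors killed by both $\operatorname{ad}(e)$ and $\operatorname{ad}(h)$; such a vector is the highest weight vector of an $\Liealg{SL}_2$-subrepresentation of highest weight $0$, i.e.\ of the trivial one, and is thus killed by $\operatorname{ad}(f)$ as well. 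Hence $\Liealg{G}_e$ is the centralizer of the whole triple, which is reductive by the classical result of Kostant, so $G_e^\circ$ is reductive and $V$ is regular.

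Next I would establish $P\cdot e=\Vreg+\filt_3$ in two steps. First, I claim $\rad{P}\cdot e=e+\filt_3$: the orbit map $\rad{P}\to e+\filt_3$ lands in $e+\filt_3$ because brackets with $\filt_1$ raise the grading, and its differential at the identity, $[\cdot,e]:\filt_1\to\filt_3$, is surjective by the same $\Liealg{SL}_2$-fact applied to $i\ge 1$. Since $\rad{P}$ is unipotent acting on the affine space $e+\filt_3$, its orbit is closed, and dimension comparison forces the orbit to fill $e+\filt_3$. Second, using $P=G\ltimes\rad{P}$ and the $G$-invariance of the grading, $P\cdot e=G\cdot(e+\filt_3)=(G\cdot e)+\filt_3=\Vreg+\filt_3$ under the identification $V\simeq\Liealg{G}'_2$.

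For part (3) the idea is a dimension count. By a telescoping argument in $\Liealg{SL}_2$-theory, $\dim\Liealg{G}'_e=\dim\Liealg{G}'_0+\dim\Liealg{G}'_1$, which yields $\dim P-\dim\Liealg{G}'_e=\dim\filt_2$. For any $v\in\Lorb_\filt$, $G'_v$ is $G'$-conjugate to $G'_e$, so $\dim(P\cap G'_v)\le\dim\Liealg{G}'_e$ and hence $\dim P\cdot v\ge\dim\filt_2$. Since $P\cdot v\subseteq\filt_2$, every $P$-orbit in $\Lorb_\filt$ is open in $\filt_2$; irreducibility of $\filt_2$ then forces any two such open orbits to meet, so there is only one, namely $P\cdot e$. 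This proves (3), and combined with the previous paragraph it also yields (2).

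Finally for (4), the only non-trivial implication reduces via (3) to the inclusion $G'_e\subseteq P$. Given $g\in G'_e$, the triple $(e,\Ad(g)h,\Ad(g)f)$ has first element $e$, so by the Jacobson--Morozov--Malcev statement already invoked before the theorem there exists $n\in\rad{P}\cap G'_e$ with $\Ad(n)\Ad(g)h=h$; hence $ng\in Z_{G'}(h)\subseteq P$ and therefore $g\in P$. The ``in particular'' assertion $G'_v\subseteq P$ for $v\in\Lorb_\filt$ follows by conjugating $G'_e$ by an element of $P$ using (3). The most delicate step is the dimension count underlying (3), where the computation of $\dim\Liealg{G}'_e$ from the $\Liealg{SL}_2$-grading together with the irreducibility of $\filt_2$ has to be combined carefully; everything else is fairly direct once the $\Liealg{SL}_2$-machinery is set up.
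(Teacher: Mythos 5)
Your proposal is correct, and its skeleton tracks the paper's, but you flesh out steps the paper dispatches by citation, and you fill a real gap in the paper's sketch.

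For part (1), the paper cites Bourbaki (Prop.~VIII.11.3.6) for $\Lorb\cap\Liealg{G}'_2$ being the open $G$-orbit and Kac for regularity; you instead argue directly that $\operatorname{ad}(e):\Liealg{G}'_0\to\Liealg{G}'_2$ is surjective (giving the open orbit) and identify $\Liealg{G}_e=\Liealg{G}'_0\cap\Liealg{G}'_e$ with the centralizer of the whole triple (a weight-$0$ highest weight vector lies in a trivial $\Liealg{SL}_2$-subrepresentation), which is reductive by Kostant. For $N_P\cdot e = e+\filt_3$, the paper cites Ranga Rao's Lemma~1; you give the standard proof via surjectivity of $\operatorname{ad}(e):\filt_1\to\filt_3$ plus Kostant--Rosenlicht (unipotent orbits in an affine variety are closed). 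Both are fine. For part (4) your argument is identical to the paper's.

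The genuinely valuable addition is the dimension count for part (3): you observe $\dim\Liealg{G}'_e=\dim\Liealg{G}'_0+\dim\Liealg{G}'_1$, hence $\dim P-\dim\Liealg{G}'_e=\dim\filt_2$, and since $G'_v$ is conjugate to $G'_e$ for any $v\in\Lorb_\filt$, every $P$-orbit in $\Lorb_\filt$ is open in $\filt_2$, so there is only one. The paper's sketch merely says its two cited facts ``show the second and third assertions'' without explaining why $\Lorb\cap\filt_2$ can contain no further $P$-orbits; your count closes that gap cleanly and simultaneously yields (2) from $P\cdot e=\Vreg+\filt_3$. This is a tidy improvement over the paper's terse derivation.
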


There are the {\PVS}s of \emph{DK-type} (Dynkin--Kostant) in the language of \cite{MR1125214}. 
(We quickly sketch how to derive these assertions from the literature. Note first that $\Lorb\cap \Liealg{G}'_2$ is the open $G$-orbit $\Vreg$ of $V$ by \cite{MR2109105}*{Proposition VIII.11.3.6}.
$V$ is regular by \cite{MR575790}, see below. In addition, the $N_P$-orbit of $e$ is $e + \filt_3$ \cite{MR0320232}*{Lemma 1}. This shows the second and third assertions. 
The fourth assertion easily reduces to the special case $G'_e\subset P$. Let $g \in G'_e$. Then $(e, \Ad (g) h, \Ad (g) f)$ is an $\Liealg{SL}_2$-triple
with the same first element as $(e,h,f)$, and therefore conjugate to $(e,h,f)$ by an element $n \in N_P \cap G'_e$. Since $ng$ centralizes $h$, we have $ng \in G$, and therefore $g \in P$.)

If we fix in addition to $e$ the $\Liealg{SL}_2$-triple $(e,h,f)$, then we can identify $G$ with the centralizer of $h$ in $G'$ and $V$ with the subspace $\Liealg{G}'_2$ of $\Liealg{G}'$.
We have then $\Vreg=V\cap\Lorb$. We may identify $\Liealg{G}'_{-2}$ with the dual of $V=\Liealg{G}'_2$ via the Killing form.
Fixing a vector space isomorphism between $\Liealg{G}'_{-2}$ and $\Liealg{G}'_2$, $\RI_{(\det\rho)^2}$ is given by the determinant of the linear map
$\operatorname{ad} (e)\circ\operatorname{ad} (e):\Liealg{G}'_{-2}\rightarrow\Liealg{G}'_2$,
a polynomial in $e\in V$ whose nonzero locus is $\Vreg$ \cite{MR575790}*{Proposition 1.1, 1.2, 2.1}.

Fix a minimal parabolic $F$-subgroup $P_0'$ of $G'$ and a maximal split $F$-torus $T_0'$ contained in $P_0'$.
Then, we may choose $(e,h,f)$ in its $G'(F)$-orbit so that $h\in\Liealg{T}_0'$ is dominant and $P=P(e)$ is standard.
This is the canonical parabolic subgroup pertaining to $\Lorb$.
$G$ is then identified with the standard Levi subgroup of $P$.

An orbit $\Lorb$ is called \emph{even} if $\Liealg{G}'_i=0$ for all odd $i$ (or equivalently, if $\Liealg{G}'_1=0$). For even orbits clearly $\Lorb=\Rich(P)$.
A particular case is given by \emph{distinguished} orbits, where the connected stabilizer $(G'_e)^{\circ}$, $e\in\Lorb$ is modulo the center of $G'$ a unipotent group.
In this case the {\PVS} $V$ is distinguished (and the converse holds for even orbits $\Lorb$).
In general, not every standard parabolic subgroup is the canonical parabolic subgroup of some nilpotent orbit, let alone an even one.
(By \cite{MR575790}*{Proposition 2.1} and  \cite{MR1125214}*{Theorem 2.4},
the Richardson orbit of a standard parabolic subgroup $P$ has $P$ as its canonical parabolic subgroup if and only if the 
associated {\PVS} $V=\Liealg{N}/[\Liealg{N},\Liealg{N}]$ is regular and $\delta_{\Liealg{N}^{(i)}/\Liealg{N}^{(i+1)}} \in \reducd{\aaa}_G^*$ for all $i \ge 1$,
where $\Liealg{N}=\Liealg{N}^{(0)}\supset\Liealg{N}^{(1)}\supset\dots$ denotes the descending central series of $\Liealg{N}$.)

Any {\PVS} of DK-type arises from an even nilpotent orbit for the reductive Lie algebra $\oplus_i\Liealg{G}'_{2i}$. 
Hence, any {\PVS} of DK-type is also of R-type, although not necessarily with the same $G'$.

\subsection{}
Let us give some concrete examples of DK-type arising from even nilpotent orbits.
For each case we provide the following data:
\begin{itemize}
\item The numbering in the Sato--Kimura classification (appendix of \cite{MR1944442}) in the irreducible case,
\item the group $G'$,
\item the (even) nilpotent orbit $\Lorb$ in $\Liealg{G}'$ (either the corresponding partition for classical groups
or the notation in the Bala--Carter classification),
\item the canonical Levi $G$,
\item the regular {\PVS} $V=\Liealg{n}/[\Liealg{n},\Liealg{n}]$,
\item the representation $\rho$ of $G$ on $V$,
\item the fundamental relative invariant polynomials (FRIPs),
\item the generic stabilizer $H$.
\item If $V$ is {\BASIC}, distinguished, or {\nsr}, we indicate that.
\end{itemize}

For simplicity we only consider split $G'$.

In the following $m,n,k$ are positive integers (possibly with some restrictions).
We denote by $\Mat_{m,n}$ the space of $m\times n$ matrices and by $\Mat_n$ (resp., $\Sym_n$, $\Skew_n$) the spaces of square
(resp., symmetric, skew-symmetric) $n\times n$-matrices. We denote by $\Pf$ the Pfaffian of a skew-symmetric matrix.

\begin{example} \ \label{ex: GL} (\#1 for $k=2$)
\begin{itemize}
\item $G'=\GL_{n_1+\dots+n_k}$, $k>1$, $n_i=n_{k+1-i}$ for all $i$, $n_i\le n_{i+1}$ for $i\le l:=\lfloor\frac{k-1}{2}\rfloor$.
\item $\Lorb$ corresponds to the partition
\[
(\overbrace{k,\dots,k}^{n_1},\overbrace{k-2,\dots,k-2}^{n_2-n_1},\dots,\overbrace{k-2l}^{n_{l+1}-n_l}).
\]
\item $G=\GL_{n_1}\times\dots\times\GL_{n_k}$.
\item $V=\Mat_{n_1,n_2}\oplus\dots\oplus\Mat_{n_{k-1},n_k}$.
\item $\rho(g_1,\dots,g_k)(x_1,\dots,x_{k-1})=(g_1x_1g_2^{-1},\dots,g_{k-1}x_{k-1}g_k^{-1})$.
\item FRIPs: $\begin{cases}\det x_i&\text{if }n_i=n_{i+1}\\\det (x_i\dots x_{k-i})&\text{if }n_i<n_{i+1}\end{cases}$
\item $H=\GL_{n_1}\times\GL_{n_2-n_1}\times\dots\times\GL_{n_{l+1}-n_l}$ embedded as
\[
(g_1,\dots,g_{l+1})\mapsto (g_1,\diag(g_1,g_2),\dots,\diag(g_1,\dots,g_{l+1}),\dots,\diag(g_1,g_2),g_1).
\]
\item Identifying $X^*(G)$ with $\Z^k$ (and consequently $\aaa_G^*$ with $\R^k$) by $(\lambda_1,\dots,\lambda_k)\mapsto\prod_{i=1}^k\det g_i^{\lambda_i}$,
the subspace $\reducd{\aaa}_G^*$ is given by the equations $\lambda_{i+1}+\dots+\lambda_{k-i}=0$ whenever $n_i<n_{i+1}$ or $i=0$.
\item $V$ is {\BASIC} if and only if $n_1=\dots=n_k$. $V$ is distinguished if and only if $n_1=\dots=n_k = 1$.
\end{itemize}
\end{example}

\begin{example} \ \label{ex: Spnodd} (\#13 for $k=1$, up to castling)
\begin{itemize}
\item $G'=\Sp_{2(n_1+\dots+n_k)+n_{k+1}}$, $n_1\le\dots\le n_{k+1}$ even. Set $m_i=n_i-n_{i-1}$ (with $n_0=0$).
\item $\Lorb$ corresponds to the partition
\[
(\overbrace{2k+1,\dots,2k+1}^{m_1},\overbrace{2k-1,\dots,2k-1}^{m_2},\dots,\dots,\overbrace{1,\dots,1}^{m_{k+1}}).
\]
\item $G=\GL_{n_1}\times\dots\times\GL_{n_k}\times\Sp_{n_{k+1}}$.
\item $V=\Mat_{n_1,n_2}\oplus\dots\oplus\Mat_{n_k,n_{k+1}}$.
\item $\rho(g_1,\dots,g_{k+1})(x_1,\dots,x_k)=(g_1x_1g_2^{-1},\dots,g_kx_kg_{k+1}^{-1})$.
\item FRIPs: $\begin{cases}\det x_i&\text{if }n_i=n_{i+1},\\
\Pf(x_i\dots x_{k+1}J_{n_{k+1}}x_{k+1}^t\dots x_i^t)&\text{otherwise},\end{cases}$, $i=1,\dots,k+1$
where $J_{n_{k+1}}\in\Skew_{n_{k+1}}$ defines $\Sp_{n_{k+1}}$.
\item $H=\Sp_{m_1}\times\dots\times\Sp_{m_{k+1}}$ embedded as
\[
(h_1,\dots,h_{k+1})\mapsto (h_1,\iota_2(h_1,h_2),\dots,\iota_{k+1}(h_1,\dots,h_{k+1}))
\]
where $\iota_j$ is an embedding $\Sp_{m_1}\times\dots\times\Sp_{m_j}\hookrightarrow\Sp_{n_j}$.
\item $V$ is {\BASIC} if and only if $n_1=\dots=n_k$.
\end{itemize}
\end{example}

\begin{example} \ \label{ex: Speven} (\#2 for $k=1$)
\begin{itemize}
\item $G'=\Sp_{2(n_1+\dots+n_k)}$, $n_1\le\dots\le n_k$. Set $m_i=n_i-n_{i-1}$ (with $n_0=0$).
\item $\Lorb$ corresponds to the partition
\[
(\overbrace{2k,\dots,2k}^{m_1},\overbrace{2k-2,\dots,2k-2}^{m_2},\dots,\dots,\overbrace{2,\dots,2}^{m_k}).
\]
\item $G=\GL_{n_1}\times\dots\times\GL_{n_k}$.
\item $V=\Mat_{n_1,n_2}\oplus\dots\oplus\Mat_{n_{k-1},n_k}\oplus\Sym_{n_k}$.
\item $\rho(g_1,\dots,g_{k+1})(x_1,\dots,x_k)=(g_1x_1g_2^{-1},\dots,g_{k-1}x_{k-1}g_k^{-1},g_kx_kg_k^t)$.
\item FRIPs: $\begin{cases}\det x_i&\text{if } n_i=n_{i+1}\text{ or }i=k,\\
\det x_i\cdots x_kx_{k-1}^t\cdots x_i^t&\text{otherwise},\end{cases}$, $i=1,\dots,k$.
\item $H=\Orth_{m_1}\times\dots\times\Orth_{m_k}$ embedded as
\[
(h_1,\dots,h_k)\mapsto (h_1,\iota_2(h_1,h_2),\dots,\iota_k(h_1,\dots,h_k))
\]
where $\iota_j$ is an embedding $\Orth_{m_1}\times\dots\times\Orth_{m_j}\hookrightarrow\Orth_{n_j}$.
\item $V$ is {\BASIC} if and only if $n_1=\dots=n_k\ne2$. $V$ is {\nsr} if and only if $m_i=2$ for some $i$.
$V$ is distinguished if and only if $m_i=0$ or $1$ for all $i$.
\end{itemize}
\end{example}

\begin{example} \ \label{ex: SOodd} (\#15 for $k=1$, up to castling)
\begin{itemize}
\item $G'=\SO_{2(n_1+\dots+n_k)+n_{k+1}}$, $n_1\le\dots\le n_{k+1}$. Set $m_i=n_i-n_{i-1}$ (with $n_0=0$).
\item $\Lorb$ corresponds to the partition
\[
(\overbrace{2k+1,\dots,2k+1}^{m_1},\overbrace{2k-1,\dots,2k-1}^{m_2},\dots,\dots,\overbrace{1,\dots,1}^{m_{k+1}}).
\]
\item $G=\GL_{n_1}\times\dots\times\GL_{n_k}\times\SO_{n_{k+1}}$.
\item $V=\Mat_{n_1,n_2}\oplus\dots\oplus\Mat_{n_k,n_{k+1}}$.
\item $\rho(g_1,\dots,g_{k+1})(x_1,\dots,x_k)=(g_1x_1g_2^{-1},\dots,g_kx_kg_{k+1}^{-1})$.
\item FRIPs: $\begin{cases}\det x_i&\text{if }n_i=n_{i+1},\\
\det(x_i\dots x_{k+1}J_{n_{k+1}}x_{k+1}^t\dots x_i^t)&\text{otherwise},\end{cases}$, $i=1,\dots,k+1$
where $J_{n_{k+1}}\in\Sym_{n_{k+1}}$ defines $\SO_{n_{k+1}}$.
\item $H=S(\Orth_{m_1}\times\dots\times\Orth_{m_{k+1}})$ embedded as
\[
(h_1,\dots,h_{k+1})\mapsto (h_1,\iota_2(h_1,h_2),\dots,\iota_{k+1}(h_1,\dots,h_{k+1}))
\]
where $\iota_j$ is an embedding $\Orth_{m_1}\times\dots\times\Orth_{m_j}\hookrightarrow\Orth_{n_j}$.
\item $V$ is {\BASIC} if and only if $n_1=\dots=n_k\ne2$ and $m_{k+1}\ne2$. $V$ is {\nsr} if and only if
$m_i=2$ for some $i$. $V$ is distinguished if and only if $m_i=0$ or $1$ for all $i$.
\end{itemize}
\end{example}

\begin{example} \ \label{ex: SOeven} (\#3 for $k=1$)
\begin{itemize}
\item $G'=\SO_{2(n_1+\dots+n_k)}$, $n_1\le\dots\le n_k$ even. Set $m_i=n_i-n_{i-1}$ (with $n_0=0$).
\item $\Lorb$ corresponds to the partition
\[
(\overbrace{2k,\dots,2k}^{m_1},\overbrace{2k-2,\dots,2k-2}^{m_2},\dots,\dots,\overbrace{2,\dots,2}^{m_k}).
\]
\item $G=\GL_{n_1}\times\dots\times\GL_{n_k}$.
\item $V=\Mat_{n_1,n_2}\oplus\dots\oplus\Mat_{n_{k-1},n_k}\oplus\Skew_{n_k}$.
\item $\rho(g_1,\dots,g_{k+1})(x_1,\dots,x_k)=(g_1x_1g_2^{-1},\dots,g_{k-1}x_{k-1}g_k^{-1},g_kx_kg_k^t)$.
\item FRIPs: $\begin{cases}\det x_i&\text{if } n_i=n_{i+1},\\
\Pf(x_i\cdots x_kx_{k-1}^t\cdots x_i^t)&\text{otherwise},\end{cases}$, $i=1,\dots,k$.
\item $H=\Sp_{m_1}\times\dots\times\Sp_{m_k}$ embedded as
\[
(h_1,\dots,h_k)\mapsto (h_1,\iota_2(h_1,h_2),\dots,\iota_k(h_1,\dots,h_k))
\]
where $\iota_j$ is an embedding $\Sp_{m_1}\times\dots\times\Sp_{m_j}\hookrightarrow\Sp_{n_j}$.
\item $V$ is {\BASIC} if and only if $n_1=\dots=n_k$.
\end{itemize}
\end{example}

\begin{example} \ \label{ex: G2} (\#4)
\begin{itemize}
\item $G'=G_2$.
\item $\Lorb$ is the subregular (10-dimensional) orbit.
\item $G=\GL_2$, with $\Delta_0^G$ consisting of the short simple root.
\item $V$ is the space of binary cubic forms
\item $\rho$ is the symmetric cube representation twisted by $\det^{-1}$.
\item FRIP: the discriminant.
\item $H$ is (an $F$-form of) the symmetric group $S_3$ (with its faithful two-dimensional representation).
\item $V$ is distinguished.
\end{itemize}
\end{example}

\begin{example} \ \label{ex: F4} (\#8)
\begin{itemize}
\item $G'=F_4$, $\Delta_0=\{\alpha_1,\alpha_2,\alpha_3,\alpha_4\}$, $\alpha_1,\alpha_2$ long, $\alpha_3,\alpha_4$ short.
\item $\Lorb=F_4(a_3)$ (40-dimensional)
\item $G=(\GL_2\times\GL_3)/\{(\lambda^2 I_2,\lambda^{-1}I_3)\}$, $\Delta_0^G=\{\alpha_1,\alpha_3,\alpha_4\}$.
\item $V=\Hom_F(F^2,\Sym_3)$.
\item $\rho(g_1,g_2)(A)(\xi)=g_2A(\xi g_1)g_2^t$
\item FRIP: the discriminant of the binary cubic form $\xi\mapsto\det A(\xi)$.
\item $H$ is (an $F$-form of) of the symmetric group $S_4$, embedded in $G$ by its faithful three-dimensional representation on the $\GL_3$-factor 
and by its irreducible two-dimensional representation (which factors through the surjection $S_4 \to S_3$) on the $\GL_2$-factor.
\item $V$ is distinguished.
\end{itemize}
\end{example}

\begin{example} \ \label{ex: E6} (\#12)
\begin{itemize}
\item $G'=E_6$.
\item $\Lorb=D_4(a_1)$ (58-dimensional)
\item $G=\SL_3\times\SL_3\times\GL_2$.
\item $V=\Hom_F(F^2,\Mat_3)$.
\item $\rho(g_1,g_2,g_3)(A)(\xi)=g_1A(\xi g_3)g_2^{-1}$, $\xi\in F^2$.
\item FRIP: the discriminant of the binary cubic form $\xi\mapsto\det A(\xi)$.
\item $H$ is the normalizer of a maximal torus in $\SL_3$, embedded diagonally into the $\SL_3$-factors and mapped to the $\GL_2$-factor via the faithful
two-dimensional representation of its component group $S_3$.
\item $V$ is {\nsr}.
\end{itemize}
\end{example}

Other examples of {\BASIC} {\PVS}s of DK-type are summarized in Table \ref{tab: excbasic}
which refers to the numbering in the list in the appendix of \cite{MR1944442} whenever $\rho$ is reduced. (See also \cite{MR910424}.)
\begin{center}
\begin{table}
\caption{Irreducible {\BASIC} {\PVS}s of DK-type for exceptional groups.}  \label{tab: excbasic}
\begin{tabular}{ |c|c|c|c|c|c| } 
 \hline
\# & $G'$  & $G$ & $\rho$ & $H^\circ$ & $H^\circ\hookrightarrow G$ \\
 \hline
16 & $F_4$ & $\GSpin_7$ & $\spin(8)$ & $G_2$ & $\phi$\\
6  & $E_7$ & $\GL_7$ & $\wedge^3$ & $G_2$ & $\pi$ \\ 
10 & $E_7$ & $\SL_5\times\GL_3$ & $\wedge^2\otimes\Std$ & $\SL_2$ & $\Sym^4\times\Sym^2$ \\ 
27 & $E_7$ & $\GE_6$ & $27$-dim. & $F_4$ & $\sigma$\\
20 & $E_7$ & $\Spin_{10}\times\GL_2$ & half-spin(16) $\otimes\Std$ & $G_2\times\SL_2$ & $\iota_1$ \\
   & $E_7$ & $\GL_2\times\SL_3\times\SL_4$ & $\Std\otimes\Std\otimes\Std$ & $\SL_2$ & $\id\times\Sym^2\times\Sym^3$ \\
7  & $E_8$ & $\GL_8$ & $\wedge^3$ & $\PGL_3$ & adjoint rep. \\ 
21 & $E_8$ & $\Spin_{10}\times\GL_3$ & half-spin(16) $\otimes\Std$ & $\SO_4$ & $\iota_2$ \\
24 & $E_8$ & $\GSpin_{14}$ & half-spin(64) & $G_2\times G_2$ & $\iota_3$ \\
 \hline
\end{tabular}
\end{table}
\end{center}

Here, $\pi$ is the embedding $G_2\hookrightarrow\SO_7\hookrightarrow\GL_7$ and $\phi$ is its lifting to an embedding $G_2\hookrightarrow\Spin_7$.
Also, $\sigma$ is the embedding of $F_4$ as the fixed point subgroup of the outer involution of $E_6$. Finally,
\begin{align*}
\iota_1(x,y)&=(\psi_{7,3}(\phi(x),y),y),\\
\iota_2(x,y)&=(\psi_{6,4}(\Sym^3(y),(x,y)),\Sym^2(y)),\\
\iota_3(x,y)&=\psi_{7,7}(\phi(x),\phi(y)),
\end{align*}
where $\psi_{m,n}$ is the homomorphism $\Spin_m\times\Spin_n\rightarrow\Spin_{m+n}$ and we identify $\Spin_3\simeq\SL_2$,
$\Spin_6\simeq\SL_4$, $\Spin_4\simeq\SL_2\times\SL_2$, $\SO_4\simeq(\SL_2\times\SL_2)/\{\pm1\}$.

\subsection{} \label{sec: magic square}
Another interesting set of examples of irreducible {\PVS}s of DK-type arises from the Freudenthal-Tits magic square
(see Table \ref{tab: FTMS}). 
In these cases, $G$ is a maximal Levi subgroup of $G'$ corresponding to a simple root $\alpha$ which is a leaf in the Dynkin diagram
(it is the unique simple root which is not orthogonal to the highest root $\tilde\alpha$ of $G'$).
The highest weight $\mu$ of $\rho$ is $\tilde\alpha-\alpha$ and its restriction to $G^{\der}$ is the fundamental weight corresponding
to the simple root $\beta$ adjacent to $\alpha$ (and of the same length) in the Dynkin diagram. 
Let $Q=LN'$ be the maximal parabolic subgroup of $G^{\der}$ corresponding to $\beta$.
Then, there exists a non-trivial element $w_0$ in the normalizer of $L$ in $G^{\der}$.
We have $\alpha=w_0\mu$, which is the lowest weight of $V$.
Clearly, $L$ stabilizes the root spaces $V_\alpha$ and $V_\mu$.
Remarkably, any $v\in V_\alpha+V_\mu$ outside $V_\alpha\cup V_\mu$ is regular. We have $H^\circ=L^{\der}$ 
and $[H:H^\circ]=2$ with $H$ containing a representative of $w_0$. In particular, $V$ is not {\BASIC}.
Since $V\simeq\Liealg{G}/\Liealg{H}$ as a representation of $H$, we see that the restriction of $\rho$ to $H$ is
$\Ind_{H^\circ}^H(\rho'\oplus 1)$ where $\rho'$ is the representation of $L$ on $\Liealg{N'}$. It turns out that $N'$ is abelian.
Therefore, $(L,\Liealg{N'},\rho')$ is a {\PVS} and in fact it is {\BASIC}: the generic stabilizer
(whose identity component is denoted by $K$ in the table) is the centralizer of (a suitable choice of) $w_0$ in $L$,
i.e., the stabilizer of an outer involution of $L$ corresponding to $w_0$. See drawings below.

The fundamental relative invariant polynomials in these cases (which are of degree 4) are called Freudenthal quartics (see \cites{MR63358, MR0447140, MR1928799}).
They arise by considering $\operatorname{ad} (x)^4$, $x \in V \subset \Liealg{G}'$ as a map between the one-dimensional spaces $\Liealg{G}'_{-\tilde\alpha}$ and 
$\Liealg{G}'_{\tilde\alpha}$ (cf. \cite{MR1125214}*{Table II}).
The orbit structure of $V$ is described in \cite{MR1247502}.

What is more, the Richardson orbit with respect to the maximal parabolic subgroup corresponding to $\beta$ is also even.
These orbits give rise to the {\PVS}s \#8 (Example \ref{ex: F4}), \#12 (Example \ref{ex: E6}), \#9 and \#28 respectively in the list of the appendix of \cite{MR1944442}.
They all have a fundamental relative invariant polynomial of degree 12, obtained as the determinant of $\operatorname{ad} (x)^6: \Liealg{G}'_{-6} \to \Liealg{G}'_{6}$, $x \in V$, where
$\Liealg{G}'_{6} = \Liealg{G}'_{\tilde\alpha-\alpha} \oplus \Liealg{G}'_{\tilde\alpha}$ is two-dimensional.
Alternatively, the {\PVS} $(L,\Liealg{N'},\rho')$ mentioned above has a cubic fundamental relative invariant polynomial, and the {\PVS} $V$ associated to $\beta$
is the direct sum of $\Liealg{N'}$ and of its isomorphic image under the simple reflection $w_\alpha$, which enables one to construct the fundamental relative invariant of $V$
as the discriminant of the cubic form associated to pairs of elements of $\Liealg{N'}$.

\begin{center}
\begin{table}
\caption{{\PVS}s of DK-type pertaining to the Freudenthal-Tits magic square.}  \label{tab: FTMS}
\begin{tabular}{|c||c|c|c|c|c|c|}
\hline 
\# & $G'$  & $G$           & $L$          & $K$      & $\rho$           & $\rho'$ \\
\hline
14 & $F_4$ & $\GSp_6$      & $\GL_3$            & $\PGL_2$  & 14-dim.          & $\Sym^2$ \\
5  & $E_6$ & $\GL_6$       & $(\GL_3\times\GL_3)\cap\SL_6$ & $\SL_3$  & $\wedge^3$       & $\Std\otimes\Std$ \\
23 & $E_7$ & $\GSpin_{12}$ & $\GL_6$            & $\Sp_6$  & half-spin(32)    & $\wedge^2$ \\
29 & $E_8$ & $\GE_7$        & $\GE_6$              & $F_4$    & 56-dim.          & 27-dim. \\
\hline
\end{tabular}
\end{table}
\end{center}

\begin{center}
\begin{tikzcd}
\overset{\alpha}{\bullet} \arrow[r,dash] & \overset{\beta}{\bullet} \arrow[r,Rightarrow] & \bullet \arrow[dash,r] \arrow[r,leftrightarrow,bend left] & \bullet
\end{tikzcd}

\begin{tikzcd}
\bullet \arrow[r,dash] \arrow[rrrr,leftrightarrow,bend left] & \bullet \arrow[r,dash] \arrow[rr,leftrightarrow,bend left] &
\overset{\beta}{\bullet} \arrow[r,dash] \arrow[d,dash] & \bullet \arrow[dash,r]  & \bullet \\
& & \overset{\alpha}{\bullet}
\end{tikzcd}

\begin{tikzcd}
\overset{\alpha}{\bullet} \arrow[r,dash] & \overset{\beta}{\bullet} \arrow[r,dash] & \bullet \arrow[r,dash] \arrow[d,dash] \arrow[rr,leftrightarrow,bend right] &
\bullet \arrow[r,dash] & \bullet \arrow[r,dash] & \bullet\\
& & \bullet \arrow[urrr,leftrightarrow,bend right]
\end{tikzcd}

\begin{tikzcd}
\bullet \arrow[r,dash] \arrow[rrrr,leftrightarrow,bend left] & \bullet \arrow[r,dash] \arrow[rr,leftrightarrow,bend left] &
\bullet \arrow[r,dash] \arrow[d,dash] & \bullet \arrow[r,dash] & \bullet \arrow[r,dash] & \overset{\beta}{\bullet} \arrow[r,dash] & \overset{\alpha}{\bullet}\\
& & \bullet
\end{tikzcd}
\end{center}

Sato and Kimura classified the irreducible {\PVS}s over $\bar F$ up to a suitable notion of equivalence weaker than isomorphism \cite{MR430336}.

There are 29 essential cases of regular {\PVS}s, listed in the appendix of \cite{MR1944442}.\footnote{The general case is obtained from these by castling transforms, see \cite{MR1944442}*{\S7.1}.
These preserve irreducibility and the generic stabilizer but may yield {\PVS}s which are far removed from nilpotent orbits.}
All these examples are either of DK-type or restrictions thereof to suitable subgroups.\footnote{The examples not of DK-type are \# 17, \# 18, \# 19, \# 22, \# 25 and \# 26.
Example \# 1 can be of DK-type or not.}
Most of these examples are {\sr}. The exceptions are:
\begin{itemize}
\item The symmetric square representation of $\GL_2$ (Example \ref{ex: Speven} for $k=1$, $n_1=2$).
\item Example \ref{ex: E6}.
\item The representation $\GL_2\times\SO_{k+2}$ on $\Mat_{2,k+2}$ (Example \ref{ex: SOodd} for $k=1$, $n_1=2$).
\item The restriction of the latter to $\GL_2\times\Spin_7$ (resp., $\GL_2\times G_2$) for $k=6$ (resp., $k=5$)
via the Spin representation $\Spin_7\hookrightarrow\SO_8$ (resp., the $7$-dimensional representation
$G_2\hookrightarrow\SO_7$).
\end{itemize}

The non-regular irreducible {\PVS}s over $\bar F$ are (up to castling equivalence) mostly of R-type or restrictions thereof.
The notable exception is the representation $\rho$ of $\SL_2\times\SL_n$ on $\Hom_F(F^2,\Skew_n)$ given by $\rho(g_1,g_2)A(\xi)=g_2A(\xi g_1)g_2^t$,
which is a {\PVS} for odd $n$ but not of R-type for $n>7$.
Another curious example is the restriction of the representation of $\GL_3\times\Sp_{2n}$ (of R-type) on $\Mat_{3,2n}$, $n>1$ given by $\rho(g_1,g_2)A=g_1Ag_2^{-1}$
to the image of the symmetric square representation of $\GL_2$ times $\Sp_{2n}$. This irreducible, non-regular {\PVS} has a relative invariant.

\section{Statement of main result} \label{sec: statement}
From now on assume that $F$ is a number field.
Let $\A = F_\infty \times \A_{\fin}$ be the ring of adeles of $F$.
For any connected $F$-group $G$ we write
\[
G(\A)^1=\cap_{\chi\in X^*(G)}\Ker\abs{\chi}
\]
where $\abs{\chi}:G(\A)\rightarrow\R_{>0}$ is the composition $G(\A)\xrightarrow{\chi}\A^*\xrightarrow{\abs{\cdot}}\R_{>0}$.
The automorphic space $G(F)\bs G(\A)^1$ is of finite volume.

Let
\[
\aaa_{G,\C}^*=\aaa_G^*\otimes_{\R}\C=X^*(G)\otimes_{\Z}\C.
\]
Every $\lambda\in\aaa_{G,\C}^*$ defines a (topological) quasi-character $g\mapsto g^\lambda$ of $G(\A)$ by
\[
g^{\sum c_i\chi_i}=\prod_i\abs{\chi_i(g)}^{c_i},\ \ \chi_i\in X^*(G),\ c_i\in\C.
\]
All quasi-characters of $G(\A)/G(\A)^1$ are obtained this way. In particular, we get an isomorphism of topological groups
\[
\iii\aaa_G^*\xrightarrow{\Exp_G}\PD(G(\A)/G(\A)^1),\ \ \lambda\mapsto (g\mapsto g^\lambda)
\]
where $\PD$ denotes the Pontryagin dual.

The following is a supplement to Lemma \ref{lem: genHG}.

\begin{lemma} \label{lem: finvolcrit}
Suppose that $G$ is connected and $H$ is an $F$-subgroup of $G$. Write $H(\A)^{(1)}=H(\A)\cap G(\A)^1$.
Then, the following two conditions are equivalent.\footnote{Note that both conditions imply that $H(\A)\cap G(\A)^1$ is unimodular.}
\begin{enumerate}
\item The restriction map $X^*(G)\xrightarrow{r} X^*(H^\circ)$ has finite cokernel.
\item The volume of the quotient $H(F)\bs H(\A)^{(1)}$ is finite.
\end{enumerate}
\end{lemma}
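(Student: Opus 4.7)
The plan is to reduce to the case where $H$ is connected, in which case $H(\A)^{(1)}/H(\A)^1$ can be identified with a closed real vector subspace of $\aaa_H$---namely, the kernel of the $\R$-linear map dual to $r\colon X^*(G) \to X^*(H)$---so that the finiteness of the volume of $H(F)\bs H(\A)^{(1)}$ reduces to the triviality of this kernel.

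For the reduction, $H^\circ$ has finite index in $H$ as an $F$-algebraic group, and $H^\circ(F)$ has finite index in $H(F)$ since $(H/H^\circ)(F)$ is a subgroup of the finite group $(H/H^\circ)(\bar F)$. Setting $H^\circ(\A)^{(1)} := H^\circ(\A) \cap G(\A)^1$, a standard adelic class set finiteness argument for the finite étale $F$-group $H/H^\circ$ shows that $H(F)H^\circ(\A)^{(1)} \bs H(\A)^{(1)}$ is finite. Comparing the two adelic quotients yields $\vol(H(F) \bs H(\A)^{(1)}) < \infty$ iff $\vol(H^\circ(F) \bs H^\circ(\A)^{(1)}) < \infty$, so we may assume that $H$ is connected. (The equivalence of condition (1) for $H$ and $H^\circ$ is automatic, as it only involves $X^*(H^\circ)$.)

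With $H$ connected, the homomorphism $\ell \colon H(\A) \to \aaa_H = \Hom(X^*(H), \R)$ given by $\ell(h)(\chi) = \log \abs{\chi(h)}$ is surjective with kernel $H(\A)^1$. The restriction $r \colon X^*(G) \to X^*(H)$ induces by precomposition an $\R$-linear map $r^* \colon \aaa_H \to \aaa_G$, and $h \in H(\A)$ lies in $H(\A)^{(1)}$ iff $r^*(\ell(h)) = 0$. Hence $H(\A)^{(1)}/H(\A)^1 \cong \Ker r^*$ as topological groups. By the product formula $H(F) \subset H(\A)^1$, and $H(F) \bs H(\A)^1$ has finite volume by the standard theorem for connected linear $F$-groups. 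Consequently $\vol(H(F) \bs H(\A)^{(1)}) = \vol(H(F) \bs H(\A)^1) \cdot \vol(\Ker r^*)$, which is finite iff $\Ker r^* = \{0\}$, iff $r^*$ is injective, iff $r$ has finite cokernel (by duality between maps of finitely generated free abelian groups and their $\R$-linear extensions). This is exactly condition (1). The main obstacle is the reduction to the connected case, which hinges on the class set finiteness for $H/H^\circ$; this is standard but requires some care to justify, while the connected case itself is essentially a direct bookkeeping of the exact sequence $H(\A)^1 \hookrightarrow H(\A) \twoheadrightarrow \aaa_H$.
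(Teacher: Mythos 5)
Your main argument for connected $H$ is correct and is essentially the same as the paper's: both identify $H(\A)^{(1)}/H(\A)^1$ with the kernel of the induced map $\aaa_H\rightarrow\aaa_G$ (the paper phrases this via Pontryagin duality, you phrase it directly via $r^*$; these are the same), and both conclude that this kernel is trivial iff $r$ has finite cokernel.

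However, your reduction to the connected case contains an inaccuracy. You assert that a class set finiteness argument shows that $H(F)H^\circ(\A)^{(1)}\bs H(\A)^{(1)}$ is \emph{finite}. This is false in general. Take $G=\GL_2$, $H=\Orth_2$, $H^\circ=\SO_2$ over $\Q$: since $\det$ takes values in $\{\pm 1\}$, one has $H(\A)^{(1)}=H(\A)$ and $H^\circ(\A)^{(1)}=H^\circ(\A)$, and $H(\A)/H^\circ(\A)\simeq\prod_v\Z/2\Z$ (the full, unrestricted product, because $\det:\Orth_2\to\{\pm 1\}$ is surjective at every place and on $\Z_v$-points for almost all $v$), so $H(F)H^\circ(\A)\bs H(\A)$ is an infinite compact group. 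Class set finiteness (Borel--Serre) gives finiteness of $\Gamma(F)\bs\Gamma(\A)/C$ for a compact open $C$, not of $\Gamma(F)\bs\Gamma(\A)$ itself. What is true, and is what you actually need, is \emph{compactness} of $H(F)H^\circ(\A)^{(1)}\bs H(\A)^{(1)}$: it is a quotient of a closed subgroup of the compact group $H(\A)/H^\circ(\A)$. Together with $H(F)\cap H^\circ(\A)^{(1)}=H^\circ(F)$, this gives the required volume comparison. The paper's phrasing of the reduction — "$[H(F):H^\circ(F)]$ is finite and $H(\A)/H^\circ(\A)$ is compact" — is exactly this, and avoids the spurious finiteness claim.
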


\begin{proof}
We may assume without loss of generality that $H$ is connected, since $[H(F):H^\circ(F)]$ is finite and
$H(\A)/H^\circ(\A)$ is compact. Now, $\vol(H(F)\bs H(\A)^1)<\infty$. Therefore,  $\vol(H(F)\bs H(\A)^{(1)})<\infty$
if and only if $Y=H(\A)^{(1)}/H(\A)^1$ is compact. Note that $Y$ is the kernel of the homomorphism
\[
H(\A)/H(\A)^1\rightarrow G(\A)/G(\A)^1.
\]
Passing to the Pontryagin dual, the lemma follows from the commutative diagram
\[
  \begin{tikzcd}
    \iii\aaa_G^* = \iii (X^*(G)\otimes\R) \arrow{d}{r\otimes\id}  \arrow[r,"\Exp_G"] &  PD(G(\A)/G(\A)^1) \arrow{d}\\
    \iii\aaa_H^* = \iii (X^*(H)\otimes\R)  \arrow[r,"\Exp_H"] &  PD(H(\A)/H(\A)^1)  
  \end{tikzcd}
\]
In fact, $Y$ is compact if and only if $Y=1$ if and only if the restriction map $\aaa_G^*\rightarrow\aaa_H^*$ is surjective.
\end{proof}

Henceforth, $(G,V,\rho)$ is a reductive {\PVS} (i.e., $G$ is reductive).
We apply the criterion of Lemma \ref{lem: finvolcrit} to the stabilizer $G_v$ of a vector $v \in \Vreg (F)$. Recall Definition \ref{def: sr}.

\begin{lemma} \label{lem: finitevolume} Let $v \in \Vreg (F)$. Then,
\begin{enumerate} \item
The group $G_v(\A)^{(1)}=G_v(\A)\cap G(\A)^1$ is unimodular. 
\item The volume
$\vlum_v=\vol(G_v(F)\bs G_v(\A)^{(1)})$ is finite if and only if $v$ is {\sr}. 
\end{enumerate} 
\end{lemma}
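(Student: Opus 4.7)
The plan is to reduce both assertions to earlier results. For part (1), I would observe that $G_v(\A)^{(1)}$ is the kernel of the continuous composition $G_v(\A) \hookrightarrow G(\A) \to G(\A)/G(\A)^1$, hence a closed normal subgroup of $G_v(\A)$. By the standard fact that the modular function of a closed normal subgroup of a locally compact group coincides with the restriction of the ambient modular function (Bourbaki, \emph{Int\'egration}, Ch.~VII, \S 2), it then suffices to show that $h \mapsto |\mdchr_{G_v}(h)|$ is trivial on $G_v(\A)^{(1)}$. By \eqref{eq: mdchr},
\[
\mdchr_{G_v} = \mdchr_G \cdot (\det\rho)^{-1}\rest_{G_v},
\]
and since $G$ is reductive, the nonzero weights of $\Ad_G$ on $\Liealg{G}$ pair up as $\pm\alpha$, forcing $\mdchr_G = 1$; hence $\mdchr_{G_v} = (\det\rho)^{-1}\rest_{G_v}$. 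As $\det\rho \in X^*(G)$, the absolute value $|\det\rho|$ is trivial on $G(\A)^1 \supset G_v(\A)^{(1)}$, which yields unimodularity.

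For part (2), I would apply Lemma \ref{lem: finvolcrit} to the subgroup $H = G_v$: the finiteness of $\vlum_v$ is equivalent to the restriction map $X^*(G) \to X^*(G_v^\circ)$ having finite cokernel. By Lemma \ref{lem: isot} together with Definition \ref{def: sr}, this in turn is equivalent to $v$ being {\sr}.

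The only slightly delicate point is the appeal to the modular-function-of-normal-subgroup fact, which is needed here because unimodularity is asserted for \emph{all} $v \in \Vreg(F)$, including isotropic ones; the parenthetical remark in Lemma \ref{lem: finvolcrit} by itself only covers the non-isotropic case. Beyond this, the argument is a routine combination of \eqref{eq: mdchr}, the reductivity of $G$, and the equivalences already established in Lemmas \ref{lem: finvolcrit} and \ref{lem: isot}.
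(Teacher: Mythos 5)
Your proof is correct and follows essentially the same route as the paper: part (1) reduces via \eqref{eq: mdchr} to the observation that $\mdchr_{G_v}$ is the restriction of a character of $G$ (hence $|\mdchr_{G_v}|$ is trivial on $G(\A)^1$), and part (2) is exactly the combination of Lemma \ref{lem: finvolcrit} and Lemma \ref{lem: isot} that the paper invokes. Your explicit appeal to the fact that a closed normal subgroup inherits the ambient modular function, and your remark that the footnote to Lemma \ref{lem: finvolcrit} alone does not cover the {\nsr} case, usefully spell out what the paper's one-line proof of part (1) leaves implicit; the only superfluous step is the computation $\mdchr_G=1$, since $\mdchr_G\det\rho^{-1}$ is already a character of $G$ regardless.
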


\begin{proof}
The first assertion follows from the fact that modular character of $G_v$ is the restriction of a character of $G$ by \eqref{eq: mdchr}.
The second assertion follows from Lemma \ref{lem: finvolcrit} and Lemma \ref{lem: isot}.
\end{proof}

We say that a continuous function $\phi$ on $V (\A)$ is rapidly decreasing, if it is supported in $V (F_\infty) K$ for some 
compact subgroup $K$ of $V(\A_{\fin})$ and 
$\sup_{v \in V (\A)}\abs{f (v)}(1+\norm{v_\infty})^N < \infty$ for every nonnegative integer $N$, where $\norm{\cdot}$ denotes a norm on the real vector space $V(F_\infty)$.
Let $C_{\rd}(V(\A))$ be the space of continuous rapidly decreasing functions on $V(\A)$.
It is a (strict) LF-space, i.e., the strict (locally convex) inductive limit of Fr\'echet spaces.

For any $\phi\in C_{\rd}(V(\A))$ we form the theta function
\[
\theta_\phi(g)=\theta_\phi^V(g)=\sum_{v\in \Vreg(F)}\phi(\rho(g)^{-1}v),\ \ g\in G(\A).
\]
The sum is locally uniformly absolutely convergent and defines a continuous function on $G(F)\bs G(\A)$.
We would like to study this function and the corresponding zeta function which is roughly speaking the Mellin transform of $\theta_\phi$.

Let $\reducd{\aaa}_{G,\C}^* = \reducd{\aaa}_G^* \otimes\C=X^*(G/\centder{G})\otimes\C$.
Ideally, we would like to study the double integral
\begin{equation} \label{eq: naivezeta}
\int_{G(\A)^1\centder{G}(\A)\bs G(\A)}\int_{G(F)\bs G(\A)^1}\theta_\phi(ga)\ dg\ a^{-\lambda}\abs{\det\rho(a)}^{-1}\ da
\end{equation}
for $\lambda$ in a suitable region of convergence in $\reducd{\aaa}_{G,\C}^*$.
Note that the outer integral formally makes sense, since the inner integral can be written as the sum over the $G(F)$-orbits in $\Vreg(F)$,
and the contribution of an orbit $\rho(G(F))v$ is
\[
\int_{G_v(F)\bs G(\A)^1}\phi(\rho(ga)^{-1}v)\ dg=
\vlum_v\int_{(G_v(\A)\cap G(\A)^1)\bs G(\A)^1}\phi(\rho(ga)^{-1}v)\ dg
\]
where $\vlum_v=\vol(G_v(F)\bs (G_v(\A)\cap G(\A)^1))$ as above.
Note that the images of $G_v(\A)$ and $\centder{G}(\A)$ in $G(\A)^1\bs G(\A)=\aaa_G=X_*(G^{\ab})\otimes\R$ coincide -- they are both equal to the image of
$X_*(\centder{G}/G^{\der})\otimes\R$ in $\aaa_G$.
Therefore, we can write the above as
\[
\vlum_v\int_{G_v(\A)\bs G(\A)^1\centder{G}(\A)}\phi(\rho(ga)^{-1}v)\abs{\det\rho(g)}^{-1}\ dg.
\]
This also explains why the more straightforward integral
$\int_{G(F)\bs G(\A)}\theta_\phi(g)g^{-\lambda}\ dg$ would not converge (for any $\lambda$) if $\reducd{\aaa}_G^* \neq \aaa_G^*$
(i.e., if $\rk X^*(H)>0$ by Lemma \ref{lem: restggv}).

A necessary condition for the convergence of this orbital integral for all $\phi$ is that $\vlum_v$ is finite, which by Lemma \ref{lem: finitevolume} is equivalent to $v$ being {\sr}.
Define
\[
\theta_\phi^{\sreg}(g)=\sum_{v\in \Vreg(F)^{\sreg}}\phi(\rho(g)^{-1}v),\ \ g\in G(\A).
\]
Thus, $\theta_\phi^{\sreg}$ omits the $G(F)$-orbits in $\Vreg(F)$ whose contributions to \eqref{eq: naivezeta} clearly diverge.

Our main result, which will be ultimately proved in \S\ref{sec: pfmain},  is the following.
\begin{theorem} \label{thm: convergence}
Assume that $(G,V,\rho)$ is a regular \PVS. Then, the double integral
\[
Z(\phi,\lambda)=\int_{G(\A)^1\centder{G}(\A)\bs G(\A)}\int_{G(F) \bs G (\A)^1} \theta_\phi^{\sreg} (ga) \, dg\  a^{-\lambda}\abs{\det\rho(a)}^{-1} \, da
\]
is absolutely convergent and holomorphic for any $\lambda\in\reducd{\aaa}_{G,\C}^*$ such that $\Re\lambda\in\R_{>0}\fundchr$.
Moreover, the integral is continuous in $\phi\in C_{\rd}(V(\A))$.
\end{theorem}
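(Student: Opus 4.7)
The plan is to bound the inner integral $\int_{G(F)\bs G(\A)^1}\theta_\phi^{\sreg}(ga)\,dg$ in a controlled way and then perform the outer integration against $a^{-\lambda}|\det\rho(a)|^{-1}$ over $G(\A)^1\centder{G}(\A)\bs G(\A)$ by a Mellin-transform argument. Using the left $G(F)$-invariance of $\theta_\phi^{\sreg}$, I would replace the inner integration by integration over a Siegel domain $\Siegel\subset G(\A)^1$ attached to a minimal $F$-parabolic $P_0=M_0N_0$ with maximal $F$-split torus $T_0$. Reduction theory guarantees that $\Siegel$ covers $G(F)\bs G(\A)^1$ with finite multiplicity, so it is enough to estimate $\theta_\phi^{\sreg}(ga)$ pointwise for $g\in\Siegel$ in a manner that is integrable against the relevant weight in $a$.

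For this pointwise estimate I would decompose $V$ into its $T_0$-weight spaces $V_\mu$, $\mu\in\wgts$, and attach to each $v\in\Vreg(F)$ the support set $\supp(v)\subset\wgts$. Proposition \ref{prop: bndtheta} then bounds $\theta_\phi^{\sreg}(g)$ on $\Siegel$ by a finite sum, indexed by the support sets $U$ of elements in $\Vreg(F)^{\sreg}$, of terms of the form $\sup_{\mu\in U} g^{-\mu}$ (raised to a suitable power) multiplied by a continuous seminorm of $\phi$. Corollary \ref{cor: lam(U)} ensures non-negativity of the linear functionals controlling these suprema, which is already suggestive of convergence on the closed region $\R_{\ge0}\fundchr$ but not of holomorphicity on the open region of the theorem.

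To turn non-negativity into the strict positivity required for absolute convergence, I would invoke the \SPCL{} subspace machinery developed in \S\ref{sec: further}. By Proposition \ref{prop: stronglyregular}, \nsr{} elements are precisely those whose support sets are attached to \emph{exceptional} \SPCL{} subspaces; since $\theta_\phi^{\sreg}$ omits these elements by construction, only non-exceptional \SPCL{} subspaces can contribute to the bound, and Proposition \ref{prop: convregs} upgrades Corollary \ref{cor: lam(U)} to a strictly positive estimate on each surviving support set. Using \eqref{eq: mdchr} to rewrite $|\det\rho(a)|^{-1}$ in terms of characters of $G/\centder{G}$, the outer $a$-integration then decouples, up to harmless constants, into one-dimensional Mellin integrals along the fundamental characters in $\fundchr$, each converging precisely when $\Re\lambda_i>0$. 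Holomorphicity and continuity of $Z(\phi,\lambda)$ in $\phi\in C_{\rd}(V(\A))$ are both inherited directly from the continuous seminorm control built into Proposition \ref{prop: bndtheta}.

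The hard part is exactly the passage from Corollary \ref{cor: lam(U)} to Proposition \ref{prop: convregs}: bare non-negativity does not suffice to integrate against the twist $a^{-\lambda}|\det\rho(a)|^{-1}$, and the whole point of the \SPCL{} subspace formalism is to identify the ``boundary'' support sets at which equality occurs as being attached precisely to exceptional \SPCL{} subspaces---hence to exactly those \nsr{} orbits that were removed in passing from $\theta_\phi$ to $\theta_\phi^{\sreg}$. Once this strict positivity is in place, the remaining steps are a routine reduction-theoretic unfolding followed by a standard Mellin transform computation on a rapidly decreasing function.
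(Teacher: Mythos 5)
Your proposal follows the paper's proof in all essential steps: reduction to a Siegel domain, decomposition of $\theta_\phi^{\sreg}$ by weight-support sets, the lattice-point bound of Lemma \ref{lem: Latticepoints} for each $\Theta_U^{\sreg}$, upgrading the non-negativity of Corollary \ref{cor: lam(U)} to strict positivity on the relevant cone via Proposition \ref{prop: convregs} (whose proof uses Proposition \ref{prop: stronglyregular} and the enveloping construction $\Env(U)\star U$ of Lemma \ref{lem: admsets}), and finally the cone-integral convergence criterion of Lemma \ref{lem: convergence}. The one cosmetic deviation is in the last step: the paper does not decouple the outer $a$-integration into one-dimensional Mellin integrals along $\fundchr$, but instead integrates the resulting positively-homogeneous exponent directly over the polyhedral cone $\cone_U\cap W$ via Lemma \ref{lem: convergence}, which is where the hypothesis $\Re\lambda\in\R_{>0}\fundchr$ is used.
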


Roughly speaking, Theorem \ref{thm: convergence} means that the only problematic terms in \eqref{eq: naivezeta} are those arising
from the orbits $\rho(G(F))v$ where $\vlum_v=\infty$.

The proof of Theorem \ref{thm: convergence} is based on a direct estimation of the theta function $\theta_\phi^{\sreg}$ using standard lattice-point counting methods.
In fact, this idea has already been used previously by Yukie \cites{MR1267735, MR1369477} to obtain partial results in the direction of Theorem \ref{thm: convergence},
mostly via a case-by-case analysis, for absolutely irreducible representations $\rho$.\footnote{Yukie proved a general qualitative convergence result for distinguished,
absolutely irreducible {\PVS}s.}
Our approach is completely general and avoids any classification result. Also, the possibility to define a zeta function in the case where $\reducd{\aaa}_G^* \neq \aaa_G^*$
does not seem to have been noticed in the literature. 

An alternative classification-free method was developed by F. Sat\B{o} \cite{MR695661} and H. Saito \cite{MR1994885}, among others,
using the decomposition of $\Vreg(F)$ into $G(F)$-orbits (Galois cohomology) and an analysis of the resulting Euler products.
Saito proves Theorem \ref{thm: convergence} under the additional assumption that $H^\circ$ is semisimple and the component group $H/H^\circ$
is abelian. Under the weaker assumption $\reducd{\aaa}_G^* = \aaa_G^*$, i.e., $X^*(H)$ finite, 
he proves convergence for all $\Re\lambda$ in some cone of the form 
$\lambda_0+\R_{>0}\fundchr$.\footnote{Strictly speaking, Saito assumes that there exists $v\in\Vreg(F)$ such that $X^*(G_v^\circ)=1$.
By Lemma \ref{lem: charconn} and Lemma \ref{lem: isot} this implies that $v\in\Vreg(F)^{\sreg}$ and $X^*(H)$ is finite.
Conversely, if $X^*(H)$ is finite, then $\Vreg(F)^{\sreg}=\{v\in\Vreg(F)\mid X^*(G_v^\circ)=1\}$.
Also, he considers a more general situation where $G$ is a connected linear algebraic group and $\Vreg$ the complement of a hypersurface.}

We expect Theorem \ref{thm: convergence} to hold without the regularity assumption on $V$.
In the case where there are no relative invariants (in which $V$ is called \emph{special}), the set $G(\A)\Vreg(F)$ is open in $V(\A)$,
and the integral $\int_{G(F)\bs G(\A)^1}\theta_\phi(g)\ dg$ converges to a constant multiple of $\int_{G(\A)\Vreg(F)}\phi(x)\ dx$.
(See \cites{MR0230726, MR1413009} and the discussion in \cite{MR3675167}*{\S2.4}. The set $G(\A)\Vreg(F)$ is described in \cite{MR1731463}*{Proposition 1.7}.)

\section{Bounding the theta function} \label{sec: thetabnd}
We start with a general bound on $\theta_\phi(g)$ (Proposition \ref{prop: bndtheta} below) which is valid without the regularity assumption,
but is by itself insufficient to prove convergence of the zeta integral of $\theta_\phi^{\sreg}(g)$. 
To state it, we need to set some more notation which will be used throughout the paper.

\subsection{}
Recall that $G$ is a reductive group over a number field $F$.
Fix a maximal $F$-split torus $T_0$ of $G$ and a minimal parabolic $F$-subgroup $P_0$ of $G$ containing $T_0$.
We have a Levi decomposition $P_0=M_0\ltimes N_0$ where $M_0$ is the centralizer of $T_0$ in $G$ and $N_0=\rad{P_0}$,
which is a maximal unipotent $F$-subgroup of $G$.

If $P$ is a standard parabolic subgroup of $G$, then we denote by $M_P$ its standard Levi subgroup.

For simplicity we write $\aaa_0=\aaa_{P_0}=\aaa_{M_0}\simeq\aaa_{T_0}$.
Let
\[
H_0:M_0(\A)\rightarrow\aaa_0
\]
be the group homomorphism defined by
\[
e^{\sprod{\chi}{H_0(m)}}=\abs{\chi(m)},\ \ m\in M_0(\A),\ \chi\in X^*(M_0).
\]

Let $\Delta_0\subset X^*(T_0)$ be the set of simple roots of $T_0$ on $\Lie N_0$.

Fix a maximal compact subgroup $K$ of $G(\A)$ which is in good position with respect to $P_0$.

For any $g\in G(\A)$, we write $m_0(g)\in M_0(\A)/(M_0(\A)\cap K)$ for the $M_0(\A)$-part in the Iwasawa decomposition
$G(\A)=N_0(\A)M_0(\A)K$ of $g$.

We extend $H_0$ to a left-$N_0(\A)$ and right-$K$-invariant function on $G(\A)$.
Thus,
\[
H_0(g)=H_0(m_0(g)).
\]

Fix a Siegel set $\Siegel$ in $G(\A)$ of the form \label{sec: Siegel}
\[
\Siegel=\{g\in G(\A)\mid\sprod{\alpha}{H_0(g)}>c_0\ \forall\alpha\in\Delta_0\}
\]
for some fixed constant $c_0$. Thus,
\begin{enumerate}
\item $\Siegel$ is left $P_0(F)N_0(\A)$-invariant.
\item For any $g\in G(\A)$, the set $\{\gamma\in P_0(F)\bs G(F)\mid \gamma g\in\Siegel\}$ is finite.
\item There exists a compact subset $\Omega\subset G(\A)$ such that
\begin{equation} \label{eq: SiegT}
\Siegel\subset P_0(F)(T_0(\A)\cap\Siegel)\Omega.
\end{equation}
\item $G(F)\Siegel=G(\A)$ provided that $c_0$ is sufficiently negative.
\end{enumerate}

Let $\delta_{P_0}$ be the modulus function of $P_0(\A)$.

\subsection{}
Recall that $V$ is a \PVS. (For now, we do not assume that $V$ is regular.)
The first order of business is the following result, which will be proved in the rest of this section.

\begin{proposition} \label{prop: bndtheta}
There exists a continuous seminorm $\nu$ on $C_{\rd}(V(\A))$ such that
\[
\abs{\theta_\phi(g)}\le\delta_{P_0}(m_0(g))\abs{\det\rho(g)}\nu(\phi)
\]
for all $\phi\in C_{\rd}(V(\A))$ and $g\in \Siegel$.
\end{proposition}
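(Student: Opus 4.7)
\textbf{Step 1. Reduction to the split torus.} By \eqref{eq: SiegT} and $G(F)$-invariance of $\theta_\phi$, any $g \in \Siegel$ factors as $\gamma t\omega$ with $\gamma \in P_0(F)$, $t \in T_0(\A) \cap \Siegel$, and $\omega$ in a fixed compact set $\Omega$. The product formula gives $H_0(g) = H_0(t) + O(1)$, while $\omega \mapsto \phi \circ \rho(\omega)^{-1}$ is continuous into a bounded subset of $C_{\rd}(V(\A))$. It thus suffices to bound $|\theta_\phi(t)|$ for $t \in T_0(\A) \cap \Siegel$ by $\delta_{P_0}(t)|\det\rho(t)|\nu(\phi)$.

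\textbf{Step 2. Weight decomposition and per-support estimates.} The $F$-split torus $T_0$ gives an $F$-rational decomposition $V = \bigoplus_{\mu \in \wgts} V_\mu$, with $\rho(t)$ acting on $V_\mu$ by the scalar $t^\mu$. For $v = \sum v_\mu \in V(F)$ record the support $S(v) = \{\mu : v_\mu \neq 0\}$. Rapid decay of $\phi$ together with the elementary bound $(1 + \|w\|)^{|\wgts|} \geq \prod_\mu (1 + \|w_\mu\|)$ yields, for any $N$,
\[
|\phi(w)| \leq \nu_N(\phi)\, \prod_\mu (1 + \|w_{\mu,\infty}\|)^{-N}\, \mathbf{1}_{K_V}(w_\fin),
\]
for some compact $K_V \subset V(\A_\fin)$ and continuous seminorm $\nu_N$. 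Partitioning $\theta_\phi(t)$ by $S(v)$ and counting adelic lattice points one weight at a time, each support class $S$ contributes at most a bounded constant times
\[
\prod_{\mu \in S} \max\bigl(|t^\mu|^{\dim_F V_\mu},\ |t^\mu|^N\bigr),
\]
the first alternative handling $|t^\mu| \geq 1$ and the second (from rapid decay plus the positive lower bound on nonzero lattice-point norms at the archimedean place, valid once $v_\fin$ lies in a compact set) handling $|t^\mu| < 1$.

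\textbf{Step 3. Conclusion and main obstacle.} Setting $H = H_0(t)$ and $\eta_V = \sum_\mu \mu \dim_F V_\mu$ (the $T_0$-character of $\det\rho$), the target bound $\delta_{P_0}(t)|\det\rho(t)| = t^{2\rho_0 + \eta_V}$ follows once we verify, for every support $S$ realised by some $v \in \Vreg(F)$ and every $H$ in the shifted chamber $\{H : \alpha(H) > c_0 \text{ for all } \alpha \in \Delta_0\}$,
\[
\sum_{\mu \in S,\,\mu(H) > 0} \mu(H)\,\dim_F V_\mu \ +\ N \sum_{\mu \in S,\,\mu(H) < 0} \mu(H) \;\leq\; (2\rho_0 + \eta_V)(H) + O_{c_0}(1),
\]
with $N$ chosen large. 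The principal obstacle is that over all of $V(F)$ this inequality genuinely fails: already for $\GL_2$ acting on $V = \Sym_2$, in the deep corner where both weights of $V$ are negative on $H$, the singleton support $S = \{(2,0)\}$ realised by $v = a E_{11}$ would provide an unbounded counter-example were $v$ admissible, and the restriction to $\Vreg(F)$ eliminates it precisely because $\det v = 0$ for such $v$. That the regularity condition universally rules out the offending support patterns is a non-negativity property for supports of elements of $\Vreg(F)$, namely Corollary \ref{cor: lam(U)}, to be established in the remainder of this section. The secondary technicality is the adelic uniformity of the per-weight lattice counting as $t_\fin$ varies unboundedly along $\Siegel$.
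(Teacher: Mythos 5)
Your proposal is correct and follows essentially the same route as the paper: reduction to the split torus via the Siegel decomposition \eqref{eq: SiegT}, splitting $\theta_\phi$ by weight-support, per-weight lattice-point counting (the paper's Lemma \ref{lem: Latticepoints}), and the crucial non-negativity statement Corollary \ref{cor: lam(U)} to dominate the resulting exponent on the Siegel chamber. One typo worth fixing: the per-support bound in Step~2 should be the $\min$ (casewise $\abs{t^\mu}^{\dim_F V_\mu}$ for $\abs{t^\mu}\ge1$ and $\abs{t^\mu}^N$ for $\abs{t^\mu}<1$), not the $\max$ --- as literally written the $\max$ is too weak and the inequality in Step~3 would fail --- though the sentence immediately following makes the intended casewise estimate clear.
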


Note that $\abs{\det\rho}$ is the modulus function for the action of $G(\A)$ on $V(\A)$, that is,
\[
\int_{V(\A)}f(\rho(g)^{-1}v)\ dv=\abs{\det\rho(g)}\int_{V(\A)}f(v)\ dv
\]
for any $f\in L^1(V(\A))$ and $g\in G(\A)$.

\subsection{}

Let $\rts_G$ be the set of roots of $T_0$ on $G$ and let $\wgts_V$ be the set of weights of $T_0$ on $V$.
The set $\rts_G$ is a (not necessarily reduced) root system and the co-roots are in $X_*(T_0)$.
We decompose $\Liealg{G}$ and $V$ according to the roots and weights:
\begin{subequations}
\begin{align}
\Liealg{G}&=\oplus_{\alpha\in\rts_G\cup\{0\}}\Liealg{G}_\alpha,\\
V&=\oplus_{\beta\in\wgts_V}V_\beta. \label{eq: Vdecomp}
\end{align}
\end{subequations}
The vector spaces $\Liealg{G}_\alpha$, $\alpha\in\rts_G$, are not necessarily one-dimensional unless $G$ is split.
Moreover, $\Liealg{G}_\alpha$ is not necessarily a commutative Lie subalgebra (namely, if $2\alpha\in\rts_G$).
The spaces $\Liealg{G}_\alpha$ and $V_\beta$ are invariant under $M_0$. We write
\[
n_\beta=\dim V_\beta,\ \ \beta\in\wgts_V,
\]
and
\[
\delta_V=\det\rho\rest_{T_0}=\sum_{\beta\in\wgts_V}n_\beta\beta\in X^*(T_0).
\]
By convention, $V_\gamma=0$ if $\gamma\notin\wgts_V$.
Note that
\begin{equation} \label{eq: drhoab}
d\rho(\Liealg{G}_\alpha)(V_\beta)\subset V_{\alpha+\beta},\ \ \alpha\in\rts_G,\beta\in\wgts_V,
\end{equation}
where $d\rho:\Liealg{G}\rightarrow\Liealg{GL}(V)$ is the Lie algebra representation obtained by differentiating $\rho$.

Let
\[
\delta_0=\sum_{\alpha\in\rts_G^+}(\dim\Liealg{G}_\alpha)\alpha\in X^*(T_0),
\]
where $\rts_G^+\subset\rts_G$ is the subset of positive roots with respect to $P_0$, so that
\[
\delta_{P_0}(m)=e^{\sprod{\delta_0}{H_0(m)}},\ \ m\in M_0(\A).
\]

We enumerate the roots $\rts_G\cup\{0\}$ (including multiplicities) as $\alpha_i$, $i\in\indx_G$ for some index set $\indx_G$, and fix a basis
$\basis_G=(x_i)_{i\in\indx_G}$ for $\Liealg{G}$ such that $x_i\in\Liealg{G}_{\alpha_i}$ for all $i$.
Similarly, we enumerate the weights $\wgts_V$ including multiplicities as $\beta_j$, $j\in\jndx_V$, and fix a basis
$\basis_V=(y_j)_{j\in\jndx_V}$ for $V$ such that $y_j\in V_{\beta_j}$ for all $j$.

Let $\Sub(V)$ be the (finite) set of subspaces of $V$ that are sums of weight subspaces.
Thus, every $U\in\Sub(V)$ is of the form
\[
U=\oplus_{\beta\in\wgts_U}V_\beta,
\]
where $\wgts_U$ is a subset of $\wgts_V$. We will also write
\begin{equation} \label{def: jndxUV}
\jndx_{V/U}=\{j\in\jndx_V\mid\beta_j\notin\wgts_U\}.
\end{equation}

\begin{lemma} \label{lem: suppRI}
Let $\psi\in\Z_{\ge0}\fundchr$, so that $f=\RI_\psi$ is a polynomial. Let $U\in\Sub(V)$ be such that $f\rest_U\not\equiv0$.
Then,
\begin{enumerate}
\item $\psi\in\Z_{\ge0}\wgts_U$. 
\item There exists $U'\in\Sub(V)$, $U'\subset U$ such that $f\rest_{U'}\not\equiv0$ and $\psi\in\R_{>0}\wgts_{U'}$.
\end{enumerate}
\end{lemma}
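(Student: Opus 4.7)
The plan is to expand $f = \RI_\psi$ as a polynomial in the linear coordinates on $V$ dual to the basis $\basis_V$. Since $f$ transforms by the character $\psi$ under $\rho(T_0)$ and a monomial of multi-degree $(m_j)_{j \in \jndx_V}$ in these coordinates has $T_0$-weight $\sum_j m_j \beta_j$, every monomial appearing in $f$ with nonzero coefficient satisfies the identity $\sum_j m_j \beta_j = \psi$ with exponents $m_j \in \Z_{\ge 0}$.

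For part (1), the hypothesis $f\rest_U \not\equiv 0$ forces at least one such monomial to have $m_j = 0$ for every $j \in \jndx_{V/U}$. The corresponding weight identity then exhibits $\psi$ as a nonnegative integer combination of elements of $\wgts_U$, giving $\psi \in \Z_{\ge 0}\wgts_U$.

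For part (2), I would select, among the monomials of $f$ that survive the restriction to $U$, one whose weight-support $\wgts_{U'} := \{\beta_j : m_j > 0\}$ is minimal under inclusion, and set $U' := \oplus_{\beta \in \wgts_{U'}} V_\beta$, which is automatically a subspace of $U$. For this chosen monomial all the exponents are strictly positive over $\wgts_{U'}$, so the weight identity reads $\psi \in \Z_{>0}\wgts_{U'} \subset \R_{>0}\wgts_{U'}$. The same monomial is still present in $f\rest_{U'}$, and since distinct monomials in the polynomial coordinate ring of $V$ are linearly independent, it cannot be cancelled by any other surviving monomials; therefore $f\rest_{U'} \not\equiv 0$.

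I do not anticipate any serious obstacle: the lemma is essentially a book-keeping exercise about how a $T_0$-semi-invariant polynomial interacts with the weight-space decomposition \eqref{eq: Vdecomp}. The only point that deserves a brief remark is the non-cancellation of the monomial chosen in part (2), and this is immediate from the linear independence of distinct monomials in a polynomial ring.
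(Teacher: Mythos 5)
Your proof is correct and follows essentially the same approach as the paper: expand $f$ in the monomial basis dual to $\basis_V$ and exploit the weight identity $\sum_j m_j\beta_j = \psi$ for each monomial with nonzero coefficient, together with linear independence of distinct monomials. The only minor difference is in part (2): you take $U'$ to be the weight-support of a single monomial surviving the restriction to $U$ (which directly yields the stronger $\psi\in\Z_{>0}\wgts_{U'}$, and the minimality condition you impose on the chosen monomial is harmless but never actually used), whereas the paper takes $U'$ to be the union of the weight-supports of \emph{all} surviving monomials and deduces $\psi\in\R_{>0}\wgts_{U'}$ by averaging the resulting weight identities.
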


\begin{proof}
Let $\basis_{V^*}=(y_j^*)_{j\in\jndx_V}$ be the basis of $V^*$ dual to $\basis_V$ and let $\basis_{F[V]}$ be the basis of $F[V]$ consisting of monomials in $\basis_{V^*}$.
By definition, the weight of a monomial $y_{j_1}^*\cdots y_{j_k}^*$ in $\basis_{F[V]}$ is the sum $\beta_{j_1}+\dots+\beta_{j_k}$.
We view $F[U]$ as a subalgebra of $F[V]$ using the unique projection $V\rightarrow U$ with kernel in $\Sub(V)$.
The basis $\basis_{F[V]}$ contains $\basis_{F[U]}$ (defined in the obvious way).

Let $h=f\rest_U$.
Expand $h$ in the basis $\basis_{F[U]}$.
Since $f=\RI_\psi$, the weight of any basis element with nonzero coefficient is $\psi$. The first part follows.

Let $U'$ be the smallest subspace in $\Sub(V)$ such that $h\in F[U']$ (viewed as a subspace of $F[U]$ via the projection $U\rightarrow U'$ with kernel in $\Sub(V)$).
Then, $\wgts_{U'}$ is the union of $\{\beta_{j_1},\dots,\beta_{j_k}\}$ as we range over all the monomials
$y_{j_1}^*\dots y_{j_k}^*$ in $\basis_{F(U)}$ with nonzero coefficients in the expansion of $h$.
Once again, since $f=\RI_\psi$, for any basis element with nonzero coefficient as above we have $\beta_{j_1}+\dots+\beta_{j_k}=\psi$.
Averaging over all these expressions of $\psi$ we deduce that $\psi\in\R_{>0}\wgts_{U'}$.
\end{proof}

Consider the set $\Minset(V)$ of subspaces in $\Sub(V)$ that contain a regular element:
\[
\Minset(V)=\{U\in\Sub(V)\mid U\cap \Vreg\ne\emptyset\}.
\]

\begin{corollary} \label{cor: fundchrS}
Let $U\in\Minset(V)$. Then,
\begin{enumerate}
\item \label{part: nonneg} $\fundchr\subset\Z_{\ge0}\wgts_U$.
\item \label{part: psipos} Assume that $V$ is regular.
Let $\psi\in \Z_{>0}\fundchr$.
Then there exists $U'\in\Minset(V)$, $U'\subset U$ such that $\psi\in\R_{>0}\wgts_{U'}$.
\end{enumerate}
\end{corollary}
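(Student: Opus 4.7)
The plan is to deduce both parts directly from Lemma \ref{lem: suppRI} by applying it to suitable relative invariants. The only conceptual input beyond that lemma is the characterization of regular \PVS{}s as those where $\Vreg$ is the complement of the hypersurface $\{\RI_{\chi_1}\cdots\RI_{\chi_r}=0\}$, allowing us to translate ``contains a regular element'' into ``some relative invariant polynomial is not identically zero.''

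For part \ref{part: nonneg}, fix a fundamental character $\chi\in\fundchr$. Since $\chi\in X^*(G/\centder{G})$, the fundamental relative invariant $f=\RI_\chi$ is a polynomial, regular and non-vanishing on $\Vreg$. Because $U\in\Minset(V)$ contains an element of $\Vreg$, the restriction $f\rest_U$ is not identically zero. Lemma \ref{lem: suppRI}(1) applied to $f$ with $\psi=\chi$ then gives $\chi\in\Z_{\ge0}\wgts_U$. Running over all $\chi\in\fundchr$ yields the desired inclusion.

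For part \ref{part: psipos}, write $\psi=\sum_{i=1}^r n_i\chi_i$ with each $n_i\in\Z_{>0}$ and set
\[
f=\prod_{i=1}^r\RI_{\chi_i}^{n_i}.
\]
This is a polynomial relative invariant with character $\psi$, i.e., $f=\RI_\psi$ up to a scalar. Since $V$ is regular, by the characterization of regularity recalled before Lemma \ref{lem: restggv}, the non-vanishing locus of $f$ is precisely $\Vreg$. Because $U\in\Minset(V)$, we have $f\rest_U\not\equiv0$. Now apply Lemma \ref{lem: suppRI}(2) to obtain $U'\in\Sub(V)$ with $U'\subset U$, $f\rest_{U'}\not\equiv0$, and $\psi\in\R_{>0}\wgts_{U'}$. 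The non-vanishing of $f$ on $U'$ combined with the regularity characterization forces $U'\cap\Vreg\ne\emptyset$, i.e., $U'\in\Minset(V)$, which finishes the proof.

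There is no real obstacle here; the only subtlety is making sure we invoke the correct version of Lemma \ref{lem: suppRI} with a relative invariant whose non-vanishing locus actually equals $\Vreg$. This is where the hypothesis that $V$ is regular (and the positivity of every coefficient $n_i$, which ensures that every $\RI_{\chi_i}$ appears in $f$) is essential: without it, the polynomial produced from $\psi$ might vanish somewhere on $\Vreg$, and the implication ``$f\rest_{U'}\not\equiv0\Rightarrow U'\cap\Vreg\ne\emptyset$'' could fail.
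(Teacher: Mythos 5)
Your proof is correct and follows essentially the same route as the paper: both apply Lemma \ref{lem: suppRI} to the polynomial $\RI_\psi$, use $U\in\Minset(V)$ to get $\RI_\psi\rest_U\not\equiv0$, and then invoke the regularity characterization $\Vreg=\{v\mid\RI_\psi(v)\ne0\}$ (valid since every $n_i>0$) to conclude $U'\cap\Vreg\ne\emptyset$. The only cosmetic difference is that you treat each $\chi\in\fundchr$ separately in part (1) while the paper handles an arbitrary $\psi\in\Z_{\ge0}\fundchr$ at once; the substance is identical.
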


\begin{proof}
Let $\psi\in\Z_{\ge0}\fundchr$ and let $f=\RI_\psi$.
Since $U\in\Minset(V)$, the restriction of $f$ to $U$ is not identically zero.
By Lemma \ref{lem: suppRI}, $\psi\in\Z_{\ge0}\wgts_U$, which implies the first part, and there exists $U'\subset U$, $U'\in\Sub(V)$ such that
$f\rest_{U'}\not\equiv0$ and $\psi\in\R_{>0}\wgts_{U'}$.

If $V$ is regular and $\psi\in \Z_{>0}\fundchr$, then $\Vreg=\{v\in V\mid f(v)\ne0\}$. Hence, $U'\cap \Vreg\ne\emptyset$.
The second part follows.
\end{proof}

A connected algebraic $F$-subgroup $R$ of $G$ is called \emph{$T_0$-saturated} if its Lie algebra
is of the form $\oplus_{\alpha\in S}\Liealg{G}_\alpha$ for some subset $S$ of $\rts_G\cup\{0\}$.
In this case we write $\rts_R = S \setminus \{0\}$.
Examples of $T_0$-saturated subgroups are parabolic $F$-subgroups containing $T_0$ and their unipotent radicals, and semistandard Levi subgroups. 
If $R$ is $T_0$-saturated, then we write
\begin{equation} \label{def: indxX}
\indx_R=\{i\in\indx_G\mid \Liealg{G}_{\alpha_i} \subset \Lie R \}.
\end{equation}

For any subspace $U\subset V$ denote by $\Stab(U)=\{g\in G\mid \rho(g)U=U\}$ the stabilizer of $U$. The Lie algebra of $\Stab(U)$ is
$\{x\in\Liealg{G}\mid d\rho(x)U\subset U\}$ \cite{MR1102012}*{Lemma II.7.4}.

Let $U\in\Sub(V)$ and let $R$ be a $T_0$-saturated subgroup of $G$. We denote by $R\star U$ the smallest $R$-stable space in $\Sub(V)$ containing $U$.

The following is an immediate consequence of \eqref{eq: drhoab}.
\begin{lemma} \label{lem: Hclos}
Suppose that $S$ is a subset of $\wgts_V$ containing $\wgts_U$ such that
for all $\beta\in S$ and $\alpha\in\rts_R$ with $\beta+\alpha\in\wgts_V$, we have $\beta+\alpha\in S$.
Then, $\wgts_{R\star U}\subset S$. In particular,
for every $\beta\in\wgts_{R\star U}$ there exist $\gamma\in \wgts_U$ and $\alpha_1,\dots,\alpha_k\in\rts_R$ such that $\beta=\gamma+\alpha_1+\dots+\alpha_k$.
\end{lemma}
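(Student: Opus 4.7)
The plan is to verify this lemma by the standard infinitesimal-stability argument for connected algebraic subgroups, then deduce the ``in particular'' clause by choosing the minimal admissible $S$.

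First I would set $W=\oplus_{\beta\in S}V_\beta\in\Sub(V)$, which by hypothesis contains $U$. By the defining property of $R\star U$, it suffices to show that $W$ is $R$-stable, for then $R\star U\subseteq W$ and hence $\wgts_{R\star U}\subseteq S$. Since $R$ is connected (it is a $T_0$-saturated connected $F$-subgroup of $G$), $R$-stability of $W$ is equivalent to stability under $d\rho(\Lie R)$. Because $\Lie R=\oplus_{\alpha\in\rts_R\cup\{0\}}\Liealg{G}_\alpha$, it in turn suffices to check that $d\rho(\Liealg{G}_\alpha)V_\beta\subseteq W$ for all $\beta\in S$ and all $\alpha\in\rts_R\cup\{0\}$. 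This is precisely what \eqref{eq: drhoab} and the hypothesis on $S$ give us: for $\alpha=0$ we get $d\rho(\Liealg{G}_0)V_\beta\subseteq V_\beta\subseteq W$ (since the zero weight space of $\Liealg{G}$ is $\Lie M_0$, which preserves each $V_\beta$); for $\alpha\in\rts_R$ we have $d\rho(\Liealg{G}_\alpha)V_\beta\subseteq V_{\alpha+\beta}$, which is either $0$ (if $\alpha+\beta\notin\wgts_V$) or contained in $W$ (since then $\alpha+\beta\in S$ by hypothesis).

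For the ``in particular'' clause, I would define
\[
S^{\ast}=\{\beta\in\wgts_V\mid\beta=\gamma+\alpha_1+\cdots+\alpha_k\text{ for some }k\ge0,\ \gamma\in\wgts_U,\ \alpha_i\in\rts_R\}.
\]
Taking $k=0$ shows $\wgts_U\subseteq S^{\ast}$, and the defining condition is manifestly closed under adding an element of $\rts_R$ (provided the result lies in $\wgts_V$). Thus $S^{\ast}$ satisfies the hypothesis of the first part, so $\wgts_{R\star U}\subseteq S^{\ast}$, which is the desired description.

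The argument is essentially routine once the right $W$ is singled out; the only mild subtlety is invoking the passage from $\Lie R$-stability to $R$-stability, for which connectedness of $R$ (built into its definition as a $T_0$-saturated subgroup here) and the characteristic-zero hypothesis are used. No serious obstacle is anticipated.
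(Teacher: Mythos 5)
Your proof is correct and fills in exactly the reasoning the paper leaves implicit when it says the lemma is ``an immediate consequence of \eqref{eq: drhoab}'': you pass from $R$-stability to $\Lie R$-stability via connectedness of $R$, apply the weight-shift formula \eqref{eq: drhoab}, and then get the ``in particular'' clause by specializing $S$ to the set of weights reachable from $\wgts_U$ by adding roots of $R$. This matches the intended argument, just spelled out in full.
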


\begin{remark}
It will follow from Lemma \ref{lem: sumofrts} below that $\wgts_{R\star U}$ is the smallest subset $S$ of $\wgts_V$ containing $\wgts_U$ with the property that
for every $\beta\in S$ and $\alpha\in\rts_R$ such that $\beta+\alpha\in\wgts_V$, we have $\beta+\alpha\in S$.
\end{remark}

\subsection{}
For any $U\in\Sub(V)$ define the gauge
\begin{equation} \label{def: lambda(U)}
\lambda(U)=\delta_0+\sum_{\beta\in\wgts_V\setminus\wgts_U}n_\beta\beta=\delta_0+\delta_V-\sum_{\beta\in\wgts_U}n_\beta\beta\in X^*(T_0).
\end{equation}
It will turn out to be a key parameter governing the growth of the subsum of $\theta_\phi$ pertaining to $U$.

The following combinatorial-geometric lemma is crucial.

\begin{lemma} \label{lem: suppSbnd}
Let $U\in\Minset(V)$. Then, there exists a one-to-one function
\[
\iota:\jndx_V\rightarrow\indx_G
\]
such that
\[
\beta_j+\alpha_{\iota(j)}\in\wgts_U
\]
for all $j\in\jndx_V$.
\end{lemma}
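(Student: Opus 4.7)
The plan is to deduce the existence of $\iota$ from the surjectivity of the differential $D_v$ at a regular element $v \in U$, by combining a full-rank matrix argument with a weight calculation. Since $U \in \Minset(V)$, one may choose $v \in U \cap \Vreg$ (over $\bar F$ if needed; the resulting combinatorial data are Galois-invariant, so this suffices). By the discussion following \eqref{def: Dv}, $D_v : \Liealg{G} \to V$ is then surjective.

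Writing the matrix $M$ of $D_v$ with respect to the bases $\basis_G$ and $\basis_V$, we obtain a $|\jndx_V| \times |\indx_G|$ matrix of full row rank. Consequently there is a subset $I \subseteq \indx_G$ of size $|\jndx_V|$ for which the square submatrix $M|_I$ is invertible, and the Leibniz expansion of $\det M|_I$ produces an injection $\iota_0 : \jndx_V \to \indx_G$ with $M_{j, \iota_0(j)} \ne 0$ for every $j$. This is the standard extraction of a system of distinct representatives from a full-rank matrix, and is the main combinatorial content of the argument.

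It remains to identify the weight condition encoded in a nonzero entry of $M$. Expanding $v = \sum_{k :\, \beta_k \in \wgts_U} v_k y_k$ and using \eqref{eq: drhoab}, the entry $M_{j, i}$ is a sum over those indices $k$ with $\beta_k \in \wgts_U$ and $\beta_k + \alpha_i = \beta_j$, so nonvanishing forces $\beta_j - \alpha_i \in \wgts_U$. Applied with $i = \iota_0(j)$, this yields $\beta_j - \alpha_{\iota_0(j)} \in \wgts_U$ for all $j$.

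To convert the minus sign into the plus sign appearing in the conclusion, I would use that $\dim \Liealg{G}_\alpha = \dim \Liealg{G}_{-\alpha}$ for every $\alpha \in \rts_G \cup \{0\}$ to fix an involution $\sigma : \indx_G \to \indx_G$ with $\alpha_{\sigma(i)} = -\alpha_i$. Then $\iota := \sigma \circ \iota_0$ is still an injection and satisfies $\beta_j + \alpha_{\iota(j)} = \beta_j - \alpha_{\iota_0(j)} \in \wgts_U$. The one step that carries the real content is the passage from surjectivity of $D_v$ to the matching $\iota_0$; the weight identification and the sign change are purely formal.
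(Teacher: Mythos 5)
Your argument is correct and follows the same route as the paper: choose $v\in U\cap\Vreg$, use surjectivity of $D_v$, extract a nonzero $\dim V\times\dim V$ minor to get a system of distinct representatives, read off the weight condition $\beta_j-\alpha_i\in\wgts_U$ from \eqref{eq: drhoab}, and finally flip the sign via $-\rts_G=\rts_G$. The paper compresses the sign flip into a one-line remark ("Note that we used that $G$ is reductive in the proof, namely that $-\rts_G=\rts_G$") whereas you spell it out with an explicit involution $\sigma$ on $\indx_G$, but the substance is identical.
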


\begin{proof}
Let $v_0\in U\cap \Vreg$. Recall the surjection $D_{v_0}$ \eqref{def: Dv}.
Since $v_0\in U$, $D_{v_0}\Liealg{G}_\alpha\subset V_{\alpha+\wgts_U}$ for all $\alpha\in\rts_G\cup\{0\}$ by \eqref{eq: drhoab}.
Writing the matrix of $D_{v_0}$ with respect to the bases $\basis_G$ and $\basis_V$, a nonzero entry at a position $(j,i)$, $j\in\jndx_V$, $i\in\indx_G$,
can only occur if $\beta_j - \alpha_i \in\wgts_U$.
A non-zero minor of order $\dim V$ therefore gives rise to a function $\iota$ as required.
\end{proof}

Note that we used that $G$ is reductive in the proof, namely that $-\rts_G=\rts_G$.

\begin{corollary} \label{cor: lam(U)}
Let $U\in\Minset(V)$. Then, for any $J\subset\jndx_V$ we have
\[
\delta_0+\sum_{j\in J}\beta_j\in\R_{\ge0}(\wgts_U\cup\Delta_0).
\]
In particular, we have
\begin{equation} \label{eq: lambdaUincone}
\lambda(U)\in\R_{\ge0}(\wgts_U\cup\Delta_0).
\end{equation}
\end{corollary}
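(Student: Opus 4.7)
The plan is to derive the corollary directly from Lemma \ref{lem: suppSbnd}, using the definition of $\delta_0$ and the injectivity of the map $\iota$. First I would apply Lemma \ref{lem: suppSbnd} to obtain an injection $\iota:\jndx_V\rightarrow\indx_G$ such that $\beta_j+\alpha_{\iota(j)}\in\wgts_U$ for every $j\in\jndx_V$. Rearranging gives the identity
\[
\beta_j=(\beta_j+\alpha_{\iota(j)})-\alpha_{\iota(j)},
\]
so that for any subset $J\subset\jndx_V$,
\[
\delta_0+\sum_{j\in J}\beta_j=\sum_{j\in J}(\beta_j+\alpha_{\iota(j)})+\Bigl(\delta_0-\sum_{j\in J}\alpha_{\iota(j)}\Bigr).
\]
The first summand already lies in $\Z_{\ge 0}\wgts_U$, so it suffices to show that the bracketed term lies in $\Z_{\ge 0}\Delta_0\subset\R_{\ge 0}\Delta_0$.

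For this second step I would use the enumeration convention, under which $\delta_0$ equals the sum $\sum_{i\in\indx_G,\ \alpha_i\in\rts_G^+}\alpha_i$, each positive root being counted with its multiplicity $\dim\Liealg{G}_\alpha$. Setting $I=\iota(J)$ and exploiting the injectivity of $\iota$ (so that $I$ has no repeated indices and the subtraction is term-by-term), I would decompose
\[
\delta_0-\sum_{i\in I}\alpha_i=\sum_{\substack{i\notin I\\ \alpha_i\in\rts_G^+}}\alpha_i+\sum_{\substack{i\in I\\ \alpha_i\in\rts_G^-}}(-\alpha_i),
\]
where the vanishing indices ($\alpha_i=0$) simply drop out. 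The right-hand side is a non-negative integer combination of positive roots, and since every positive root of $G$ is a non-negative integer combination of elements of $\Delta_0$, the whole expression lies in $\Z_{\ge 0}\Delta_0$. Combining the two summands establishes the first assertion, and the ``in particular'' part \eqref{eq: lambdaUincone} then follows immediately by specializing to $J=\jndx_{V/U}$, since $\lambda(U)=\delta_0+\sum_{j\in\jndx_{V/U}}\beta_j$ by \eqref{def: lambda(U)} and \eqref{def: jndxUV}.

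The only delicate point is the bookkeeping in the second step, where the injectivity of $\iota$ is what prevents the subtraction $\delta_0-\sum_{i\in I}\alpha_i$ from overshooting. Everything else reduces to the identity $-\rts_G^-=\rts_G^+$ (reductivity of $G$) and to the standard fact that $\rts_G^+\subset\Z_{\ge 0}\Delta_0$, both of which are automatic in the present setup.
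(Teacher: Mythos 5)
Your proof is correct and follows essentially the same route as the paper's: invoke Lemma \ref{lem: suppSbnd} for the injection $\iota$, write $\delta_0+\sum_{j\in J}\beta_j=\sum_{j\in J}(\beta_j+\alpha_{\iota(j)})+\bigl(\delta_0-\sum_{j\in J}\alpha_{\iota(j)}\bigr)$, and observe that the second term lies in $\R_{\ge0}\Delta_0$ by injectivity of $\iota$. The paper states this last step without elaboration; your explicit decomposition of $\delta_0-\sum_{i\in I}\alpha_i$ into positive-root contributions from $i\notin I$ with $\alpha_i>0$ and from $i\in I$ with $\alpha_i<0$ is a correct and slightly more detailed justification of the same observation.
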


\begin{proof}
We write
\[
\delta_0+\sum_{j\in J}\beta_j=\delta_0-\sum_{j\in J}\alpha_{\iota(j)}+\sum_{j\in J} (\beta_j+\alpha_{\iota(j)})
\]
and observe that $\delta_0-\sum_{j\in J}\alpha_{\iota(j)}\in\R_{\ge0}\Delta_0$ since $\iota$ is injective.

Taking $J=\jndx_{V/U}$ (cf.~\eqref{def: jndxUV} and \eqref{def: lambda(U)}) we obtain \eqref{eq: lambdaUincone}.
\end{proof}

\subsection{}
The proof of Proposition \ref{prop: bndtheta} is based on a standard bound for lattice sums. 

For any $\xi\in V$ denote by $\xi_\beta\in V_\beta$, $\beta\in\wgts_V$ the components of $\xi$ with respect to the decomposition \eqref{eq: Vdecomp}.

We write $x_\pm=\max(0,\pm x)$ for the positive and negative parts of a real number $x$, so that $x=x_+-x_-$.

\label{sec: llNphi}

For simplicity, we write $A(\cdot,\phi)\ll_{N,\phi}^{\rd} B_N(\cdot)$ if for every $N\ge0$
there exists a continuous seminorm $\nu_N$ on $C_{\rd}(V(\A))$ such that $\abs{A(\cdot,\phi)}\le B_N(\cdot)\nu_N(\phi)$ for every $\phi\in C_{\rd}(V(\A))$.

\begin{lemma} \label{lem: Latticepoints}
Let $\Omega$ be a compact subset of $G(\A)$. Let $U\in\Sub(V)$.
Then,
\[
\sum_{\xi\in U: \, \xi_{\beta} \neq 0\, \forall\beta \in\wgts_U} \abs{\phi(\rho(g)^{-1}\xi)} \ll_{N,\phi}^{\rd}
e^{\sum_{\beta\in\wgts_U} (n_\beta\sprod{\beta}{H_0(g)}_+ - N\sprod{\beta}{H_0(g)}_-)},\ \ g \in T_0(\A)\Omega.
\]
Equivalently,
\[
e^{\sprod{\lambda(U)}{H_0(g)}}\abs{\det\rho(g)}^{-1}\sum_{\xi\in U: \, \xi_{\beta} \neq 0\, \forall\beta \in\wgts_U} \abs{\phi(\rho(g)^{-1}\xi)} \ll_{N,\phi}^{\rd}
\delta_{P_0}(m_0(g))e^{-N\sum_{\beta\in\wgts_U}\sprod{\beta}{H_0(g)}_-}.
\]
In particular, for $U=V$
\[
\abs{\det\rho(g)}^{-1}\sum_{\xi\in V: \, \xi_{\beta} \neq 0\, \forall\beta \in\wgts_V} \abs{\phi(\rho(g)^{-1}\xi)} \ll_{N,\phi}^{\rd}
\min(1,\abs{\det\rho(g)})^N.
\]
\end{lemma}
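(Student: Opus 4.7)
The plan is to reduce to the case $g\in T_0(\A)$, decouple the weight components using rapid decrease, and estimate each weight-wise sum by a standard adelic lattice-point count.

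First, write $g=\tau\omega$ with $\tau\in T_0(\A)$ and $\omega\in\Omega$, and set $\phi_\omega(v):=\phi(\rho(\omega)^{-1}v)$. Since the regular representation of $G(\A)$ on $C_{\rd}(V(\A))$ is continuous and $\Omega$ is compact, every continuous seminorm of $\phi_\omega$ is bounded by a continuous seminorm of $\phi$ uniformly in $\omega\in\Omega$, so it suffices to prove the bound for $g=\tau$ (with $\phi_\omega$ in place of $\phi$). Next, the direct sum decomposition $V=\oplus_{\beta\in\wgts_V}V_\beta$ combined with $\norm{v_\beta}_\infty\le\norm{v}_\infty$ yields the elementary inequality $\prod_{\beta\in\wgts_V}(1+\norm{v_\beta}_\infty)\le C(1+\norm{v}_\infty)^{|\wgts_V|}$; feeding this into the rapid-decrease seminorms of $\phi_\omega$ gives, for every $N$,
\[
\abs{\phi_\omega(v)}\le\nu_N(\phi)\prod_{\beta\in\wgts_V}(1+\norm{v_\beta}_\infty)^{-N}
\]
on the (finite-place) support of $\phi$. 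Since $T_0$ acts on $V_\beta$ by the character $\beta(\tau)$, we have $\rho(\tau)^{-1}\xi=\sum_{\beta\in\wgts_U}\beta(\tau)^{-1}\xi_\beta$ for $\xi\in U$, and the displayed sum factors into a product of weight-wise sums
\[
S_\beta(\tau):=\sum_{\xi_\beta\in V_\beta(F)\setminus\{0\}}(1+\norm{\beta(\tau)^{-1}\xi_\beta}_\infty)^{-N},
\]
where $\xi_\beta$ is implicitly constrained by the finite-place support condition.

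Set $u_\beta:=\sprod{\beta}{H_0(\tau)}$, so that $\abs{\beta(\tau)}=e^{u_\beta}$. The rationality constraint together with the finite-place support condition places $\xi_\beta$ in a lattice in $V_\beta(F_\infty)$ whose covolume, by the product formula, is proportional to $e^{n_\beta u_\beta}$. When $u_\beta\ge 0$ this lattice is dense and a Riemann-sum (equivalently, Poisson-summation) comparison gives $S_\beta(\tau)\ll e^{n_\beta u_\beta}$ for $N$ large; when $u_\beta<0$ the lattice is sparse and its shortest nonzero vector has archimedean norm $\gtrsim e^{-u_\beta}$ (again by the product formula), so summing over dyadic shells produces $S_\beta(\tau)\ll e^{Nu_\beta}=e^{-N(u_\beta)_-}$. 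Either way
\[
S_\beta(\tau)\ll e^{n_\beta(u_\beta)_+-N(u_\beta)_-},
\]
and the product over $\beta\in\wgts_U$ yields the first displayed inequality. The equivalent reformulation is purely algebraic: using $\lambda(U)=\delta_0+\delta_V-\sum_{\beta\in\wgts_U}n_\beta\beta$ and $\abs{\det\rho(g)}=e^{\sprod{\delta_V}{H_0(g)}}$, multiplication by $e^{\sprod{\lambda(U)}{H_0(g)}}\abs{\det\rho(g)}^{-1}=\delta_{P_0}(m_0(g))e^{-\sum_{\beta\in\wgts_U}n_\beta\sprod{\beta}{H_0(g)}}$ cancels the positive parts and leaves $\prod_{\beta}e^{(n_\beta-N)\sprod{\beta}{H_0(g)}_-}$, which for $N$ sufficiently large is bounded by $\delta_{P_0}(m_0(g))e^{-N'\sum_{\beta\in\wgts_U}\sprod{\beta}{H_0(g)}_-}$ for any prescribed $N'$. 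The special case $U=V$ uses the further inequality $\sum_{\beta\in\wgts_V}n_\beta\sprod{\beta}{H_0(g)}_-\ge(-\sprod{\delta_V}{H_0(g)})_+$ to convert this into $\min(1,\abs{\det\rho(g)})^N$ for appropriately enlarged $N$.

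The main obstacle is the adelic lattice-point estimate above: one must simultaneously track the covolume contributions from both the archimedean and non-archimedean components of $\beta(\tau)$, and use the product formula to locate the shortest nonzero lattice vector in the sparse regime. Once this is phrased via the standard bound for Schwartz--Bruhat functions on $V_\beta(\A)$ (or via Poisson summation together with an appropriate change of variables), the remaining verifications are routine bookkeeping.
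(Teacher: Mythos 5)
Your proposal follows essentially the same route as the paper: reduce to $g\in T_0(\A)$ by continuity of the regular action on $C_{\rd}(V(\A))$, decouple the weight components (you via the bound $\prod_\beta(1+\norm{v_\beta}_\infty)^{-N}$, the paper via a factorizable majorant $\prod_\beta\phi_\beta(v_\beta)$ — interchangeable), and then reduce to a one-dimensional adelic lattice-point bound $\sum_{\xi\in F^\times}\varphi(t^{-1}\xi)\ll|t|^{n_\beta}$ for $|t|\ge1$ and $\ll|t|^N$ for $|t|\le1$, with the sparse regime controlled by the product formula. The only cosmetic difference is that the paper further reduces to $g\in T_0(\R)$ (using left $T_0(F)$-invariance) so that only the archimedean component scales, whereas you carry the adelic scalar and invoke the product formula to locate the shortest vector; both are fine. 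The algebraic reformulation and the $U=V$ specialization are handled the same way.
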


\begin{proof}
The first two statements are clearly equivalent by the definition \eqref{def: lambda(U)} of $\lambda(U)$.
Since $\rho$ acts continuously on $C_{\rd}(V(\A))$ we may assume without loss of generality that $g\in T_0(\A)$.
(In fact, since both sides are left $T_0(F)$-invariant, we may assume that $g\in T_0(\R)$ where $\R$ is viewed as a subring of $\A=\A_{\Q}\otimes F$
via $x\mapsto x\otimes 1$.)

Upon replacing $\phi$ by $\prod_{\beta\in\wgts_V}(1+\norm{v_{\beta,\infty}})^{-n}\one_{K_\beta}(v_{\beta,\fin})$, $n\gg0$
where $K_\beta$ is a compact open subgroup of $V_\beta(\A_{\fin})$ for every $\beta\in\wgts_V$,
we may also assume that $\phi\rest_{U (\A)}$ factors as a product $\phi (v) = \prod_{\beta \in\wgts_U} \phi_\beta (v_\beta)$,
$v \in U (\A)$, where each factor $\phi_\beta \in  C_{\rd}(V_\beta(\A))$ is nonnegative real.
Since $T_0$ acts by scalars on each space $V_\beta$, we are reduced to the assertion (for $t\in\A^\times$)
\[
\sum_{\xi_{\beta} \neq 0} \phi_\beta (t^{-1}\xi_{\beta})\ll_{N,\phi_\beta}^{\rd} \begin{cases}\abs{t}^{n_\beta}&\abs{t} \ge 1,\\
\abs{t}^{N}&\abs{t} \le 1.\end{cases}
\]
In turn, this immediately reduces to the following elementary statement for $\varphi\in C_{\rd}(\A)$:
\[
\sum_{\xi\in F^\times}\varphi(t^{-1}\xi)\ll_{N,\varphi}^{\rd}\begin{cases}\abs{t}&\abs{t}\ge1\\\abs{t}^N&\abs{t}\le1.\end{cases}
\]
To prove this, note that given a compact subset $K$ of $\A$, we have
\[
\#(F^\times\cap tK)\ll\abs{t}.
\]
In particular, $F^\times\cap tK=\emptyset$ if $\abs{t}$ is sufficiently small.
Thus, given a compact subset $K_{\fin}$ of $\A_{\fin}$,
\[
\#\{\xi\in F^\times\mid (t^{-1}\xi)_{\fin}\in K_{\fin}\text{ and }\norm{(t^{-1}\xi)_\infty}\le R\}\ll R^{[F:\Q]}\abs{t},\ \ R\ge0.
\]
The statement above follows easily from this estimate and the fact that $\varphi\in C_{\rd}(\A)$.
\end{proof}

\begin{proof}[Proof of Proposition \ref{prop: bndtheta}]
Define the support of an element $\xi\in V$ to be
\[
\supp\xi=\{\beta\in\wgts_V\mid\xi_\beta\ne0\},
\]
so that for any $U\in\Sub(V)$, $\xi\in U\iff\supp\xi\subset\wgts_U$.

We decompose $\theta_\phi$ according to the support of the summands, i.e., we write
\begin{equation} \label{eq: thetadecomp}
\theta_\phi(g)=\sum_{U\in\Sub(V)}\Theta_U(g)\text{  where }
\Theta_U(g)=\sum_{\xi\in \Vreg(F):\,\supp\xi=\wgts_U}\phi(\rho(g)^{-1}\xi).
\end{equation}
Of course, $\Theta_U$ is no longer $G(F)$-invariant or even $P_0(F)$-invariant, but it is $T_0(F)$-invariant.

At any rate, for the proof of Proposition \ref{prop: bndtheta} we may assume by \eqref{eq: SiegT} that $g\in (T_0(\A)\cap\Siegel)\Omega$, which we will do throughout.

Fix $U\in\Sub(V)$. By Lemma \ref{lem: Latticepoints}, we have
\begin{equation} \label{eq: BoundThetaU}
\Theta_U(g)\ll_{N,\phi}^{\rd} \delta_{P_0}(m_0(g))\abs{\det\rho(g)}e^{-\sprod{\lambda(U)}{H_0(g)} - N \sum_{\beta\in\wgts_U} \sprod{\beta}{H_0(g)}_-}.
\end{equation}
Moreover, we may assume that $U\in\Minset(V)$, for otherwise $\Theta_U(g)=0$. By \eqref{eq: lambdaUincone}, we have
\[
\lambda(U)=\sum_{\beta\in\wgts_U}c_\beta\beta+\sum_{\alpha\in\Delta_0}d_\alpha\alpha
\]
with suitable coefficients $c_\beta,d_\alpha\ge0$. Since $g\in (T_0(\A)\cap\Siegel)\Omega$, we deduce that
\[
\Theta_U(g)\ll_{N,\phi}^{\rd} \delta_{P_0}(m_0(g))\abs{\det\rho(g)}e^{-\sum_{\beta\in\wgts_U}(c_\beta \sprod{\beta}{H_0(g)}_+ +N\sprod{\beta}{H_0(g)}_-)}.
\]
In particular,
\begin{equation} \label{eq: thetacoarsebound}
\Theta_U(g)\ll_\phi^{\rd} \delta_{P_0}(m_0(g))\abs{\det\rho(g)}.
\end{equation}
The proposition follows.
\end{proof}

\section{Special subspaces} \label{sec: further}

In this section, we provide some geometric prerequisites to analyze further the behavior of $\theta_\phi$ on the Siegel domain.
A crucial role will be played by certain subspaces of $V$ that are {\PVS}s for parabolic subgroups of $G$ (cf. Definition \ref{def: special} below). 

\subsection{}

We start with the following elementary fact from convex geometry.

\begin{lemma} \label{lem: polycone}
Let $W$ be a finite-dimensional real vector space and let $v_i$, $i\in I$ be finitely many vectors in $W$.
Given $\lambda\in\R_{\ge 0}\{v_i, i\in I\}$ we write $\lambda=\sum_{i\in I}\lambda_i v_i$ with $\lambda_i\ge0$ for all $i$
and with maximal number of nonzero (i.e., positive) coefficients. Then,
\begin{enumerate}
\item $I':=\{i\in I\mid \lambda_i>0\}$ is uniquely determined by $\lambda$.
\item Suppose that $\lambda = \sum_{i\in I}\lambda'_i v_i$ with
$\lambda'_i \in \R$, $i \in I$, such that $\lambda'_i\ge0$ for all $i\notin I'$.
Then, $\lambda'_i=0$ for all $i\notin I'$.
\item \label{part: allin} In particular, for any $i\in I$, if $v_i$ is in the linear span of $\{v_j\}_{j\in I'}$ then $i\in I'$.
\end{enumerate}
\end{lemma}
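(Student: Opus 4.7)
The plan is to deduce all three parts from a single convexity observation: if $\lambda=\sum_{i\in I}a_iv_i=\sum_{i\in I}b_iv_i$ with $a_i,b_i\ge 0$, then for any $t\in[0,1]$ the convex combination $\lambda=\sum_{i\in I}(ta_i+(1-t)b_i)v_i$ is again a nonnegative representation, and for $t\in(0,1)$ its support is exactly $\{i:a_i>0\}\cup\{i:b_i>0\}$. In effect, the fiber of nonnegative representations of $\lambda$ is a convex polytope, and its relative interior consists of exactly those representations whose support is the (common) maximum, which will give part (1).

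For part (1), if $(\lambda_i)$ and $(\lambda'_i)$ are two nonnegative representations both realizing the maximum number of nonzero coefficients, then the averaged representation has support equal to the union of their supports, is again a valid nonnegative representation, and so by maximality the union coincides with each of the two supports, forcing them to be equal. For part (2), let $(\lambda_i)$ be the distinguished maximal representation (support $I'$) and let $(\lambda'_i)$ be as in the hypothesis. For $t\in[0,1]$ close to $1$, form $\mu_i(t)=t\lambda_i+(1-t)\lambda'_i$. For $i\in I'$ we have $\lambda_i>0$, so $\mu_i(t)>0$ whenever $t$ is sufficiently close to $1$; for $i\notin I'$ we have $\mu_i(t)=(1-t)\lambda'_i\ge 0$, with strict inequality whenever $\lambda'_i>0$ and $t<1$. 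If some $\lambda'_i>0$ occurred with $i\notin I'$, choosing such a $t\in(0,1)$ would produce a nonnegative representation of $\lambda$ whose support strictly contains $I'$, contradicting maximality.

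Part (3) follows from part (2) by a small perturbation argument. Suppose $v_i=\sum_{j\in I'}c_jv_j$ for some $c_j\in\R$. For $\epsilon>0$ we may rewrite
\[
\lambda=\sum_{j\in I'}\lambda_jv_j=\sum_{j\in I'}(\lambda_j-\epsilon c_j)v_j+\epsilon v_i.
\]
Taking $\epsilon$ small enough that $\lambda_j-\epsilon c_j>0$ for all $j\in I'$, we obtain a representation $\lambda=\sum_{k\in I}\mu_k v_k$ with $\mu_k\ge 0$ for every $k\notin I'$ (namely $\mu_i=\epsilon\ge 0$ and $\mu_k=0$ otherwise). Part (2) forces $\mu_k=0$ for all $k\notin I'$, hence $\epsilon=0$ if $i\notin I'$, a contradiction; so $i\in I'$.

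I do not anticipate a real obstacle: the only point requiring a bit of care is arranging, in part (2), that $t$ can be chosen simultaneously close enough to $1$ so that the $\mu_i(t)$ for $i\in I'$ remain strictly positive, which is automatic since $I'$ is finite and each $\lambda_i>0$ for $i\in I'$.
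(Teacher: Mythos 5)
Your proof is correct. The paper states Lemma \ref{lem: polycone} without proof, treating it as an elementary fact from convex geometry, so there is no authorial proof to compare against. Your argument is complete and careful: averaging two maximal nonnegative representations to get part (1), interpolating toward the maximal representation with $t$ close to $1$ to handle the possibly negative coefficients on $I'$ in part (2), and perturbing along a linear relation $v_i=\sum_{j\in I'}c_jv_j$ to reduce part (3) to part (2). The one point that needed care — choosing $t$ uniformly close to $1$ so that all $\mu_i(t)$, $i\in I'$, stay positive — you correctly flag and resolve using finiteness of $I'$.
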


\label{sec: PS}
In the situation of Lemma \ref{lem: polycone} we will say that $v_i$, $i\in I'$ is the \emph{\PS} of $\lambda$ with respect to $v_i$, $i\in I$.

Fix $U\in\Minset(V)$.
Recall that $\lambda(U) \in X^*(T_0)$ was defined in \eqref{def: lambda(U)} and that
$\lambda(U)\in\R_{\ge0}(\Delta_0\cup\wgts_U)$ by \eqref{eq: lambdaUincone}.
Let $\cone_U \subset \aaa_0$ be the cone dual to $\R_{\ge0}(\Delta_0\cup\wgts_U)$, that is,
\begin{equation} \label{def: CU}
\cone_U=\{x\in\aaa_{0,+}\mid\sprod{\beta}x\ge0\ \forall\beta\in\wgts_U\}.
\end{equation}
Let $\face_U$ be the face of $\cone_U$ defined by $\lambda(U)$, i.e.,
\[
\face_U=\{x\in\cone_U\mid\sprod{\lambda(U)}x=0\}.
\]
Roughly speaking, $\face_U$ describes the directions in $T_0(F)\bs (T_0(\A)\cap\Siegel)$ where the bounded function $\delta_{P_0}^{-1}\abs{\det\rho}^{-1}\Theta_U$ 
(see \eqref{eq: thetadecomp}) does not decay.
To analyze $\face_U$, let $A\subset\Delta_0\cup\wgts_U$ be the {\PS} of $\lambda(U)$ with respect to $\Delta_0\cup\wgts_U$. Let $I=A\cap\Delta_0$ and $J=A\cap\wgts_U$.

(Note that we do not assume at the outset that $\Delta_0$ and $\wgts_U$ are disjoint.)

\begin{definition}
Let $\Env(U)$ (the enveloping group of $U$) be the standard parabolic subgroup of $G$ such that $I=\Delta_0^{\Env(U)}$. 
\end{definition}

Then,
\[
\face_U=\{x\in\cone_U\mid\sprod{\alpha}x=\sprod{\beta}x=0\ \forall\alpha\in\Delta_0^{\Env(U)},\beta\in J\}.
\]

\begin{remark}
Let $U\in\Minset(V)$ and let $\iota$ be as in Lemma \ref{lem: suppSbnd}.
Then, the image under $\iota$ of $\jndx_{V/U}$ contains $\indx_{N_{\Env(U)}}$ and is contained in $\indx_{\Env(U)}$.

Indeed, as in the proof of Corollary \ref{cor: lam(U)} we write
\[
\lambda (U) =\sum_{j\in \jndx_{V/U}} (\beta_j+\alpha_{\iota(j)}) +
\delta_0-\sum_{j\in \jndx_{V/U}}\alpha_{\iota(j)}.
\]
It follows from the definition of $\Env(U)$ that $\delta_0-\sum_{j\in \jndx_{V/U}}\alpha_{\iota(j)}$
can be written as a linear combination of $\Delta_0^{\Env(U)}$. This precisely means that 
$\indx_{N_{\Env(U)}}\subset\iota(\jndx_{V/U})\subset\indx_{\Env(U)}$.
\end{remark}

\subsection{}
To go further, we need a follow-up of Lemma \ref{lem: suppSbnd}. It will play a key role.

\begin{proposition} \label{prop: cent}
Let $U\in\Minset(V)$. Assume that $P_0$ stabilizes $U$, so that the stabilizer $S=\Stab(U)$ of $U$ in $G$
is a standard $F$-parabolic subgroup of $G$. Then,
\begin{enumerate}
\item There exists a one-to-one function
\[
\iota:\jndx_{V/U}\rightarrow\indx_{N_S}
\]
such that $\beta_j+\alpha_{\iota(j)}\in\wgts_U$ for all $j\in\jndx_{V/U}$. In particular, $\dim V/U\le \dim G/S$.
\item $S\subset \Env(U)$.
\item The following conditions are equivalent. \label{part: special}
\begin{enumerate}
\item $\dim V/U=\dim G/S$. \label{part: dimeqlty}
\item $\iota$ is a bijection. \label{part: iotabij}
\item $\lambda(U)=\sum_{j\in\jndx_{V/U}}(\beta_j+\alpha_{\iota(j)})+\delta_0^S$. \label{part: lamexp}
\item For some (or every) $v\in U\cap \Vreg$, $\rho(S)v$ is dense in $U$ and $G_v^\circ\subset S$. \label{part: prehom}
\item For some (or every) $v\in U\cap \Vreg$, the variety \label{part: var}
\[
G_{v,U}=\{g\in G\mid\rho(g)^{-1}v\in U\}
\]
is the union of finitely many left cosets of $S$ in $G$.
\end{enumerate}
\item If the conditions in part \ref{part: special} are satisfied, then $U$ is a {\PVS} with respect to $S$
with open orbit $U^{\reg}=U\cap \Vreg$. Moreover, $G_{v,U}=G_vS$ for any $v\in U^{\reg}$.

On the other hand, if the conditions in part \ref{part: special} are not satisfied, then $\Env(U) \neq S$.
\end{enumerate}
\end{proposition}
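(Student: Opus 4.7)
The plan is to reduce everything to two observations: first, a study of the differential $D_v:\Liealg{G}\to V$ at $v\in U\cap\Vreg$ that exploits the hypothesis that $S$ stabilizes $U$; second, an explicit decomposition of $\lambda(U)$ to which Lemma~\ref{lem: polycone} applies.

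\textbf{Parts (1) and (2).} Since $S$ stabilizes $U$, $D_v(\Liealg{G}_{\alpha_i})\subset U$ for every $i\in\indx_S$. Thus in the $(\basis_G,\basis_V)$-matrix of $D_v$ the $\jndx_{V/U}\times\indx_S$ block vanishes, and surjectivity of $D_v$ forces a nonzero minor of size $|\jndx_{V/U}|$ in the complementary block $\jndx_{V/U}\times(\indx_G\setminus\indx_S)$. Using $\rts_G=-\rts_G$, this minor produces the injection $\iota:\jndx_{V/U}\hookrightarrow\indx_{N_S}$ required in (1). Substituting $\delta_0=\delta_0^S+\sum_{i\in\indx_{N_S}}\alpha_i$, where $\delta_0^S:=\sum_{\alpha\in\rts_{M_S}^+}(\dim\Liealg{G}_\alpha)\alpha$, into $\lambda(U)=\delta_0+\sum_{j\in\jndx_{V/U}}\beta_j$ yields
\[
\lambda(U)=\sum_{j\in\jndx_{V/U}}(\beta_j+\alpha_{\iota(j)})+\delta_0^S+\sum_{i\in\indx_{N_S}\setminus\iota(\jndx_{V/U})}\alpha_i.
\]
The first term is a nonnegative combination of $\wgts_U$, the second has strictly positive coefficient on every element of $\Delta_0^S$ when expanded over $\Delta_0$ (because $\gamma$ itself appears in $\rts_{M_S}^+$ for every $\gamma\in\Delta_0^S$), and the third is a nonnegative combination of $\Delta_0$. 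By the maximality characterization of the {\PS} in Lemma~\ref{lem: polycone}, this forces $\Delta_0^S\subset\Delta_0^{\Env(U)}$, i.e., $S\subset\Env(U)$, proving (2).

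\textbf{Part (3).} The equivalence (a)$\iff$(b) is a comparison of cardinalities $|\jndx_{V/U}|=\dim V/U$ and $|\indx_{N_S}|=\dim G/S$. For (b)$\iff$(c), the displayed identity reduces (c) to the vanishing of the residual sum over $\indx_{N_S}\setminus\iota(\jndx_{V/U})$, and a nonempty sum of positive roots cannot vanish in $X^*(T_0)_\R$. For (b)$\Rightarrow$(d), the decomposition $\Liealg{G}=\Lie S\oplus\Lie N_S^-$ combined with $D_v(\Lie S)\subset U$ forces, in view of (b) and surjectivity of $D_v$, the induced map $\Lie N_S^-\to V/U$ to be bijective. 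Consequently $D_v$ restricts to a surjection $\Lie S\twoheadrightarrow U$, whence $\rho(S)v$ is dense in $U$ and $\dim(\Lie S\cap\Liealg{G}_v)=\dim S-\dim U=\dim\Liealg{G}_v$, i.e., $\Liealg{G}_v\subset\Lie S$ and $G_v^\circ\subset S$. The converse (d)$\Rightarrow$(a) is the dimension count $\dim U=\dim S-\dim G_v=\dim V-\dim G/S$. For (a)$\iff$(e), $G_{v,U}$ is the preimage of $U\cap\Vreg$ under the surjection $g\mapsto\rho(g)^{-1}v$ (whose fibers have dimension $\dim G_v$), so $\dim G_{v,U}=\dim G-\dim V/U$; since $G_{v,U}$ is stable under right multiplication by $S$, it is a finite union of left $S$-cosets precisely when $\dim G_{v,U}=\dim S$, i.e., iff (a). The ``some or every'' clause is automatic because (a) does not involve $v$.

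\textbf{Part (4) and the final assertion.} When the conditions hold, $\rho(S)v$ is an open orbit in the irreducible variety $U$ for every $v\in U\cap\Vreg$; by uniqueness of the open orbit, $U\cap\Vreg$ is a single $S$-orbit $U^{\reg}$, so $U$ is a {\PVS} for $S$. The equality $G_{v,U}=G_vS$ then follows because, under $g\mapsto\rho(g)^{-1}v$, $G_{v,U}$ maps onto the single $S$-orbit $U^{\reg}$ through $v$, forcing $G_{v,U}$ to be the single $(G_v,S)$-double coset containing $1$. If, instead, the conditions fail, then $\indx_{N_S}\setminus\iota(\jndx_{V/U})$ is nonempty and some $\alpha\in\rts_{N_S}$ appears with positive coefficient in the displayed expression for $\lambda(U)$; expanding $\alpha$ in simple roots puts an element of $\Delta_0\setminus\Delta_0^S$ in the {\PS} of $\lambda(U)$, so $\Env(U)\supsetneq S$. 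The step I expect to need the most care is (b)$\Rightarrow$(d), specifically the surjectivity of $D_v|_{\Lie S}$ onto $U$: it is not formal and relies precisely on the dimension matching on the $V/U$-component supplied by (b).
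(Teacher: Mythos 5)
Your proof is correct and follows essentially the same route as the paper: get $\iota$ from a nonzero minor of $D_v$ concentrated in the $\jndx_{V/U}\times\indx_{\bar N_S}$ block (equivalently, from the surjection $\tilde D_v:\Liealg{G}/\Lie S\to V/U$), write $\lambda(U)=\sum_j(\beta_j+\alpha_{\iota(j)})+\delta_0^S+\sum_{i\in\indx_{N_S}\setminus\iota(\jndx_{V/U})}\alpha_i$, and apply Lemma~\ref{lem: polycone} to compare with $\Env(U)$. Where you differ is in the tactics for (3)(d) and (3)(e): for (b)$\Rightarrow$(d) you argue directly via the splitting $\Liealg{G}=\Lie S\oplus\Lie\bar N_S$ and rank–nullity, while the paper packages the same information in the short exact sequence $0\to\Ker D_{v_0}/(\Ker D_{v_0}\cap\Liealg{S})\to\Ker\tilde D_{v_0}\to U/D_{v_0}(\Liealg{S})\to 0$; and for (a)$\Leftrightarrow$(e) you use a clean dimension count ($\dim G_{v,U}=\dim G - \dim V/U$ via the $G_v$-torsor $G\to\Vreg$), whereas the paper shows (e)$\Rightarrow$(d) directly by decomposing $U\cap\Vreg$ into finitely many $S$-orbits and noting one is dense. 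Your dimension argument for (e) is valid, though it silently uses that $G_{v,U}$ is closed, that $S$ is connected (so its components are right-$S$-stable), and that $G_{v,U}$ is equidimensional (being a $G_v$-bundle over the irreducible $U\cap\Vreg$); these are true and standard, but worth stating since the equivalence ``finite union of left $S$-cosets $\Leftrightarrow\dim G_{v,U}=\dim S$'' does not hold for arbitrary right-$S$-stable subsets.
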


\begin{proof}
Write $\Liealg{S}=\Lie S$ for the Lie algebra of $S$.
Let $N=N_S$, and let $\bar{N}$ be the unipotent radical of the parabolic subgroup of $G$ opposite to $S$.
Denote the corresponding Lie algebras by $\Liealg{N}=\Lie N$ and $\bar{\Liealg{N}}=\Lie\bar{N}$.

Let $v_0\in U\cap \Vreg$. The composition of the surjection $D_{v_0}$ \eqref{def: Dv} with the projection $V\rightarrow V/U$ factors as a surjective map
\[
\tilde{D}_{v_0}:\bar{\Liealg{N}}\simeq\Liealg{G}/\Liealg{S}\rightarrow V/U\simeq\oplus_{\beta\in\wgts_V\setminus\wgts_U}V_\beta.
\]
We have $\tilde D_{v_0}(\Liealg{G}_\alpha)\subset V_{(\alpha+\wgts_U)\setminus\wgts_U}$ for all $\alpha\in\rts_{\bar{N}}=-\rts_N$.
The existence of $\iota$ follows as in the proof of Lemma \ref{lem: suppSbnd}.
Moreover,
\[
\lambda(U)=\sum_{\beta\in\wgts_V\setminus\wgts_U}n_\beta\beta+\delta_0=
\sum_{j\in\jndx_{V/U}}(\beta_j+\alpha_{\iota(j)})+\delta_0-\sum_{j\in\jndx_{V/U}}\alpha_{\iota(j)}.
\]
Clearly,
\[
\delta_0-\sum_{j\in\jndx_{V/U}}\alpha_{\iota(j)}=
\delta_0-\sum_{\alpha\in\rts_N}(\dim\Liealg{G}_\alpha)\alpha+\mu=\delta_0^S+\mu,
\]
where $\mu=\sum_{i\in\indx_N\setminus \iota (\jndx_{V/U})}\alpha_i$ is a (possibly empty) sum of roots in $N$.
Therefore, $S\subset \Env(U)$, and the inclusion is proper if $\mu\ne0$.

Note that the following statements are equivalent: $\mu=0$; $\iota$ is a bijection; $\dim V/U=\dim G/S$; $\tilde D_{v_0}$ is an isomorphism of vector spaces.

Observe that the kernel of $\tilde{D}_{v_0}$ is $D_{v_0}^{-1}(U)/\Liealg{S}$ and we have a short exact sequence
\[
0\rightarrow\Ker D_{v_0}/( \Ker D_{v_0} \cap \Liealg{S}) \rightarrow\Ker\tilde{D}_{v_0}\rightarrow U/D_{v_0}(\Liealg{S})\rightarrow0.
\]
Thus, $\tilde{D}_{v_0}$ is an isomorphism if and only if $\Ker D_{v_0}\subset\Liealg{S}$ and $D_{v_0}(\Liealg{S})=U$.
Since $\Ker D_{v_0}=\Lie G_{v_0}$, the first condition means that $G_{v_0}^\circ\subset S$, while the second
condition means that the $S$-orbit of $v_0$ is dense in $U$.
This proves that conditions \eqref{part: dimeqlty}--\eqref{part: prehom} are equivalent and that the validity of \eqref{part: prehom} does not depend on the choice of $v\in U\cap \Vreg$.
Therefore, under these conditions, $U\cap \Vreg$ is a single $S$-orbit, since $U\cap \Vreg$ is irreducible.
Since $U\cap \Vreg=\rho(G_{v_0,U})^{-1}v_0$, this also implies that
$G_{v_0,U}=G_{v_0}S$, which is a finite union of left $S$-cosets since $G_{v_0}^\circ\subset S$.

Finally, if $G_{v_0,U}=\bigcup_{i=1}^n \eta_i S$ is the union of finitely many left $S$-cosets,
then
\[
U\cap \Vreg=\rho(G_{v_0,U})^{-1}v_0=\bigcup_{i=1}^n \rho(S)\rho(\eta_i)^{-1}v_0
\]
is the union of finitely many $S$-orbits, at least one of which must be dense, while
$G_{\rho(\eta_i)^{-1}v_0}^\circ\subset S$ for every $i = 1, \ldots, n$,
since $G_{\rho(\eta_i)^{-1}v_0}=\eta_i^{-1}G_{v_0}\eta_i\subset \eta_i^{-1}G_{v_0,U}$
is contained in a finite union of left $S$-cosets.
Thus, condition \eqref{part: var} is also equivalent to the other ones.
This concludes the proof of the proposition.
\end{proof}

As we shall see, the spaces considered in the proposition above deserve special attention.

\begin{definition} \label{def: special}
We denote by $\Spcl(V)$ the set of spaces satisfying the conditions of Proposition \ref{prop: cent} part \ref{part: special}.
Thus, $\Spcl(V)$ consists of the subspaces $U\in\Minset(V)$ that are stabilized by $P_0$ and satisfy $\dim V/U=\dim G/\Stab(U)$.

A subspace in $\Spcl(V)$ will be called \emph{{\SPCL}}.
\end{definition}

\begin{remark} \label{rem: basic}
Trivially, $V\in\Spcl(V)$.
It follows from Proposition \ref{prop: cent} that if $V$ is {\BASIC} (see Definition \ref{def: basic}), then $\Spcl(V)=\{V\}$.

The converse is not true: $V=\Mat_n$ is a {\PVS} with respect to $G=\GL_n$ acting by matrix multiplication. Here $H=1$,
but it is easy to see that $\Spcl(V)=\{V\}$.

In fact, the four examples in Table \ref{tab: FTMS} of \S\ref{sec: magic square} provide counterexamples for irreducible {\PVS} of DK-type.
Indeed, by Proposition \ref{prop: cent}, if $U\in\Spcl(V)$ then it is stable under the maximal standard subgroup parabolic $Q$ of $G$
whose Levi part $L$ satisfies $H^\circ=L^{\der}$. In particular, $U$ is stable under $L$.
However, $V$ decomposes into four irreducible subrepresentations of $L$, $V=U_1\oplus U_2\oplus U_3\oplus U_4$
where $U_1,U_2$ are one-dimensional, $\wgts_{U_1}$ consists of the highest weight of $V$, and $\dim U_3=\dim U_4=\dim G/Q$.
Since every non-trivial $P_0$-stable subspace of $V$ contains $U_1$, we cannot have $\dim V/U=\dim G/Q$. Hence $U=V$.

Consider the list of reduced, absolutely irreducible {\PVS}s of Sato and Kimura in the appendix to \cite{MR1944442}.
Of the 29 regular cases, 23 are of DK-type.\footnote{Case \# 1 can be of DK-type or not, but it is always basic.} Among these, we have $\Spcl(V) \neq \{V \}$ only in the following eight cases:
\# 2 (Example \ref{ex: Speven} for $k=1$ and $n_1=2$), \# 4 (Example \ref{ex: G2}), \# 8 (Example \ref{ex: F4}), \# 9, \# 11, \# 12 (Example \ref{ex: E6}),
\# 15 (Example \ref{ex: SOodd} for $k=1$ and $n_1=2$ or $n_2-2$), \# 28. (To verify the assertion $\Spcl(V) \neq \{V \}$ in the remaining cases \# 9, \# 11 and \# 28,
one may use the construction of \S\ref{sec: examspcl}.)
\end{remark}

\begin{remark}
In general, we may have $\Env(U)\supsetneq \Stab(U)$ even if $U\in\Spcl(V)$. See Examples \ref{ex: F4'} and \ref{ex: E6'} below.
\end{remark}

In principle, using Proposition \ref{prop: cent} one can enumerate $\Spcl(V)$, at least in small rank cases or when
$H$ is contained in only a few parabolic subgroups.

\begin{example} \label{ex: rank2}
Consider Example \ref{ex: Speven} for $k=1$, $n_1=2$ and Example \ref{ex: G2}. Write the simple roots of $G'$ as $\{\alpha,\beta\}$
where $\alpha$ is the short root.
Then $\Delta_0^G=\{\alpha\}$ and $\wgts_V=\{\beta,\beta+\alpha,\beta+2\alpha[,\beta+3\alpha]\}$ where root in square brackets occurs for $G'=G_2$.
It is easy to see that $\Spcl(V)=\{V,U\}$ where $\wgts_U=\wgts_V\setminus\{\beta\}$. $\Stab(U)=\Env(U)$ is the Borel subgroup of $G$.
\end{example}

\subsection{}

For $\alpha\in\rts_G$ denote by $N_{(\alpha)}$ the unipotent subgroup of $G$ with Lie algebra $\Liealg{G}_\alpha+\Liealg{G}_{2\alpha}$
if $2\alpha\in\rts_G$ and $\Liealg{G}_\alpha$ otherwise.

The following general fact about representations is probably well-known.

\begin{lemma} \label{lem: sumofrts}
Let $U\in\Sub(V)$ and $\alpha\in\rts_G$.
Suppose that $U$ is stable under $N_{(\alpha)}$.
Then, for every $\beta\in \wgts_U$ such that $\alpha+\beta\in\wgts_V$ we have $\alpha+\beta\in \wgts_U$.
\end{lemma}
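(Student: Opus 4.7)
The plan combines the $N_{(\alpha)}$-stability of $U$ with a representation-theoretic non-vanishing statement. Fix $\beta\in\wgts_U$ with $\alpha+\beta\in\wgts_V$; the goal is to show $V_{\alpha+\beta}\subseteq U$.

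For any $v\in V_\beta\subseteq U$ and $x\in\Liealg{G}_\alpha$, the $N_{(\alpha)}$-stability of $U$ gives $\rho(\exp(tx))v\in U$ for every scalar $t$. Nilpotency of $x$ on $V$ makes this a polynomial in $t$, whose expansion
\[
\rho(\exp(tx))v=v+t\,d\rho(x)v+\tfrac{t^2}{2}d\rho(x)^2v+\cdots
\]
has $k$th coefficient $\tfrac{1}{k!}d\rho(x)^k v\in V_{\beta+k\alpha}$ by \eqref{eq: drhoab}. Since $U$ is a direct sum of $T_0$-weight spaces, each $T_0$-weight component of any element of $U$ lies in $U$, so each coefficient individually lies in $U$. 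In particular $d\rho(x)v\in V_{\alpha+\beta}\cap U$, which, as $V_{\alpha+\beta}$ is a single weight space and $U$ is a sum of weight spaces, is either $\{0\}$ or all of $V_{\alpha+\beta}$. Thus the lemma reduces to showing $d\rho(\Liealg{G}_\alpha)V_\beta\neq 0$ whenever both $\beta$ and $\alpha+\beta$ lie in $\wgts_V$.

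To establish this non-vanishing I would restrict $V$ to the semisimple $F$-rank-one $F$-subgroup $G^{(\alpha)}\subseteq G$ generated by $T_0$ and $N_{(\pm\alpha)}$ (together with $N_{(\pm 2\alpha)}$ if $2\alpha\in\rts_G$), and decompose $V$ into irreducible $G^{(\alpha)}$-summands. The $T_0$-weights of any irreducible $G^{(\alpha)}$-representation form a symmetric interval around $0$, so the top weight is $\geq 0$ and the bottom weight is $\leq 0$. If no summand contained both $\beta$ and $\alpha+\beta$, then in every $W_1$ containing $\beta$ but not $\alpha+\beta$ the weight $\beta$ would sit at the end of $W_1$'s interval nearest $\alpha+\beta$, and in every $W_2$ containing $\alpha+\beta$ but not $\beta$ the weight $\alpha+\beta$ would sit at the opposite end of $W_2$'s interval; combining these extremality conditions with the symmetry of the intervals about $0$ yields sign inequalities on $\beta$ and $\alpha+\beta$ that together force $\alpha=0$, contradicting $\alpha\in\rts_G$. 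Therefore some summand $W$ contains both weights, and inside $W$ a standard $\mathfrak{sl}_2^{\tilde\alpha}$-string argument, performed over $\bar F$ for a lift $\tilde\alpha$ of $\alpha$ to a maximal torus $T\supset T_0$, shows that the action of $\Liealg{G}_{\tilde\alpha}$ from $W_\beta$ into $W_{\alpha+\beta}$ is nontrivial.

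The main obstacle will be the representation-theoretic step, especially in the case $2\alpha\in\rts_G$, where $G^{(\alpha)}\otimes\bar F$ is an $F$-form of $\mathrm{SL}_3$ rather than a product of $\mathrm{SL}_2$'s: both the weight-interval sign analysis and the final $\mathfrak{sl}_2$-string computation require adaptation to this quasi-split setting.
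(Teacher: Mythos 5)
Your reduction to showing $d\rho(\Liealg{G}_\alpha)(V_\beta)\neq 0$ whenever $\beta,\alpha+\beta\in\wgts_V$ is exactly the shape of the paper's argument, and your step passing from $N_{(\alpha)}$-stability of $U$ to $d\rho(\Liealg{G}_\alpha)V_\beta\subseteq U\cap V_{\alpha+\beta}\in\{0,V_{\alpha+\beta}\}$ is fine (though more roundabout than necessary: stability of $U$ under $N_{(\alpha)}$ directly gives $d\rho(\Lie N_{(\alpha)})U\subseteq U$, and $\Liealg{G}_\alpha\subseteq\Lie N_{(\alpha)}$). The problem lies in your strategy for the non-vanishing. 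You decompose $V$ under the full rank-one subgroup $G^{(\alpha)}$ and run a sign analysis on restricted weight intervals; as you yourself flag, this breaks down when $2\alpha\in\rts_G$, where $G^{(\alpha)}$ is a form of $\SL_3$ and the relative weight strings are no longer $\mathfrak{sl}_2$-strings. That gap is genuine, and the paper's fix avoids the case split entirely: Lemma \ref{lem: SL_2} (Jacobson--Morozov normalized to $T_0$) produces, for any $0\neq e\in\Liealg{G}_\alpha$, an $F$-morphism $\SL_2\to G$ with raising operator $e$ and cocharacter $\alpha^\vee$, irrespective of whether $2\alpha$ is a root. Grouping the weight spaces of $V$ by the integer $\sprod{\cdot}{\alpha^\vee}$ and applying $\mathfrak{sl}_2$-representation theory shows that $d\rho(e)\colon V_j\to V_{j+2}$ is surjective for $j\geq -1$ and injective for $j\leq -1$, hence nonzero whenever both $V_j$ and $V_{j+2}$ are nonzero; since $d\rho(e)$ is block-diagonal with respect to the finer $T_0$-weight decomposition (sending distinct $V_\beta$'s to distinct $V_{\beta+\alpha}$'s), the same surjectivity or injectivity passes to each component $V_\beta\to V_{\alpha+\beta}$ individually. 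The moral is to build the $\SL_2$ through a single $e\in\Liealg{G}_\alpha$ rather than invoking the whole rank-one group.
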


This follows from the representation theory of $\SL_2$. To explain this, we need a standard consequence of
the Jacobson--Morozov Theorem.

\begin{lemma} \label{lem: SL_2}
Let $\alpha\in\rts_G$ and let $0\ne e\in\Liealg{G}_\alpha$.
Then, there exists a unique $F$-morphism $\varphi:\SL_2 \rightarrow G$ such that $d\varphi(\sm 0100)=e$,
the image of the diagonal torus under $\varphi$ is contained in $T_0$ and the co-character
$t\mapsto\varphi(\sm t{}{}{t^{-1}})$ is $\alpha^\vee$.
\end{lemma}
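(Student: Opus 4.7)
The plan is to build $\varphi$ in three steps: produce an $\mathfrak{sl}_2$-triple $(e, h, f)$ in $\Liealg{G}$ with $h = d\alpha^\vee(1)$, integrate this triple to an $F$-morphism $\SL_2 \to G$ using the simple connectedness of $\SL_2$, and verify uniqueness via $\mathfrak{sl}_2$-representation theory.

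For the triple, I set $h := d\alpha^\vee(1) \in \Liealg{T}_0$; since $\alpha(\alpha^\vee) = 2$, the bracket relation $[h, e] = 2e$ holds automatically. To produce $f$, I invoke a strengthened Jacobson--Morozov theorem (cf.\ Bourbaki, \emph{Groupes et alg\`ebres de Lie}, Ch.~VIII, \S11): in any reductive Lie algebra over a field of characteristic zero, if $e \neq 0$ is nilpotent and $h$ is semisimple with $[h, e] = 2e$, then there exists $f$ completing $(e, h, f)$ to an $\mathfrak{sl}_2$-triple. The resulting $f$ automatically lies in the $(-2)$-eigenspace of $\operatorname{ad} h$, namely the sum of root spaces $\Liealg{G}_\beta$ with $\langle \beta, \alpha^\vee\rangle = -2$. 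Alternatively, one may start from a triple $(e, h_0, f_0)$ produced by the ordinary Jacobson--Morozov theorem and use Kostant's result that all such triples through $e$ are $Z_G(e)^\circ(F)$-conjugate to arrange $h_0 = h$.

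The resulting Lie algebra homomorphism $\mathfrak{sl}_2 \to \Liealg{G}$ integrates to a unique $F$-morphism $\varphi: \SL_2 \to G$, since $\SL_2$ is simply connected as an algebraic group in characteristic zero. Then $d\varphi(\sm 0100) = e$ by construction, and the cocharacter $t \mapsto \varphi(\sm t{}{}{t^{-1}})$ has differential $h = d\alpha^\vee(1)$, hence coincides with $\alpha^\vee$ (two $F$-cocharacters into a split torus that share a differential must agree) and in particular lands in $T_0$. For uniqueness, if $\varphi'$ also satisfies the three conditions, then $d\varphi'(\sm 0100) = e$ and $d\varphi'(\sm 100{-1}) = h$ (differentiating the cocharacter condition), so the element $z := d\varphi'(\sm 0010) - f$ satisfies $[h, z] = -2z$ and $[e, z] = 0$; but a highest-weight vector in any finite-dimensional $\mathfrak{sl}_2$-representation has non-negative $h$-weight, forcing $z = 0$. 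Hence $d\varphi' = d\varphi$, and connectedness of $\SL_2$ gives $\varphi' = \varphi$.

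The main obstacle I anticipate is the prescribed-$h$ form of Jacobson--Morozov, which is not the statement usually cited; the remaining steps are formal consequences of the representation theory of $\mathfrak{sl}_2$ and of the integration of Lie algebra homomorphisms from simply connected algebraic groups. The only subtlety in the non-reduced case (when $2\alpha \in \rts_G$) is that the $(-2)$-eigenspace of $\operatorname{ad} h$ is larger than $\Liealg{G}_{-\alpha}$, but the argument proceeds unchanged inside the reductive rank-one subalgebra generated by $\Liealg{G}_{\pm\alpha}$, in which $Fh$ is a split Cartan.
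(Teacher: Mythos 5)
Your uniqueness argument is correct, and it is essentially the argument the paper cites from Collingwood--McGovern \cite{MR1251060}*{Lemma 3.4.4}: the difference $z$ of the two candidate $f$'s is an $e$-highest weight vector of $\operatorname{ad} h$-weight $-2$, hence zero, and two morphisms from the connected group $\SL_2$ with equal differentials coincide. The existence part, however, has a genuine gap. The ``strengthened Jacobson--Morozov theorem'' is false as you have stated it: the correct statement (Bourbaki, Ch.~VIII, \S 11) requires the additional hypothesis $h \in [\Liealg{G}, e]$, which is not automatic for semisimple $h$ with $[h,e]=2e$. A counterexample: in $\mathfrak{sl}_2 \times \mathfrak{sl}_2$ take $e = (e_0,0)$ and $h = (h_0, h_0)$; then $h$ is semisimple and $[h,e]=2e$, but no $f$ with $[e,f]=h$ exists. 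In the lemma's setting $h = d\alpha^\vee(1)$ \emph{does} lie in $[\Liealg{G}, e]$, but this is nontrivial and you do not verify it. Your ``alternative'' via Kostant's conjugacy theorem does not repair this: Kostant gives that all $\mathfrak{sl}_2$-triples \emph{through} $e$ are $Z_G(e)^\circ$-conjugate, but it does not say that the specific element $h = d\alpha^\vee(1)$ occurs in one of them. Indeed, the same example shows the set of semisimple $h$ with $[h,e]=2e$ is strictly larger than the set of characteristics of $e$ and is not a single $Z_G(e)^\circ$-orbit. So the conjugacy argument is circular at precisely the point you need it.

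By contrast, the paper's proof sidesteps the whole question of prescribing $h$ at the Lie algebra level: it reduces to $G$ semisimple of $F$-rank one, applies the ordinary Jacobson--Morozov theorem to produce a morphism $\varphi'$, and then uses the Bruhat decomposition to show that the image $T$ of the diagonal torus under $\varphi'$ is contained in $P_0$, hence is $N_0(F)$-conjugate to $T_0$; after checking that the conjugating element centralizes $e$ (via a small argument with $\cite{MR1102012}$*{Proposition III.9.3}), the morphism can be adjusted. Once the diagonal torus is placed inside $T_0$, the cocharacter is forced to be $\alpha^\vee$ because in rank one $X_*(T_0)\otimes\Q$ is one-dimensional and the normalization $[h,e]=2e$ pins it down. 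Your approach, if the hypothesis $h \in [\Liealg{G}, e]$ were established (say, by the same rank-one reduction plus a Killing-form computation showing $h$ is orthogonal to $\Liealg{G}^e \cap \Liealg{G}_0$), would be a valid and arguably more conceptual alternative. As it stands, the key existence step is not proven.
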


\begin{proof}
This is well-known. For convenience we include the proof.

For the uniqueness of $\varphi$ -- see \cite{MR1251060}*{Lemma 3.4.4}.

By passing to the subgroup generated by $N_{(\alpha)}$ and $N_{(-\alpha)}$, we may assume that $G$ is semisimple of $F$-rank one.
We may also assume without loss of generality that $\alpha>0$, so that $N_{(\alpha)}\subset N_0$.

By the Jacobson--Morozov Theorem, there exists an $F$-morphism $\varphi':\SL_2 \rightarrow G$ such that $d\varphi'(\sm 0100)=e$.
Let $a$ be the restriction of $\varphi'$ to the diagonal torus, i.e., $a(s)=\varphi'(\sm s{}{}{s^{-1}})$, so that $\Ad(a(s))e=s^2 e$.
Let $T$ be the image of $a$, which is a maximal split torus in $G$.
By the Bruhat decomposition, for any $g\notin P_0$ we have $gN_0g^{-1}\cap N_0=1$.
Thus, $T\subset P_0$. It follows that $T=nT_0n^{-1}$ for some element $n$ of $N_0(F)$.

We claim that $n$ centralizes $e$.
Indeed, for every $t\in T_0$, both $\Ad(t)e$ and $\Ad(ntn^{-1})e$ are scalar multiples of $e$.
Fix $t\in T_0$ such that its centralizer in $N_0$ is trivial. Write $ntn^{-1}=n't$ where $n'=[n,t]\in N_0$.
Then, $\Ad(n')e$ is a scalar multiple of $e$, and hence $\Ad(n')e=e$ since $n'$ is unipotent.
Note that the centralizer $N_{0,e}$ of $e$ in $N_0$ is invariant under conjugation by $T_0$ (since $\Ad(T_0)$ acts by scalars on $e$).
By \cite{MR1102012}*{Proposition III.9.3} the maps $N_0\rightarrow N_0$ and $N_{0,e}\rightarrow N_{0,e}$ given by $n\mapsto n'$ are bijective.
Since $n'\in N_{0,e}$, we infer that $n\in N_{0,e}$ as claimed.

We can now take $\varphi=\varphi'(n^{-1}\cdot n)$.
\end{proof}

\begin{proof}[Proof of Lemma \ref{lem: sumofrts}]
For any integer $k$ let $V_k\in\Sub(V)$ be such that
\[
\wgts_{V_k}=\{\beta\in\wgts_V\mid\sprod{\beta}{\alpha^\vee}=k\}.
\]
Fix $0\ne e\in\Liealg{G}_\alpha$.
Using Lemma \ref{lem: SL_2} and basic facts about representation theory of $\SL_2$, it follows that for any integer $k$, the map
\[
d\rho(e):V_{k-1}\rightarrow V_{k+1}\text{ is }
\begin{cases}\text{surjective}&\text{if }k\ge0,\\\text{injective}&\text{if }k\le0,\end{cases}
\]
Thus, for every $\beta\in\wgts_V$,
\[
d\rho(e):V_\beta\rightarrow V_{\beta+\alpha}\text{ is }
\begin{cases}\text{surjective}&\text{if }\sprod{\beta}{\alpha^\vee}+1\ge0,\\\text{injective}&\text{if }\sprod{\beta}{\alpha^\vee}+1\le0.\end{cases}
\]
In particular, $d\rho(e)(V_\beta)\ne0$ if $V_{\beta+\alpha}\ne0$. The lemma follows.
\end{proof}

\begin{lemma} \label{lem: intersections}
Suppose that $U_1,U_2\in\Spcl(V)$ and $U_1\cap U_2\in\Minset(V)$. Then, $U_1\cap U_2\in\Spcl(V)$ and $\Stab(U_1\cap U_2)=\Stab(U_1)\cap \Stab(U_2)$.
\end{lemma}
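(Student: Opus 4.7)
The plan is to pick a regular element $v \in (U_1 \cap U_2) \cap \Vreg(F)$, which exists by the hypothesis $U_1 \cap U_2 \in \Minset(V)$, and to translate the problem into linear-algebra statements about the surjection $D_v : \Liealg{G} \twoheadrightarrow V$. Setting $S_i = \Stab(U_i)$ for $i = 1, 2$, which are standard parabolic $F$-subgroups by the hypothesis $U_i \in \Spcl(V)$, the characterization in Proposition \ref{prop: cent} part \ref{part: special}, specifically condition \ref{part: prehom}, will give $\Ker D_v = \Liealg{G}_v \subset \Lie S_i$ and $D_v(\Lie S_i) = U_i$, i.e., $D_v^{-1}(U_i) = \Lie S_i$ for each $i$. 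Intersecting these equalities yields
\[
\Lie(S_1 \cap S_2) = \Lie S_1 \cap \Lie S_2 = D_v^{-1}(U_1 \cap U_2),
\]
and consequently $D_v$ restricts to a surjection $\Lie(S_1 \cap S_2) \twoheadrightarrow U_1 \cap U_2$.

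Next, I set $S = \Stab(U_1 \cap U_2)$. Since both $S_1, S_2$ contain $P_0$, so does $S$; as a closed subgroup containing a parabolic it is itself a standard parabolic $F$-subgroup, in particular connected. The inclusion $S_1 \cap S_2 \subset S$ is automatic; for the reverse, I will use the identity $\Lie S = \{x \in \Liealg{G} : d\rho(x)(U_1 \cap U_2) \subset U_1 \cap U_2\}$ from \cite{MR1102012}*{Lemma II.7.4}. Evaluating at $v \in U_1 \cap U_2$ gives $D_v(\Lie S) \subset U_1 \cap U_2$, and hence
\[
\Lie S \subset D_v^{-1}(U_1 \cap U_2) = \Lie(S_1 \cap S_2).
\]
Both $S$ and $S_1 \cap S_2$ being connected, this forces $S = S_1 \cap S_2$, which is the second assertion of the lemma.

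For the first assertion, $U_1 \cap U_2 \in \Spcl(V)$, I will verify the dimension condition of Definition \ref{def: special}. The space $U_1 \cap U_2$ lies in $\Minset(V)$ by hypothesis and is visibly $P_0$-stable. The surjection $D_v : \Lie(S_1 \cap S_2) \twoheadrightarrow U_1 \cap U_2$ established above, combined with $\Ker D_v \subset \Lie(S_1 \cap S_2)$, induces an isomorphism $\Liealg{G}/\Lie(S_1 \cap S_2) \xrightarrow{\sim} V/(U_1 \cap U_2)$, yielding $\dim V/(U_1 \cap U_2) = \dim G/(S_1 \cap S_2) = \dim G/S$, i.e., condition \ref{part: dimeqlty} of Proposition \ref{prop: cent}.

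The only genuinely non-formal step is the reverse inclusion $\Stab(U_1 \cap U_2) \subset S_1 \cap S_2$, and the hypothesis $U_i \in \Spcl(V)$ is used decisively there: without it one only has $\Lie S_i \subset D_v^{-1}(U_i)$, possibly with strict inclusion, which breaks the intrinsic identification of $D_v^{-1}(U_1 \cap U_2)$ with $\Lie(S_1 \cap S_2)$ on which the whole argument rests.
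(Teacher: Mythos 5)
Your proof is correct, and it takes a genuinely different route from the paper's. The paper argues combinatorially: it invokes Proposition \ref{prop: cent} to obtain the injection $\iota:\jndx_{V/U}\to\indx_{N_S}$ for $U=U_1\cap U_2$, then uses Lemma \ref{lem: sumofrts} and the $S_1$-stability of $U_1$ to show that $\iota$ maps $\jndx_{V/U_1}$ into $\indx_{N_1}$, so by the pigeonhole count coming from $U_1\in\Spcl(V)$ this restriction is a bijection with $\indx_{N_1}$ (and likewise for $U_2$); hence $\iota$ is surjective, $\rts_{N_S}=\rts_{N_1}\cup\rts_{N_2}$, and both assertions follow. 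You instead fix a regular $v\in U_1\cap U_2$ and work directly with $D_v$: the content of conditions \ref{part: dimeqlty} and \ref{part: prehom} of Proposition \ref{prop: cent} is precisely that $D_v^{-1}(U_i)=\Lie S_i$, and intersecting these identities gives $D_v^{-1}(U_1\cap U_2)=\Lie(S_1\cap S_2)$ in one stroke; both the equality of stabilizers and the dimension count then fall out of the single isomorphism $\Liealg{G}/\Lie(S_1\cap S_2)\xrightarrow{\sim}V/(U_1\cap U_2)$. Your argument is shorter, does not need Lemma \ref{lem: sumofrts}, and makes the role of the hypothesis $U_i\in\Spcl(V)$ very transparent (it is exactly what upgrades $\Lie S_i\subset D_v^{-1}(U_i)$ to an equality). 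The paper's version has the advantage of recording the root-theoretic byproduct $\rts_{N_{S_1\cap S_2}}=\rts_{N_1}\cup\rts_{N_2}$, i.e.\ $N_{S_1\cap S_2}=N_1N_2$, which is in the same circle of ideas as the subsequent Lemma \ref{lem: admsets}. One small point worth making explicit in your writeup: to pass from $\Lie S\subset\Lie(S_1\cap S_2)$ to $S\subset S_1\cap S_2$ you use that both are connected; for $S$ you correctly note it contains $P_0$, and $S_1\cap S_2$ is likewise a standard parabolic (intersection of two standard parabolics), hence connected.
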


\begin{proof}
Let $U=U_1\cap U_2$, $S=\Stab(U)$, $N=N_S$, $S_k=\Stab(U_k)$, $N_k=N_{S_k}$, $M_k=M_{S_k}$, $k=1,2$.
Clearly, $\wgts_U=\wgts_{U_1}\cap\wgts_{U_2}$, $P_0$ stabilizes $U$ and $S\supset S_1\cap S_2$.
In particular, $\rts_N\subset\rts_{N_1}\cup\rts_{N_2}$.

By Proposition \ref{prop: cent}, there exists a one-to-one function
\[
\iota:\jndx_{V/U}\rightarrow\indx_N
\]
such that $\beta_j+\alpha_{\iota(j)}\in\wgts_U$ for all $j\in\jndx_{V/U}$.

We claim that if $\beta_j\in\wgts_V\setminus\wgts_{U_1}$, then $\alpha_{\iota(j)}\in\rts_{N_1}$.
Indeed, suppose otherwise. Then, $\alpha_{\iota(j)}\in\rts_{M_1}$ (since $\alpha_{\iota(j)}>0$ as $\alpha_{\iota(j)}\in\rts_N$).
Therefore, using Lemma \ref{lem: sumofrts}, $\beta_j=(\beta_j+\alpha_{\iota(j)})-\alpha_{\iota(j)}\in\wgts_{U_1}$ since $U_1$ is $S_1$-stable, in contradiction to
the assumption that $\beta_j\notin\wgts_{U_1}$.

Thus, the restriction of $\iota$ to $\jndx_{V/U_1}$ is a bijection with $\indx_{N_1}$ (since $U_1\in\Spcl(V)$).
Similarly, the restriction of $\iota$ to $\jndx_{V/U_2}$ is a bijection with $\indx_{N_2}$.

It follows that $\iota$ is onto and $\rts_N=\rts_{N_1}\cup\rts_{N_2}$.
Thus, $U \in\Spcl(V)$ and $N=N_1N_2$, which implies that $S=S_1\cap S_2$.
\end{proof}

\begin{lemma} \label{lem: admsets}
Let $U\in\Minset(V)$ and $\tilde U=\Env(U)\star U$. Then, $\tilde U\in\Spcl(V)$ and $\Env(\tilde U)=\Stab(\tilde U)=\Env(U)$.
\end{lemma}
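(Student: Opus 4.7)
The plan is to proceed in three logical stages: easy reductions, identification of the stabilizer/special-subspace structure, and finally identification of the enveloping parabolic.

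First I would verify the trivial ingredients. Since $U\subseteq\tilde U$ and $U\in\Minset(V)$, we have $\tilde U\in\Minset(V)$. The parabolic $\Env(U)$ contains $P_0$, and $\tilde U$ is $\Env(U)$-stable by construction, so $P_0$ stabilizes $\tilde U$ and $\Env(U)\subseteq\Stab(\tilde U)$. It thus suffices to prove $\Stab(\tilde U)=\Env(U)$, the equality $\dim V/\tilde U=\dim G/\Env(U)$ (giving $\tilde U\in\Spcl(V)$), and finally $\Env(\tilde U)=\Env(U)$.

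Next I would exploit the map $\iota:\jndx_V\to\indx_G$ from Lemma \ref{lem: suppSbnd} applied to $U$. By the remark following the definition of $\Env(U)$, one has $\indx_{N_{\Env(U)}}\subseteq\iota(\jndx_{V/U})\subseteq\indx_{\Env(U)}$, so the set $J^\ast:=\iota^{-1}(\indx_{N_{\Env(U)}})\subseteq\jndx_{V/U}$ is mapped bijectively onto $\indx_{N_{\Env(U)}}$. For any $j\in\jndx_{V/U}\setminus J^\ast$ we have $\alpha_{\iota(j)}\in\rts_{M_{\Env(U)}}\subseteq\rts_{\Env(U)}$, so $-\alpha_{\iota(j)}\in\rts_{\Env(U)}$; since $\beta_j+\alpha_{\iota(j)}\in\wgts_U\subseteq\wgts_{\tilde U}$ and $\tilde U$ is $\Env(U)$-stable, Lemma \ref{lem: sumofrts} (i.e., closure of $\wgts_{\tilde U}$ under $\rts_{\Env(U)}$) gives $\beta_j\in\wgts_{\tilde U}$. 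Therefore $\jndx_{V/\tilde U}\subseteq J^\ast$. An analogous argument applied to $S:=\Stab(\tilde U)$ shows that for $j\in\jndx_{V/\tilde U}$ one must have $\alpha_{\iota(j)}\in\rts_{N_S}$, producing an injection $\iota\rest_{\jndx_{V/\tilde U}}:\jndx_{V/\tilde U}\hookrightarrow\indx_{N_S}$ compatible with $\indx_{N_S}\subseteq\indx_{N_{\Env(U)}}$.

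The heart of the proof is the reverse inclusion $\jndx_{V/\tilde U}\supseteq J^\ast$, which I expect to be the main obstacle. My approach is to use the positive envelope uniqueness in Lemma \ref{lem: polycone}. Comparing the expression $\lambda(U)=\sum_{j\in\jndx_{V/U}}(\beta_j+\alpha_{\iota(j)})+(\delta_0-\sum_{j\in\jndx_{V/U}}\alpha_{\iota(j)})$ with the positive envelope $\lambda(U)=\sum_{\beta\in J}c_\beta\beta+\sum_{\alpha\in I}d_\alpha\alpha$ via Lemma \ref{lem: polycone}(2) forces $\beta_j+\alpha_{\iota(j)}\in J$ for every $j\in\jndx_{V/U}$ and $\delta_0-\sum_{j\in\jndx_{V/U}}\alpha_{\iota(j)}\in\R_{\ge0}I$. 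Suppose for contradiction that $\beta_{j_0}\in\wgts_{\tilde U}$ for some $j_0\in J^\ast$. By Lemma \ref{lem: Hclos} write $\beta_{j_0}=\gamma+\sum_l\alpha_l$ with $\gamma\in\wgts_U$ and $\alpha_l\in\rts_{\Env(U)}$. Pairing with an interior point $x$ of the face $\face_U$ yields $\sprod{\gamma}x\ge 0$ and $\sprod{\alpha_l}x\ge 0$ (with equality automatic on $\rts_{M_{\Env(U)}}$), hence $\sprod{\beta_{j_0}}x\ge 0$, while the identity $\beta_{j_0}=(\beta_{j_0}+\alpha_{\iota(j_0)})-\alpha_{\iota(j_0)}$ combined with $\beta_{j_0}+\alpha_{\iota(j_0)}\in J$ gives $\sprod{\beta_{j_0}}x=-\sprod{\alpha_{\iota(j_0)}}x\le 0$. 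Consequently $\sprod{\alpha_{\iota(j_0)}}x=0$ for every $x\in\face_U$, and since $\alpha_{\iota(j_0)}\in\rts_{N_{\Env(U)}}$ expands as a non-negative combination of $\Delta_0$ containing some $\sigma\in\Delta_0\setminus I$ with positive coefficient, this forces $\sprod{\sigma}x=0$ identically on $\face_U$. Lemma \ref{lem: polycone}(3) then places $\sigma$ inside the positive envelope $J\cup I$, contradicting $\sigma\in\Delta_0\setminus I$. (The degenerate possibility $\face_U=\{0\}$ corresponds to $\Env(U)=G$, in which case $\tilde U$ is a $G$-subrepresentation and the conclusion of the lemma must be checked separately, essentially reducing to the trivial case $\tilde U=V$ in the irreducible setting.)

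Once $\jndx_{V/\tilde U}=J^\ast$ is established, the chain $|\jndx_{V/\tilde U}|=|J^\ast|=|\indx_{N_{\Env(U)}}|\ge|\indx_{N_S}|\ge|\jndx_{V/\tilde U}|$ becomes a chain of equalities, forcing $\Stab(\tilde U)=\Env(U)$ and $\tilde U\in\Spcl(V)$. Finally, for $\Env(\tilde U)=\Env(U)$: Proposition \ref{prop: cent}(3)(c) applied to $\tilde U$ with $S=\Env(U)$ yields $\lambda(\tilde U)=\sum_{j\in J^\ast}(\beta_j+\alpha_{\iota(j)})+\delta_0^{\Env(U)}$, an expression lying in $\R_{\ge0}(\wgts_{\tilde U}\cup I)$ with strictly positive coefficients on each $\sigma\in I$. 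Combined with Proposition \ref{prop: cent}(2) giving $\Env(\tilde U)\supseteq\Stab(\tilde U)=\Env(U)$, a comparison of coefficients at $\sigma\in\Delta_0\setminus I$ between this canonical expression and any competing non-negative expression of $\lambda(\tilde U)$ forces $\Delta_0^{\Env(\tilde U)}\subseteq I$, whence $\Env(\tilde U)=\Env(U)$.
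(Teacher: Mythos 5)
The argument for $\Env(\tilde U)\subseteq\Env(U)$ in your last paragraph does not work. Producing a single non-negative expression of $\lambda(\tilde U)$ whose $\Delta_0$-support is contained in $I=\Delta_0^{\Env(U)}$ places no upper bound on $\Delta_0^{\Env(\tilde U)}$: the positive envelope is by definition a representation of \emph{maximal} support, so a competing representation with smaller support cannot shrink it. Applying Lemma \ref{lem: polycone}(2) to the expression you wrote down only recovers $I\subseteq\Delta_0^{\Env(\tilde U)}$, the inclusion you already had from Proposition \ref{prop: cent}(2). The correct move is to descend to $\lambda(U)$: starting from the positive-envelope expression for $\lambda(\tilde U)$, add $\sum_{\beta\in\wgts_{\tilde U}\setminus\wgts_U}n_\beta\beta$, rewrite each such $\beta$ via Lemma \ref{lem: Hclos} as $\gamma+\sum_l\alpha_l$ with $\gamma\in\wgts_U$ and $\alpha_l\in\rts_{\Env(U)}$, and observe that this produces a representation of $\lambda(U)$ whose coefficients on $\Delta_0\setminus\Delta_0^{\Env(U)}$ are non-negative and dominate the corresponding coefficients in the $\lambda(\tilde U)$-decomposition (the $\alpha_l\in\rts_{N_{\Env(U)}}$ expand non-negatively in $\Delta_0$, while those in $\rts_{M_{\Env(U)}}$ stay inside $\Delta_0^{\Env(U)}$). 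Lemma \ref{lem: polycone}(2) applied to $\lambda(U)$, whose positive envelope meets $\Delta_0$ exactly in $\Delta_0^{\Env(U)}$, now forces these coefficients to vanish, hence $\Delta_0^{\Env(\tilde U)}\subseteq\Delta_0^{\Env(U)}$.

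Once that inclusion is known, your paragraphs two through four are unnecessary: from $\Env(U)\subseteq\Stab(\tilde U)\subseteq\Env(\tilde U)\subseteq\Env(U)$ all three parabolics coincide, and then the contrapositive of the last clause of Proposition \ref{prop: cent}(4) gives $\tilde U\in\Spcl(V)$ directly. This is exactly the paper's proof, which never computes $\jndx_{V/\tilde U}$. Your intermediate determination of $\jndx_{V/\tilde U}$ is sound in outline, but note that it quietly overreaches Lemma \ref{lem: polycone}(3) at one point: the condition ``$\sigma$ vanishes identically on $\face_U$'' is weaker than ``$\sigma$ lies in the linear span of $J\cup I$'', which is the hypothesis that lemma actually requires. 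The conclusion you want (that such a $\sigma\in\Delta_0\cup\wgts_U$ belongs to the positive envelope) is true, via the conjugate-face duality for polyhedral cones, but it would need a separate justification --- and, as just noted, the whole detour can be avoided.
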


\begin{proof}
Clearly $\tilde U\in\Minset(V)$, since $\tilde U\supset U$, and $\Stab(\tilde U)\supset \Env:=\Env(U)$.
In view of Proposition \ref{prop: cent}, in order to prove the lemma we only need to show that $\tilde \Env:=\Env(\tilde U)\subset \Env$.
We have $\wgts_{\tilde U}\supset \wgts_U$. Write
\[
\lambda(\tilde U)= \sum_{\beta\in\wgts_{\tilde U}} c_\beta \beta +\sum_{\alpha\in \Delta_0^{\tilde \Env}} d_\alpha\alpha
\]
with $c_\beta\ge0$ for all $\beta\in\wgts_{\tilde U}$ and $d_\alpha>0$ for all $\alpha\in\Delta_0^{\tilde \Env}$. Then,
\[
\lambda(U)= \sum_{\beta\in\wgts_U} c_\beta\beta +
\sum_{\beta\in\wgts_{\tilde U}\setminus\wgts_U} (c_\beta+n_\beta)\beta +
\sum_{\alpha\in \Delta_0^{\tilde \Env}} d_\alpha \alpha.
\]
Using Lemma \ref{lem: Hclos} for each $\beta\in\wgts_{\tilde U}\setminus\wgts_U$, we can rewrite the above relation as
\[
\lambda(U)= \sum_{\beta\in\wgts_U} c'_\beta \beta +\sum_{\alpha\in \Delta_0} d'_\alpha \alpha,
\]
where $c'_\beta \ge 0$ for all $\beta\in\wgts_U$, $d'_\alpha\ge0$ for $\alpha\in\Delta_0\setminus\Delta_0^{\Env}$
and $d'_\alpha\ge d_\alpha>0$ for all $\alpha\in\Delta_0^{\tilde \Env}\setminus\Delta_0^{\Env}$.
By Lemma \ref{lem: polycone} we deduce that $d'_\alpha=0$ for all $\alpha\in\Delta_0\setminus\Delta_0^{\Env}$.
This implies that $\tilde \Env\subset \Env$, as required.
\end{proof}

\begin{corollary} \ \label{cor: spcsets}
\begin{enumerate}
\item If $U\in\Minset(V)$ is stable under $\Env(U)$, then $U\in\Spcl(V)$.
\item If $U\in\Minset(V)$ and $\Env(U)\subsetneq G$, then there exists $\tilde U\in\Spcl(V)$, $\tilde U\ne V$,
containing $U$.
\item \label{part: spcl} In particular, if $\Spcl(V)=\{V\}$, then $\Env(U)=G$ for every $U\in\Minset(V)$.
\end{enumerate}
\end{corollary}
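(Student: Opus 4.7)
The plan is to derive all three parts as essentially direct consequences of Lemma \ref{lem: admsets}.

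For part (1), I would observe that if $U$ is stable under $\Env(U)$, then by the very definition of the operation $\star$ we have $\Env(U)\star U = U$. Hence Lemma \ref{lem: admsets} applied to $U$ yields $U = \Env(U)\star U \in \Spcl(V)$.

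For part (2), given $U \in \Minset(V)$ with $\Env(U) \subsetneq G$, the natural candidate is $\tilde U := \Env(U)\star U$. By Lemma \ref{lem: admsets}, $\tilde U \in \Spcl(V)$ and, crucially, $\Stab(\tilde U) = \Env(U)$. Containment $\tilde U \supset U$ is immediate from the construction of $\star$. To see that $\tilde U \ne V$, I would argue by contradiction: if $\tilde U = V$, then $\Stab(\tilde U) = G$, contradicting $\Stab(\tilde U) = \Env(U) \subsetneq G$.

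Part (3) is the contrapositive of part (2): if there existed $U \in \Minset(V)$ with $\Env(U) \subsetneq G$, then part (2) would produce a proper {\SPCL} subspace of $V$, contradicting $\Spcl(V) = \{V\}$.

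I do not expect any real obstacle: the content of Lemma \ref{lem: admsets} already packages both the fact that $\Env(U) \star U$ is {\SPCL} and the identification of its stabilizer with $\Env(U)$, which are exactly the two ingredients needed. The only subtlety worth stating carefully is that $\tilde U \ne V$ follows from the stabilizer identification together with the hypothesis $\Env(U) \ne G$, rather than from any dimension count.
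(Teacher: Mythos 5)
Your proposal is correct and is essentially the only reasonable route given the paper's setup; indeed, the paper omits the proof of Corollary \ref{cor: spcsets} entirely, treating it as immediate from Lemma \ref{lem: admsets}, which is exactly what you do. The one step worth articulating (as you do) is that $\tilde U \ne V$ follows from $\Stab(\tilde U) = \Env(U) \subsetneq G = \Stab(V)$, and you state that correctly.
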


\begin{example} \label{ex: GL2'}
Consider Example \ref{ex: GL} with $k=3$ and $n_1<n_2$. Then, it is easy to see that $\Spcl(V)=\{V,V_1,V_2\}$ where
\begin{align*}
V_1&=\{(x_1,x_2)\mid\text{the first $n_2-n_1$ columns of $x_1$ vanish}\},\\
V_2&=\{(x_1,x_2)\mid\text{the last $n_2-n_3$ rows of $x_2$ vanish}\}.
\end{align*}

The general case of Example \ref{ex: GL} is far more complicated and will not be considered here.
\end{example}

\begin{example} \label{ex: F4'}
Consider Example \ref{ex: F4} pertaining to $F_4$.
Write for simplicity $(i_1i_2i_3i_4)$ instead of $i_1\alpha_1+i_2\alpha_2+i_3\alpha_3+i_4\alpha_4\in X^*(T_0)$.
 
We explicate $\wgts_V$ in a way that demonstrates its realization as pairs of $3\times3$-symmetric matrices as follows:
\[
\wgts_V=\begin{psmallmatrix}(0100)&(0110)&(0111)\\(0110)&(0120)&(0121)\\(0111)&(0121)&(0122)\end{psmallmatrix},
\begin{psmallmatrix}(1100)&(1110)&(1111)\\(1110)&(1120)&(1121)\\(1111)&(1121)&(1122)\end{psmallmatrix}.
\]

Consider the subspaces $U_1,U_2,U_3$ in $\Sub(V)$ given by
\begin{align*}
\wgts_{U_1}&=\wgts_V\setminus \{(0100),(1100)\},\\
\wgts_{U_2}&=\wgts_V\setminus \{(0100),(0110),(0120)\},\\
\wgts_{U_3}&=\wgts_V\setminus \{(0100),(0110),(0111)\}.
\end{align*}
In matrices,
\[
U_1=\begin{psmallmatrix} 0&*&* \\ *&*&* \\ *&*&* \end{psmallmatrix}, \begin{psmallmatrix} 0&*&* \\ *&*&* \\ *&*&* \end{psmallmatrix},\ \ 
U_2=\begin{psmallmatrix} 0&0&* \\ 0&0&* \\ *&*&* \end{psmallmatrix}, \begin{psmallmatrix} *&*&* \\ *&*&* \\ *&*&* \end{psmallmatrix},\ \ 
U_3=\begin{psmallmatrix} 0&0&0 \\ 0&*&* \\ 0&*&* \end{psmallmatrix}, \begin{psmallmatrix} *&*&* \\ *&*&* \\ *&*&* \end{psmallmatrix}.
\]
Let $U_4=U_1\cap U_2$, $U_5=U_2\cap U_3$ so that
\[
U_4=\begin{psmallmatrix} 0&0&* \\ 0&0&* \\ *&*&* \end{psmallmatrix}, \begin{psmallmatrix} 0&*&* \\ *&*&* \\ *&*&* \end{psmallmatrix},\ \ 
U_5=\begin{psmallmatrix} 0&0&0 \\ 0&0&* \\ 0&*&* \end{psmallmatrix}, \begin{psmallmatrix} *&*&* \\ *&*&* \\ *&*&* \end{psmallmatrix}.
\]
Then, it is not difficult to check that
\[
\Spcl(V)=\{V,U_1,U_2,U_3,U_4,U_5\}.
\]
Moreover,
\begin{gather*}
\Stab(U_1)=\Env(U_1)=(*00*),\ \Stab(U_2)=\Env(U_2)=(00{*}0),\ \Stab(U_3)=\Env(U_3)=(000*),\\
\Stab(U_4)=\Stab(U_5)=\Env(U_5)=(0000),\ \ \Env(U_4)=(00{*}0).
\end{gather*}
\end{example}

\section{Proof of Theorem \ref{thm: convergence}} \label{sec: pfmain}

In this section we prove Theorem \ref{thm: convergence} by providing an estimate on $\Theta_U$ which is finer than \eqref{eq: thetacoarsebound}
for spaces $U \in \Minset (V)$ containing {\sr} elements (see Definition \ref{def: sr}).

\subsection{} \label{sec: poscones}

Let $\cone$ be a closed cone in a finite-dimensional real vector space $W$.

Let $f$ be a continuous real-valued function on $W$ and let $d>0$.
We say that $f$ is positively-$d$-homogeneous if $f(rv)=r^df(v)$ for all $r>0$, $v\in W$. (In the cases at hand, $d=1$.)
In this case, $f$ is positive on $\cone\setminus\{0\}$ if (and course, only if) it is positive on the intersection of $\cone$ with the 
unit sphere of $W$ with respect to a prescribed norm.

The space of continuous positively-$d$-homogeneous functions on $W$ is a Banach space with respect to the supremum norm on the unit ball
and the positivity on $\cone\setminus\{0\}$ is an open condition.

By integration using spherical coordinates it is clear that if $f$ is positively-$d$-homogeneous and positive on $\cone\setminus\{0\}$, then the integral of $e^{-f}$ over $\cone$ converges.

Suppose that $f$ is positively-$1$-homogeneous and let $\lambda_j$, $j\in J$ be a finite set of vectors in $W^*$.
Then, the positively-$1$-homogeneous function $f+N\sum_{j\in J}\sprod{\lambda_j}v_-$ is positive on $\cone\setminus\{0\}$ for $N\gg0$ if and only if
$f$ is positive on the subcone $\{v\in\cone:\sprod{\lambda_j}v\ge0\ \forall j\in J\}\setminus\{0\}$.

These facts yield the following elementary convergence result.

\begin{lemma} \label{lem: convergence}
Let $\chi_i$, $i\in I$ and
$\lambda_j$, $j\in J$ be finite sets of vectors in $\aaa_0^*$. 
Let $W \subset \aaa_0$ be a subspace and $f$ a continuous positively-$1$-homogeneous function on $W$.
Assume that $f$ is positive on the cone
\[
\{x\in\aaa_{0,+} \cap W \mid\sprod{\lambda_j}x\ge0\ \forall j\in J\}\setminus\{0\}.
\]
Then, there exists $c>0$ such that for all $N\gg0$ the integral
\[
\int_{\aaa_{0,+} \cap W} e^{-f(H)-N\sum_{j\in J}\sprod{\lambda_j}{H}_- - \sum_{i\in I}\sigma_i \sprod{\chi_i}{H}} \ dH
\]
converges for $\abs{\sigma_i}<c$, $i\in I$. Moreover, if $\chi_i\in\R_{\ge0}\{\lambda_j\}_{j\in J}$ for all $i\in I$, then for
all $\sigma_i>-c$, $i\in I$ the integral converges for sufficiently large $N$ (depending on the maximum value of the $\sigma_i$'s).
\end{lemma}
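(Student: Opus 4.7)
The plan is to reduce both assertions to the reduction principle stated in the paragraph preceding the lemma, combined with a compactness argument on the unit sphere. Let $K = \{H \in \aaa_{0,+} \cap W \mid \sprod{\lambda_j}{H} \ge 0 \text{ for all } j \in J\}$, so the hypothesis reads $f > 0$ on $K \setminus \{0\}$. By the reduction principle applied to the cone $\aaa_{0,+} \cap W$, for $N$ sufficiently large the continuous, positively-$1$-homogeneous function
\[
g_N(H) = f(H) + N \sum_{j \in J} \sprod{\lambda_j}{H}_-
\]
is positive on $(\aaa_{0,+} \cap W) \setminus \{0\}$. Combined with the $1$-homogeneity of $g_N$ and compactness of the intersection of $\aaa_{0,+} \cap W$ with the unit sphere of $\aaa_0$, this yields a constant $c_N > 0$ such that $g_N(H) \ge c_N \norm{H}$ on all of $\aaa_{0,+} \cap W$.

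For the first claim I fix such an $N$ and estimate the perturbation crudely: $\abs{\sum_{i \in I} \sigma_i \sprod{\chi_i}{H}} \le C_0 (\max_i \abs{\sigma_i}) \norm{H}$ for a suitable constant $C_0 > 0$. Choosing $c = c_N/(2 C_0)$, the full exponent will be dominated by $-(c_N/2)\norm{H}$ for all $\abs{\sigma_i} < c$, and integrability over the finite-dimensional cone $\aaa_{0,+} \cap W$ follows immediately.

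For the second claim I exploit $\chi_i \in \R_{\ge0}\{\lambda_j\}_{j \in J}$ to write $\chi_i = \sum_j a_{ij} \lambda_j$ with $a_{ij} \ge 0$. Split $\sigma_i = \sigma_i^+ - \sigma_i^-$ with $\sigma_i^- < c$ (from $\sigma_i > -c$) and $\sigma_i^+ \le \sigma^* := \max_i \max(0, \sigma_i)$, and use $\sprod{\lambda_j}{H} = \sprod{\lambda_j}{H}_+ - \sprod{\lambda_j}{H}_-$. A short case-by-case computation on the sign of $\sigma_i$ (dropping in each case the manifestly non-positive contribution) produces
\[
-\sum_i \sigma_i \sprod{\chi_i}{H} \le c \sum_j B_j \sprod{\lambda_j}{H}_+ + \sigma^* \sum_j B_j \sprod{\lambda_j}{H}_-
\]
with $B_j = \sum_i a_{ij}$. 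I then introduce $\tilde f(H) = f(H) - c \sum_j B_j \sprod{\lambda_j}{H}_+$; for $c$ sufficiently small $\tilde f$ remains positive on $K \setminus \{0\}$, since $f(H) \ge c_1 \norm{H}$ on $K$ by compactness while the subtracted term is of size $O(c \norm{H})$. Applying the first step to $\tilde f$ in place of $f$ yields some $\tilde N$ such that $\tilde f(H) + \tilde N \sum_j \sprod{\lambda_j}{H}_-$ is bounded below by a positive multiple of $\norm{H}$ on $\aaa_{0,+} \cap W$, and taking $N \ge \tilde N + \sigma^* \max_j B_j$ makes the total exponent $\le -c' \norm{H}$, whence convergence. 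The principal subtlety lies precisely in this second part: the $\sigma_i$'s may be arbitrarily large positive, and the hypothesis $\chi_i \in \R_{\ge0}\{\lambda_j\}$ is essential to distribute the resulting growth between a controlled modification of $f$ (absorbed by the strict positivity of $f$ on $K$) and a requirement on $N$ (absorbed by the $-N \sum_j \sprod{\lambda_j}{\cdot}_-$ decay).
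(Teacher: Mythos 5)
Your argument is correct and follows exactly the route the paper intends: the lemma is left without a detailed proof there, being presented as an immediate consequence of the openness-of-positivity and reduction-principle observations in \S\ref{sec: poscones}, which are precisely the tools you invoke. One small point worth making explicit for the first assertion: since $g_N = f + N\sum_j\sprod{\lambda_j}{\cdot}_-$ is non-decreasing in $N$ on $\aaa_{0,+}\cap W$, the constant $c$ extracted at a single sufficiently large $N_0$ already serves for all $N \ge N_0$, which gives the required order of quantifiers ($\exists c\,\forall N\gg 0$) rather than the weaker ($\forall N\gg0\,\exists c$) that your phrasing literally yields.
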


\subsection{}

We now turn to the support sets of {\sr} elements.

\begin{definition}
We denote by $\Sub_{\sreg} (V) \subset \Minset (V)$ the set of subspaces in $\Sub(V)$ that contain a {\sr} element.
\end{definition}

\begin{definition}
Let $U\in\Spcl(V)$. We say that $U$ is \emph{exceptional}
if there exists $U'\in\Minset(V)$, $U'\subset U$ such that the linear span of $X^*(G)\cup\Delta^{\Stab(U)}_0\cup\wgts_{U'}$ is a proper subspace of $\aaa_0^*$.
In this case, we will also say that $(U,U')$ is an exceptional pair.

We denote by $\Sub_{\excp}(V)$ the set of exceptional subspaces of $V$.
\end{definition}

\begin{proposition} \label{prop: stronglyregular}
Let $U\in\Spcl(V)\cap \Sub_{\sreg}(V)$. Then $U$ is not exceptional.

Conversely, suppose that $V$ is regular. Then, for any {\nsr} $v\in \Vreg(F)$ there exists an exceptional pair $(U,U')$
such that $v\in\rho(G(F))U'$.

Thus, if $V$ is regular, then the set of {\nsr} elements in $\Vreg(F)$ is
\[
\cup_{U\in\Sub_{\excp}(V)}\rho(G(F))U\cap \Vreg(F)=\cup_{(U,U')}\rho(G(F))U'\cap \Vreg(F)
\]
where $(U,U')$ range over the exceptional pairs.
\end{proposition}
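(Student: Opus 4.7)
The plan is to handle the three claims in turn; the third follows immediately from the first two. A common device is that the linear span of $X^*(G) \cup \Delta_0^{\Stab(U)} \cup \wgts_{U'}$ being proper in $\aaa_0^*$ is equivalent to the existence of a nontrivial $F$-split subtorus $T_\mu \subset T_0$: clearing denominators in a rational vector $\eta \in \aaa_0$ orthogonal to that set yields a cocharacter of $T_0$, and the three orthogonality relations translate (using $T_\mu \subset T_0 \subset M_{\Stab(U)}$) into $T_\mu \subset G^{\der}$, $T_\mu \subset Z(M_{\Stab(U)})$, and $U' \subset V^{T_\mu}$.

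For the first direction, assume an exceptional pair $(U, U')$ with $S := \Stab(U)$, extract $T_\mu$ as above, and take $v \in U \cap \Vreg(F)$; I will show that $v$ is {\nsr}. By Proposition~\ref{prop: cent}, $G_v^\circ \subset S$, so the Levi projection $S \twoheadrightarrow M_S$ exhibits an $F$-form of $G_v^{\red}$ as a reductive subgroup of $M_S$ centralized by $T_\mu$. To produce an $F$-split subtorus of $Z(G_v^{\red}) \cap G^{\der}$, pick $v' \in U' \cap \Vreg(\bar F)$, which exists since $U' \in \Minset(V)$; then $T_\mu \subset G_{v'}$, and the $S(\bar F)$-conjugacy between $v$ and $v'$ (arising from $U \cap \Vreg$ being a single $S$-orbit by Proposition~\ref{prop: cent}), combined with triviality of $H^1(F, \rad{G_v^\circ})$ for the unipotent radical, descends $T_\mu$ to the required $F$-split subtorus. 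Lemma~\ref{lem: isot} then contradicts $v$ being {\sr}.

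For the second direction, let $v \in \Vreg(F)^{\nsr}$. By Lemma~\ref{lem: isot}(3), $Z(G_v^{\red}) \cap G^{\der}$ contains a nontrivial $F$-split torus; after replacing $v$ by an element in its $G(F)$-orbit (using that maximal $F$-split tori of $G$ are $G(F)$-conjugate, followed by a Weyl conjugation to put the image of the torus into the positive chamber of $T_0$) we may assume this torus sits inside $T_0$ dominantly. Call it $T_\mu$, with associated $\eta \in \aaa_0$. Let $S$ be the standard parabolic of $G$ with Levi $Z_G(T_\mu)$, so $\Delta_0^S = \{\alpha \in \Delta_0 : \sprod{\alpha}{\eta} = 0\}$, and arrange by a further $G(F)$-adjustment that $G_v^\circ \subset S$. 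Take $U'$ to be the smallest weight subspace of $V$ containing $v$: then $v \in U' \cap \Vreg(F)$, so $U' \in \Minset(V)$, and $v \in V^{T_\mu}$ forces $\wgts_{U'} \subset \eta^\perp$. Set $U := S \star U'$. Since the operation $S \star$ only adjoins roots $\gamma$ of $S$ (which satisfy $\sprod{\gamma}{\eta} \ge 0$), $\wgts_U$ consists of weights $\beta$ with $\sprod{\beta}{\eta} \ge 0$, and standard arguments using Lemma~\ref{lem: sumofrts} together with the richness of $\wgts_V$ force $\Stab(U) = S$; Proposition~\ref{prop: cent} then gives $U \in \Spcl(V)$. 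By construction, $\eta$ annihilates each of $X^*(G)$, $\Delta_0^S$, and $\wgts_{U'}$, so $(U, U')$ is an exceptional pair with $v \in U'$.

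The hardest step is the $F$-rational descent in direction~1: the $S(\bar F)$-conjugation between the $F$-rational $v$ and the possibly $F$-irrational $v' \in U'$ is not itself $F$-rational, so controlling that the resulting torus inside $Z(G_v^{\red})$ is $F$-split rather than merely $\bar F$-defined relies on the fact that the centralization by $T_\mu$ is uniform across $M_S$, eliminating the dependence on the particular conjugating element and yielding a Galois-equivariant construction.
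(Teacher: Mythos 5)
Your plan mirrors the paper's two-direction scheme, and the observation that the exceptionality condition is equivalent to the existence of a nontrivial $F$-split torus $T_\mu\subset T_0\cap Z(M_{\Stab(U)})\cap G^{\der}$ fixing $U'$ pointwise is exactly right. However, both directions are missing their crux. In the first direction, invoking $H^1(F,\cdot)=1$ for the unipotent radical of $G_v^\circ$ does not help: with $x\in S(\bar F)$ such that $\rho(x)v=v'$, the Galois twist of $x^{-1}T_\mu x$ is the cocycle $h_\sigma=x^{-1}\sigma(x)$, which lies in $G_v(\bar F)\cap S(\bar F)$ rather than in the unipotent radical of $G_v^\circ$, so there is no cocycle in a unipotent group to trivialize. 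The device that actually works --- which your final paragraph gestures at but never executes --- is to pass to $G_v^\circ/(G_v^\circ\cap N_S)$ and verify that $t\mapsto x^{-1}tx\bmod(G_v^\circ\cap N_S)$ is Galois-equivariant; this is forced by the fact that $T_\mu$, being central in $M_S$ and contained in $G_{v'}^\circ$, centralizes $(G_{v'}\cap S)/(G_{v'}^\circ\cap N_S)$, where $\sigma(x)x^{-1}$ lives. One then obtains an $F$-split central torus in $G_v^\circ/(G_v^\circ\cap N_S)$, hence (since $G_v^\circ\cap N_S$ lies in the unipotent radical of $G_v^\circ$) a nontrivial $F$-split central torus in $G_v^{\red}$. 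Your assertion that $\pr_S(G_v^\circ)$ is reductive, an $F$-form of $G_v^{\red}$, is also unjustified in general: one only has the inclusion of $G_v^\circ\cap N_S$ in the unipotent radical of $G_v^\circ$, not equality.

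In the second direction, the step ``standard arguments using Lemma~\ref{lem: sumofrts} together with the richness of $\wgts_V$ force $\Stab(U)=S$'' is not an argument, and even knowing $\Stab(U)=S$ would not yet give $U\in\Spcl(V)$: one still needs $\dim V/U=\dim G/S$ by Proposition~\ref{prop: cent} part~\ref{part: special}. The missing verification is exactly where $v$ itself enters: since $v\in U'$, one has $D_v(\Liealg{G}_\alpha)\subset V_{\alpha+\wgts_{U'}}$ for all $\alpha$, so $D_v(\Liealg{S})\subset U$ while $D_v(\bar{\Liealg{N}})\cap U=0$ (where $\bar{\Liealg{N}}$ is opposite to $N_S$); combined with $\Liealg{G}_v\subset\Liealg{L}\subset\Liealg{S}$, this gives $D_v^{-1}(U)=\Liealg{S}$, so $\tilde D_v\colon\Liealg{G}/\Liealg{S}\to V/U$ is an isomorphism and $U\in\Spcl(V)$ with $\Stab(U)=S$. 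Two smaller points: the ``further $G(F)$-adjustment'' is superfluous, since the Weyl conjugation moving $T_\mu$ into dominant position in $T_0$ simultaneously moves $v$, so $G_v^\circ\subset L=C_G(T_\mu)\subset S$ is automatic; and it is cleaner to replace the whole torus by a single cocharacter $h\in X_*(T_\mu)\cap\aaa_L^{\reg}$, as the paper does, rather than keeping track of a cone.
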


\begin{remark}
The first implication cannot be reversed. There is an example of a regular {\PVS} $V$ and a non-exceptional $U\in\Spcl(V)$ such that all elements of $U\cap\Vreg (F)$ are {\nsr} (see Example \ref{ex: E6'} below).
\end{remark}

\begin{proof}
For the first part, let $U\in\Spcl(V)$ and $S=\Stab(U)$.
Assume that there exists $U'\in\Minset(V)$, $U'\subset U$ such that 
the linear span of $X^*(G)\cup\Delta^S_0\cup\wgts_{U'}$ is a proper subspace of $\aaa_0^*$.
Equivalently, there exists a (split) torus $1\ne T\subset T_0 \cap Z(M_S) \cap G^{\der}$ fixing $U'$ pointwise
(namely, the identity component of $\cap_{\chi\in X^*(G)\cup\Delta_0^S\cup\wgts_{U'}}\Ker\chi$).
We will show that all elements of $U\cap \Vreg(F)$ are {\nsr}.

Fix a regular element $u$ in $U'$. By Proposition \ref{prop: cent}, we have $G_u^\circ\subset S$.
Since $T$ lies in the center of $M_S$, $[T,G_u\cap S]\subset [Z(M_S),S]\subset N_S$. On the other hand, $T\subset G_u^\circ$
(since $T$ stabilizes all elements of $U'$).
Therefore, $T$ centralizes $(G_u\cap S)/(G_u^\circ\cap N_S)$. In particular, the image of $T$ in $G_u^\circ/(G_u^\circ\cap N_S)$ is central.
Note that $G_u^\circ\cap N_S$ is contained in the unipotent radical of $G_u^\circ$.

Now, let $v\in U\cap \Vreg(F)$. By Proposition \ref{prop: cent}, $v$ is in the $S$-orbit (but not necessarily the $S(F)$-orbit) of $u$.
Let $x\in S (\bar{F})$ be such that $u=\rho(x)v$. Then, $x^{-1}Tx$ is a torus in $G^{\der}\cap G_v^\circ$.
Moreover, for any $\sigma \in\Gal(\bar{F}/F)$ we have $\sigma(x)x^{-1}\in G_u\cap S$.
Let $T'$ be the image of $x^{-1}Tx$ in $G_v^\circ/(G_v^\circ\cap N_S)$. Then, $T'$ is central.
Moreover, $T'$ is a split torus. In fact, since $T$ centralizes $\sigma(x)x^{-1}(G_u^\circ\cap N_S)$ for all $\sigma \in\Gal(\bar{F}/F)$,
the map $t\mapsto x^{-1}tx (G_v^\circ\cap N_S)$ is an $F$-isomorphism of $T$ with $T'$.
In particular, the image of $T'$ in $G_v^{\red}$ is a non-trivial $F$-split central torus.
Thus, $v$ is {\nsr} by Lemma \ref{lem: isot}. This proves the first part.

For the second part, suppose that $V$ is regular and $v\in \Vreg (F)$ is {\nsr}. Let $T\ne1$ be a split torus contained in $Z(G^\circ_v)\cap G^{\der}$.
The centralizer $L=C_G(T)$ contains $G_v^\circ$.
Upon replacing $v$ by an element in its $G(F)$-orbit, we can assume that $T$ is contained in $T_0  \cap G^{\der}$ and that $L$ is a standard Levi subgroup of $G$.
Thus, $X_*(T)\subset\aaa_L$ and in the notation of \cite{MR1361168}*{I.1.10} we have $X_*(T)\cap\aaa_L^{\reg}\ne\emptyset$.
Fix an element $h \in X_*(T)\cap\aaa_L^{\reg} \subset \aaa^G_0$. Upon conjugating by an element of the Weyl group of $T_0$, we may assume that $\sprod{\alpha}{h}$ is non-negative
precisely for the roots $\alpha$ of the (proper) standard parabolic subgroup $Q$ of $G$ with Levi subgroup $L$ \cite{MR1361168}*{Lemma I.1.10}.

Consider the subspaces $U,U'\in \Sub(V)$ given by
\[
\wgts_U = \{ \beta \in \wgts_V \mid\sprod{\beta}{h} \ge 0 \},\ \ \wgts_{U'}= \{ \beta \in \wgts_V \mid\sprod{\beta}{h} = 0 \} \subset\wgts_U.
\]
Clearly $U$ is stable under $Q$ and $U'\subset U$. Since $T$ stabilizes $v$, $\supp v\subset\wgts_{U'}$ so that $v \in U'$.
In particular, $U'\in \Minset (V)$, and a fortiori $U \in \Minset (V)$.
Recall the surjection \eqref{def: Dv}. It satisfies
\[
D_v(\Liealg{G}_\alpha)\subset V_{\alpha+\wgts_{U'}}\text{ for all }\alpha\in\rts_G\cup\{0\}
\]
by \eqref{eq: drhoab}. In particular, $D_v(\Liealg{Q})\subset U$ and $D_v(\bar{\Liealg{N}})\cap U=0$ where $\Liealg{Q}=\Lie Q$ and $\bar{\Liealg{N}}$ is the Lie algebra of
the unipotent radical of the parabolic subgroup of $G$ opposite to $Q$.
We claim that the preimage of $U$ under $D_v$ is $\Liealg{Q}$.
It suffices to show that $D_v^{-1}(U)\cap\bar{\Liealg{N}}=0$.
Suppose that $x\in\bar{\Liealg{N}}$ and $D_v(x)\in U$. Then, by the above, $D_v(x)=0$.
Hence, $x$ is in the Lie algebra of $G_v$, which is contained in $\Liealg{L}$, and thus $x=0$.

Hence, the induced map $\tilde D_v:\Liealg{G}/\Liealg{Q}\rightarrow V/U$ is an isomorphism.
It follows from Proposition \ref{prop: cent} that  $U\in\Spcl(V)$ and $\Stab(U)=Q$.

Since the nonzero vector $h \in \aaa^G_0$ annihilates $\wgts_{U'}\cup \Delta^Q_0$,
we obtain that $(U,U')$ is exceptional.

Finally, the last part follows from the first two parts.
\end{proof}

\begin{remark}
We do not know whether the second (and hence also the last) part of Proposition \ref{prop: stronglyregular} hold without the regularity assumption on $V$.
\end{remark}

\begin{example}
Proposition \ref{prop: stronglyregular} is best illustrated by the case
where $V$ is the {\PVS} of binary quadratic forms with respect to the action of $G=\GL_2$, i.e., the symmetric square representation.
(Example \ref{ex: Speven} with $k=1$, $n_1=2$.)
Thus, $\Vreg$ is the set of binary quadratic forms with nonzero discriminant.
In this case $\wgts_V = \{ \beta, \beta + \alpha, \beta + 2 \alpha \}$, where $\alpha$ is the simple root of $G$, and $\beta$ the lowest weight of $V$.
Note that $\beta + \alpha$ is the determinant character of $G$. Apart from $V$ itself, there is a single space $U$ in
$\Spcl(V)$, namely $U=V_{\beta+\alpha}+V_{\beta+2\alpha}$, and $\Env(U)=\Stab(U)$ is the standard Borel subgroup of $G$.
The space $U$ is exceptional. More precisely, $(U,U')$ is an exceptional pair where $U'=V_{\beta+\alpha}$.
Let $v\in \Vreg$. Then, $G_v$ is an orthogonal group so that $G_v^\circ$ is a one-dimensional torus.
Thus, Proposition \ref{prop: stronglyregular} amounts to the fact that $G_v^\circ$ is split if and only if $v$ can be represented
as $xy$ in suitable coordinates.
\end{example}

\begin{example} \label{ex: E6'}
Let us revisit Example \ref{ex: E6} pertaining to the simply connected group $G'=E_6$.

\begin{center}
\begin{tikzcd}
\bullet \arrow[r,dash] \arrow[rrrr,leftrightarrow,bend left]  & \bullet \arrow[r,dash] \arrow[rr,leftrightarrow,bend left] &
\bullet \arrow[r,dash] \arrow[d,dash] & \bullet \arrow[dash,r]  & \bullet \\ & & \bullet
\end{tikzcd}
\end{center}

For brevity we will write $\Eroot{abcde}f$ for the linear combinations of simple roots with coefficients $a,b,c,d,e,f$.

We will also write standard parabolic subgroups as $\Eroot{*0**0}*$ where $*$ indicates that the simple root is in the Levi subgroup
and $0$ otherwise.

Let $\theta$ be the outer involution of $G'$ acting on the simple roots as indicated.

Let $P$ be the maximal standard parabolic subgroup $\Eroot{**0**}*$.
The Levi subgroup $G$ has derived group $\SL_2 \times \SL_3 \times \SL_3$.

The unipotent radical $\rad{P}$, which has dimension $29$, has a three-step filtration, the first filtration step $V$ (the abelianization) has dimension $18$. 
We may write $\wgts_V$ explicitly to reflect the realization of $V$ as the space of pairs of $3\times 3$-matrices as follows.

\[
\wgts_V=\begin{pmatrix}\Eroot{11100}0,\Eroot{11110}0,\Eroot{11111}0\\\Eroot{01100}0,\Eroot{01110}0,\Eroot{01111}0\\\Eroot{00100}0,\Eroot{00110}0,\Eroot{00111}0\end{pmatrix},\ \ 
\begin{pmatrix}\Eroot{11100}1,\Eroot{11110}1,\Eroot{11111}1\\\Eroot{01100}1,\Eroot{01110}1,\Eroot{01111}1\\\Eroot{00100}1,\Eroot{00110}1,\Eroot{00111}1\end{pmatrix}
\]

We observe that for all proper subspaces $U\in\Spcl(V)$, $U \cap \Vreg (F)$ consists entirely of {\nsr} elements. 
Namely, if $U\in\Spcl(V)$ contains a {\sr} element $x$, then $G^\circ_x$
has to be contained in $Q = \Stab (U)$. But the projection of $G^\circ_x$ to 
any of the two $\SL_3$ factors of $G^{\der}$ is 
an anisotropic maximal torus of this group, which 
implies that $Q$ contains both $\SL_3$ factors. Therefore either $Q=G$ and $U=V$ or the intersection of $Q$ with the $\SL_2$ factor is the standard Borel subgroup.
However, for this maximal parabolic subgroup $Q$ of $G$ the only non-trivial $Q$-invariant subspace of $V$ does not intersect $\Vreg$.

Define $U_1,U_2,U_3\in\Sub(V)$ by
\begin{align*}
\wgts_{U_1}&=\wgts_V\setminus \{\Eroot{00100}0,\Eroot{01100}0,\Eroot{00100}1,\Eroot{01100}1\},\\
\wgts_{U_2}&=\wgts_V\setminus\{\Eroot{00100}0,\Eroot{01100}0,\Eroot{11100}0\},\\
\wgts_{U_3}&=\wgts_V\setminus\{\Eroot{00100}0,\Eroot{01100}0,\Eroot{00110}0,\Eroot{01110}0,\Eroot{00100}1,\Eroot{01100}1\}.
\end{align*}
In terms of matrices, these spaces are described as follows.
\[
U_1=\begin{psmallmatrix}***\\ 0**\\ 0**\end{psmallmatrix},\begin{psmallmatrix}***\\ 0**\\ 0**\end{psmallmatrix},\ \ 
U_2=\begin{psmallmatrix}0**\\ 0**\\ 0**\end{psmallmatrix},\begin{psmallmatrix}***\\ ***\\ ***\end{psmallmatrix},\ \ 
U_3=\begin{psmallmatrix}***\\ 00*\\ 00*\end{psmallmatrix},\begin{psmallmatrix}***\\ 0**\\ 0**\end{psmallmatrix}.
\]
Let $U_i'=\theta(U_i)$, $i=1,2,3$ and $U_i''=U_i\cap U_i'$ so that
\[
U_1'=\begin{psmallmatrix}***\\ ***\\ 00*\end{psmallmatrix},\begin{psmallmatrix}***\\ ***\\ 00*\end{psmallmatrix},\ \ 
U_2'=\begin{psmallmatrix}***\\ ***\\ 000\end{psmallmatrix},\begin{psmallmatrix}***\\ ***\\ ***\end{psmallmatrix},\ \ 
U_3'=\begin{psmallmatrix}***\\ 00*\\ 00*\end{psmallmatrix},\begin{psmallmatrix}***\\ ***\\ 00*\end{psmallmatrix},
\]
and
\[
U_1''=\begin{psmallmatrix}***\\ 0**\\ 00*\end{psmallmatrix},\begin{psmallmatrix}***\\ 0**\\ 00*\end{psmallmatrix},\ \ 
U_2''=\begin{psmallmatrix}0**\\ 0**\\ 000\end{psmallmatrix},\begin{psmallmatrix}***\\ ***\\ ***\end{psmallmatrix},\ \ 
U_3''=\begin{psmallmatrix}***\\ 00*\\ 00*\end{psmallmatrix},\begin{psmallmatrix}***\\ 0**\\ 00*\end{psmallmatrix}.
\]
We write $U_{i,j}=U_i\cap U_j$, $U_{i',j}=U_i'\cap U_j$, $U_{i',j'}=U_i'\cap U_j'$, $U_{i'',j}=U_i''\cap U_j$ and $U_{i'',j'}=U_i''\cap U_j'$.
Then, a straightforward but tedious computation gives the following Hasse diagram for $\Spcl(V)$:

\begin{tikzcd}
\\
& &     &      & V\\
& & U_1 \arrow[urr,dash] & U_1' \arrow[ur,dash] & U_2 \arrow[u,dash] & U_2' \arrow[ul,dash] \\
U_3 \arrow[urr,dash] & U_3' \arrow[urr,dash] & U_1'' \arrow[u,dash] \arrow[ur,dash] & U_{1,2} \arrow[ul,dash] \arrow[ru,dash] &
U_{1',2} \arrow[ul,dash] \arrow[u,dash] & U_{1,2'} \arrow[ulll,dash] \arrow[u,dash] & U_{1',2'} \arrow[ulll,dash] \arrow[ul,dash] &
U_2'' \arrow[ulll,dash] \arrow[ull,dash]\\
&U_3'' \arrow[ul,dash] \arrow[ur,dash] \arrow[u,dash] & U_{3',2} \arrow[ul,dash] \arrow[urr,dash] & U_{3,2'} \arrow[ulll,dash] \arrow[urr,dash] &
U_{1'',2} \arrow[ull,dash] \arrow[ul,dash] \arrow[u,dash] & U_{1'',2'} \arrow[ulll,dash] \arrow[u,dash] \arrow[ur,dash]\\
\end{tikzcd}

Moreover,
\begin{align*}
\Stab(U_1)&=\Eroot{0*\phantom{0}0*}*,\ \Stab(U_2)=\Eroot{**\phantom{0}0*}0,\ \Stab(U_3)=\Eroot{0*\phantom{0}00}0\\
\Stab(U_1')&=\Eroot{*0\phantom{0}*0}*,\ \Stab(U_2')=\Eroot{*0\phantom{0}**}0,\ \Stab(U_3')=\Eroot{00\phantom{0}*0}0\\
\Stab(U_1'')&=\Eroot{00\phantom{0}00}*,\ \Stab(U_2'')=\Eroot{*0\phantom{0}0*}0,\ \Stab(U_3'')=\Eroot{00\phantom{0}00}0.
\end{align*}
The stabilizers of the rest of the elements of $\Spcl(V)$ are given by Lemma \ref{lem: intersections}.
All spaces except $U_2$, $U'_2$ and $V$ are exceptional.

The spaces $U \in \Spcl(V)$ with $\Env(U) = \Stab(U)$ are $V, U_1, U_1', U_2, U_2', U_{1,2}, U_{1',2'}, U_1'', U_2''$.
For $U = U_3, U_{3,2'}, U_{1,2'}$ we have $\Env(U) \star U = U_1$.
For $U = U_3', U_{3',2}, U_{1',2}$ we have $\Env(U) \star U = U_1'$.
For $U = U_3'', U_{1'',2}, U_{1'',2'}$ we have $\Env(U) \star U = U_1''$.

\end{example}

\subsection{}
We can now formulate and prove the main geometric statement about $\lambda(U)$ where $U\in \Sub_{\sreg} (V)$.

Consider the natural surjection
\[
q:\aaa_G=X_*(G^{\ab})\otimes\R\longrightarrow\reducd{\aaa}_G=X_*(G/\centder{G})\otimes\R.
\]
The kernel of $q$ is the image of $X_*(\centder{G}/G^{\der})\otimes\R$ in $\aaa_G$.
The dual space $\reducd{\aaa}_G^*=X^*(G/\centder{G})\otimes\R\hookrightarrow\aaa_G^*$ of $\reducd{\aaa}_G$ has basis $\fundchr$.
Fix a section for $q$, i.e., a subspace $\cmpl$ of $\aaa_G$ such that the restriction of $q$ to $\cmpl$ is an isomorphism.
In other words, $\cmpl$ is a complement of $\Ker q$ in $\aaa_G$.
Let 
\[
W=(\aaa_0^G)\oplus\cmpl\hookrightarrow(\aaa_0^G)\oplus\aaa_G=\aaa_0.
\]

Recall the cone $\cone_U$ defined in \eqref{def: CU}.

\begin{proposition} \label{prop: convregs}
Suppose that $V$ is regular. Let $U \in \Sub_{\sreg} (V)$.
Then, for every $\mu\in\R_{>0}\fundchr$, the restriction of the linear form $\lambda(U)+\mu$ to $W$ is positive on the cone $\cone_U\cap W\setminus\{0\}$.
\end{proposition}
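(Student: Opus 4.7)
The plan is to locate the zero set of $\lambda(U)+\mu$ on $\cone_U\cap W$ and show it is the origin. By Corollary \ref{cor: lam(U)} the form $\lambda(U)$ is nonnegative on $\cone_U$, and by the first part of Corollary \ref{cor: fundchrS} we have $\fundchr\subset\Z_{\ge 0}\wgts_U$, so $\sprod{\chi_i}{x}\ge 0$ on $\cone_U$ and hence $\mu\ge 0$ there as well. Vanishing of $\lambda(U)+\mu$ at $x\in\cone_U$ therefore forces $x\in\face_U$ and $\sprod{\chi_i}{x}=0$ for every $i$. The second condition places $x$ in the annihilator of $\reducd{\aaa}_G^*$, which, in the splitting $\aaa_0=\aaa_0^G\oplus\aaa_G$, equals $\aaa_0^G\oplus\Ker q$; intersecting with $W=\aaa_0^G\oplus\cmpl$ and using $\cmpl\cap\Ker q=0$ leaves only $\aaa_0^G$. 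Thus the proposition reduces to proving $\face_U\cap\aaa_0^G=\{0\}$.

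I will argue this by contradiction. Suppose $0\ne h\in\face_U\cap\aaa_0^G$ and set $\tilde U=\Env(U)\star U\in\Spcl(V)$ (Lemma \ref{lem: admsets}). Since $h\in\face_U$, the pairings $\sprod{\alpha}{h}$ vanish on $\Delta_0^{\Env(U)}$ and hence on all roots of the Levi of $\Env(U)$, while they are nonnegative on positive roots of $G$ because $h\in\aaa_{0,+}$. Combined with Lemma \ref{lem: Hclos} this shows $\sprod{\beta}{h}\ge 0$ for every $\beta\in\wgts_{\tilde U}$, i.e., $h\in\cone_{\tilde U}$. From $\lambda(\tilde U)=\lambda(U)-\sum_{\beta\in\wgts_{\tilde U}\setminus\wgts_U}n_\beta\beta$ and $\lambda(U)(h)=0$ I get $\lambda(\tilde U)(h)\le 0$, and combined with the a priori bound $\lambda(\tilde U)(h)\ge 0$ on $\cone_{\tilde U}$ (Corollary \ref{cor: lam(U)}), this yields $h\in\face_{\tilde U}$.

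The crux of the argument is to exhibit $\tilde U$ as exceptional, contradicting Proposition \ref{prop: stronglyregular} (applicable because $U\in\Sub_{\sreg}(V)$ forces $\tilde U\in\Spcl(V)\cap\Sub_{\sreg}(V)$). Define $U'\in\Sub(V)$ by $\wgts_{U'}=\{\beta\in\wgts_{\tilde U}:\sprod{\beta}{h}=0\}$. Then $h$ is orthogonal to $X^*(G)\cup\Delta_0^{\Stab(\tilde U)}\cup\wgts_{U'}$, using $\Stab(\tilde U)=\Env(\tilde U)=\Env(U)$ from Lemma \ref{lem: admsets}, so once $U'\in\Minset(V)$ is established the pair $(\tilde U,U')$ witnesses exceptionality and we obtain the contradiction. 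To prove $U'\cap\Vreg\ne\emptyset$, fix any $v\in\tilde U\cap\Vreg$ (nonempty since $\tilde U\in\Minset(V)$) and let $v_0$ be the projection of $v$ onto $U'$. Expanding each fundamental relative invariant $\RI_{\chi_i}$ as a sum of monomials in the weight-basis coordinates of $\tilde U$, any monomial with nonzero coefficient has total weight $\chi_i$; pairing with $h$ and using $h\in\aaa_0^G$ yields $0=\sprod{\chi_i}{h}=\sum_k n_k\sprod{\beta_k}{h}$, a sum of nonnegative terms, which forces $\sprod{\beta_k}{h}=0$ for every weight $\beta_k$ occurring, i.e., $\beta_k\in\wgts_{U'}$. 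Hence $\RI_{\chi_i}(v)=\RI_{\chi_i}(v_0)\ne 0$ for every $i$, and by regularity $v_0\in U'\cap\Vreg$.

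The main obstacle is precisely this last step---producing a subspace $U'\subset\tilde U$ that meets $\Vreg$ and whose weights are annihilated by $h$. The mechanism that makes it possible is the $h$-invariance of the fundamental relative invariants $\RI_{\chi_i}$ (equivalent to $\sprod{\chi_i}{h}=0$ for $h\in\aaa_0^G$), which transports regularity from $v$ to its projection $v_0$; this is exactly where the regularity hypothesis on $V$ is essential.
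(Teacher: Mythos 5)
Your proof is correct, and its strategy coincides with the paper's: reduce the positivity assertion to $\face_U \cap \aaa_0^G = \{0\}$, pass to $\tilde U = \Env(U)\star U \in \Spcl(V)$, and derive a contradiction from the non-exceptionality of $\tilde U$ furnished by Proposition \ref{prop: stronglyregular}. The one step you handle differently is the production of the subspace $U'$ witnessing exceptionality. The paper applies Corollary \ref{cor: fundchrS} part 2 to $U$ with a $\psi\in\Z_{>0}\fundchr$, obtaining a ready-made $U'\subset U$ in $\Minset(V)$ with $\psi\in\R_{>0}\wgts_{U'}$, from which $\sprod{\beta}{x}=0$ for all $\beta\in\wgts_{U'}$ is immediate. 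You instead define $U'$ to be the weight subspace of $\tilde U$ annihilated by $h$ and verify $U'\in\Minset(V)$ by hand: each $\RI_{\chi_i}\rest_{\tilde U}$ is a sum of monomials of total weight $\chi_i$, and since $\sprod{\chi_i}{h}=0$ while $\sprod{\beta}{h}\ge 0$ on $\wgts_{\tilde U}$, every weight occurring in such a monomial lies in $\wgts_{U'}$, so $\RI_{\chi_i}\rest_{\tilde U}$ descends through the projection to $U'$ and non-vanishing is preserved. This re-derives in place exactly the weight-tracking argument of Lemma \ref{lem: suppRI}, and your choice of $U'$ mirrors the construction used in the proof of Proposition \ref{prop: stronglyregular} itself; the paper's route is more modular, yours slightly more self-contained. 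A small remark: the observation $h\in\face_{\tilde U}$ is not used afterwards --- the facts $h\in\cone_{\tilde U}$, $h\in\aaa_0^G$, and $\sprod{\alpha}{h}=0$ for $\alpha\in\Delta_0^{\Env(U)}$ suffice.
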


\begin{proof}
Suppose that $x\in\cone_U$ is such that $\sprod{\lambda(U)+\mu}x\le0$.
By Corollary \ref{cor: fundchrS} part \ref{part: nonneg}, we have $\sprod{\chi}x\ge 0$ for all $\chi\in\fundchr$.
By \eqref{eq: lambdaUincone}, we obtain $\sprod{\lambda(U)}x=0$ and $\sprod{\chi}x=0$ for all $\chi\in\fundchr$.
Also, by the definition of $\Env=\Env(U)$, we have $\sprod{\alpha}x=0$ for all $\alpha\in\Delta_0^\Env$.

Let $U'\in\Minset(V)$ be as in Corollary \ref{cor: fundchrS} part \ref{part: psipos}. Then, $\sprod{\beta}x=0$ for all $\beta\in\wgts_{U'}$.

Let $\tilde U=\Env(U)\star U$.
By Lemma \ref{lem: admsets}, $\tilde U\in\Spcl(V)$ and $\Env(\tilde U)=\Stab(\tilde U)=\Env$.

Since $\tilde U\supset U$, and $U\in\Sub_{\sreg}(V)$ we may apply Proposition \ref{prop: stronglyregular} to conclude that $\tilde U$ is not exceptional.
Thus, the set $\wgts_{U'}\cup\Delta_0^\Env\cup\aaa_G^*$ spans $\aaa_0^*$.
Let
\[
p:\aaa_0^*\rightarrow W^*
\]
be the restriction map, which is surjective. We have $\aaa_G^*=\Ker p\oplus\reducd{\aaa}_G^*$. Since $\fundchr$ spans $\reducd{\aaa}_G^*$,
we infer that $p(\wgts_{U'}\cup\Delta_0^\Env\cup\fundchr)$ spans $W^*$.
Restricting to $x \in W$, we deduce $x=0$.
\end{proof}

\subsection{}
We can now finish the proof of Theorem \ref{thm: convergence}.
As we have already noted in the discussion following \eqref{eq: naivezeta}, assuming convergence, the integral
\[
a\mapsto\int_{G(F) \bs G (\A)^1} \theta^{\sreg} (ha) \, dh
\]
is $\centder{G}(\A)$-invariant. It therefore remains to prove the convergence of
\[
\int_{\cmpl}\int_{G(F) \bs G (\A)^1} \theta^{\sreg} (ha) \, dh\ e^{-\sprod{\delta_V+\mu}{a}}\ da
\]
where $\cmpl$ is as in Proposition \ref{prop: convregs} and $\mu\in\R_{>0}\fundchr$.

Choose a Siegel set as in \S\ref{sec: Siegel} such that $G(F)\Siegel=G(\A)$.
For any non-negative function $f$ on $G(F)\bs G(\A)^1$ we have
\begin{equation} \label{eq: intsiegel}
\begin{aligned}
\int_{G(F)\bs G(\A)^1}f(g)\ dg&\le
\int_K\int_{\aaa_0^G}\int_{P_0(F)\bs P_0(\A)^1}f(pe^x k)\delta_{P_0}(e^x)^{-1}\tau_{c_0}(x)\ dp\ dx\ dk\\&\le
\int_{\aaa_0^G}\sup_{y\in\Omega} f(e^x y)\delta_{P_0}(e^x)^{-1}\tau_{c_0}(x)\ dx,
\end{aligned}
\end{equation}
where $\tau_{c_0}$ is the characteristic function of $\{x\in\aaa_0^G\mid\sprod{\alpha}x>c_0\ \forall\alpha\in\Delta_0\}$.

It is therefore enough to show the convergence of
\begin{equation} \label{EqnConvergence1}
\int_W\delta_{P_0}(e^x)^{-1}\sup_{y\in\Omega}\abs{\theta_\phi^{\sreg} (e^x y)} \ e^{-\sprod{\delta_V+\mu}{x}}\tau_{c_0}(x)\ dx,
\end{equation}
where $W=\aaa_0^G\oplus\cmpl\hookrightarrow\aaa_0$ and we extend $\tau_{c_0}$ trivially on $\cmpl$.

As in the proof of Proposition \ref{prop: bndtheta} we bound $\theta^{\sreg} (g)$
by splitting the sum according to support sets, i.e., we write
\[
\theta^{\sreg}_\phi(g)=\sum_{U\in\Sub(V)}\Theta_U^{\sreg}(g)
\]
with
\[
\Theta_U^{\sreg}(g)=\sum_{\xi\in \Vreg(F)^{\sreg}: \,\supp\xi=\wgts_U}\phi(\rho(g)^{-1}\xi).
\]
The integral \eqref{EqnConvergence1} is then majorized by the sum over $U\in\Sub(V)$ of
\begin{equation} \label{EqnConvergence2}
\int_W\delta_{P_0}(e^x)^{-1}\sup_{y\in\Omega}\abs{\Theta_U^{\sreg} (e^x y)} \ e^{-\sprod{\delta_V+\mu}{x}}\tau_{c_0}(x)\ dx.
\end{equation}
Moreover, by Lemma \ref{lem: Latticepoints}
for any $N\ge0$ we have
\[
\Theta_U^{\sreg}(g)\ll^{\rd}_{N,\phi}
\delta_{P_0}(m_0(g))\abs{\det\rho(g)}e^{-\sprod{\lambda(U)}{H_0(g)} - N \sum_{\beta\in\wgts_U} \sprod{\beta}{H_0(g)}_-}, \ \ 
g\in T_0(\A)\Omega.
\]
Therefore, \eqref{EqnConvergence2} is majorized by
\begin{equation} \label{eq: auxint12}
\nu_N (\phi)
\int_W e^{-\sprod{\lambda (U)+\mu}{x}}e^{-N \sum_{\beta\in\wgts_U} \sprod{\beta}{x}_-}\tau_{c_0}(x)\ dx
\end{equation}
for suitable seminorms $\nu_N$ on $C_{\rd}(V(\A))$.
We may assume of course that $U \in \Sub_{\sreg} (V)$, for otherwise $\Theta_U^{\sreg}\equiv0$.
By Proposition \ref{prop: convregs}, the restriction of the linear form $\lambda(U)+\mu$ to $W$ is then positive on the cone $\cone_U\cap W\setminus\{0\}$.
Therefore, the integral \eqref{eq: auxint12} converges by Lemma \ref{lem: convergence}.

This finishes the proof of Theorem \ref{thm: convergence}.

\section{{\PVS}s without non-trivial {\SPCL} subspaces} \label{SectionSimpleCase}

In the section, we analyze the case where a regular {\PVS} $V$ has regular $F$-irreducible components and no non-trivial special subspaces.
Under these conditions we obtain a strengthening of our main result, namely meromorphic continuation of the zeta function to the left of the point $\lambda = 0$
with at most simple poles along the hyperplanes defined by the fundamental characters.

We continue to assume that $G$ is reductive.
In general, a subrepresentation of a regular {\PVS} is not necessarily regular.
We will say that a {\PVS} $V$ is \emph{completely factorizable} (\CF) if all its $F$-irreducible subrepresentations are regular. It is clear that every subrepresentation of a {\CF} {\PVS} is again {\CF}.

For example, every {\BASIC} {\PVS} (see Definition \ref{def: basic}) is {\CF}, since any subrepresentation is {\BASIC}, hence regular.
Examples \ref{ex: GL}--\ref{ex: SOeven} are {\CF} precisely when $n_1=\dots=n_k$.
(However, $n_{k+1}$ may be bigger than $n_k$ in cases \ref{ex: Spnodd} and \ref{ex: SOodd}.)

Note that in general, an $F$-irreducible representation $\pi$ of $G$ does not admit a central character.
However, the restriction of $\pi$ to the torus $T_G$ acts by a character, which we denote by $\omega_\pi$.

\begin{lemma}[cf.~\cite{MR2961852}*{Proposition 2.13}] \label{LemmaRubenthaler}
Let $(G,V,\rho)$ be a {\CF} {\PVS} and let $V=\oplus_{i\in I}(V_i,\rho_i)$ be a decomposition into irreducible subrepresentations over $F$.
Let $\chi_i=\omega_{\rho_i}$, $i\in I$. Then,
\begin{enumerate}
\item \label{part: Vreg} $V$ is regular.
\item \label{part: prodreg} $\Vreg=\prod_{i\in I}\Vreg_i$, where $\Vreg_i$ is the regular part of $V_i$.
\item \label{part: fundinv} The fundamental invariant polynomials of $V$ are those of the $V_i$'s.
\item \label{part: fndk} The restrictions of the fundamental characters of $V$ to $T_G$ are $n_i\chi_i$ for some positive integers $n_i$, $i\in I$.
\item \label{part: linind} The characters $\chi_i$ are linearly independent over $\Z$.
\item \label{part: disunion} The union $\wgts_V=\cup\wgts_{V_i}$ is disjoint.
\item \label{part: isotypic} The irreducible components $V_i$ are the isotypic components of $V$.
\item \label{part: dsuminset} The set $\Minset(V)$ consists of the direct sums 
$U=\oplus_{i\in I} U_i$ with $U_i \in \Minset(V_i)$, $i \in I$. 
\item \label{part: ani} The group $\centder{G}/G^{\der}\Ker\rho$ is $F$-anisotropic.
\end{enumerate}
\end{lemma}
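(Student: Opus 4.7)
The plan is to introduce, for each irreducible component $V_i$ of $V$, a fundamental relative invariant $f_i$ with character $\psi_i\in X^*(G)$, and to exploit the relation $\psi_i\rest_{T_G}=d_i\chi_i$ with $d_i=\deg f_i>0$. All nine assertions will flow from two ingredients: a polynomial ``disjoint variables'' argument and the injectivity of the restriction $X^*(G)\to X^*(T_G)$. The latter holds because the image of $T_G$ in the torus $G^{\ab}$ is its maximal $F$-split subtorus, and the anisotropic quotient of $G^{\ab}$ admits no nontrivial $F$-rational characters.

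First I would prove the $\Z$-linear independence of the $\psi_i$'s in $X^*(G)$. Extending each $f_i$ to $V$ via the projection $V\to V_i$ yields a relative invariant of $V$ with the same character. Any relation $\sum_i n_i\psi_i=0$ would force $\prod_i f_i^{n_i}$ to be a $G$-invariant rational function on $V$, hence a constant since $V$ is a \PVS{}; rewriting as $\prod_{n_i>0}f_i^{n_i}=c\prod_{n_j<0}f_j^{-n_j}$ displays two polynomials in disjoint sets of variables, forcing each side to be constant and hence each $n_i$ to vanish. Combined with the injectivity of $X^*(G)\to X^*(T_G)$ this yields part \ref{part: linind}. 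Parts \ref{part: disunion} and \ref{part: isotypic} are then immediate, since every weight of $V_i$ restricts to $\chi_i$ on $T_G$: distinct $\chi_i$'s force the $\wgts_{V_i}$ to be pairwise disjoint and prevent $V_i$ and $V_j$ with $i\ne j$ from sharing an absolutely irreducible constituent.

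For the geometric parts \ref{part: Vreg}, \ref{part: prodreg}, \ref{part: fundinv}, I would first observe that each irreducible $V_i$ admits a unique fundamental relative invariant: any two candidates would have $T_G$-restrictions that are proportional to $\chi_i$, hence (again by the injectivity of $X^*(G)\to X^*(T_G)$) would be $\Q$-proportional, and therefore equal, being primitive elements of the character lattice of relative invariants of $V_i$. Thus $\Vreg_i=\{f_i\ne 0\}$ and $\prod_i\Vreg_i=\{\prod_i f_i\ne 0\}$ is the complement of a hypersurface in $V$. A dimension count for projections gives $\Vreg\subseteq\prod_i\Vreg_i$. For the reverse inclusion over $\bar F$, $G$ acts transitively on each $\Vreg_i(\bar F)$, and part \ref{part: linind} guarantees that $T_G$ rescales the summands independently; combining these makes the total $G$-action transitive on $\prod_i\Vreg_i(\bar F)$. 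The resulting equality $\Vreg=\prod_i\Vreg_i$ proves \ref{part: Vreg} and \ref{part: prodreg}. Since every fundamental character of $V$ is trivial on $\Ker\rho$, its $T_G$-restriction lies in $\Q\langle\chi_i\rangle$, so the injectivity of $X^*(G)\to X^*(T_G)$ forces $r=|I|$ and identifies the $\psi_i$'s as all the fundamental characters, giving \ref{part: fundinv}; part \ref{part: fndk} is then immediate. Part \ref{part: dsuminset} follows at once from \ref{part: disunion} (which canonically splits any $U\in\Sub(V)$ as $\bigoplus_i(U\cap V_i)$) together with \ref{part: prodreg}.

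Finally for part \ref{part: ani}, the inclusion $\Ker\rho\subset H\subset\centder{G}$ is built into the definition of $H=G_v$. An $F$-rational character $\chi$ of $\centder{G}/G^{\der}\Ker\rho$ lifts, after multiplication by a positive integer, via Lemma \ref{lem: restggv} to $\tilde\chi\in X^*(G)$; the lift is automatically trivial on $\Ker\rho$ since its restriction to $H\supset\Ker\rho$ is. The restriction $\tilde\chi\rest_{T_G}$ is then trivial on $T_G\cap\Ker\rho$, so by part \ref{part: linind} it lies rationally in the $\Q$-span of the $\chi_i=d_i^{-1}\psi_i\rest_{T_G}$. Injectivity of $X^*(G)\to X^*(T_G)$ places $\tilde\chi$ itself rationally in $\sum_i\Q\psi_i\subset X^*(G/\centder{G})$, so $\tilde\chi\rest_{\centder{G}}$, and therefore $\chi$, is torsion. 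The main obstacle I anticipate is the transitivity argument for $\Vreg\supseteq\prod_i\Vreg_i$ over $\bar F$, which requires carefully combining the $G$-action on each factor with the independent scaling action of $T_G$.
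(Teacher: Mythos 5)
Your proposal takes a more self-contained route than the paper, which for parts \ref{part: Vreg}, \ref{part: prodreg}, \ref{part: fundinv} simply cites Rubenthaler's Proposition~2.13, derives parts \ref{part: fndk}--\ref{part: dsuminset} from these, and verifies part \ref{part: ani} directly by comparing $T_G\cap\Ker\rho$ with $T_G\cap\centder{G}$. Your direct argument for part \ref{part: linind} via the disjoint-variables trick and injectivity of $X^*(G)\to X^*(T_G)$ is correct and does not logically depend on parts \ref{part: Vreg}--\ref{part: fundinv}, which is a genuine streamlining; parts \ref{part: disunion} and \ref{part: isotypic} then follow as you say, and your treatment of part \ref{part: ani} amounts to the same computation as the paper's.

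The gap --- which you yourself acknowledge at the end --- lies in the inclusion $\prod_i\Vreg_i\subseteq\Vreg$ over $\bar F$, on which your proofs of parts \ref{part: Vreg}, \ref{part: prodreg}, \ref{part: fundinv} rest, and hence also parts \ref{part: fndk} and \ref{part: dsuminset}. The proposed argument (transitivity of $G(\bar F)$ on each $\Vreg_i(\bar F)$ ``combined with'' independent rescaling by $T_G$) is not a valid deduction. Applying $g\in G(\bar F)$ to move $v_1$ to $v_1'$ simultaneously moves the remaining coordinates, and to finish one would need $G_{v_1'}(\bar F)$ to act transitively on $\Vreg_2(\bar F)\times\cdots\times\Vreg_n(\bar F)$, which is the same kind of assertion one level down; the independent $T_G$-scaling only produces coordinate-wise scalar multiples $(\lambda_1v_1,\dots,\lambda_nv_n)$, whereas the target $v_i'$ is in general not a scalar multiple of $\rho_i(g)v_i$. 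What actually must be shown is that the linear map $D_{v'}\colon\Liealg{G}\to V$ is surjective for \emph{every} $v'\in\prod_i\Vreg_i$, not merely for generic $v'$, even though each component $\Liealg{G}\to V_i$ is individually surjective. This is the nontrivial content of Rubenthaler's Proposition~2.13 and genuinely uses the regularity of the $V_i$ (e.g.\ via the affineness of $\Vreg_i$ and a closed-orbit argument, or via nondegeneracy of the Hessian of $\log\prod_i f_i$ on $\prod_i\Vreg_i$). Quoting Rubenthaler, as the paper does, closes the gap.
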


\begin{proof}
First note that every $F$-irreducible regular {\PVS} admits a unique fundamental invariant polynomial (over $F$).

Parts \ref{part: Vreg}, \ref{part: prodreg} and \ref{part: fundinv} are contained in \cite{MR2961852}*{Proposition 2.13}.
(Note that part \ref{part: fundinv} over $\bar F$ easily implies it over $F$.)
Part \ref{part: fndk} immediately follows.
Part \ref{part: linind} follows since the fundamental characters of a {\PVS} are always linearly independent.
Parts \ref{part: disunion}, \ref{part: isotypic} and \ref{part: dsuminset} are easy consequences of part \ref{part: linind}. 

Finally, part \ref{part: ani} is equivalent to the statement that $T_G\cap\Ker\rho$ is of finite index in $T_G\cap\centder{G}$.
Note that $T_G\cap\Ker\rho$ is the intersection of $\Ker\chi_i$, $i\in I$ and by part \ref{part: fndk}, this is of finite index in the common kernel
of the restrictions of the characters in $X^*(G/\centder{G})$ to $T_G$. The lemma follows.
\end{proof}

The following easy consequence about the special subspaces of $V$ and its subrepresentations will be used repeatedly.

\begin{lemma} \label{LemmaSpecialFactor}
Let $V$ be a {\CF} {\PVS} and $V'$ a subrepresentation of $V$ (defined over $F$).
\begin{enumerate} 
\item
Let $V''$ be the (unique) complement to $V'$ in $V$. Then the map
$U \mapsto U \oplus V''$ defines an injection of $\Spcl (V')$ into $\Spcl (V)$.
\item If $\Spcl(V)=\{V\}$, then $\Spcl(V')=\{V'\}$. 
\end{enumerate}
\end{lemma}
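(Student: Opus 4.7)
The plan is to use Lemma \ref{LemmaRubenthaler} to reduce the problem to a straightforward compatibility check for stabilisers. By part \ref{part: isotypic} of that lemma, the $F$-irreducible components of $V$ are precisely the isotypic components, so the subrepresentation $V'$ must be the direct sum of a subset of these components. Consequently, the unique complement $V''$ is itself a subrepresentation, the decomposition $\wgts_V = \wgts_{V'} \sqcup \wgts_{V''}$ is a disjoint union by part \ref{part: disunion}, and both $V'$ and $V''$ are {\CF}, hence regular, with $\Vreg = \Vreg_{V'} \times \Vreg_{V''}$ by part \ref{part: prodreg}. In particular, $\Vreg_{V''}$ is non-empty.

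For part (1), given $U \in \Spcl(V')$, I would verify that $\tilde U := U \oplus V'' \in \Spcl(V)$ directly from Definition \ref{def: special}. Clearly $\tilde U \in \Sub(V)$, and $P_0$ stabilises $\tilde U$ (using $P_0$-stability of $U$ and of the subrepresentation $V''$). Moreover $\tilde U \in \Minset(V)$, since for any $v' \in U \cap \Vreg_{V'}$ and $v'' \in \Vreg_{V''}$ one has $v' + v'' \in \tilde U \cap \Vreg$. The only substantive point is the dimension condition, which will follow from the identification $\Stab_V(\tilde U) = \Stab_{V'}(U)$: because $V''$ is $G$-invariant, $\rho(g)\tilde U = (\rho|_{V'}(g) U) \oplus V''$, so $\rho(g)\tilde U = \tilde U$ if and only if $\rho|_{V'}(g) U = U$. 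Combined with $\dim V/\tilde U = \dim V'/U$ and the assumption $U \in \Spcl(V')$, this gives $\dim V/\tilde U = \dim V'/U = \dim G/\Stab_{V'}(U) = \dim G/\Stab_V(\tilde U)$, as required. Injectivity of $U \mapsto \tilde U$ is immediate from $U = \tilde U \cap V'$, using the disjointness of weights.

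Part (2) is then a one-line consequence of part (1): if $\Spcl(V) = \{V\}$, any $U \in \Spcl(V')$ lifts to $\tilde U \in \Spcl(V) = \{V\}$, forcing $\tilde U = V$ and hence $U = V'$. The main (and essentially only) obstacle is the stabiliser identification $\Stab_V(\tilde U) = \Stab_{V'}(U)$, but this is immediate once one observes that the decomposition $V = V' \oplus V''$ is preserved by all of $G$; everything else is bookkeeping with the disjoint weight decomposition of Lemma \ref{LemmaRubenthaler}.
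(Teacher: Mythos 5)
Your proof is correct and follows essentially the same route as the paper's (terse) argument: use the isotypic decomposition from Lemma \ref{LemmaRubenthaler} to see that the map $U \mapsto U \oplus V''$ sends $\Minset(V')$ into $\Minset(V)$, then check that the stabilizer (and hence the codimension condition in Definition \ref{def: special}) is preserved. You simply spell out the stabilizer identification $\Stab_V(U \oplus V'') = \Stab_{V'}(U)$ and the $P_0$-stability bookkeeping that the paper leaves implicit in the phrase "the first part follows now from the definition of a special subspace."
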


\begin{proof} By Lemma \ref{LemmaRubenthaler} part 7, the map 
$U \mapsto U \oplus V''$ defines an injection of $\Minset (V')$ into $\Minset (V)$. The first part follows now from the definition of a special subspace. The second part is clear.
\end{proof}

Let $\swrz(V(\A))$ be the space of Schwartz--Bruhat functions on $V(\A)$.
As a locally convex topological vector space, $\swrz(V(\A))$ can be identified with $\swrz(V(F_\infty))\otimes\swrz(V(\A_{\fin}))$ where $\swrz(V(F_\infty))$ is the usual Schwartz space
and $\swrz(V(\A_{\fin}))$ is the countable-dimensional space of locally constant compactly supported functions on $V(\A_{\fin})$ with the finite topology.
In other words, $\swrz(V(\A_{\fin}))=\varinjlim U$ with the inductive limit topology, where $U$ ranges over the finite-dimensional subspaces of $\swrz(V(\A_{\fin}))$
with their canonical Hausdorff topology.
Equivalently, $\swrz(V(\A))=\varinjlim\swrz(V(F_\infty))\otimes U$ with the locally convex inductive limit topology where $U$ is as before.
The space $\swrz(V(\A))$ is Hausdorff, complete and nuclear.
The embedding $\swrz(V(\A))\rightarrow C_{\rd}(V(\A))$ is continuous.

Assume that $(G,V,\rho)$ is {\CF} with irreducible components $(\rho_i,V_i)$, $i\in I$ over $F$ and let $\chi_i=\omega_{\rho_i}$, $i\in I$.
For any $J\subset I$ let $V^J=\oplus_{j\in J}V_j$ and $V_J=V/V^J=\oplus_{j\notin J}\rho_j$. Let $\rho_J$ be the representation of $G$ on $V_J$.
These are again {\CF} {\PVS}s. 

As usual, let $A_G = T_G (\R)^\circ$ be the complement to $G(\A)^1$ in $G (\A)$ obtained by embedding $\R$ into $F_\infty = \R \otimes F$ via $x \mapsto x \otimes 1$, and let $A_{\Ker \rho} = A_G \cap \Ker \rho$.
Since $\centder{G}/G^{\der}\Ker\rho$ is $F$-anisotropic, we have $G(\A)^1\centder{G}(\A)\bs G(\A) \simeq A_{\Ker\rho} \bs A_G$ and we can write the zeta function of $\phi\in\swrz(V(\A))$ as
\[
Z(\phi,\lambda)=\int_{G(F) A_{\Ker\rho} \bs G (\A)} \theta_\phi^{\sreg} (g) g^{-\lambda}\abs{\det\rho(g)}^{-1} \, dg
\]
for $\lambda\in(\aaa_{G,\C}/\aaa_{\Ker \rho,\C})^* = \C \fundchr$. 
We know from Theorem \ref{thm: convergence} that the integral converges absolutely for $\Re\lambda\in\R_{>0}\fundchr$.
In the following, we will use the isomorphism
$A_{\Ker\rho} \bs A_G \simeq \R^I, a \mapsto (\log \abs{\chi_i (a)})_{i \in I}$
and the Lebesgue measure on the vector space $\R^I$ to normalize the measure on the quotient $G(F) A_{\Ker\rho} \bs G (\A)$.

For any $\phi\in\swrz(V(\A))$, let $\phi_J\in\swrz(V_J(\A))$ be given by
\[
\phi_J(v)=\int_{V^J(\A)}\phi(u+v)\ du.
\]
For $\lambda=\sum_{i\in I}\lambda_i\chi_i$ we write $\lambda_J=\sum_{j\notin J}\lambda_j\chi_j$. We will consider the zeta functions $Z^{V_J}(\phi_J,\lambda_J)$ simultaneously.
In the limiting case $J = I$, where $V_I = 0$, the term $Z^{V_I}(\phi_I,\lambda_I)$ is of course interpreted as the constant $\vol (G(F) \bs G (\A)^1) \hat\phi(0)$.

The main result of this section is the following.

\begin{theorem} \label{thm: smplcase}
Let $(G,V,\rho)$ be a {\CF} {\PVS} such that $\Spcl(V)=\{V\}$. 
Then, there exists $c >0$ such that for any $\phi\in\swrz(V(\A))$ the zeta function $Z(\phi,\lambda)$ admits a meromorphic continuation to the region
\[
\Domain=\{\lambda=\sum_{i\in I}\lambda_i\chi_i\mid\Re\lambda_i>-c\},
\]
and $Z (\phi, \lambda) \prod_{i\in I}\lambda_i$ is holomorphic in $\Domain$.
Moreover, with the notation above,
\begin{equation} \label{eq: diffZ}
\sum_{J\subset I}(-1)^{\# J}\frac{Z^{V_J}(\phi_J,\lambda_J)}{\prod_{j\in J}\lambda_j}
\end{equation}
is holomorphic in $\Domain$.
\end{theorem}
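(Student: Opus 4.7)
The plan is a multivariable Tate-style Mellin analysis: identify $A_{\Ker\rho}\bs A_G\simeq\R^I$ via $a\mapsto(\log|\chi_i(a)|)_{i\in I}$ and isolate the singularities of the resulting Mellin integral at the coordinate walls.

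\emph{Step 1 (Reductions).} First I would observe that $V$ itself is never exceptional: since $\Stab(V)=G$ we have $\Delta_0^G=\Delta_0$, and $X^*(G)\cup\Delta_0$ already spans $\aaa_0^*$. Hence under $\Spcl(V)=\{V\}$ we have $\Sub_{\excp}(V)=\emptyset$, so Proposition \ref{prop: stronglyregular} gives $\Vreg(F)^{\sreg}=\Vreg(F)$ and $\theta_\phi^{\sreg}=\theta_\phi$. Lemma \ref{LemmaSpecialFactor} shows that each sub-{\PVS} $V_J$ still satisfies $\Spcl(V_J)=\{V_J\}$ and is {\CF}, which makes the statement amenable to induction on $\#I$. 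Lemma \ref{LemmaRubenthaler} parts \ref{part: prodreg} and \ref{part: disunion} provide the factorizations $\Vreg=\prod_i\Vreg_i$ and $\wgts_V=\sqcup_i\wgts_{V_i}$ required for componentwise Poisson analysis.

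\emph{Step 2 (Uniform decay in the interior).} Let $\Xi_\phi(a)=\int_{G(F)\bs G(\A)^1}\theta_\phi(ga)\,dg$. Using the support-set decomposition $\theta_\phi=\sum_{U\in\Minset(V)}\Theta_U$ from \eqref{eq: thetadecomp} together with Lemma \ref{lem: Latticepoints}, I would invoke Corollary \ref{cor: spcsets} part \ref{part: spcl} to conclude $\Env(U)=G$ for every $U\in\Minset(V)$. This places $\lambda(U)$ in the strict interior of $\R_{>0}(\Delta_0\cup\wgts_U)$ modulo $\aaa_G^*$. Via Lemma \ref{lem: convergence}, $\Xi_\phi(a)$ then decays exponentially whenever some $|\chi_i(a)|$ is small; consequently the contribution to the zeta integral from the orthant $\{a : |\chi_i(a)|\le 1\ \forall i\}$ extends to an entire function of $\lambda$, with continuous dependence on $\phi\in\swrz(V(\A))$.

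\emph{Step 3 (Singularities via Poisson summation).} For the remaining orthants, stratify by the subset $J\subset I$ of indices with $|\chi_j(a)|>1$. On each stratum $\rho(a)^{-1}$ contracts the $V_j$-components for $j\in J$ into the support of $\phi$, so the partial sum over $v_j\in\Vreg_j(F)$ is close to a full sum over $V_j(F)$. I would apply Poisson summation on $V_j(\A)$ iteratively for $j\in J$: the trivial-character contribution produces a factor $\prod_{j\in J}|\chi_j(a)|^{\dim V_j}$ times the theta function of $\phi_J$ on $V_J$, while the Mellin integral $\prod_{j\in J}\int_1^\infty a_j^{-\lambda_j-1}\,da_j=\prod_{j\in J}\lambda_j^{-1}$ produces the simple poles. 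Collecting contributions by inclusion-exclusion over $J$ yields the identity \eqref{eq: diffZ} modulo contributions holomorphic in $\Domain$: the nontrivial-character Poisson tails decay rapidly, while the error terms coming from restricting $V_j(F)$ to $\Vreg_j(F)$ live on lower-dimensional subvarieties and are absorbed either into the entire part of Step 2 or into the inductive step via Lemma \ref{LemmaSpecialFactor}. Since each pole of $Z(\phi,\lambda)$ in $\Domain$ must lie on some hyperplane $\lambda_j=0$ with residue coming from the $J\ni j$ terms, the product $Z(\phi,\lambda)\prod_i\lambda_i$ is holomorphic.

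\emph{Main obstacle.} The principal technical difficulty is bridging the gap between Poisson summation (which naturally sums over $V_j(F)$) and the sum over $\Vreg_j(F)$ appearing in $\theta_\phi$. The subtracted strata $V_j\setminus\Vreg_j$ are unions of irreducible $G$-invariant subvarieties that must either be handled recursively or uniformly dominated by the decay of Step 2. Making this recursion terminate and yield uniform control over $\phi$ requires the inductive closure furnished by Lemma \ref{LemmaSpecialFactor} together with careful seminorm bookkeeping on $\swrz(V(\A))$ to preserve continuity of the meromorphic continuation throughout.
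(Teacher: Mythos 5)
Your overall strategy is the right one and matches the paper's in outline: deduce $\theta^{\sreg}_\phi=\theta_\phi$ from non-exceptionality, induct on the number of irreducible factors using Lemma \ref{LemmaSpecialFactor}, and extract the simple pole divisor by a Poisson/Mellin analysis on $A_{\Ker\rho}\bs A_G\simeq\R^I$. But there are genuine gaps in the execution. The main one is exactly the obstacle you flag, and your proposed resolution of it fails. The discrepancy between the full-support regular lattice sum $\Theta_V(g)$ and the complete sum $\sum_{v\in V(F)}\phi(\rho(g)^{-1}v)$ consists of lattice points lying in the vanishing locus of a fixed nonzero polynomial $P$ on $V$ (the product of the fundamental relative invariant with coordinate forms on each weight space); this set is a hypersurface, not a union of sub-{\PVS}s, so it can be handled neither recursively via Lemma \ref{LemmaSpecialFactor} nor by the smaller-support strata controlled in your Step 2. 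One needs a separate quantitative estimate for lattice points in the hypersurface $P=0$, of the same exponential strength $e^{-\sprod{\beta_0}{H_0(g)}}$ as the Poisson error term; the paper invokes \cite{MR3534542}*{Lemma 6.4} for this in \eqref{eq: thetaminfull}. Without some such input the analysis of the $U=V$ term cannot be closed.

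Second, your Step 2 records only the qualitative consequence of $\Env(U)=G$, namely that $\lambda(U)$ lies in the interior of the cone $\R_{\ge 0}(\Delta_0\cup\wgts_U)$. This suffices to recover Theorem \ref{thm: convergence}, i.e.\ absolute convergence for $\Re\lambda_j>0$, but does not by itself control the integral past the hyperplanes $\lambda_j=0$. What one actually needs is the sharper inequality $\sum_{\beta\in\wgts_U\cap\wgts_{V_j}}c_\beta\ge 1$ for every $j$ with $V_j\not\subset U$, where $\lambda(U)=\sum_\beta c_\beta\beta+\sum_\alpha d_\alpha\alpha$ with $c_\beta\ge 0$ and $d_\alpha>0$; this follows by evaluating $\lambda(U)$ on $\aaa_G$ and using the $\Z$-linear independence of the $\chi_j$ (Lemma \ref{LemmaRubenthaler}, part \ref{part: linind}), and it is precisely what produces the extra decay factor $e^{-\sprod{\beta_j}{H_0(g)}_+}$ that lets $\Re\lambda_j$ be pushed slightly negative. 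Finally, to make the $J$-indexed Poisson/inclusion-exclusion scheme and the seminorm bookkeeping tractable, one should reduce to factorizable test functions $\phi=\otimes_i\psi_i$ via $\swrz(V(\A))\simeq\hat\otimes_{i\in I}\swrz(V_i(\A))$, after which the alternating sum behind \eqref{eq: diffZ} factorizes as a product over $i\in I$ of single-variable quantities of the form treated in Proposition \ref{prop: irrcase}; without this device the recursion you sketch is very hard to close with uniform control in $\phi$.
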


Theorem \ref{thm: smplcase} will be proved in the remainder of this section.
Applying it to $V_J$ for every $J\subset I$ and using inclusion-exclusion we obtain

\begin{corollary} 
Under the above conditions, for every subset $J\subset I$ the function
\[
f_J (\lambda_J) = \sum_{K\subset I \setminus J}(-1)^{\# K}\frac{Z^{V_{J \cup K}}(\phi_{J \cup K},\lambda_{J \cup K})}{\prod_{j\in K}\lambda_j}
\]
is holomorphic in $\{\lambda_J=\sum_{j\notin J}\lambda_j\chi_j\mid\Re\lambda_j>-c\}$ and we have
\[
Z (\phi, \lambda) = \sum_{J\subset I}\frac{f_J (\lambda_J)}{\prod_{j\in J}\lambda_j}
\]
in $\Domain$.
\end{corollary}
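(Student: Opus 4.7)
The strategy combines a sign-chamber decomposition of $A_{\Ker\rho}\bs A_G\cong\R^I$ with iterated Poisson summation on the irreducible components $V_j$, and an induction on $\#I$.

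Two preliminary reductions. By Lemma \ref{LemmaSpecialFactor}, the hypothesis $\Spcl(V)=\{V\}$ is inherited by every subrepresentation $V_J=\oplus_{j\notin J}V_j$, which enables the induction on $\#I$. Second, since $\Stab(V)=G$ and $X^*(G)\cup\Delta_0$ already spans $\aaa_0^*$, the space $V$ itself is never exceptional, so Proposition \ref{prop: stronglyregular} yields $V^{\sreg}(F)=V^{\reg}(F)$; by Lemma \ref{LemmaRubenthaler} the latter factors as $\prod_{i\in I}V_i^{\reg}(F)$. This product structure is what makes Poisson summation feasible one component at a time.

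Parametrize $A_{\Ker\rho}\bs A_G$ by $(t_i)=(\log\abs{\chi_i(a)})$ and split $\R^I$ into the $2^{\#I}$ closed orthants $R_J=\{t:t_j\le 0 \text{ for } j\in J,\ t_j\ge 0 \text{ for } j\notin J\}$, inducing $Z(\phi,\lambda)=\sum_{J\subset I}Z_J(\phi,\lambda)$. On the principal chamber $R_\emptyset$ the integral extends absolutely to $\Domain$: Corollary \ref{cor: fundchrS} gives $\fundchr\subset\R_{\ge0}\wgts_U$ for every $U\in\Sub_{\sreg}(V)$, and the second clause of Lemma \ref{lem: convergence}, combined with the additional non-negativity constraints $t_i\ge 0$, produces convergence for $\Re\lambda_i>-c$. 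For non-empty $J$, on each chamber $R_J$ we apply Poisson summation on $V_j(F)\bs V_j(\A)$ to the first summand after decomposing $\sum_{v_j\in V_j^{\reg}(F)}=\sum_{v_j\in V_j(F)}-\sum_{v_j\in V_j^{\sing}(F)}$, for each $j\in J$. The zero Fourier modes $\xi_j=0$ contribute a main term involving $\phi_J$ together with factors of $\abs{\det\rho_j(g)}$; after integration over $R_J$ against $a^{-\lambda}\abs{\det\rho(a)}^{-1}$, the elementary integrals $\int_{-\infty}^0 e^{-\lambda_jt_j}dt_j=-\lambda_j^{-1}$ (convergent for $\Re\lambda_j<0$ and then analytically continued) produce the explicit simple poles $\lambda_j^{-1}$. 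Rearranging the main terms across all chambers reproduces, up to a holomorphic remainder on $\Domain$, the alternating sum \eqref{eq: diffZ}, establishing simultaneously its holomorphy and the meromorphic continuation of $Z(\phi,\lambda)$ with at most simple poles along $\lambda_i=0$; the inductive hypothesis supplies the meromorphic continuation of each $Z^{V_J}(\phi_J,\lambda_J)$ needed to make sense of the formula.

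The main obstacle is the control of the error terms: the nonzero Fourier modes $\xi_j\in V_j^*(F)\setminus\{0\}$ from Poisson summation and the residual contributions from the $V_j^{\sing}(F)$ pieces. The nonzero Fourier modes produce dual Schwartz sums whose absolute convergence on $\Domain$ follows from a variant of Proposition \ref{prop: convregs} applied on the Fourier-dual side. The $V_j^{\sing}(F)$ contributions lie outside the direct Poisson framework; they must be handled through a secondary reduction exploiting the orbit stratification of $V_j^{\sing}$ together with the inductive fact that $\Spcl(V_J)=\{V_J\}$ for every subrepresentation $V_J$ (Lemma \ref{LemmaSpecialFactor}). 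Without the hypothesis $\Spcl(V)=\{V\}$, both error-term analyses would break down, generating additional polar structures attached to non-trivial special subspaces.
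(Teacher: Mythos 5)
Your proposal misses the point of the corollary, which is that it is a formal consequence of Theorem \ref{thm: smplcase}. The paper's proof is genuinely a one-liner: since $V_J$ is again {\CF} with $\Spcl(V_J)=\{V_J\}$ by Lemma \ref{LemmaSpecialFactor}, applying the theorem to $(G,V_J,\rho_J)$ with test function $\phi_J$ gives directly that $f_J(\lambda_J)$ (which is the expression \eqref{eq: diffZ} for $V_J$) is holomorphic for $\Re\lambda_j>-c$, $j\notin J$; and the identity $Z(\phi,\lambda)=\sum_{J\subset I}f_J(\lambda_J)/\prod_{j\in J}\lambda_j$ is a M\"obius inversion (writing $L=J\cup K$ and using $\sum_{K\subset L}(-1)^{\#K}=[L=\emptyset]$). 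You instead attempt to re-derive the analytic continuation from scratch by chamber decomposition and Poisson summation -- that is, you are re-proving Theorem \ref{thm: smplcase} rather than deducing the corollary from it -- and even so you never carry out the inclusion-exclusion step which is what actually produces the stated identity.

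Moreover, the re-derivation as written has a genuine error and several unfilled gaps. The assertion that ``on the principal chamber $R_\emptyset$ the integral extends absolutely to $\Domain$'' is false: by Poisson summation $\abs{\det\rho(a)}^{-1}\theta_\phi(a)\to\hat\phi(0)\ne 0$ as all $t_i\to+\infty$, so $\int_{t_i\ge 0}e^{-\sum\lambda_i t_i}\,dt$ diverges for $\Re\lambda_i\le 0$; this is precisely why Proposition \ref{prop: irrcase} subtracts the term $\one_{\R_{>1}}(\abs{\chi(g)})\hat\phi(0)$ before integrating, and why the theorem works with the alternating sum rather than with $Z$ itself. The error terms are also not controlled: the paper's estimate \eqref{eq: thetaminfull} for the singular locus invokes \cite{MR3534542}*{Lemma 6.4}, not an ``orbit stratification,'' and no ``Fourier-dual variant'' of Proposition \ref{prop: convregs} exists in the paper (nor is it clear that the dual space is a {\PVS} to which it would apply). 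The correct path is simply to invoke Theorem \ref{thm: smplcase} for every $V_J$ and then perform the M\"obius inversion.
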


Theorem \ref{thm: smplcase} and its corollary apply in particular to any {\BASIC} {\PVS}, as well as to the ``magic square'' examples of \S\ref{sec: magic square} --
see Remark \ref{rem: basic}. Recall that the {\BASIC} {\PVS}s obtained from nilpotent orbits for the classical groups are the following:
\begin{enumerate}
\item $\GL_n^k$ acting on $\Mat_n^{k-1}$ (Example \ref{ex: GL}).
\item $\GL_n^k \times \Sp_{n+m}$ acting on $\Mat_n^{k-1} \oplus \Mat_{n,n+m}$ for $n$, $m$ even (Example \ref{ex: Spnodd}).
\item $\GL_n^k$ acting on $\Mat_n^{k-1} \oplus \Sym_n$ for $n \neq 2$ (Example \ref{ex: Speven}).
\item $\GL_n^k \times \SO_{n+m}$ acting on $\Mat_n^{k-1} \oplus \Mat_{n,n+m}$ for $n$, $m \neq 2$ (Example \ref{ex: SOodd}).
\item $\GL_n^k$ acting on $\Mat_n^{k-1} \oplus \Skew_n$ (Example \ref{ex: SOeven}).
\end{enumerate}

For $F$-irreducible $V$ such that $\Spcl(V)=\{V\}$, we obtain that the zeta function has a simple pole at $\lambda = 0$ with residue $\vol (G(F) \bs G (\A)^1) \hat\phi(0)$,
as in Tate's thesis (cf. Proposition \ref{prop: irrcase} below). Example \ref{ex: G2} (treated in \cite{MR776169}, building on \cite{MR0289428}) and
Example \ref{ex: F4} (treated in \cite{MR1267735}) show that without the assumption $\Spcl(V)=\{V\}$, the structure of the pole of the zeta function at $\lambda = 0$ may be more complicated.

Note that the assumption $\Spcl(V)=\{V\}$ implies (by Lemma \ref{LemmaSpecialFactor} and Proposition \ref{prop: stronglyregular})
that all $V_J$ are {\sr}, i.e. that $\theta_{\phi_J}^{\sreg} (g) = \theta_{\phi_J} (g)$. (In our proof below we will simply work with $\theta_{\phi_J} (g)$ throughout.)

We first consider the case where $\rho$ is irreducible over $F$.
For simplicity we write $x_{\le1}=\min(x,1)$ and $x_{\ge1}=\max(1,x)$ for any $x\in\R_{>0}$.
Denote by $\one_A$ the characteristic function of a set $A$.

For brevity, we will use the notation $A(\cdot,\phi)\ll_{N,\phi} B_N(\cdot)$ to mean that for every $N\ge0$
there exists a continuous seminorm $\nu_N$ on $\swrz(V(\A))$ such that $\abs{A(\cdot,\phi)}\le B_N(\cdot)\nu_N(\phi)$ for every $\phi\in\swrz(V(\A))$.
Recall that we have already used the notation $\ll_{N,\phi}^{\rd}$ in \S\ref{sec: llNphi} with respect to the space $C_{\rd}(V(\A))$.
Clearly $\ll_{N,\phi}^{\rd}$ implies $\ll_{N,\phi}$.

\begin{proposition} \label{prop: irrcase}
Suppose that $(G,V,\rho)$ is an $F$-irreducible {\PVS} such that $\Spcl(V)=\{V\}$ and let $\chi=\omega_\rho$. For any $\phi\in\swrz(V(\A))$ let
\[
\gimel_\phi(g)=\abs{\det\rho(g)}^{-1}\theta_\phi(g)-\one_{\R_{>1}}(\abs{\chi(g)})\hat\phi(0),\ \ g\in G(F)\bs G(\A).
\]
Let $\beta_0$ be the lowest weight of $V$. Then, 
\begin{equation} \label{eq: bndif}
\gimel_\phi(g)\ll_{N,\phi}\delta_{P_0}(m_0(g))e^{-\sprod{\mu}{H_0(g)}-\sprod{\beta_0}{H_0(g)}_+-N\sprod{\chi}{H_0(g)}_-},\ \ g\in\Siegel
\end{equation}
for a suitable $\mu\in\R_{>0}(\Delta_0)$.
Therefore, there exists $c>0$ such that the integral
\begin{equation} \label{eq: convf}
\int_{G(F) A_{\Ker \rho} \bs G(\A)}\gimel_\phi(g)\abs{\chi(g)}^{-s} \, dg
= Z (\phi, s \chi) - \vol (G(F) \bs G (\A)^1) \frac{\hat\phi(0)}{s}
\end{equation}
converges absolutely for $\Re s>-c$.
\end{proposition}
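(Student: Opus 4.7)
The argument combines Poisson summation with the support decomposition $\theta_\phi = \sum_{U \in \Sub(V)} \Theta_U$ underlying the proof of Proposition \ref{prop: bndtheta}. The hypothesis $\Spcl(V) = \{V\}$ is decisive: by Corollary \ref{cor: spcsets} it forces $\Env(U) = G$ for every proper $U \in \Minset(V)$, so that $\lambda(U) \in \R_{>0}\Delta_0 + \R_{\ge 0}\wgts_U$. The $F$-irreducibility of $V$ guarantees moreover that every $\beta \in \wgts_V$ satisfies $\beta - \beta_0 \in \Z_{\ge 0}\Delta_0$ and that there is a unique fundamental relative invariant $f = \RI_\chi$, so that $V \setminus \Vreg = \{f = 0\}$.

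\textbf{Poisson summation.} Split $\theta_\phi(g) = \Sigma_V(g) - \Sigma_{\mathrm{sing}}(g)$ with $\Sigma_V(g) = \sum_{v \in V(F)} \phi(\rho(g)^{-1}v)$ and $\Sigma_{\mathrm{sing}}(g) = \sum_{v \in (V \setminus \Vreg)(F)} \phi(\rho(g)^{-1}v)$. Poisson summation on $V(F) \subset V(\A)$ yields
\[
\Sigma_V(g) = \abs{\det \rho(g)}\,\hat\phi(0) + \abs{\det \rho(g)}\, \Sigma^*(g), \quad \Sigma^*(g) = \sum_{w \in V^*(F) \setminus \{0\}} \hat\phi(\rho^*(g)w),
\]
where $\rho^*$ denotes the contragredient representation. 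Hence $\gimel_\phi(g) = \Sigma^*(g) - \abs{\det\rho(g)}^{-1}\Sigma_{\mathrm{sing}}(g)$ when $\abs{\chi(g)} > 1$, while for $\abs{\chi(g)} \le 1$ we bound $\abs{\det\rho(g)}^{-1}\theta_\phi(g)$ directly from the original support decomposition without recourse to Poisson.

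\textbf{Bounding the pieces.} We bound each summand via Lemma \ref{lem: Latticepoints}. The dual sum $\Sigma^*$, decomposed by support in $V^*$ (with weights $-\wgts_V$), produces contributions from each non-empty $S \subset \wgts_V$; the singleton $\{\beta_0\}$ yields the factor $e^{-\sprod{\beta_0}{H_0(g)}_+}$, and every larger support gives additional decay of the form $e^{-\sprod{\mu_1}{H_0(g)}}$ with $\mu_1 \in \R_{>0}\Delta_0$ (since any $\beta \neq \beta_0$ exceeds $\beta_0$ by a positive combination of simple roots). The singular-locus sum $\Sigma_{\mathrm{sing}}$ and the direct bound on $\theta_\phi$ are handled by support decomposition: the polynomial constraint $v \in V \setminus \Vreg$ only reduces the underlying lattice sum, so Lemma \ref{lem: Latticepoints} applies uniformly. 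For every proper $U \in \Minset(V)$, the hypothesis $\Env(U) = G$ yields $\lambda(U) \in \R_{>0}\Delta_0 + \R_{\ge 0}\wgts_U$ and, for $N$ sufficiently large, the decay $\delta_{P_0}(m_0(g))e^{-\sprod{\mu_U}{H_0(g)}}$. The full-support term $\abs{\det\rho(g)}^{-1}\Theta_V$ satisfies $\ll_{N,\phi} e^{-N \sum_{\beta \in \wgts_V}\sprod{\beta}{H_0(g)}_-}$ directly, and since $\chi \in \R_{>0}\wgts_V$ by Lemma \ref{lem: suppRI}, the sum $\sum_\beta\sprod{\beta}{H_0(g)}_-$ dominates $c_1 \sprod{\chi}{H_0(g)}_-$ for some $c_1 > 0$, giving the required $e^{-N \sprod{\chi}{H_0(g)}_-}$ decay after rescaling $N$. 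Assembling these estimates establishes \eqref{eq: bndif}.

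\textbf{Convergence and the main obstacle.} Inserting \eqref{eq: bndif} into the integral and using \eqref{eq: intsiegel} to reduce to an integration over $\aaa_0^G$, the convergence of \eqref{eq: convf} for $\Re s > -c$ with $c > 0$ follows from Lemma \ref{lem: convergence}, applied with the positively-$1$-homogeneous function $H \mapsto \sprod{\mu}{H} + \sprod{\beta_0}{H}_+$, which is strictly positive on $\{H \in \aaa_{0,+} : \sprod{\chi}{H} \ge 0\} \setminus \{0\}$ by the hypothesis $\mu \in \R_{>0}\Delta_0$. The explicit identity with $Z(\phi, s\chi) - \vol(G(F) \bs G(\A)^1)\hat\phi(0)/s$ follows by evaluating $\int_{A_{\Ker \rho}\bs A_G} \one_{\abs{\chi(a)}>1}\abs{\chi(a)}^{-s}\,da = 1/s$ after factoring out $\vol(G(F)\bs G(\A)^1)$. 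The most delicate step is isolating the factor $e^{-\sprod{\beta_0}{H_0(g)}_+}$ from the dual sum uniformly on the Siegel set: because $\sprod{\beta_0}{H_0(g)}$ may have either sign in $\Siegel$, the contribution of the singleton support $\{-\beta_0\}$ in $V^*$ must be carefully separated from the others, crucially using the $F$-irreducibility of $V$.
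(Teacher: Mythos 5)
Your strategy is close to the paper's in spirit (support decomposition plus Poisson summation plus Lemma \ref{lem: convergence}), but there is a genuine gap in how you split the regimes, and a secondary gap in the treatment of $\Sigma_{\sing}$.

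\textbf{The main gap.} You invoke Poisson summation precisely on the set where $\abs{\chi(g)} > 1$ and claim the dual sum $\Sigma^*(g)$, decomposed by support, is dominated by the contribution of the singleton support $\{-\beta_0\}$, which you assert yields the factor $e^{-\sprod{\beta_0}{H_0(g)}_+}$. But the lattice estimate of Lemma \ref{lem: Latticepoints}, transposed to the dual sum, gives for this singleton the bound
\[
\sum_{0\ne w\in V^*_{-\beta_0}(F)}\abs{\hat\phi(\rho^*(g)^{-1}w)}\ \ll_{N,\phi}\ e^{\,n_{\beta_0}\sprod{\beta_0}{H_0(g)}_-\ -\ N\sprod{\beta_0}{H_0(g)}_+},
\]
and the first factor $e^{n_{\beta_0}\sprod{\beta_0}{H_0(g)}_-}$ is a genuine \emph{growth} term whenever $\sprod{\beta_0}{H_0(g)}<0$. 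This regime is not empty inside $\{\abs{\chi(g)}>1\}\cap\Siegel$: since $\chi-\beta_0\in\R_{\ge0}\Delta_0$, one can have $\sprod{\chi}{H_0(g)}>0$ while $\sprod{\beta_0}{H_0(g)}$ is arbitrarily negative (take $\sprod{\alpha}{H_0(g)}$ large). Already for $G=\GL_n$ acting on $V=\Mat_n$ by left multiplication (which satisfies your hypotheses), the directions $x$ with $\sprod{\alpha_i}{x}\approx 0$ for $i<n-1$ and $\sprod{\alpha_{n-1}}{x}$ large, $\sprod{\chi}{x}$ slightly positive, have $\sprod{\delta_0-\mu+n_{\beta_0}\beta_0}{x}<0$ and unbounded below; there the dual-sum growth $e^{-n_{\beta_0}\sprod{\beta_0}{x}}$ is not dominated by $\delta_{P_0}(m_0(g))e^{-\sprod{\mu}{H_0(g)}}$. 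The underlying reason is that in this regime $\Sigma^*(g)$ and $\abs{\det\rho(g)}^{-1}\Sigma_{\sing}(g)$ are both large and cancel almost completely in $\gimel_\phi$; taking absolute values after Poisson destroys this cancellation. The paper avoids the issue by splitting according to $\sprod{\beta_0}{H_0(g)}$ rather than $\abs{\chi(g)}$: Poisson is used \emph{only} on $\{\sprod{\beta_0}{H_0(g)}>c_1\}$, where all weight directions are stretched and the dual sum really does decay like $e^{-\sprod{\beta_0}{H_0(g)}}$; on the complement $\{\sprod{\beta_0}{H_0(g)}\le c_1\}$ (which includes the bad part of $\{\abs{\chi(g)}>1\}$) the direct estimate from Lemma \ref{lem: Latticepoints} applied to $\Theta_V$ gives $\ll e^{-N\sprod{\chi}{H_0(g)}_-}$ and one simply absorbs the subtraction of $\hat\phi(0)$ into the bounded term $e^{-\sprod{\beta_0}{H_0(g)}_+}$. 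You acknowledge this as ``the most delicate step'' but do not resolve it, and the resolution is not ``carefully separating the singleton'' --- it is to not apply Poisson in that part of $\Siegel$ at all.

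\textbf{A secondary gap.} Even where Poisson is legitimate (large $\sprod{\beta_0}{H_0(g)}$), you handle $\Sigma_{\sing}$ by asserting that ``the polynomial constraint $v\in V\setminus\Vreg$ only reduces the underlying lattice sum, so Lemma \ref{lem: Latticepoints} applies uniformly.'' That is true but insufficient for the full-support piece of $\Sigma_{\sing}$: Lemma \ref{lem: Latticepoints} for $U=V$ only yields $\abs{\det\rho(g)}^{-1}\Theta_V^{\sing}(g)\ll 1$ when all $\sprod{\beta}{H_0(g)}>0$, whereas the bound \eqref{eq: bndif} requires decay $e^{-\sprod{\beta_0}{H_0(g)}}$. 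One needs a codimension-one gain from the vanishing locus of the relative invariant; the paper obtains \eqref{eq: thetaminfull} by citing \cite{MR3534542}*{Lemma 6.4}, which you would also need here. Moreover your $\Sigma_{\sing}$ contains supports $U\notin\Minset(V)$, for which the key non-negativity $\lambda(U)\in\R_{\ge0}(\wgts_U\cup\Delta_0)$ of Corollary \ref{cor: lam(U)} is unavailable; bounding $\theta_\phi$ directly by its own support decomposition (as in the paper) only ever involves $U\in\Minset(V)$ and sidesteps this.
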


\begin{proof}
We will estimate $\gimel_\phi$ using the decomposition \eqref{eq: thetadecomp} of
$\theta_\phi(g)$. We write
\[
\gimel_\phi (g) = \abs{\det\rho(g)}^{-1} \sum_{U\in\Minset(V), \, U \neq V} \Theta_U(g) + 
\abs{\det\rho(g)}^{-1}\Theta_V (g)-\one_{\R_{>1}}(\abs{\chi(g)})\hat\phi(0)
\]
and separately bound the summands $\Theta_U$, $U\in\Minset(V)$, $U\ne V$ and $\abs{\det\rho}^{-1}\Theta_V-\one_{\R_{>1}}(\abs{\chi})\hat\phi(0)$ on $\Siegel$.

Consider the contribution from $U\in\Minset(V)$, $U\ne V$. 
By Lemma \ref{lem: Latticepoints} we have
\[
\abs{\det\rho(g)}^{-1}\Theta_U(g)\ll_{N,\phi}^{\rd}\delta_{P_0}(m_0(g))e^{-\sprod{\lambda(U)}{H_0(g)} -N\sum_{\beta\in\wgts_U}\sprod{\beta}{H_0(g)}_-},\ \ g\in\Siegel.
\]
Recall that
$\lambda(U)=\delta_0+\sum_{\beta\in\wgts_V\setminus\wgts_U}n_\beta\beta \in \R_{\ge0}(\wgts_U\cup\Delta_0)$ by \eqref{def: lambda(U)}
and \eqref{eq: lambdaUincone}.
By Corollary \ref{cor: spcsets} part \ref{part: spcl} and our assumption on $V$ we have $\Env(U)=G$.
Thus, the {\PS} of $\lambda(U)$ with respect to $\Delta_0\cup\wgts_U$ (see \S\ref{sec: PS}) contains $\Delta_0$ and we have
\[
\lambda(U)=\sum_{\beta\in\wgts_U}c_\beta\beta+\sum_{\alpha\in\Delta_0}d_\alpha\alpha
\]
with suitable coefficients $c_\beta\ge0$ and $d_\alpha > 0$. Moreover, by considering $\sprod{\lambda(U)}{H}$ for $H \in \aaa_G$ we see that
$\sum_{\beta\in\wgts_U}c_\beta = \sum_{\beta\in\wgts_V\setminus\wgts_U}n_\beta \ge 1$ since $U\ne V$. 
Using that on the Siegel domain $\sprod{\beta_0}{H_0(g)}-\sprod{\beta}{H_0(g)}$ is bounded from above for all $\beta\in\rts_{V}$ we obtain
\[
\abs{\det\rho(g)}^{-1}\Theta_U(g)\ll_{N,\phi}^{\rd}\delta_{P_0}(m_0(g))e^{-\sprod{\mu}{H_0(g)} - \sprod{\beta_0}{H_0(g)}_+ - N\sum_{\beta\in\wgts_U}\sprod{\beta}{H_0(g)}_-},\ \ g\in\Siegel.
\]
By Corollary \ref{cor: fundchrS} part \ref{part: nonneg}, $\chi \neq 0$ is a non-negative linear combination of $\wgts_U$. Therefore we can replace here
$\sum_{\beta\in\wgts_U}\sprod{\beta}{H_0(g)}_-$ by a suitable positive multiple of 
$\sprod{\chi}{H_0(g)}_-$, proving the required bound for the contribution of any $U \neq V$.

As for $U=V$, we claim that
\begin{equation} \label{eq: estUV2}
\abs{\det\rho(g)}^{-1}\Theta_V(g)-\one_{\R_{>1}}(\abs{\chi(g)})\hat\phi(0)\ll_{N,\phi}e^{-\sprod{\beta_0}{H_0(g)}_+-N\sprod{\chi}{H_0(g)}_-},\ \ g\in\Siegel.
\end{equation}
We may assume that $g\in T_0(\A)\cap\Siegel$, or even that $g\in T_0(\R)\cap\Siegel$.
We split into cases according to the size of $\sprod{\beta_0}{H_0(g)}$. Fix a constant $c_1$.

If $\sprod{\beta_0}{H_0(g)}\le c_1$, then \eqref{eq: estUV2} holds because
$\abs{\det\rho(g)}^{-1}\Theta_V(g)\ll_{N,\phi}^{\rd}e^{-N\sprod{\chi}{H_0(g)}_-}$
by Lemma \ref{lem: Latticepoints}.

Assume that $\sprod{\beta_0}{H_0(g)}>c_1$. Then $\sprod{\beta}{H_0(g)}$ is bounded away from $0$ for every $\beta\in\wgts_V$, since $g\in\Siegel$.
By a standard argument, e.g., using the Poisson summation formula, we have
\begin{equation} \label{eq: fullthetaest}
\abs{\det\rho(g)}^{-1}\sum_{v\in V(F)}\phi(\rho(g)^{-1}v)-\hat\phi(0)\ll_\phi e^{-\sprod{\beta_0}{H_0(g)}}.
\end{equation}
(Here we cannot replace $\ll_\phi$ by $\ll_\phi^{\rd}$.) On the other hand, 
\[
\abs{\Theta_V(g)-\sum_{v\in V(F)}\phi(\rho(g)^{-1}v)}\le\sum_{v\in V(F)\mid P(v)=0}\abs{\phi(\rho(g)^{-1}v)}
\]
for some non-zero polynomial $P$ on $V$. (For instance, we can take the relative invariant times the product over $\beta\in\wgts_V$
of arbitrary nonzero linear forms on $V_\beta$.) It follows from \cite{MR3534542}*{Lemma 6.4} that
\begin{equation} \label{eq: thetaminfull}
\abs{\det\rho(g)}^{-1}(\Theta_V(g)-\sum_{v\in V(F)}\phi(\rho(g)^{-1}v))\ll_\phi e^{-\sprod{\beta_0}{H_0(g)}}.
\end{equation}
(In fact, the argument gives the stronger assertion with $\ll_\phi^{\rd}$.)

Since $\sprod{\chi}{H_0(g)}-\sprod{\beta_0}{H_0(g)}$ is bounded from below for $g\in\Siegel$,
we may choose $c_1$ such that $\abs{\chi(g)}>1$ whenever $\sprod{\beta_0}{H_0(g)}>c_1$.
Thus, \eqref{eq: estUV2} for $\sprod{\beta_0}{H_0(g)}>c_1$ follows from \eqref{eq: fullthetaest} and \eqref{eq: thetaminfull}. This finishes the proof of \eqref{eq: bndif}.

The convergence of
\[
\int_{P_0 (F) A_{\Ker \rho} \bs \Siegel}\abs{\gimel_\phi(g)} \abs{\chi(g)}^{-\sigma} \, dg
\]
for $\sigma >-c$ for a suitable $c>0$ follows from \eqref{eq: bndif} and Lemma \ref{lem: convergence}, since for any complement $W$ of $\aaa_{\Ker \rho}$ in $\aaa_0$ the
positively-$1$-homogeneous function $\sprod{\mu}{x}+\sprod{\beta_0}{x}_+$ on $W$ is positive on the cone $\{x\in\aaa_{0,+} \cap W \mid\sprod{\chi}{x}\ge0\}\setminus\{0\}$. 

This implies the absolute convergence of \eqref{eq: convf} for $\Re s >-c$.
\end{proof}

\begin{proof}[Proof of Theorem \ref{thm: smplcase}]
By induction on the cardinality of $I \setminus J$, starting with the trivial case $J=I$ (and, if we like, Proposition
\ref{prop: irrcase} for $\# (I \setminus J) = 1$), we may assume holomorphic 
continuation of the functions $Z^{V_J}(\phi_J,\lambda_J)
\prod_{j\notin J}\lambda_j$ 
to $\{\lambda_J=\sum_{j\notin J}\lambda_j\chi_j\mid\Re\lambda_j>-c\}$ 
for all $J\neq\emptyset$. To prove the theorem, we need to show that \eqref{eq: diffZ} is holomorphic on $\Domain$.

For every $i\in I$ let $\beta_i$ be the lowest weight of $V_i$.
For any $J\subset I$ let $\chi_J=\prod_{j\notin J}\chi_j$ and
let $G^J$ be the subset of $G(\A)$ defined by the conditions $\abs{\chi_j(g)}>1$ for all $j\in J$.

Consider
\[
\daleth_\phi(g)=\sum_{J\subset I}(-1)^{\# J}\abs{\det\rho_J(g)}^{-1}\theta_{\phi_J}^{V_J}(g)\one_{G^J}(g).
\]
We claim that there exists $\mu\in\R_{>0} \Delta_0$ such that
\begin{equation} \label{eq: mainbnd2}
\daleth_\phi(g)\ll_{N,\phi}
\delta_{P_0}(m_0(g))e^{-\sprod{\mu}{H_0(g)}-\sum_{i\in I}\sprod{\beta_i}{H_0(g)}_+-N\sum_{i\in I}\sprod{\chi_i}{H_0(g)}_-},\ \ g\in\Siegel.
\end{equation}

We may identify $\swrz(V(\A))$ with the completed tensor product $\hat\otimes_{i\in I}\swrz(V_i(\A))$ with respect to, say, the projective tensor product topology (the spaces $\swrz(V_i(\A))$ are nuclear).
Therefore, it is enough to prove \eqref{eq: mainbnd2} for $\phi$ of the form $\phi=\otimes\psi_i$ where $\psi_i\in\swrz(V_i(\A))$.
We then have
\[
\daleth_\phi(g)=\prod_{i\in I}\big(\abs{\det\rho_i(g)}^{-1}\theta_{\psi_i}^{V_i}(g)-\one_{\R_{>1}}(\abs{\chi_i(g)})\hat\psi_i(0)\big).
\]
If we simply apply \eqref{eq: bndif} to each factor, the bound that we get will not be good enough.
This is not surprising since we have not yet used that $V$ itself is a {\PVS}, only the individual factors.
Instead, we write for each $i\in I$
\[
\theta_{\psi_i}^{V_i}=\Theta_{\psi_i}^{V_i,\fs}+\Theta_{\psi_i}^{V_i,\nfs}
\]
where 
\[
\Theta_{\psi_i}^{V_i,\fs} = \sum_{\xi\in \Vreg_i (F):\,\supp\xi=\wgts_{V_i}}\phi(\rho_i(g)^{-1}\xi)
\]
is the summand corresponding to $U=V_i$ (full support) in the decomposition \eqref{eq: thetadecomp} for $\theta_{\psi_i}^{V_i}$.
Then, we can decompose
\begin{equation} \label{eq: decompdal}
\daleth_\phi=\sum_{J\subset I}\prod_{j\in J}\abs{\det\rho_j}^{-1}\Theta_{\psi_j}^{V_j,\nfs}\prod_{j\notin J}
\big(\abs{\det\rho_j}^{-1}\Theta_{\psi_j}^{V_j,\fs}-\one_{\R_{>1}}(\abs{\chi_j})\hat\psi_j(0)\big).
\end{equation}
For any subset $J\subset I$ let $\phi^J=\otimes_{j\in J}\psi_j$. Then, 
\begin{equation} \label{eq: nfs}
\prod_{j\in J}\Theta_{\psi_j}^{V_j,\nfs} = \sum_{U \in\Minset(V^J), \, U \not\supset V_j \, \forall j\in J} \Theta^{V^J}_{\phi^J;U}
\end{equation}
is the contribution to $\Theta^{V^J}_{\phi^J}$
with respect to the decomposition \eqref{eq: thetadecomp} of the subspaces $U$ not containing any $V_j$, $j \in J$.
As in the proof of Proposition \ref{prop: irrcase}, there exists $\mu\in\R_{>0}\Delta_0$ (which we may take to be independent of $J$) such that
\begin{equation} \label{eq: newbndUprop}
\prod_{j\in J}\abs{\det\rho_j(g)}^{-1}\Theta_{\psi_j}^{V_j,\nfs}(g)\ll_{N,\phi^J}\delta_{P_0}(m_0(g))
e^{-\sprod{\mu}{H_0(g)}-\sum_{j\in J}\sprod{\beta_j}{H_0(g)}_+-N\sum_{j\in J}\sprod{\chi_j}{H_0(g)}_-}
\end{equation}
on $\Siegel$.
Indeed, this estimate holds for each individual contribution
$\Theta^{V^J}_{\phi^J;U}$ to \eqref{eq: nfs}.
Since $\Env(U)=G$ by Lemma \ref{LemmaSpecialFactor} and Corollary \ref{cor: spcsets} part \ref{part: spcl},
the {\PS} of $\lambda(U)$ with respect to $\wgts_U\cup\Delta_0$ contains $\Delta_0$. Writing
\[
\lambda(U)=\sum_{\beta\in\wgts_U}c_\beta\beta+\sum_{\alpha\in\Delta_0}d_\alpha\alpha
\]
with $c_\beta\ge0$ and $d_\alpha > 0$, and considering $\sprod{\lambda(U)}{H}$ for $H \in \aaa_G$, we obtain
\[
\sum_{j \in J} \sum_{\beta\in\wgts_U \cap \wgts_{V_j}} c_\beta \chi_j = \sum_{j \in J} \sum_{\beta\in\wgts_{V_j} \setminus\wgts_U}n_\beta \chi_j, 
\]
and therefore
$\sum_{\beta\in\wgts_U \cap \wgts_{V_j}} c_\beta = \sum_{\beta\in\wgts_{V_j} \setminus\wgts_U}n_\beta \ge 1$ for all $j \in J$,
since the $\chi_j$ are linearly independent over $\Z$ and
$U\not\supset V_j$ for all $j\in J$.

Using Lemma \ref{lem: Latticepoints} together with Corollary \ref{cor: fundchrS} part \ref{part: nonneg} and the fact that
$\sprod{\beta_j}{H_0(g)}-\sprod{\beta}{H_0(g)}$ is bounded from above for any $\beta\in\rts_{V_j}$ and $g\in\Siegel$, we conclude \eqref{eq: newbndUprop} as in the irreducible case.

Combining \eqref{eq: newbndUprop} with the estimate \eqref{eq: estUV2} for every $j\notin J$ and using the decomposition \eqref{eq: decompdal}
we obtain the required estimate \eqref{eq: mainbnd2}.

Note that the expression \eqref{eq: diffZ} is equal to the integral 
\[
\int_{A_{\Ker \rho} G(F)\bs G(\A)}\daleth_\phi(g)e^{-\sprod{\lambda}{H_0(g)}}\ dg.
\]
Therefore, Theorem \ref{thm: smplcase} follows from \eqref{eq: mainbnd2} and Lemma \ref{lem: convergence} by noting that 
for any complement $W$ of $\aaa_{\Ker \rho}$ in $\aaa_0$, the positively-$1$-homogeneous function
$\sprod{\mu}{x}+\sum_{i\in I}\sprod{\beta_i}{x}_+$ on $W$ is positive on the cone $\{x\in\aaa_{0,+} \cap W \mid\sprod{\chi_i}{x}\ge0\ \forall i\in I\}\setminus\{0\}$.
\end{proof}

\begin{remark}
Let $V$ be a regular {\PVS} with $\Spcl(V)=\{V\}$. The example of $V=\Mat_n$ with $G=\GL_n$ acting by matrix multiplication shows that $V$ is not necessarily {\CF}.
One may ask whether disjointness of the sets of weights $\wgts_{V_i}$ of the irreducible components $V_i$ of $V$ is sufficient to imply that $V$ is {\CF}.
This would follow if one could show that an $F$-irreducible {\PVS} $V$ such that $\Spcl(V)=\{V\}$ is necessarily regular.\footnote{This can be easily
checked when $F$ is algebraically closed by classification.}

In the case of {\PVS}s $V$ of DK-type, the construction of \S\ref{sec: examspcl} below shows that the condition $\Spcl(V)=\{V\}$ implies that $V$ is {\CF}. 
\end{remark}

\section{A construction of {\SPCL} subspaces for {\PVS}s of DK-type} \label{sec: examspcl}

We finish the paper by providing a general construction of {\SPCL} subspaces for regular {\PVS}s coming from nilpotent orbits
(see \S\ref{sec: nilPVS}).

Let $G'$ be a reductive group over $F$.
Recall that any nilpotent $e\in\Liealg{G}'$ gives rise to a filtration $\filt(e)=(\filt_i(e))_{i\in\Z}$ of $\Liealg{G}'$
whose stabilizer $\Stab(\filt(e))$ in $G'$ is a parabolic subgroup of $G'$ with Lie algebra $\filt_0(e)$.

Clearly, $\filt(\Ad(g)e)=\Ad(g)\filt(e)$ for all $g\in G'$.
In particular, $e$ and $\Ad(g)e$ define the same filtration if and only if $g\in\Stab(\filt(e))$.

If $\Lorb$ is the nilpotent orbit of $e$, then we say that $\filt(e)$ is an $\Lorb$-filtration. Thus, the variety of $\Lorb$-filtrations
is a $G'$-orbit whose stabilizer is the canonical parabolic subgroup pertaining to $\Lorb$.

\subsection{}
We recall the notion of induction of nilpotent orbits \cite{MR527733}.

Let $Q$ be a parabolic $F$-subgroup of $G'$ with Lie algebra $\Liealg{Q}$.
Let $L$ be the Levi quotient of $Q$, with Lie algebra $\Liealg{L}$.
Let $\pr_Q:Q\rightarrow L$ (resp., $\pr_{\Liealg{Q}}:\Liealg{Q}\rightarrow\Liealg{L}$) be the canonical projections.
Let $\Lorb^L$ be a nilpotent orbit in $\Liealg{L}$ and let $\tilde\Lorb^L$ be its inverse image under $\pr_{\Liealg{Q}}$.
It is an irreducible subvariety of $\Liealg{Q} \subset \Liealg{G}'$ consisting of nilpotent elements.
By definition, the induced orbit $\Ind_Q\Lorb^L$ is the nilpotent orbit in $\Liealg{G}'$ intersecting $\tilde\Lorb^L$ in an open subset.
In fact, it depends only on $L$ (and $\Lorb^L$), and not on $Q$, but it will be important for us to keep track of $Q$.

\begin{definition}
Suppose that $\Lorb=\Ind_Q\Lorb^L$ where
$Q$ is a parabolic $F$-subgroup of $G'$ with Levi quotient $L$ and $\Lorb^L$ is a nilpotent orbit of $\Liealg{L}$.
Let $\filt^L$ be an $\Lorb^L$-filtration of $\Liealg{L}$.
We then say that the triple $\data=(Q,\Lorb^L,\filt^L)$ is an \emph{induced filtration data} (\IFD) for $\Lorb$.
\end{definition}

For instance, $(G',\Lorb,\filt)$ is a (trivial) {\IFD} for any $\Lorb$-filtration $\filt$ of $\Liealg{G}'$.

The variety $\Indata=\Indata_{\Lorb}$ of IFDs for $\Lorb$ comprises finitely many $G'$-orbits.
(If $\Indata_{\Lorb}$ is a single orbit, i.e., if $\Lorb$ is not induced non-trivially, then $\Lorb$ is called \emph{rigid}.)

\begin{lemma} \label{LemmaIfiltration}
Let $\data=(Q,\Lorb^L,\filt^L)\in\Indata$. Define inductively
\[
\tilde\filt_i=\begin{cases}\pr_{\Liealg{Q}}^{-1}(\filt^L_i),&i=0,1,2,\\
[\tilde\filt_1,\tilde\filt_{i-1}] + [\tilde\filt_2,\tilde\filt_{i-2}],&i>2,\end{cases}
\]
and
\[
\tilde\filt_i=\{ x \in \Liealg{G}' \mid [x, \tilde\filt_{-i}] \subset \tilde\filt_0 \},\ \ i < 0.
\]
Then $\tilde\filt = (\tilde\filt_i)_i$ is a filtration of $\Liealg{G}'$ of V-type.
\end{lemma}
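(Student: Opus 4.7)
The plan is to exhibit $\tilde\filt$ as the V-type filtration attached to an explicit cocharacter $\lambda:\Mlt\to G'$, constructed by lifting the cocharacter underlying $\filt^L$ and twisting by a large dominant central character of $L$. Concretely, I would fix a Levi subgroup $M\subset Q$ so that $M\hookrightarrow Q\twoheadrightarrow L$ is an isomorphism and lift the cocharacter $\lambda^L:\Mlt\to L$ attached to $\filt^L$ to $\lambda^L:\Mlt\to M$ along this isomorphism. Next, I would pick a cocharacter $\mu:\Mlt\to Z(L)^\circ$ that is strictly dominant for $Q$, meaning $\sprod{\alpha}{\mu}>0$ for every root $\alpha$ of $\Liealg{N}_Q$, and set $\lambda=\lambda^L+c\mu$ in $X_*(T)$ for an integer $c$ large enough that every $\lambda$-weight on $\Liealg{N}_Q$ is at least $2$. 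Let $\hat\filt_i=\oplus_{j\ge i}\Liealg{G}'_j$ be the V-type filtration attached to $\lambda$; the claim is $\tilde\filt=\hat\filt$.

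The base cases $i=0,1,2$ are direct: since $\mu$ is central in $M$, the restriction of $\lambda$ to $\Liealg{M}$ coincides with $\lambda^L$, so $\hat\filt_i\cap\Liealg{M}=\filt^L_i$; the hypothesis on $c$ gives $\Liealg{N}_Q\subset\hat\filt_2$ while $\bar{\Liealg{N}}_Q$ has $\lambda$-weights $\le-2$ and so does not meet $\hat\filt_0$, yielding $\hat\filt_i=\pr_{\Liealg{Q}}^{-1}(\filt^L_i)=\tilde\filt_i$. For $i>2$ I would argue by induction on $i$: the inclusion $\tilde\filt_i\subset\hat\filt_i$ is immediate from the V-type bracket property $[\hat\filt_j,\hat\filt_k]\subset\hat\filt_{j+k}$ and the inductive hypothesis; for the reverse inclusion I would decompose a weight-$j$ vector (with $j\ge i\ge 3$) into its $\Liealg{L}$- and $\Liealg{N}_Q$-components. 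On $\Liealg{L}$, the $\Liealg{SL}_2$-theory attached to the triple underlying $\filt^L$ gives surjectivity of $\operatorname{ad}(e):\Liealg{L}_{j-2}\to\Liealg{L}_j$ for $j\ge 1$, placing $\Liealg{L}_j$ inside $[\hat\filt_2,\hat\filt_{j-2}]$. On $\Liealg{N}_Q$, I would decompose each root $\alpha$ of $\lambda$-weight $j$ as $\alpha=\beta+\gamma$ with $\gamma$ a simple root and iterate, peeling off a low-weight generator at each step. Finally, for $i<0$ I would match the defining formula $\tilde\filt_i=\{x\mid [x,\tilde\filt_{-i}]\subset\tilde\filt_0\}$ with $\hat\filt_i$ using the non-degeneracy of the Killing-form pairing $\Liealg{G}'_{-k}\times\Liealg{G}'_k\to\C$ in the reductive case, which yields the analogous bracket characterization $\hat\filt_{-k}=\{x\mid [x,\hat\filt_k]\subset\hat\filt_0\}$.

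The main obstacle I foresee is the $\Liealg{N}_Q$ part of the reverse inclusion in the inductive step for $i>2$: once the central shift pushes the $\lambda$-weights on $\Liealg{N}_Q$ above the base threshold, the simple roots of $G'$ lying in $\Liealg{N}_Q$ may acquire $\lambda$-weights well beyond $2$, and it is not automatic that every root space in $\Liealg{N}_Q$ of $\lambda$-weight at least $3$ lies in $[\hat\filt_1,\hat\filt_{j-1}]+[\hat\filt_2,\hat\filt_{j-2}]$. Overcoming this should involve refining the choice of $\mu$ (perhaps built from the fundamental coweights dual to the simple roots in the complement of $\Delta_L$) so that every simple root of $G'$ ends up with $\lambda$-weight in $\{0,1,2\}$, or alternatively exploiting the IFD hypothesis $\Lorb=\Ind_Q\Lorb^L$ directly to verify the bracket identity on each graded piece of $\Liealg{N}_Q$.
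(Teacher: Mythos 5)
Your overall strategy matches the paper's: realize $\tilde\filt$ as the V-type filtration attached to an explicit grading element obtained by shifting $h^L$ by something central in $L$. However, the concrete choice you commit to --- "$c$ large enough that every $\lambda$-weight on $\Liealg{N}_Q$ is at least $2$" --- does not work, and the problem is more serious than "not automatic." If $\gamma\in\Delta_0^{G'}\setminus\Delta_0^Q$ is a simple root whose $\lambda$-weight exceeds $2$, then $\Liealg{G}'_\gamma\subset\hat\filt_3$, but $\tilde\filt_3=[\tilde\filt_1,\tilde\filt_2]$ can never contain $\Liealg{G}'_\gamma$: both $\tilde\filt_1$ and $\tilde\filt_2$ consist entirely of positive root spaces (they contain no piece of the Cartan), so every bracket lands in root spaces of height at least $2$, whereas $\gamma$ has height $1$. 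Thus $\hat\filt\ne\tilde\filt$ for any "large" $c$, and the inductive reverse inclusion for $i>2$ genuinely fails with your choice.

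You do, in fact, name the correct fix in your final paragraph, and it is exactly what the paper does, but it is not a refinement one "should perhaps" make: it is forced and it is unique. Reducing to $Q$ standard and $\filt^L$ the standard $\Lorb^L$-filtration coming from a dominant $h^L\in\aaa_0^L$, the paper takes $h=h^L+h_L$ with $h_L\in\aaa^{G'}_L$ the unique vector making $\sprod{\alpha}{h}=2$ for every $\alpha\in\Delta_0^{G'}\setminus\Delta_0^Q$. Since $\sprod{\alpha}{h}=\sprod{\alpha}{h^L}\in\{0,1,2\}$ for $\alpha\in\Delta_0^Q$ (a general property of Dynkin gradings), this arranges $\sprod{\alpha}{h}\in\{0,1,2\}$ for \emph{all} simple roots of $G'$. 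Only then does the key combinatorial step go through: any positive root $\alpha$ with $\sprod{\alpha}{h}\ge 3$ decomposes as $\alpha=\beta+\gamma$ with $\beta,\gamma$ positive roots and $\sprod{\beta}{h}\in\{1,2\}$, which is what makes $\hat\filt_i\subset\tilde\filt_i$ for $i\ge 3$. Your sketch of the $\Liealg{L}$-part via $\Liealg{SL}_2$-theory and the $i<0$ part via the Killing form are consistent with the paper; the gap is the unresolved choice of central shift, which is the heart of the argument.
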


Note that $\pr_{\Liealg{Q}} (\tilde\filt_i) = \filt^L_i$ for all $i$ but that the preimage $\pr_{\Liealg{Q}}^{-1}(\filt^L_i)$ strictly contains
$\tilde\filt_i$ for $i>2$.

\begin{proof}
Without loss of generality, we may assume that $Q$ is a standard parabolic subgroup of $G'$ and $\filt^L$ the standard $\Lorb^L$-filtration defined by a dominant element $h^L \in \aaa_0^L$, i.e. 
$\filt^L_i = \bigoplus_{\alpha \in \rts_{L} \cup \{ 0 \}: \, \sprod{\alpha}{h^L} \ge i} \Liealg{G}'_{\alpha}$.

Let $h = h^L + h_L \in \aaa_0^{G'}$ be the unique vector with $h_L \in \aaa^{G'}_L$ and $\sprod{\alpha}{h} = 2$ for all $\alpha \in \Delta_0^{G'} \setminus \Delta_0^Q$.
We claim that the inductively defined spaces $\tilde\filt_i$ coincide with the filtration obtained from the gradation of $\Liealg{G}'$ defined by $h$:
$\tilde\filt_i = \bigoplus_{\alpha \in \rts_{G'} \cup \{ 0 \}: \, \sprod{\alpha}{h} \ge i} \Liealg{G}'_{\alpha}$.

To see this, note that we have $\sprod{\alpha}{h} = \sprod{\alpha}{h^L} \in \{ 0, 1, 2 \}$ for $\alpha \in \Delta_0^Q$ (and therefore $\sprod{\alpha}{h} \in \{ 0, 1, 2 \}$ for all 
$\alpha \in \Delta_0^{G'}$). This implies the claim for $i = 0$, $1$, $2$. The claim for $i \ge 3$ follows,
since any positive root $\alpha \in \rts_{G'}$ with $\sprod{\alpha}{h} \ge 3$ can be written as 
$\alpha = \beta + \gamma$ with positive roots $\beta$ and $\gamma$ and
$\sprod{\beta}{h}  \in \{ 1, 2 \}$.

The relation between $\tilde\filt_i$ and $\tilde\filt_{-i}$ holds for any filtration obtained from a gradation.
\end{proof}

\begin{definition}
We call the filtration $(\tilde\filt_i)_i$ of Lemma \ref{LemmaIfiltration} the \emph{I-filtration} associated to $\data$. 

We say that a filtration $\tilde\filt$ of $\Liealg{G}'$ is an \emph{I-filtration compatible with
$\Lorb$} if it arises from some $\data \in \Indata$ by this construction. 
\end{definition}

The stabilizer $\Stab(\data)$ of $\data=(Q,\Lorb^L,\filt^L)\in\Indata$ is the parabolic subgroup of $Q$ whose image in $L$ is $\Stab_L (\filt^L)$. It is also the stabilizer
of the associated I-filtration $\tilde\filt$.
Its Lie algebra is $\tilde\filt_0$.

$\Lorb$-filtrations are I-filtrations compatible with $\Lorb$, but in general there will be others.
For example, if $\Lorb=\Rich(Q)$, there is an I-filtration compatible with $\Lorb$ with $\tilde\filt_{2} = \Liealg{N}_Q$, which is an $\Lorb$-filtration
only if $\Lorb$ is even and $Q$ is conjugate to the canonical parabolic subgroup of $\Lorb$.
On the other hand, different {\IFD}s may give rise to the same I-filtration.

\begin{lemma} \label{lem: basicind}
Let $\data=(Q,\Lorb^L,\filt^L)$ be an {\IFD} for $\Lorb$. Let $P'=\Stab(\data)$ and $(\tilde\filt_i)_i$ be the associated I-filtration. Then,
\begin{enumerate}
\item $\Lorb_{\tilde\filt} = \Lorb\cap\tilde\filt_2$ is open in $\tilde\filt_2$.
\item $\pr_{\Liealg{Q}}(\Lorb_{\tilde\filt}) = \Lorb^L\cap\filt^L_2$.
\item \label{part: projind} $\Lorb_{\tilde\filt}$ is a $P'$-orbit.
\item \label{part: moveind} Let $v\in\Lorb_{\tilde\filt}$. Then,
\[
\{g\in G'\mid\Ad(g)^{-1}v\in\Lorb_{\tilde\filt}\}=G'_v P'
\]
and $(G'_v)^\circ\subset P'$.
\end{enumerate}
\end{lemma}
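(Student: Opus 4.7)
This lemma generalizes Theorem \ref{thm: stdunip}, which is the case of the trivial IFD $(G',\Lorb,\filt)$, and the proof will follow the same overall template. After conjugating $\data$ by an element of $G'(F)$, I may assume $\data$ is in standard form: $Q$ is a standard parabolic $F$-subgroup of $G'$ with standard Levi quotient $L$, and $\filt^L$ is the standard filtration attached to a dominant $h^L\in\aaa_0^L$. Let $h=h^L+h_L\in\aaa_0^{G'}$ and pick a cocharacter $\lambda:\Mlt\to G'$ with $d\lambda(1)=h$, so that by the proof of Lemma \ref{LemmaIfiltration}, $\tilde\filt_i=\bigoplus_{j\ge i}\Liealg{G}'_j$ comes from the gradation induced by $\lambda$. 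Set $G=Z_{G'}(\lambda)$ (the Levi of $P'$) and $V=\Liealg{G}'_2$; then $\tilde\filt_2=V\oplus\tilde\filt_3$, $\Lie N_{P'}=\tilde\filt_1$, and $[\Lie N_{P'},\tilde\filt_2]\subset\tilde\filt_3$. Since $\tilde\filt$ is of V-type, $V$ is a PVS under the adjoint action of $G$, and nilpotent elements of $V$ form finitely many $G$-orbits.

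The technical core is the claim that $\Lorb\cap V$ is nonempty and equal to the unique open $G$-orbit of $V$, in direct analogy with the input \cite{MR2109105}*{Proposition VIII.11.3.6} used in the proof of Theorem \ref{thm: stdunip}. Granting this, the four assertions follow as in that sketch. Part (1) is immediate from the decomposition $\tilde\filt_2=V\oplus\tilde\filt_3$. For part (3), a Richardson-style argument (\cite{MR0320232}*{Lemma 1}) using $[\Lie N_{P'},\tilde\filt_2]\subset\tilde\filt_3$ shows the $N_{P'}$-orbit of a generic $e\in\Lorb\cap V$ equals $e+\tilde\filt_3$, whence $\Lorb_{\tilde\filt}=G\cdot e+\tilde\filt_3$ is a single $P'$-orbit. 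For part (2), the $Q$-equivariant surjection $\pr_{\Liealg{Q}}:\tilde\filt_2\twoheadrightarrow\filt^L_2$ sends $\Lorb\cap V$ into $\Lorb^L\cap\Liealg{L}_2$, and combining with Theorem \ref{thm: stdunip} applied inside $\Liealg{L}$ to $(\Lorb^L,\filt^L)$ yields $\pr_{\Liealg{Q}}(\Lorb_{\tilde\filt})=\Lorb^L\cap\filt^L_2$. For part (4), the displayed set equality reduces via part (3) to the containment $(G'_v)^\circ\subset P'$: given a pair $v,\Ad(g)^{-1}v\in\Lorb_{\tilde\filt}$, part (3) produces $p\in P'$ with $\Ad(p)v=\Ad(g)^{-1}v$, forcing $gp\in G'_v$; the containment $(G'_v)^\circ\subset P'$ I would deduce from the V-type PVS structure of $V$ at the open point $v$, comparing $\dim\Liealg{G}'_v=\dim\Liealg{G}'-\dim(G'\cdot v)$ with the graded dimensions to force $\Lie G'_v\subset\bigoplus_{i\ge 0}\Liealg{G}'_i=\tilde\filt_0$.

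The main obstacle is thus the key claim itself. To construct an element of $\Lorb\cap V$, I would take $e^L\in\Lorb^L\cap\Liealg{L}_2$ (nonempty by Theorem \ref{thm: stdunip} applied inside $\Liealg{L}$) and form $e=e^L+e'$ for a generic Richardson-type element $e'\in V\cap\Liealg{N}_Q$ (a sum of root vectors $x_\alpha$ for $\alpha\in\Delta_0^{G'}\setminus\Delta_0^Q$, weighted to reflect the $h_L$-grading). To identify the $G'$-orbit of $e$ with $\Lorb=\Ind_Q\Lorb^L$, I would exploit the definition of induction: $e\in\pr_{\Liealg{Q}}^{-1}(e^L)\cap\tilde\filt_2\subset\pr_{\Liealg{Q}}^{-1}(\Lorb^L)$, and by openness of $\Lorb\cap\pr_{\Liealg{Q}}^{-1}(\Lorb^L)$ in $\pr_{\Liealg{Q}}^{-1}(\Lorb^L)$, combined with the dimension formula $\dim\Lorb=\dim\Lorb^L+2\dim N_Q$ and a direct count on the $h$-gradation of $\Liealg{G}'$, a generic choice of $e'$ lands $e$ in $\Lorb$. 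An alternative, cleaner route is via the moment-map geometry of the Springer-type resolution $G'\times^{P'}\tilde\filt_2\to\Liealg{G}'$ following \cite{MR527733}, whose image is the closure of a single nilpotent orbit identified with $\Lorb$ by the same dimension count. Once $e\in\Lorb\cap V$ is produced, the finiteness of $G$-orbits among nilpotents in $V$ together with the openness of $\Lorb\cap V$ in $V$ (which itself follows from the dimension count) forces $G\cdot e$ to be the unique open $G$-orbit and thus equal to $\Lorb\cap V$.
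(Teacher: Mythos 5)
Your proposal takes a genuinely different route from the paper: you try to reduce to the graded, V-type picture via the cocharacter defining $\tilde\filt$ and then run the argument of Theorem~\ref{thm: stdunip} by analogy, with the main input being the ``key claim'' that $\Lorb\cap\Liealg{G}'_2$ is exactly the open $G$-orbit of $V=\Liealg{G}'_2$. The paper instead works directly with the fibration $\pr_{\Liealg{Q}}:\Liealg{Q}\to\Liealg{L}$: part~(2) is proved by a dimension comparison for induced orbits (if $\bar u\in\filt_2^L\setminus\Lorb^L$ then $u$ lies in the closure of $\Ind_Q\Lorb'$ for a strictly smaller $\Lorb'$, so $u\notin\Lorb$); part~(4) then follows by quoting Lusztig--Spaltenstein \cite{MR527733}*{Theorem~1.3(d)} to get $g\in G'_vQ$ and then applying Theorem~\ref{thm: stdunip} (part~\ref{part: moveo}) \emph{inside $L$} to the image $\bar q$; and part~(3) is deduced as a formal consequence of part~(4). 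No analogue of the key claim is needed.

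There are concrete gaps in your write-up. First, the key claim itself is never established: you call it the ``main obstacle'' and only sketch a construction of one element $e=e^L+e'$ together with hand-waved dimension counts and an alternative moment-map route. Second, your proof of part~(2) simply asserts that $\pr_{\Liealg{Q}}$ maps $\Lorb\cap V$ into $\Lorb^L\cap\Liealg{L}_2$; this is precisely what needs proof (it is the content of the paper's part~(2) argument), and is \emph{not} a consequence of $Q$-equivariance alone. Third, the Richardson-style argument for part~(3) is invoked with only the inclusion $[\Lie N_{P'},\tilde\filt_2]\subset\tilde\filt_3$; Ranga Rao's lemma requires the \emph{reverse} direction, namely that $\operatorname{ad}(e)$ maps $\Liealg{G}'_i$ onto $\Liealg{G}'_{i+2}$ for $i\ge 1$. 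That surjectivity is automatic in the DK case from $\mathfrak{sl}_2$-theory, but for a general I-filtration one obtains it (via Killing duality) only after knowing $\Lie G'_e\subset\tilde\filt_0$, i.e.\ after part~(4). Since you later deduce part~(4) from part~(3), this makes your argument circular as written. Reordering so that part~(4) is established first — via the Lusztig--Spaltenstein input, as the paper does — would repair the circularity, but you would still need to supply real proofs of the key claim and of part~(2).
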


\begin{proof}
For simplicity we sometimes write $\bar{\phantom{x}}$ for the image under $\pr_Q$ or $\pr_{\Liealg{Q}}$.

The first part follows since $\pr_{\Liealg{Q}}^{-1}(u)\cap\Lorb$ is open in $\pr_{\Liealg{Q}}^{-1}(u)$ for every $u\in\Lorb^L$ and
$\Lorb^L\cap\filt^L_2$ is open in $\filt_2^L$.

For the second part, suppose that $u\in\Liealg{Q}$ with $\bar u\in\filt_2^L\setminus\Lorb^L$.
Then, $\bar u$ lies in a nilpotent orbit $\Lorb'\ne\Lorb^L$ of $\Liealg{L}$ in the closure of $\Lorb^L$
and $u$ lies in the closure of $\Ind_Q\Lorb'$. The latter has smaller dimension than $\Lorb$.
Thus, $u\notin\Lorb$.

For the last part, suppose that $u=\Ad(g)^{-1}v\in\Lorb_{\tilde\filt}$. Then, by the second part
$\bar u\in\Lorb^L$. By \cite{MR527733}*{Theorem 1.3(d)}, $g\in G'_vQ$.
Write $g=hq$ where $h\in G'_v$, $q\in Q$. Then, $\bar u=\Ad(\bar{q})^{-1}\bar v$. Hence, $\Ad(\bar q)^{-1}\bar v\in\Lorb^L \cap
\filt^L_2$.
By Theorem \ref{thm: stdunip} part \ref{part: moveo} (applied to $\Lorb^L$) we infer that $\bar q\in\pr_Q(P')$, so that $q\in P'$. Hence $g\in G'_v P'$.

Also, $(G'_v)^\circ\subset Q$ again by \cite{MR527733}*{Theorem 1.3(d)}, since $\bar v\in\Lorb^L$ by the second part.
Thus, $(G'_v)^\circ=Q_v^\circ$. Since $\pr_Q(Q_v)\subset L_{\bar v}\subset\pr_Q(P')$, by Theorem \ref{thm: stdunip}
we deduce that $(G'_v)^\circ\subset P'$.

Finally, the third part follows from the last part.
\end{proof}

\subsection{}

We fix $P_0'$, $T_0'$ as in \S\ref{sec: nilPVS}.
The following standard fact is an easy consequence of the Bruhat decomposition.
\begin{remark} \label{rem: unqstd}
Let $P$ be a standard parabolic $F$-subgroup of $G'$ with standard Levi subgroup $M$.
Then, for any parabolic $F$-subgroup $Q$ of $G'$ there exists a unique parabolic subgroup $Q'$
of the form $Q'=Q^p$, $p\in P(F)$ such that $Q'\cap M$ is a standard parabolic subgroup of $M$.
\end{remark}

Namely, if $Q$ is conjugate to the standard parabolic $F$-subgroup $Q''$ of $G$, then we can write $Q = (Q'')^{n p}$ with $n \in N_{G'(F)} (T_0')$ and $p \in P(F)$
by the Bruhat decomposition, and moreover the Weyl group element $\bar{n} \in W = N_{G'(F)} (T_0') / T_0' (F)$ defined by $n$ can be taken of minimal length in its $(W_{Q''}, W_P)$-double coset.
Then $Q'=(Q'')^n$ works by \cite{MR794307}*{Proposition 2.8.9}, and uniqueness follows from \cite{MR794307}*{Proposition 2.3.3, 2.8.1}.

Now let $\Lorb$ be a nilpotent orbit in $\Liealg{G}'$ and let $(G,V,\rho)$ be the regular {\PVS} of DK-type attached to $\Lorb$ (cf. \S\ref{sec: nilPVS}).
Thus, $G$ is the standard Levi subgroup of the (standard) canonical parabolic subgroup $P$ of $\Lorb$.
We write $\Lorb_\filt=\Lorb\cap\filt_2$, where $\filt$ is the standard $\Lorb$-filtration (i.e., with $\Stab(\filt)=P$).

Denote by $[\mathfrak{F}]$ the (finite) set of $G'$-orbits of I-filtrations compatible with $\Lorb$. The elements of $[\mathfrak{F}]$ give rise to {\SPCL} subspaces of $V$. More precisely,

\begin{proposition} \label{prop: specfid}
For any $G'$-orbit $\Cls\in[\mathfrak{F}]$ there exists a unique I-filtration $\tilde\filt \in\Cls$ such that $\Stab(\tilde\filt)\cap G$ is a standard parabolic
subgroup of $G$ and $\Lorb \cap \filt_2 \cap \tilde\filt_2 \ne\emptyset$. Moreover, $U:=\tilde\filt_2\cap V\in\Spcl(V)$ and $\Stab(U)=\Stab(\tilde\filt)\cap G$.
\end{proposition}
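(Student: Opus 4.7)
The plan is to prove the three assertions in sequence. For existence, I will start with an arbitrary $\tilde\filt^\sharp \in \Cls$. By Lemma \ref{lem: basicind} part 1, $\Lorb \cap \tilde\filt^\sharp_2$ is a non-empty open subset of $\tilde\filt^\sharp_2$, and $G'$-transitivity on $\Lorb$ lets me find $g \in G'$ moving some element of $\Lorb \cap \tilde\filt^\sharp_2$ into $\Lorb_\filt$, so that $g\tilde\filt^\sharp$ satisfies $\Lorb \cap \filt_2 \cap (g\tilde\filt^\sharp)_2 \ne \emptyset$. Applying Remark \ref{rem: unqstd} to $P$ and its standard Levi $G$ then produces a $P$-conjugate $\tilde\filt$ of $g\tilde\filt^\sharp$ with $\Stab(\tilde\filt) \cap G$ standard; since $P$ preserves $\filt_2$, the intersection condition persists.

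For uniqueness, suppose $\tilde\filt, \tilde\filt' \in \Cls$ both satisfy the conditions and write $\tilde\filt' = h\tilde\filt$ for some $h \in G'$. Pick $v \in \Lorb \cap \filt_2 \cap \tilde\filt_2$ and $v' \in \Lorb \cap \filt_2 \cap \tilde\filt'_2$. Since $\Lorb \cap \tilde\filt_2$ is a single $\Stab(\tilde\filt)$-orbit by Lemma \ref{lem: basicind} part 3, we have $\Ad(h^{-1}) v' = \Ad(q) v$ for some $q \in \Stab(\tilde\filt)$, whence $v' = \Ad(hq) v$; as $v, v' \in \Lorb_\filt$, Theorem \ref{thm: stdunip} part \ref{part: moveo} forces $hq \in P$. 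Thus $\tilde\filt' = p \tilde\filt$ for some $p \in P$, so $\Stab(\tilde\filt') = p \Stab(\tilde\filt) p^{-1}$; Remark \ref{rem: unqstd} then gives $\Stab(\tilde\filt) = \Stab(\tilde\filt')$, so $p$ lies in the self-normalizing parabolic $\Stab(\tilde\filt)$ and $\tilde\filt' = \tilde\filt$.

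For the third assertion, set $P' = \Stab(\tilde\filt)$ and $S = P' \cap G$. Standardness of $S$ forces $P' \supset T_0'$, so $\tilde\filt_2$ is a sum of $T_0'$-weight spaces, $U := \tilde\filt_2 \cap V$ lies in $\Sub(V)$, and $S$ stabilizes $U$. Choose $w \in \Lorb \cap \filt_2 \cap \tilde\filt_2$ and let $v$ denote its projection onto $V = \Liealg{G}'_{h_0 = 2}$ (under the $h_0$-gradation); Theorem \ref{thm: stdunip} part 2 gives $v \in \Vreg$, and $T_0'$-stability of $\tilde\filt_2$ ensures $v \in \tilde\filt_2$, so $v \in U \cap \Vreg \cap \Lorb_{\tilde\filt}$. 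Hence $U \in \Minset(V)$, and Lemma \ref{lem: basicind} part 4 applied to $v$ yields $(G'_v)^\circ \subset P'$, whence $G_v^\circ \subset S$.

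The crux is to show the $S$-orbit of $v$ is dense in $U$, equivalently $\operatorname{ad}(\Lie S) v = U$. By Lemma \ref{lem: basicind} part 3, $\Lorb_{\tilde\filt}$ is an open $P'$-orbit in $\tilde\filt_2$, so the orbit map differential gives $\operatorname{ad}(\Lie P') v = \tilde\filt_2$. Since $P' \supset T_0'$, $\Lie P'$ decomposes into $h_0$-weight spaces, and $\operatorname{ad}(x) v$ shifts $h_0$-weight by $+2$ for $x$ of definite $h_0$-weight; intersecting with $V = \Liealg{G}'_{h_0=2}$ thus isolates precisely the image of the $h_0$-weight $0$ component of $\Lie P'$, which is $\Lie S$. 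This yields $\operatorname{ad}(\Lie S) v = \tilde\filt_2 \cap V = U$. Condition (d) of Proposition \ref{prop: cent} part \ref{part: special} is then satisfied for $S$, and hence for $\Stab(U) \supset S$, giving $U \in \Spcl(V)$; a dimension count $\dim S = \dim U + \dim G_v = \dim \Stab(U)$ combined with $S \subset \Stab(U)$ and the connectedness of standard parabolics finally gives $\Stab(U) = S$.
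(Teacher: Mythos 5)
Your proof is correct and follows the same overall structure as the paper's — same existence/uniqueness mechanism (Lemma \ref{lem: basicind}(3), Theorem \ref{thm: stdunip}(4), Remark \ref{rem: unqstd}), and the same projection argument $w = v + w'$ along the $h$-grading to produce $v \in U \cap \Vreg$. The one genuine variation is in verifying $U \in \Spcl(V)$: the paper invokes condition (e) of Proposition \ref{prop: cent}(3), using the full coset decomposition $\{g\in G\mid \Ad(g)^{-1}v_0\in U\} = \bigcup_i \eta_i P' \cap G$ supplied by Lemma \ref{lem: basicind}(4). You instead verify condition (d): from the openness of $\Lorb_{\tilde\filt}$ in $\tilde\filt_2$ (Lemma \ref{lem: basicind}(3)) you get $\operatorname{ad}(\Lie P')v = \tilde\filt_2$, and then the $h$-grading of $\Lie P'$ cleanly isolates $\operatorname{ad}(\Lie S)v = \tilde\filt_2 \cap V = U$ as the degree-$0$ contribution; combined with $(G'_v)^\circ \subset P'$ (the weaker tail of Lemma \ref{lem: basicind}(4)) and a dimension count, this gives $\Stab(U) = S$. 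Both routes are sound; yours trades the coset bookkeeping of condition (e) for an explicit infinitesimal computation, and the dimension count makes the identification $\Stab(U) = P' \cap G$ visibly transparent, whereas the paper lets it fall out of Proposition \ref{prop: cent}(4) implicitly.
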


\begin{proof}
We first show that the set
\[
\Cls'=\{\tilde\filt\in\Cls\mid \Lorb \cap \filt_2 \cap \tilde\filt_2
\ne\emptyset\}
\]
is a $P$-orbit. Clearly, $\Cls'$ is stable under $P$, and it is non-empty since
$\Lorb \cap \tilde\filt_2 \ne\emptyset$ for all $\tilde\filt \in\Cls$.
Assume that $\tilde\filt\in\Cls'$
and $\Ad(g)\tilde\filt\in\Cls'$ for some $g\in G'$. We need to show that $g\in PP'$ for $P'=\Stab(\tilde\filt)$. By assumption,
$\Lorb_{\tilde\filt}$ intersects both $\Lorb_\filt$ and $\Ad(g)^{-1}\Lorb_\filt$.
Since $\Lorb_{\tilde\filt}$ is a $P'$-orbit by Lemma \ref{lem: basicind} part \ref{part: projind}, there exists $p'\in P'$ such that
\[
\Ad(p')(\Lorb_\filt)\cap\Ad(g)^{-1}(\Lorb_\filt)\ne\emptyset.
\]
By Theorem \ref{thm: stdunip} we conclude that $gp'\in P$, as required.

The first assertion follows now from Remark \ref{rem: unqstd}.
Let $\tilde\filt = (\tilde\filt_i)_i \in\Cls'$ be the unique I-filtration provided by this construction. As before, let $P'=\Stab(\tilde\filt)$.
Since $P'$ contains $T_0'$, we have
\[
\tilde\filt_2\cap\filt_2=(\tilde\filt_2\cap V)+(\tilde\filt_2\cap\filt_3).
\]
Since $\Lorb_{\filt}=\Vreg+\filt_3$ by Theorem \ref{thm: stdunip}, we conclude that
\[
\Lorb_{\tilde\filt}\cap\Lorb_{\filt}=\Lorb\cap\tilde\filt_2\cap\filt_2=(\Lorb_{\tilde\filt}\cap \Vreg)+(\tilde\filt_2\cap\filt_3).
\]
In particular, $\Lorb_{\tilde\filt}\cap \Vreg\ne\emptyset$, so that $U\in\Minset(V)$.

Clearly, $\Stab(U)$ contains the standard parabolic subgroup $P'\cap G$ of $G$.
Let $v_0\in\Lorb\cap U$.
It follows from Lemma \ref{lem: basicind} part \ref{part: moveind} that there exist $\eta_1,\dots,\eta_k\in G'$ such that
\[
\{g\in G\mid \Ad(g)^{-1}v_0\in U\}=\{g\in G\mid \Ad(g)^{-1}v_0\in\Lorb_{\tilde\filt}\}=\bigcup_{1 \le i \le k} \eta_iP'\cap G.
\]
Each non-empty intersection $\eta_iP'\cap G$ is a left coset of $P'\cap G$ in $G$.
The second assertion of the proposition now follows from Proposition \ref{prop: cent} part \ref{part: special}.
\end{proof}

For instance, if $\Lorb=\Rich(Q)$, where $Q$ is a parabolic $F$-subgroup of $G'$, then Proposition \ref{prop: specfid}
says that there is a unique $G(F)$-conjugate $Q'$ of $Q$
such that $\Liealg{N}_{Q'}\cap\Lorb_\filt\ne\emptyset$ and $Q'\cap G$ is a standard parabolic subgroup of $G$.
In general, $Q'\cap P$ is not a parabolic subgroup of $G'$. In other words, if $Q$ is standard, it is not necessary that $Q'=Q$.

Consider the following examples.

\begin{enumerate}
\item Rank two cases (see Example \ref{ex: rank2}).

We may write the subregular orbit $\Lorb$ as $\Rich(Q)$ where $\Delta_0^Q=\{\beta\}$. This {\IFD} gives rise to $U = V \cap \Liealg{N}_{Q}$, the unique non-trivial special subspace.

\item Example \ref{ex: GL2'} (= Example \ref{ex: GL} with $k=3$ and $n_1<n_2$).

We have $\Lorb=\Rich(Q_1)=\Rich(Q_2)$ where $Q_1$ and $Q_2$ are the standard parabolic subgroups of type
$(n_2,n_1,n_1)$ and $(n_1,n_1,n_2)$. These {\IFD}s give rise to the spaces $V_i=V\cap\Liealg{N}_{Q_i}$ in $\Spcl(V)$.

On the other hand, if $k=5$ and $n_1<n_2<n_3$ and $Q$ is the standard parabolic subgroup of type $(n_3,n_2,n_2,n_1,n_1)$,
then $\Lorb=\Rich(Q)$ but $V \cap \Liealg{N}_{Q}\notin\Minset(V)$.

\item Example \ref{ex: F4'} (= Example \ref{ex: F4}).
In the following two examples, for standard parabolic subgroups
$Q_1 \supset Q_2$ we will use the notation 
$\Rich^{Q_1}(Q_2)$ for the Richardson orbit of the standard parabolic subgroup $L_1 
\cap Q_2$ inside the standard Levi subgroup $L_1$ of $Q_1$. 

We have $\Lorb=\Rich(({*}{*}0{*}))=\Rich((0{*}{*}0))$. This will give rise to the subspaces $U_1$ and $U_2$, respectively. 
Using induction by stages, we may also write $\Lorb=\Ind_Q\Lorb'$ where $Q=(0{*}{*}{*})$ and $\Lorb'=\Rich^Q((0{*}{*}0))$.

The weighted Dynkin diagram of $\Lorb'$ is
\begin{tikzcd}[sep=small]
\overset{0}{\bullet}\arrow[r,Rightarrow]&\overset{1}{\bullet}\arrow[r,dash]&\overset{0}{\bullet}.
\end{tikzcd}
The standard $\Lorb'$-filtration satisfies $\Phi_{\filt'_2}=\{(0120),(0121),(0122)\}$, and the associated I-filtration
$\Phi_{\tilde\filt_2}= \Phi^+_{G'} \setminus 
\{(0100),(0010),(0001),(0110),(0011),(0111)\}$.
Therefore, this {\IFD} gives rise to the space $U_3$.

Note that writing $\Lorb=\Ind_{({*}{*}{*}0)}\Lorb''$ where $\Lorb''=\Rich^{({*}{*}{*}0)}((0{*}{*}0))$ will give rise to $U_2$ again, since
the weighted Dynkin diagram of $\Lorb''$ is
\begin{tikzcd}[sep=small]
\overset{2}{\bullet}\arrow[r,dash]&\overset{0}{\bullet}\arrow[r,Rightarrow]&\overset{0}{\bullet}
\end{tikzcd}
and therefore $\tilde\filt_2 = \Liealg{N}_{(0{*}{*}0)}$ again.

\item Example \ref{ex: E6'} (= Example \ref{ex: E6})

We have $\Lorb=\Rich(Q'_1)=\Rich(Q'_2)$, where $Q'_1=\Eroot{0**0*}*$, $Q'_2=\Eroot{***0*}0$ (which are associate).
These give rise to $U_1$ and $U_2$, respectively. 

They also give three presentations $\Lorb=\Ind_{Q_i}\Lorb'_i$, $i=1,2,3$ where
\begin{align*}
Q_1=\Eroot{***0*}*,\ \ \Lorb'_1&=\Rich^{Q_1}(Q_1')=\Rich^{Q_1}(Q_2'),\\
Q_2=\Eroot{*****}0,\ \ \Lorb'_2&=\Rich^{Q_2}(Q_2'),\\
Q_3=\Eroot{0****}*,\ \ \Lorb'_3&=\Rich^{Q_3}(Q_1').
\end{align*}
Note that $L_1^{\der}=\SL_5\times\SL_2$ and $\Lorb'_1$ is the minimal orbit of the $\SL_5$-factor.
The weighted Dynkin diagram is $\Eroot{100\phantom{0}0}1$. We have $\Phi_{\filt'_2}=\{\Eroot{11100}1\}$. We get the space $U_1\cap U_2$.

The weighted Dynkin diagram of $\Lorb'_2$ is $\Eroot{01010}{}$ and
\[
\Phi_{\filt'_2}=\{\Eroot{01110}0,\Eroot{11110}0,\Eroot{01111}0,\Eroot{11111}0\}.
\]
We get the space $U_2''$.

The weighted Dynkin diagram of $\Lorb'_3$ is $\Eroot{\phantom{0}0020}0$, and we get $U_1$ again.

Of course, we can apply the involution $\theta$ to all of the above, obtaining in addition $U_1'$, $U_2'$ and $U_1' \cap U_2'$.

Finally, we also have $\Lorb=\Ind_Q\Lorb'$ where $Q=\Eroot{0***0}*$ and $\Lorb'$ is the minimal orbit of the Levi subgroup of $Q$ (whose derived group is $\Spin_8$).
The weighted Dynkin diagram of $\Lorb'$ is $\Eroot{\phantom{0}010\phantom{0}}0$ and $\Phi_{\filt'_2}= \{ \Eroot{01210}1 \}$.
Here, $\tilde\filt_2\cap\Lorb_{\filt}=\emptyset$ for the standard I-filtration
$\tilde\filt$.
However, it is easy to see that if $w$ is the simple reflection with respect to $\Eroot{00100}0$ (the simple root defining $G$),
then $\Stab(w\tilde\filt) \cap G$ is the standard Borel subgroup of $G$ and $w\tilde\filt_2\cap\Lorb_{\filt}\ne\emptyset$. The intersection $w\tilde\filt_2\cap V$ is $U_3''$.  

Note that $U_3$ is not obtained from this construction (either directly or by intersection).
\end{enumerate}


\def\cprime{$'$}
\begin{bibdiv}
\begin{biblist}

\bib{MR2017446}{book}{
       title={Rev\^{e}tements \'{e}tales et groupe fondamental ({SGA} 1)},
      series={Documents Math\'{e}matiques (Paris) [Mathematical Documents
  (Paris)]},
   publisher={Soci\'{e}t\'{e} Math\'{e}matique de France, Paris},
        date={2003},
      volume={3},
        ISBN={2-85629-141-4},
        note={S\'{e}minaire de g\'{e}om\'{e}trie alg\'{e}brique du Bois Marie
  1960--61. [Algebraic Geometry Seminar of Bois Marie 1960-61], Directed by A.
  Grothendieck, With two papers by M. Raynaud, Updated and annotated reprint of
  the 1971 original [Lecture Notes in Math., 224, Springer, Berlin; MR0354651
  (50 \#7129)]},
      review={\MR{2017446}},
}

\bib{MR2275598}{incollection}{
      author={Bhargava, Manjul},
       title={Higher composition laws and applications},
        date={2006},
   booktitle={International {C}ongress of {M}athematicians. {V}ol. {II}},
   publisher={Eur. Math. Soc., Z\"{u}rich},
       pages={271\ndash 294},
      review={\MR{2275598}},
}

\bib{2107.12819}{article}{
      author={Bhargava, Manjul},
      author={Taniguchi, Takashi},
      author={Thorne, Frank},
       title={Improved error estimates for the {D}avenport-{H}eilbronn
  theorems},
        date={2023},
     journal={Math. Ann.},
      volume={online},
         url={https://doi.org/10.1007/s00208-023-02684-w},
}

\bib{MR0294349}{article}{
      author={Borel, A.},
      author={Tits, J.},
       title={\'{E}l\'ements unipotents et sous-groupes paraboliques de groupes
  r\'eductifs. {I}},
        date={1971},
        ISSN={0020-9910},
     journal={Invent. Math.},
      volume={12},
       pages={95\ndash 104},
      review={\MR{MR0294349 (45 \#3419)}},
}

\bib{MR1102012}{book}{
      author={Borel, Armand},
       title={Linear algebraic groups},
     edition={Second},
      series={Graduate Texts in Mathematics},
   publisher={Springer-Verlag},
     address={New York},
        date={1991},
      volume={126},
        ISBN={0-387-97370-2},
      review={\MR{MR1102012 (92d:20001)}},
}

\bib{MR2109105}{book}{
      author={Bourbaki, Nicolas},
       title={Lie groups and {L}ie algebras. {C}hapters 7--9},
      series={Elements of Mathematics (Berlin)},
   publisher={Springer-Verlag},
     address={Berlin},
        date={2005},
        ISBN={3-540-43405-4},
        note={Translated from the 1975 and 1982 French originals by Andrew
  Pressley},
      review={\MR{MR2109105 (2005h:17001)}},
}

\bib{MR794307}{book}{
      author={Carter, Roger~W.},
       title={Finite groups of {L}ie type},
      series={Pure and Applied Mathematics (New York)},
   publisher={John Wiley \& Sons Inc.},
     address={New York},
        date={1985},
        ISBN={0-471-90554-2},
        note={Conjugacy classes and complex characters, A Wiley-Interscience
  Publication},
      review={\MR{MR794307 (87d:20060)}},
}

\bib{MR3625130}{article}{
      author={Chaudouard, Pierre-Henri},
       title={Sur la contribution unipotente dans la formule des traces
  d'{A}rthur pour les groupes g\'{e}n\'{e}raux lin\'{e}aires},
        date={2017},
        ISSN={0021-2172},
     journal={Israel J. Math.},
      volume={218},
      number={1},
       pages={175\ndash 271},
         url={https://doi.org/10.1007/s11856-017-1464-4},
      review={\MR{3625130}},
}

\bib{MR1251060}{book}{
      author={Collingwood, David~H.},
      author={McGovern, William~M.},
       title={Nilpotent orbits in semisimple {L}ie algebras},
      series={Van Nostrand Reinhold Mathematics Series},
   publisher={Van Nostrand Reinhold Co.},
     address={New York},
        date={1993},
        ISBN={0-534-18834-6},
      review={\MR{MR1251060 (94j:17001)}},
}

\bib{MR936994}{article}{
      author={Datskovsky, Boris},
      author={Wright, David~J.},
       title={Density of discriminants of cubic extensions},
        date={1988},
        ISSN={0075-4102},
     journal={J. Reine Angew. Math.},
      volume={386},
       pages={116\ndash 138},
         url={https://doi.org/10.1515/crll.1988.386.116},
      review={\MR{936994}},
}

\bib{MR3969874}{incollection}{
      author={Finis, Tobias},
      author={Hoffmann, Werner},
      author={Wakatsuki, Satoshi},
       title={The subregular unipotent contribution to the geometric side of
  the {A}rthur trace formula for the split exceptional group {$G_2$}},
        date={2018},
   booktitle={Geometric aspects of the trace formula},
      series={Simons Symp.},
   publisher={Springer, Cham},
       pages={163\ndash 182},
      review={\MR{3969874}},
}

\bib{MR3534542}{article}{
      author={Finis, Tobias},
      author={Lapid, Erez},
       title={On the continuity of the geometric side of the trace formula},
        date={2016},
        ISSN={0251-4184},
     journal={Acta Math. Vietnam.},
      volume={41},
      number={3},
       pages={425\ndash 455},
         url={https://doi.org/10.1007/s40306-016-0176-x},
      review={\MR{3534542}},
}

\bib{MR63358}{article}{
      author={Freudenthal, Hans},
       title={Beziehungen der {$E_7$} und {$E_8$} zur {O}ktavenebene. {I}},
        date={1954},
     journal={Indag. Math.},
      volume={16},
       pages={218\ndash 230},
        note={Nederl. Akad. Wetensch. Proc. Ser. A {{\bf{5}}7}},
      review={\MR{63358}},
}

\bib{MR1125214}{article}{
      author={Gyoja, Akihiko},
       title={Invariants, nilpotent orbits, and prehomogeneous vector spaces},
        date={1991},
        ISSN={0021-8693},
     journal={J. Algebra},
      volume={142},
      number={1},
       pages={210\ndash 232},
         url={https://doi.org/10.1016/0021-8693(91)90226-X},
      review={\MR{1125214}},
}

\bib{MR463157}{book}{
      author={Hartshorne, Robin},
       title={Algebraic geometry},
      series={Graduate Texts in Mathematics, No. 52},
   publisher={Springer-Verlag, New York-Heidelberg},
        date={1977},
        ISBN={0-387-90244-9},
      review={\MR{463157}},
}

\bib{MR1438546}{book}{
      author={Hilton, P.~J.},
      author={Stammbach, U.},
       title={A course in homological algebra},
     edition={Second},
      series={Graduate Texts in Mathematics},
   publisher={Springer-Verlag, New York},
        date={1997},
      volume={4},
        ISBN={0-387-94823-6},
         url={https://doi.org/10.1007/978-1-4419-8566-8},
      review={\MR{1438546}},
}

\bib{MR620024}{book}{
      author={Hochschild, Gerhard~P.},
       title={Basic theory of algebraic groups and {L}ie algebras},
      series={Graduate Texts in Mathematics},
   publisher={Springer-Verlag, New York-Berlin},
        date={1981},
      volume={75},
        ISBN={0-387-90541-3},
      review={\MR{620024}},
}

\bib{MR903631}{article}{
      author={Hoffmann, Werner},
       title={The nonsemisimple term in the trace formula for rank one
  lattices},
        date={1987},
        ISSN={0075-4102},
     journal={J. Reine Angew. Math.},
      volume={379},
       pages={1\ndash 21},
         url={https://doi.org/10.1515/crll.1987.379.1},
      review={\MR{903631}},
}

\bib{MR3675167}{incollection}{
      author={Hoffmann, Werner},
       title={The trace formula and prehomogeneous vector spaces},
        date={2016},
   booktitle={Families of automorphic forms and the trace formula},
      series={Simons Symp.},
   publisher={Springer, [Cham]},
       pages={175\ndash 215},
      review={\MR{3675167}},
}

\bib{MR3843151}{article}{
      author={Hoffmann, Werner},
      author={Wakatsuki, Satoshi},
       title={On the geometric side of the {A}rthur trace formula for the
  symplectic group of rank 2},
        date={2018},
        ISSN={0065-9266},
     journal={Mem. Amer. Math. Soc.},
      volume={255},
      number={1224},
       pages={v+88},
         url={https://doi.org/10.1090/memo/1224},
      review={\MR{3843151}},
}

\bib{MR0447140}{article}{
      author={Igusa, Jun-ichi},
       title={Exponential sums associated with a {F}reudenthal quartic},
        date={1977},
        ISSN={0040-8980},
     journal={J. Fac. Sci. Univ. Tokyo Sect. IA Math.},
      volume={24},
      number={1},
       pages={231\ndash 245},
      review={\MR{0447140 (56 \#5455)}},
}

\bib{MR910424}{article}{
      author={Igusa, Jun-ichi},
       title={On a certain class of prehomogeneous vector spaces},
        date={1987},
        ISSN={0022-4049},
     journal={J. Pure Appl. Algebra},
      volume={47},
      number={3},
       pages={265\ndash 282},
         url={http://dx.doi.org/10.1016/0022-4049(87)90051-X},
      review={\MR{910424 (88m:20089)}},
}

\bib{MR2138067}{article}{
      author={Kable, Anthony~C.},
      author={Yukie, Akihiko},
       title={On the number of quintic fields},
        date={2005},
        ISSN={0020-9910},
     journal={Invent. Math.},
      volume={160},
      number={2},
       pages={217\ndash 259},
         url={https://doi.org/10.1007/s00222-004-0391-2},
      review={\MR{2138067}},
}

\bib{MR575790}{article}{
      author={Kac, V.~G.},
       title={Some remarks on nilpotent orbits},
        date={1980},
        ISSN={0021-8693},
     journal={J. Algebra},
      volume={64},
      number={1},
       pages={190\ndash 213},
         url={http://dx.doi.org/10.1016/0021-8693(80)90141-6},
      review={\MR{575790 (81i:17005)}},
}

\bib{MR1944442}{book}{
      author={Kimura, Tatsuo},
       title={Introduction to prehomogeneous vector spaces},
      series={Translations of Mathematical Monographs},
   publisher={American Mathematical Society, Providence, RI},
        date={2003},
      volume={215},
        ISBN={0-8218-2767-7},
        note={Translated from the 1998 Japanese original by Makoto Nagura and
  Tsuyoshi Niitani and revised by the author},
      review={\MR{1944442}},
}

\bib{MR527733}{article}{
      author={Lusztig, G.},
      author={Spaltenstein, N.},
       title={Induced unipotent classes},
        date={1979},
        ISSN={0024-6107},
     journal={J. London Math. Soc. (2)},
      volume={19},
      number={1},
       pages={41\ndash 52},
         url={http://dx.doi.org/10.1112/jlms/s2-19.1.41},
      review={\MR{527733}},
}

\bib{MR1361168}{book}{
      author={M\oe~glin, C.},
      author={Waldspurger, J.-L.},
       title={Spectral decomposition and {E}isenstein series},
      series={Cambridge Tracts in Mathematics},
   publisher={Cambridge University Press, Cambridge},
        date={1995},
      volume={113},
        ISBN={0-521-41893-3},
         url={https://doi.org/10.1017/CBO9780511470905},
        note={Une paraphrase de l'\'{E}criture [A paraphrase of Scripture]},
      review={\MR{1361168}},
}

\bib{MR1413009}{article}{
      author={Morishita, Masanori},
      author={Watanabe, Takao},
       title={A note on the mean value theorem for special homogeneous spaces},
        date={1996},
        ISSN={0027-7630},
     journal={Nagoya Math. J.},
      volume={143},
       pages={111\ndash 117},
         url={http://projecteuclid.org/getRecord?id=euclid.nmj/1118771971},
      review={\MR{1413009 (97h:11042)}},
}

\bib{MR0092928}{article}{
      author={Mostow, G.~D.},
       title={Fully reducible subgroups of algebraic groups},
        date={1956},
        ISSN={0002-9327},
     journal={Amer. J. Math.},
      volume={78},
       pages={200\ndash 221},
      review={\MR{MR0092928 (19,1181f)}},
}

\bib{MR0230726}{article}{
      author={Ono, Takashi},
       title={A mean value theorem in adele geometry},
        date={1968},
        ISSN={0025-5645},
     journal={J. Math. Soc. Japan},
      volume={20},
       pages={275\ndash 288},
      review={\MR{0230726}},
}

\bib{MR357399}{article}{
      author={Popov, V.~L.},
       title={Picard groups of homogeneous spaces of linear algebraic groups
  and one-dimensional homogeneous vector fiberings},
        date={1974},
        ISSN={0373-2436},
     journal={Izv. Akad. Nauk SSSR Ser. Mat.},
      volume={38},
       pages={294\ndash 322},
      review={\MR{357399}},
}

\bib{MR0320232}{article}{
      author={Ranga~Rao, R.},
       title={Orbital integrals in reductive groups},
        date={1972},
        ISSN={0003-486X},
     journal={Ann. of Math. (2)},
      volume={96},
       pages={505\ndash 510},
      review={\MR{MR0320232 (47 \#8771)}},
}

\bib{MR260758}{book}{
      author={Raynaud, Michel},
       title={Faisceaux amples sur les sch\'{e}mas en groupes et les espaces
  homog\`enes},
      series={Lecture Notes in Mathematics, Vol. 119},
   publisher={Springer-Verlag, Berlin-New York},
        date={1970},
      review={\MR{260758}},
}

\bib{MR1247502}{incollection}{
      author={R\"{o}hrle, Gerhard~E.},
       title={On extraspecial parabolic subgroups},
        date={1993},
   booktitle={Linear algebraic groups and their representations ({L}os
  {A}ngeles, {CA}, 1992)},
      series={Contemp. Math.},
      volume={153},
   publisher={Amer. Math. Soc., Providence, RI},
       pages={143\ndash 155},
         url={https://doi-org.ezproxy.weizmann.ac.il/10.1090/conm/153/01310},
      review={\MR{1247502}},
}

\bib{MR2961852}{article}{
      author={Rubenthaler, Hubert},
       title={Decomposition of reductive regular prehomogeneous vector spaces},
        date={2011},
        ISSN={0373-0956},
     journal={Ann. Inst. Fourier (Grenoble)},
      volume={61},
      number={5},
       pages={2183\ndash 2218 (2012)},
         url={https://doi.org/10.5802/aif.2670},
      review={\MR{2961852}},
}

\bib{MR1731463}{article}{
      author={Saito, Hiroshi},
       title={Explicit form of the zeta functions of prehomogeneous vector
  spaces},
        date={1999},
        ISSN={0025-5831},
     journal={Math. Ann.},
      volume={315},
      number={4},
       pages={587\ndash 615},
         url={http://dx.doi.org/10.1007/s002080050330},
      review={\MR{1731463 (2001b:11111)}},
}

\bib{MR1928799}{article}{
      author={Saito, Hiroshi},
       title={Global zeta functions of {F}reudenthal quartics},
        date={2002},
        ISSN={0129-167X},
     journal={Internat. J. Math.},
      volume={13},
      number={8},
       pages={797\ndash 820},
         url={http://dx.doi.org/10.1142/S0129167X02001496},
      review={\MR{1928799 (2003i:11051)}},
}

\bib{MR1994885}{article}{
      author={Saito, Hiroshi},
       title={Convergence of the zeta functions of prehomogeneous vector
  spaces},
        date={2003},
        ISSN={0027-7630},
     journal={Nagoya Math. J.},
      volume={170},
       pages={1\ndash 31},
         url={http://projecteuclid.org/getRecord?id=euclid.nmj/1114631874},
      review={\MR{1994885 (2005d:11170)}},
}

\bib{MR676121}{article}{
      author={Sat\B{o}, Fumihiro},
       title={Zeta functions in several variables associated with
  prehomogeneous vector spaces. {I}. {F}unctional equations},
        date={1982},
        ISSN={0040-8735},
     journal={Tohoku Math. J. (2)},
      volume={34},
      number={3},
       pages={437\ndash 483},
         url={https://doi.org/10.2748/tmj/1178229205},
      review={\MR{676121}},
}

\bib{MR695661}{article}{
      author={Sat\B{o}, Fumihiro},
       title={Zeta functions in several variables associated with
  prehomogeneous vector spaces. {II}. {A} convergence criterion},
        date={1983},
        ISSN={0040-8735},
     journal={Tohoku Math. J. (2)},
      volume={35},
      number={1},
       pages={77\ndash 99},
         url={https://doi.org/10.2748/tmj/1178229103},
      review={\MR{695661}},
}

\bib{MR430336}{article}{
      author={Sato, M.},
      author={Kimura, T.},
       title={A classification of irreducible prehomogeneous vector spaces and
  their relative invariants},
        date={1977},
        ISSN={0027-7630},
     journal={Nagoya Math. J.},
      volume={65},
       pages={1\ndash 155},
         url={http://projecteuclid.org/euclid.nmj/1118796150},
      review={\MR{430336}},
}

\bib{MR1086566}{article}{
      author={Sato, Mikio},
       title={Theory of prehomogeneous vector spaces (algebraic part)---the
  {E}nglish translation of {S}ato's lecture from {S}hintani's note},
        date={1990},
        ISSN={0027-7630},
     journal={Nagoya Math. J.},
      volume={120},
       pages={1\ndash 34},
         url={https://doi.org/10.1017/S0027763000003214},
        note={Notes by Takuro Shintani, Translated from the Japanese by
  Masakazu Muro},
      review={\MR{1086566}},
}

\bib{MR296079}{article}{
      author={Sato, Mikio},
      author={Shintani, Takuro},
       title={On zeta functions associated with prehomogeneous vector spaces},
        date={1972},
        ISSN={0027-8424},
     journal={Proc. Nat. Acad. Sci. U.S.A.},
      volume={69},
       pages={1081\ndash 1082},
         url={https://doi.org/10.1073/pnas.69.5.1081},
      review={\MR{296079}},
}

\bib{MR0344230}{article}{
      author={Sato, Mikio},
      author={Shintani, Takuro},
       title={On zeta functions associated with prehomogeneous vector spaces},
        date={1974},
        ISSN={0003-486X},
     journal={Ann. of Math. (2)},
      volume={100},
       pages={131\ndash 170},
      review={\MR{MR0344230 (49 \#8969)}},
}

\bib{MR1466966}{book}{
      author={Serre, Jean-Pierre},
       title={Galois cohomology},
   publisher={Springer-Verlag},
     address={Berlin},
        date={1997},
        ISBN={3-540-61990-9},
        note={Translated from the French by Patrick Ion and revised by the
  author},
      review={\MR{1466966 (98g:12007)}},
}

\bib{MR0292755}{article}{
      author={Shintani, Takuro},
       title={On {D}irichlet series whose coefficients are class numbers of
  integral binary cubic forms},
        date={1970},
        ISSN={0021-4280},
     journal={Proc. Japan Acad.},
      volume={46},
       pages={909\ndash 911},
      review={\MR{MR0292755 (45 \#1837)}},
}

\bib{MR0289428}{article}{
      author={Shintani, Takuro},
       title={On {D}irichlet series whose coefficients are class numbers of
  integral binary cubic forms},
        date={1972},
     journal={J. Math. Soc. Japan},
      volume={24},
       pages={132\ndash 188},
      review={\MR{MR0289428 (44 \#6619)}},
}

\bib{MR0384717}{article}{
      author={Shintani, Takuro},
       title={On zeta-functions associated with the vector space of quadratic
  forms},
        date={1975},
     journal={J. Fac. Sci. Univ. Tokyo Sect. I A Math.},
      volume={22},
       pages={25\ndash 65},
      review={\MR{MR0384717 (52 \#5590)}},
}

\bib{MR0430168}{article}{
      author={Vinberg, {\`E}.~B.},
       title={The {W}eyl group of a graded {L}ie algebra},
        date={1976},
        ISSN={0373-2436},
     journal={Izv. Akad. Nauk SSSR Ser. Mat.},
      volume={40},
      number={3},
       pages={488\ndash 526, 709},
      review={\MR{0430168 (55 \#3175)}},
}

\bib{MR776169}{article}{
      author={Wright, David~J.},
       title={The adelic zeta function associated to the space of binary cubic
  forms. {I}. {G}lobal theory},
        date={1985},
        ISSN={0025-5831},
     journal={Math. Ann.},
      volume={270},
      number={4},
       pages={503\ndash 534},
         url={https://doi.org/10.1007/BF01455301},
      review={\MR{776169}},
}

\bib{MR1185585}{article}{
      author={Wright, David~J.},
      author={Yukie, Akihiko},
       title={Prehomogeneous vector spaces and field extensions},
        date={1992},
        ISSN={0020-9910},
     journal={Invent. Math.},
      volume={110},
      number={2},
       pages={283\ndash 314},
         url={https://doi.org/10.1007/BF01231334},
      review={\MR{1185585}},
}

\bib{MR1149040}{article}{
      author={Yukie, Akihiko},
       title={On the {S}hintani zeta function for the space of binary quadratic
  forms},
        date={1992},
        ISSN={0025-5831},
     journal={Math. Ann.},
      volume={292},
      number={2},
       pages={355\ndash 374},
         url={https://doi.org/10.1007/BF01444626},
      review={\MR{1149040}},
}

\bib{MR1267735}{book}{
      author={Yukie, Akihiko},
       title={Shintani zeta functions},
      series={London Mathematical Society Lecture Note Series},
   publisher={Cambridge University Press, Cambridge},
        date={1993},
      volume={183},
        ISBN={0-521-44804-2},
      review={\MR{1267735}},
}

\bib{MR1369477}{article}{
      author={Yukie, Akihiko},
       title={On the convergence of the zeta function for certain
  prehomogeneous vector spaces},
        date={1995},
        ISSN={0027-7630},
     journal={Nagoya Math. J.},
      volume={140},
       pages={1\ndash 31},
         url={https://doi.org/10.1017/S0027763000005390},
      review={\MR{1369477}},
}

\end{biblist}
\end{bibdiv}

\end{document}